\newtheorem{maintheorem}{Theorem} 
\newtheorem{theorem}{Theorem}
\newtheorem{lemma}[theorem]{Lemma}
\newtheorem{coro}[theorem]{Corollary}
\newtheorem{prop}[theorem]{Proposition}
\newtheorem*{assumptions*}{Assumptions}
\newtheorem*{rem*}{Remark}
\theoremstyle{remark}
\newtheorem{remark}[theorem]{Remark}
\newtheorem*{remark*}{Remark}
\theoremstyle{definition}
\newtheorem{definition}{Definition}
\newcommand{\F}{{\mathbf F}}
\newcommand{\R}{{\mathbf R}}
\newcommand{\W}{{\mathbf W}}
\newcommand{\Z}{{\mathbf Z}}
\newcommand{\NN}{{\mathbb N}}
\newcommand{\QQ}{{\mathbb Q}}
\newcommand{\RR}{{\mathbb R}}
\newcommand{\TT}{{\mathbb T}}
\newcommand{\ZZ}{{\mathbb Z}}
\newcommand{\SL}{{\rm SL}}
\newcommand{\GL}{{\rm GL}}
\newcommand{\be}[1]{\begin{equation} \label{#1} }
\newcommand{\ee}{\end{equation}}
\newcommand{\beq}{\begin{equation}}
\def \TT{{\mathbb T}}
\def \RR{{\mathbb R}}
\def \ZZ{{\mathbb Z}}
\def \Diff{{\rm Diff}}
\def \al{{\alpha}}
\def \W{{\mathcal W}}
\def \cD{{\mathcal{D}}}
\def \cN{{\mathcal{N}}}
\def \W{\mathcal{W}}
\def \Ho{{\mathrm{Homeo}}}
\def \bal{\bar{\alpha}}
\def \hx0{\hat{x_0}}
\def \id{\mathrm{id}}
\def \P{\mathcal{P}}
\def \Z{\mathcal{Z}}
\def \F{\mathcal{F}}
\theoremstyle{plain}
\newtheorem{quest}{Question}
\author{Danijela Damjanovi\'c}
\address[Damjanovi\'c]{Department of mathematics, Kungliga Tekniska högskolan, Lindstedtsvägen 25, SE-100 44 Stockholm, Sweden.} 
\email{ddam@kth.se}
\author{Amie Wilkinson}
\address[Wilkinson]{Department of mathematics, the University of Chicago, Chicago, IL, USA, 60637}
\email{wilkinso@math.uchicago.edu}
\author{Disheng Xu}
\address[Xu]{Department of mathematics, the University of Chicago, Chicago, IL, USA, 60637}
\email{dishengxu@math.uchicago.edu}
\begin{document}

\title[Pathology and asymmetry]{Pathology and asymmetry: centralizer rigidity for partially hyperbolic diffeomorphisms}
\maketitle

\begin{abstract}  We discover a rigidity phenomenon within the volume-preserving partially hyperbolic diffeomorphisms with $1$-dimensional center.
In particular,  for smooth, ergodic perturbations of certain algebraic systems -- including the discretized geodesic flows over hyperbolic manifolds   and certain   toral automorphisms with simple spectrum and exactly one eigenvalue on the unit circle, the smooth centralizer is  either virtually $\ZZ^\ell$ or contains a smooth flow.   

At the heart of this work are two very different rigidity phenomena.
The first  was discovered in \cite{AVW, AVW2}: for a class of volume-preserving partially hyperbolic systems including those studied here, the disintegration of volume along the center foliation is either equivalent to Lebesgue or atomic. The second phenomenon is the rigidity associated to several commuting partially hyperbolic diffeomorphisms  with very different hyperbolic behavior transverse to a common center foliation \cite{DX0}. 

We employ a variety of  techniques, among them a novel geometric approach to building new partially hyperbolic elements in hyperbolic Weyl chambers using Pesin theory and leafwise conjugacy, measure rigidity via thermodynamic formalism for circle extensions of Anosov diffeomorphisms, partially hyperbolic Liv\v sic theory, and nonstationary normal forms.
\end{abstract}

\bigskip

\centerline{\em To the memory of Anatole Katok.}
 
\tableofcontents

\section{Introduction } 

The {\em centralizer} of a diffeomorphism $f\colon M\to M$ is the set of diffeomorphisms $g$ that commute with $f$ under composition: $f\circ g = g\circ f$.  Put another way, the centralizer of $f$ is  the group of symmetries of $f$,  where ``symmetries" is meant the classical sense:  coordinate changes that leave the  dynamics of the system unchanged.  The centralizer of $f$ always contains the integer powers of $f$ and typically not more, at least conjecturally \cite{Smale1, Smale2}.  By contrast,  a diffeomorphism belonging to a smooth flow has large centralizer, containing a $1$-dimensional Lie group.

To date, the study of smooth  centralizers has mainly focused in two directions: showing that the typical map commutes only with its powers; and classifying the manifolds and/or dynamics that can support abelian centralizers of sufficiently high rank.   In this paper we aim at describing  the centralizers of \emph{all diffeomorphisms} in a small neighborhood of a given map, for specific classes of maps. This relates to one of the classical questions in perturbation theory: if a diffeomorphism belongs to a smooth flow, which perturbations also belong to a smooth flow? We answer this question fully for algebraic geodesic flows in negative curvature in conservative setting.  

More generally, we start with certain diffeomorphisms with {\em exceptionally}  large centralizer -- containing a $1$-dimensional Lie group -- and consider what happens when these diffeomorphisms are perturbed.  We find that for such perturbed systems, if the centralizer gets large enough, as measured by the  rank of its abelianization, 
 then in fact it must be exceptionally large.

To fix notation, let ${\mathcal G}$ be a group: our central example will be the space $\Diff^r(M)$ of $C^r$ diffeomorphisms of a closed manifold $M$ under composition.  For $f\in {\mathcal G}$, denote by  $\Z_{\mathcal G}(f)$  the centralizer of $f$ in $\mathcal G$:
\[\Z_{\mathcal G}(f) := \{ g\in {\mathcal G}\,:\, g f = f g \}.\]
We say that $f\in {\mathcal G}$ has {\em trivial centralizer in ${\mathcal G}$} if  the centralizer of $f$ consists of the iterates of $f$:
\[\Z_{\mathcal G}(f)  = <f>\,:= \{f^n\,:\, n\in \ZZ\} \cong \ZZ,
\]
and {\em virtually trivial centralizer}  if $\Z_{\mathcal G}(f)$ contains $<f>$ as a finite index subgroup.\footnote{If general, one says that a property holds {\em virtually} for a group $G$ if $G$ contains a finite index subgroup $H$ with that property.} 

For $f\in \Diff^r(M)$ and $M$ fixed, we will  use the shorthand notation   $\Z_r(f) : = \Z_{ \Diff^r(M)}(f)$.    If $f\in \Diff_{\mathrm{vol}}^r(M)$ is a volume-preserving element of $ \Diff^r(M)$, then we denote
   $\Z_{r,\mathrm{vol}}(f) : = \Z_{ \Diff_{\mathrm{vol}}^r(M)}(f)$.   It is not hard to see (see Lemma~\ref{lemma: g pr vol}) that if  $f\in \Diff_{\mathrm{vol}}^r(M)$ is ergodic with respect to volume, then $\Z_r(f) = \Z_{r,\mathrm{vol}}(f)$.

\subsection*{Discretized geodesic flows}

The context in which our main results are easiest to state and prove is that of perturbations of discretized geodesic flows in negative curvature.  Let $X$ be a closed, negatively curved, locally symmetric   manifold, for example, a compact hyperbolic manifold.    Denote by  $T^1X$  the unit tangent bundle of $X$ and by $\psi_t$ the geodesic flow $\psi_t:T^1X\to T^1X$ over $X$. The flow $\psi_t$ preserves the canonical Liouville probability measure on $T^1X$, which we denote by $\mathrm{vol}=\mathrm{vol}_{T^1X}$.  Any element $\psi_t$ of this flow commutes with any other element, and thus
\[\Z_{\infty}(\psi_t) \supseteq \{\psi_s : s\in \RR\} \cong \RR.
\]
Our first result concerns volume-preserving perturbations of the {\em discretized flow:} the time-$t_0$ map $\psi_{t_0}$, for a fixed $t_0\neq 0$.  Such a perturbation $f\in \Diff^\infty_{\mathrm{vol}}(T^1X)$ will not necessarily embed in a flow: for example, any perturbation with a hyperbolic periodic point cannot embed in a flow, and such perturbations are plentiful.  The upshot of this result is that if such a perturbation does {\em not} embed in a flow, then it has virtually trivial centralizer.

\begin{maintheorem}\label{intro: cent dic geod fl} Let $X$ be a closed, negatively curved, locally symmetric manifold, and  let $\psi_t\colon T^1X\to T^1X$ be the associated geodesic flow. Fix $t_0\neq 0$, and suppose $f\in \Diff^\infty_{\mathrm{vol}}(T^1X)$ is a $C^{1}-$small perturbation of $\psi_{t_0}$. Then either $f$ has virtually trivial centralizer in ${\Diff^\infty(T^1X)}$ or $f$  embeds into a smooth, volume preserving flow (and thus $\Z_{\infty}(f) \supseteq \RR$).  Moreover, in the latter case, the centralizer $\Z_{\infty}(f)$ is virtually $\RR$.
 \end{maintheorem}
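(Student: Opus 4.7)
The plan is to combine two rigidity phenomena: the disintegration dichotomy of Avila--Viana--Wilkinson \cite{AVW,AVW2} and the commuting-element rigidity of \cite{DX0}.

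First I would set up the partially hyperbolic structure. Since $f$ is $C^1$-close to the partially hyperbolic map $\psi_{t_0}$, it is itself partially hyperbolic with $1$-dimensional center and admits a continuous center foliation $\W^c_f$ close to the orbit foliation of $\psi_t$. By stable ergodicity of volume-preserving perturbations of $\psi_{t_0}$, $f$ is ergodic; together with Lemma~\ref{lemma: g pr vol} this gives $\Z_\infty(f)=\Z_{\infty,\mathrm{vol}}(f)$. Any $g\in\Z_\infty(f)$ preserves the canonical $f$-invariant splitting, hence preserves $\W^c_f$ and the disintegration of volume along $\W^c_f$.

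Next I would invoke the AVW dichotomy for $f$: the disintegration of volume along $\W^c_f$ is either equivalent to Lebesgue on each center leaf or purely atomic. In the Lebesgue case, AVW rigidity promotes $\W^c_f$ to a $C^\infty$ foliation on which $f$ acts as a smooth volume-preserving leafwise translation; integrating the associated unit vector field yields a smooth volume-preserving flow $\{\phi_s\}$ with $\phi_{t_0}=f$, so $\Z_\infty(f)\supseteq\RR$. To obtain the ``virtually $\RR$'' refinement, I would pass to the quotient of $T^1X$ by $\W^c_f$, where the induced action of $\Z_\infty(f)/\RR$ lies inside the centralizer of an Anosov diffeomorphism on (essentially) the base locally symmetric space; classical Anosov centralizer rigidity together with finiteness of the isometry group in negative curvature then forces this quotient to be finite.

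In the atomic case, the disintegration is supported on finitely many $\langle f\rangle$-orbits of atoms on each center leaf. Any $g\in\Z_\infty(f)$ permutes these orbits, inducing a homomorphism to a finite symmetric group; passing to its kernel we may assume $g$ preserves each $\langle f\rangle$-orbit of atoms. The key step is to show that such a $g$ must already lie in $\langle f\rangle$. For this I would employ the paper's flagged technique of constructing new partially hyperbolic elements in hyperbolic Weyl chambers: if $g$ were not an iterate of $f$, Pesin theory combined with leafwise conjugacy between $f$ and $\psi_{t_0}$ would produce, from suitable words in $f$ and $g$, a partially hyperbolic element commuting with $f$ but with transverse stable/unstable behavior distinct from that of $f$; then \cite{DX0} applied to $\langle f,g\rangle$ would force an algebraic structure on $f$ incompatible with the atomic disintegration, a contradiction.

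The main obstacle is the atomic case. Converting the measurable atomic data into genuine smooth commuting dynamics requires first upgrading the description of the atoms to a smooth object along stable and unstable leaves via partially hyperbolic Liv\v sic theory, and then applying nonstationary normal forms in the $1$-dimensional center to conclude that $g$ acts as an iterate of $f$ on the atom support. The Lebesgue case, by contrast, is comparatively routine once the smoothness of $\W^c_f$ has been extracted from AVW.
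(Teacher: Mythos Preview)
Your proposal has several genuine gaps, and in fact misses the central structural reduction the paper uses for both cases.

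\textbf{The quotient argument fails.} In the Lebesgue case you propose passing to the quotient $T^1X/\W^c_f$ and invoking Anosov centralizer rigidity there. But the center leaves here are mostly \emph{non-compact} and dense (they are perturbations of geodesic flow orbits), so this quotient is not a manifold and there is no ``base Anosov diffeomorphism'' to speak of. The paper instead reduces to the \emph{center-fixing} subgroup $\Z^c_r(f)$ by showing that $\Z^+_r(f)/\Z^c_r(f)$ injects into $\mathrm{Out}(\pi_1(X))$ (Propositions~\ref{prop:innerauto}--\ref{prop: outer auto}); for $\dim X\ge 3$ this group is finite by Paulin--Sela, and for surfaces an additional mapping-class-group argument is needed. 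This step is essential in both the atomic and Lebesgue cases and is absent from your outline.

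\textbf{The atomic case is much more elementary than you suggest.} The Weyl-chamber construction and \cite{DX0} are tools for the \emph{toral} theorems, where the center foliation is compact and there is a higher-rank linear structure on the base. They play no role in the geodesic-flow setting. Once one knows $g\in\Z^c_1(f)$ fixes each center leaf, the argument is straightforward: almost every center leaf is a copy of $\RR$ on which $f$ acts like a translation, and the atoms form $k$ interlaced $f$-orbits. An orientation-preserving $g$ acts on this totally ordered set by an order-preserving bijection commuting with $f$, hence $g^k=f^{k'(g)}$ for some integer $k'(g)$ constant a.e.\ by ergodicity; this gives an injection $\Z^c_1(f)\hookrightarrow\ZZ$. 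No Pesin theory, Liv\v sic theory, or normal forms are needed here.

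\textbf{In the Lebesgue case, AVW does not directly give a smooth foliation.} What \cite{AVW2} provides is a \emph{continuous} flow $\varphi_t$ tangent to $\W^c_f$, smooth along leaves, with $\varphi_1=f$. One then shows $\Z^c_r(f)=\{\varphi_t:t\in D\}$ for some subgroup $D\subset\RR$. If $D$ is discrete, the centralizer is again virtually trivial (a case you do not distinguish). If $D$ is dense, the upgrade to a $C^\infty$ flow uses the nonstationary normal forms of Theorem~\ref{thm: normal form} (this is where the locally symmetric / narrow-band hypothesis enters): the $C^r$ elements $\varphi_t$, $t\in D$, are brought to sub-resonance polynomial form along $\W^{s/u}_f$, and a limiting argument extends this to all $t$ (Proposition~\ref{prop: applic normal form}). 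Your sketch elides this and overstates what AVW yields.
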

The conclusions of Theorem~\ref{intro: cent dic geod fl} hold in considerably greater generality; see Theorem \ref{main: cent dic geod fl} and Remark~\ref{rem: manifolds that work}.  In particular, $X$ can be any closed Riemannian manifold with pointwise  $1/4$-pinched negative curvature (such as a surface), or more generally any closed, negatively curved  manifold whose geodesic flow satisfies   either a $2$-bunched or narrow band spectrum condition.

Thus for perturbations of these flows, up to finite index subgroups, the centralizer is either $\ZZ$ or $\RR$.  We do not know whether the same result holds for perturbations of discretized Anosov flows in general.
\begin{quest} Do the same conclusions of Theorem~\ref{intro: cent dic geod fl} hold for the volume-preserving perturbations of the time-$t_0$ map of an arbitrary volume-preserving Anosov flow?
\end{quest}
 
A partial answer to this question has recently been found in dimension 3 by Barthelm\'e and Gogolev \cite{BarGo}.

We remark that {\em virtually trivial} cannot be replaced by {\em trivial} in the conclusion of Theorem~\ref{intro: cent dic geod fl}.  Indeed for any $t_0\in\RR$, Burslem shows in \cite[Theorem 1.3]{B} that the  time-$t_0/2$ map $\psi_{t_0/2}$ can be $C^\infty$ approximated by
$f\in \Diff^\infty_{\mathrm{vol}}(T^1X)$ with trivial centralizer.  Then map $f^2$ has virtually trivial, but not trivial, centralizer and $C^\infty$-approximates $\psi_{t_0}$.

\subsection*{Toral automorphisms}

Linear automorphisms of tori present a rich family of algebraic systems with notable rigidity properties.  Any orientation-preserving automorphism of the torus $\TT^d = \RR^d/\ZZ^d$ lifts to a linear automorphism of $\RR^d$ preserving $\ZZ^d$, which can be represented by a matrix $C\in \SL(d,\ZZ)$.  For such a matrix $C$ we write $T_C\colon \TT^d\to \TT^d$ to denote the associated toral automorphism.  Since $C$ has determinant $1$, the map $T_C$  preserves the Lebesgue-Haar measure on $\TT^d$, which we again denote by $\mathrm{vol}(=\mathrm{vol}_{\TT^d})$.

In the {\em hyperbolic} case where $C$  has no eigenvalues on the unit circle, the automorphism $T_C$ has a strong topological rigidity property known as structural stability: any perturbation of $T_C$ in $\Diff^1(\TT^d)$ is topologically conjugate to $T_C$.  The centralizer of a perturbation $f\in \Diff^1(\TT^d)$ {\em within $\Ho^+(\TT^d)$} is thus isomorphic to the centralizer of $T_C$ in $\Ho^+(\TT^d)$.  It is well-known (see  Lemma \ref{lemma: rank cent}) that when $C$ is irreducible ---  meaning that its characteristic polynomial is irreducible over $\ZZ$
 -- both $\Z_{\Ho^+(\TT^d)}(T_C)$ and $\Z_{\SL(d,\ZZ)}(C)$ are virtually finitely generated free abelian groups whose rank is determined by the number of distinct eigenvalues of $C$.   Of course for a perturbation $f\in \Diff^r(\TT^d)$ of $T_C$, the centralizer $\Z_{r}(f)$ can be considerably smaller than $\Z_{\Ho^+(\TT^d)}(f)$: in fact, Palis and Yoccoz showed that,   among the smooth Anosov diffeomorphisms, there exists an open and dense subset of $f\in \Diff^\infty(\TT^d)$ such that    the centralizer $\Z_{\infty}(f)$ is trivial \cite{PY1, PY2}.

From a dynamical point of view, perturbations of the non-hyperbolic automorphisms are considerably  more interesting.  When $C$ has no eigenvalues that are roots of unity, then $T_C$ is mixing with respect to $\mathrm{vol}$, and in several cases of interest, {\em stably mixing:} any sufficiently smooth, volume-preserving perturbation of $T_C$ is mixing if $d\leq 5$ \cite{RH05}.

We consider a case in which both structural stability and ergodicity are violated in a fairly dramatic fashion,  where the generating matrix $C\in \SL(d,\ZZ)$ has $1$ as an eigenvalue, with multiplicity $1$.\footnote{The case where $C\in \SL(d,\ZZ)$ has exactly one eigenvalue of modulus $1$ can be treated by similar methods.}  By conjugating by a toral automorphism, we may assume without loss of generality that  $C=\begin{pmatrix}
A&\\
& 1
\end{pmatrix}. 
$  For such $A$, the map $T_C= T_A\times \id_{\TT}$ admits non-conjugate {\em affine} perturbations of the form
$f=T_A\times R_\theta$,  where $R_\theta(z) = z+\theta$  is a rotation by $\theta\in\TT$ in last factor in $\TT^d = \TT^{d-1}\times \TT$, and so $T_C$ is not structurally stable, even within the restricted class of affine transformations.  By the same token, these affine perturbations also have large centralizer, commuting with any affine map of the form
$T_B\times R_\theta$, with $B\in \Z_{\SL(d-1,\ZZ)}(A)$, and $\theta\in \TT$.

In the case that $A$ is irreducible, the Dirichlet unit theorem gives that the group $\Z_{\SL(d-1,\ZZ)}(A)$ is virtually $\ZZ^{\ell_0}$, where $\ell_0 = \ell_0(A) :=r+c-1$,  $r$ is the number of real eigenvalues
of $A$ and $c$ is the number of pairs of complex eigenvalues of $A$  (see Lemma~\ref{lemma: rank cent}).   We obtain the following classification result for the centralizer of perturbations of $T_C$.

\begin{maintheorem}\label{intro: dich} Let $f\in \Diff^\infty_{\mathrm{vol}}(\TT^d)$ be a $C^1-$small, ergodic perturbation of $T_A\times \id_{\TT}$, where $A\in \SL(d-1,\ZZ)$ is hyperbolic and irreducible. 
Let $\ell_0 =\ell_0(A)$.  Then one of the following holds: 
\begin{enumerate}
\item $\Z_{\infty}(f)$ is virtually $\ZZ^\ell$ for some $\ell\in [1, \ell_0]$.   Furthermore, $\ell<\ell_0$ if $\ell_0>1$.
\item $\Z_{\infty}(f)$ is virtually $\ZZ \times \TT$. 
\item  $\Z_{\infty}(f)$ is virtually $\ZZ^{\ell_0}\times \TT$,  $\ell_0>1$ and $f$ is $C^\infty$ conjugate to $T_A\times R_\theta$,  $\theta\notin \QQ/\ZZ$.
\end{enumerate}
\end{maintheorem}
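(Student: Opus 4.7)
Since $f$ is $C^1$-close to the partially hyperbolic diffeomorphism $T_A\times \id_{\TT}$ with $1$-dimensional center, $f$ itself is partially hyperbolic with a $1$-dimensional center foliation $W^c_f$, leafwise conjugate by a H\"older homeomorphism $h$ to the linear foliation whose leaves are the translates of $\{\mathbf 0\}\times\TT$. After passing to a finite-index subgroup, any $g\in \Z_\infty(f)$ preserves the splitting $E^s\oplus E^c\oplus E^u$ of $f$, and by Lemma~\ref{lemma: g pr vol} preserves $\mathrm{vol}$. Such a $g$ permutes center leaves of $f$, and $h$ conjugates the induced action on the leaf space $\TT^{d-1}$ to a toral automorphism commuting with $A$. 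This produces a homomorphism $\pi\colon \Z_\infty(f)\to \Z_{\SL(d-1,\ZZ)}(A)$ whose target is virtually $\ZZ^{\ell_0}$ by Dirichlet. Now apply the rigidity dichotomy of \cite{AVW,AVW2}: the disintegration $\{\mu^c_x\}$ of $\mathrm{vol}$ along $W^c_f$ is either (A) purely atomic with uniformly bounded multiplicity, or (L) equivalent to leafwise Lebesgue. Both alternatives are $\Z_\infty(f)$-invariant.

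\textbf{Case (A), atomic disintegration.} The set $\Lambda$ of atoms is closed, $f$-invariant, and meets every center leaf in a fixed finite number of points, so it is a finite cover of the leaf space $\TT^{d-1}$. Any $g$ in the identity component of $\Z_\infty(f)$ fixes $\Lambda$ pointwise (after a further finite-index reduction) and is isotopic to the identity; hyperbolicity of $f$ in the $E^s$ and $E^u$ directions then forces $g=\id$, so $\Z_\infty(f)$ is discrete and the kernel of $\pi$ is finite. Consequently $\Z_\infty(f)$ is virtually $\ZZ^\ell$ for some $\ell\in[1,\ell_0]$. If $\ell=\ell_0>1$, then $\Z_\infty(f)$ supports a rank-$\ell_0$ abelian action by commuting partially hyperbolic diffeomorphisms whose linear parts occupy all hyperbolic Weyl chambers of $\Z_{\SL(d-1,\ZZ)}(A)$; the higher-rank smooth rigidity at the heart of \cite{DX0} would then smoothly conjugate the whole action to the affine model, whose center disintegration is Lebesgue, contradicting (A). Hence $\ell<\ell_0$ when $\ell_0>1$, giving alternative (1).

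\textbf{Case (L), Lebesgue disintegration.} The plan is to promote the measurable Lebesgue structure to a genuinely smooth circle action. Using partially hyperbolic Liv\v sic theory, nonstationary normal forms along $W^c_f$, and the thermodynamic formalism for circle extensions of Anosov diffeomorphisms, one shows that the stable and unstable holonomies along center leaves are smoothly conjugate to rigid translations; assembling these into a coherent normal coordinate produces a smooth, free, volume-preserving $\TT$-action $\{\phi_s\}_{s\in\TT}$ on $\TT^d$ whose orbits are the center leaves of $f$ and which commutes with $f$. Thus $\Z_\infty(f)\supseteq \langle f\rangle\times \TT$, and $\pi(\Z_\infty(f))$ contains the powers of $A$. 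If $\pi(\Z_\infty(f))$ is virtually $\ZZ$, then $\Z_\infty(f)$ is virtually $\ZZ\times \TT$, alternative (2). Otherwise there is a partially hyperbolic $g\in \Z_\infty(f)$ with $\pi(g)$ independent of $A$; the novel Pesin-theoretic construction of new partially hyperbolic elements in hyperbolic Weyl chambers, via leafwise conjugacy as announced in the abstract, then promotes this to a full rank-$\ell_0$ commuting family. Higher-rank smooth rigidity in the presence of the central $\TT$-action yields a $C^\infty$ conjugacy of $f$ with an affine map $T_A\times R_\theta$; ergodicity of $f$ forces $\theta\notin \QQ/\ZZ$, giving alternative (3).

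\textbf{Main obstacle.} The technical heart of the argument is Case (L): extracting a smooth $\TT$-action from a purely measurable Lebesgue disintegration requires the full strength of the four tools listed in the abstract, and the Weyl-chamber production of new partially hyperbolic elements is what rules out intermediate ranks $\ell\in(1,\ell_0)$ in the Lebesgue case, collapsing that alternative to either (2) or (3). The atomic-case strict inequality $\ell<\ell_0$ is conceptually clean but depends substantively on the higher-rank smooth rigidity machinery of \cite{DX0}.
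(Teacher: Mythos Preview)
Your outline has the right ingredients but misassembles them, and there are two substantive gaps.

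\textbf{Gap 1: Lebesgue disintegration does not by itself produce a smooth $\TT$-action.} In Case (L) you assert that ``assembling these into a coherent normal coordinate produces a smooth, free, volume-preserving $\TT$-action $\{\phi_s\}$'' and conclude $\Z_\infty(f)\supseteq \langle f\rangle\times\TT$. But the Avila--Viana--Wilkinson input only yields a \emph{continuous} flow $\varphi_t$ that is $C^\infty$ along center leaves; global smoothness is not automatic. The paper must analyze the subgroup $D=\{t:\varphi_t\in\Diff^r\}$: only when $D$ is dense can one invoke the nonstationary normal forms (Proposition~\ref{prop: applic normal form}) to upgrade $\varphi_t$ to a $C^\infty$ flow. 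When $D$ is discrete, the center-fixing subgroup $\Z^c$ is finite and one lands back in alternative (1), even though the disintegration is Lebesgue. So your dichotomy ``(A)$\Rightarrow$(1), (L)$\Rightarrow$(2) or (3)'' is false; alternative (1) can and does occur in the Lebesgue case.

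\textbf{Gap 2: partial hyperbolicity of centralizer elements is not free, and the tools are deployed in the wrong case.} In Case (A) you write that a rank-$\ell_0$ centralizer ``supports a rank-$\ell_0$ abelian action by commuting partially hyperbolic diffeomorphisms whose linear parts occupy all hyperbolic Weyl chambers,'' and then invoke \cite{DX0}. But an element $g\in\Z_\infty(f)$ is a priori just a diffeomorphism preserving the dominated splitting of $f$; it need not be partially hyperbolic at all, and \cite{DX0} requires partially hyperbolic elements in every chamber as a \emph{hypothesis}. Establishing this is precisely the hard part of the paper (Propositions~\ref{prop: key pr} and~\ref{lemma: exst PH}), and it uses the Pesin-theoretic argument together with sharp H\"older control on the leaf conjugacy. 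Moreover, the paper does not then cite \cite{DX0} to reach a contradiction; instead it shows that once there are partially hyperbolic elements in every chamber, $E^s\oplus E^u$ is jointly integrable, and then uses cocycle rigidity plus thermodynamic formalism (Propositions~\ref{prop: coh rig ph act}--\ref{prop: uque mx etrpy msur}) to prove that $\mathrm{vol}$ is the unique measure of maximal entropy, forcing Lebesgue disintegration. In other words, the Weyl-chamber construction and the thermodynamic argument live in the proof of ``full rank $\Rightarrow$ Lebesgue'' (the contrapositive giving $\ell<\ell_0$ in the non-Lebesgue case), not in the Lebesgue case where you place them. Finally, your claim that the atom set $\Lambda$ is closed is unjustified; it is only a full-measure set, and the paper's argument that $\Z^c$ is finite in the non-Lebesgue case goes via rotation numbers (Lemmas~\ref{lemma: Zc circle grp} and~\ref{lemma: Zc fin not Leb}), not via the topological picture you sketch.
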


\begin{remark} Theorem~\ref{intro: dich} has a stronger formulation for perturbations of   isometric extensions of an irreducible toral automorphism, stated in  Theorem~\ref{main: dich} in the next section.
For similar problems on nilmanifolds, cf. our upcoming paper \cite{DWX}. 
\end{remark}

\begin{remark}Consider the simplest non-ergodic example  of $f=T_A\times\id$ itself, for which  $\Z_{\infty}(f)$ is virtually $\ZZ^{\ell_0}\times \Diff^\infty(\TT)$. This example illustrates the \emph{a priori} possibility that the centralizer might not be virtually abelian,  and thus part of  the work in  Theorem~\ref{intro: dich}  is to establish that for an {\em ergodic} perturbation, the centralizer is virtually abelian.  In particular, this shows that   the ergodicity assumption in  Theorem \ref{intro: dich} is necessary.  On the other hand, the ergodicity assumption is satisfied generically:   it is proved by Burns and Wilkinson in \cite{BW} and  F. Rodriguez Hertz, M. A. Rodriguez Hertz and Ures in \cite{RR} that ergodicity (indeed, mixing) holds open and densely among  the  \emph{partially hyperbolic diffeomorphisms with $1$-dimensional center} in $\Diff^\infty_{\mathrm{vol}}(\TT^d)$ (for precise definitions and more details, see Section \ref{sec: prl PH}). In particular, for any neighborhood $U$ of $T_C$, there is a $C^1-$open set  $U_0\subset U$ such that every $f\in U_0$ is ergodic.

  We conjecture that for any   volume preserving (possibly non-ergodic) $C^1$-small perturbation $f$ of $T_A\times \id$, the group $\Z_{\infty}(f)$  is either virtually trivial or contains a nontrivial Lie group.
\end{remark}

\begin{remark}
 We expect that the conclusions in Theorem \ref{intro: dich} extend to the case when $T_A$ is  \emph{reducible} hyperbolic toral automorphism as well. Moreover, we conjecture that for general hyperbolic $T_A$, the conclusion (1) should read: for every  $g\in \Z_{\infty}(f)$, and on any $<f,g>$-invariant subtorus of $\TT^d$, the action of $<f, g>$ is virtually a $\ZZ$-action.
 \end{remark}

Both  Theorems~\ref{intro: cent dic geod fl} and \ref{intro: dich} are consequences of more general results, which we state in Section~\ref{s:statements}.

\subsection*{The secret sauce}

While it does not appear in the statements, there is a hidden concept behind Theorems~\ref{intro: cent dic geod fl} and \ref{intro: dich}: pathological foliations.  Both the discretized geodesic flows and the toral automorphisms we discuss above preserve smooth, $1$-dimensional foliations, in the first case, the foliation by orbits of the flow, and in the second, the foliation by circles tangent to the last factor in $\TT^{d-1}\times \TT$.  

Transverse to the leaves of these foliations, the dynamics is hyperbolic, and so the theory of normally hyperbolic foliations developed in \cite{HPS} applies.  In particular, the perturbations of these examples considered in Theorems~\ref{intro: cent dic geod fl} and \ref{intro: dich} also preserve $1$-dimensional foliations with smooth leaves, homeomorphic as foliations to the unperturbed smooth foliations  (see Section \ref{sec: prl PH} for a detailed discussion).  The measure-theoretic properties of these {\em center} foliations are well-studied and play a key role in our proofs.

By a standard procedure, the volume $\mathrm{vol}$ can be locally disintegrated along the leaves of a foliation $\F$ to obtain  in each foliation chart a measurable family of measures, supported on the local leaves (or {\em plaques}) $\F_{loc}$ of the foliation.  Each  plaque $\F_{loc}(x)$ of a foliation, being a  $C^1$ embedded disk, also carries a natural measure class $\mathrm{vol}_{\F_{loc}(x)}$ associated to  leafwise volume, or length in the case of $1$-dimensional leaves. If the foliation is $C^1$ (i.e. has $C^1$ foliation charts), then the disintegration of $\mathrm{vol}$ and leafwise volume are equivalent, meaning they have the same sets of measure zero.  

When, as is typically the case in our perturbed examples, the foliation is not $C^1$, anything goes, at least {\em a priori}. The two extremal cases are:
\begin{itemize}
\item {\em Lebesgue disintegration,} where the disintegrated and leafwise volume are equivalent.  A foliation $\F$ of $M$ has Lebesgue disintegration if for every set $Z\subset M$:
\[\mathrm{vol}(Z) = 0 \iff \mathrm{vol}_{\F_{loc}(x)}(Z)=0,\;\hbox{ for } \mathrm{vol}\hbox{-a.e. }x\in M.  \]
\item {\em atomic disintegration,} where the disintegrated volume is atomic.    A foliation $\F$ of $M$ has atomic disintegration if there exists a set $Y\subset M$ and $k\geq 1$ such that
\[\mathrm{vol}(M\setminus Y) = 0\; \hbox{ and } \#\{Y\cap \F_{loc}(x)\} \leq k,\;\hbox{ for } \mathrm{vol}\hbox{-a.e. }x\in M.  \]
\end{itemize}
If a foliation fails to have Lebesgue disintegration with respect to volume, we call it {\em pathological}, a concept first considered by Shub and Wilkinson in \cite{SW}. This concept plays an important role in our paper.  In brief, pathological disintegration is associated with small centralizer and Lebesgue disintegration with large centralizer (at least in the group of homeomorphisms).

\subsection*{Higher rank abelian actions}  
 Another key role in our proofs is played by {\it higher rank abelian} group actions with some hyperbolicity.   A smooth {\em Anosov action} is a
homomorphism $\alpha\colon G\to  \Diff^\infty(M)$, where $G$ is a finitely generated abelian group, and $\alpha(a)$ is an Anosov diffeomorphism,  for some $a\in G$ (see Section ~\ref{sec: PH} for definitions of Anosov and partially hyperbolic diffeomorphisms). For example, if $f$ is Anosov, and the smooth centralizer $\Z_{\infty}(f)$
is finitely generated and abelian, then the action of $\Z_{\infty}(f)$ on $M$ is Anosov.  This is the case, for example, when $M$ is the  torus, and $f$ is an irreducible hyperbolic automorphism.

An Anosov action has \emph{higher rank}  if it contains an Anosov $\mathbb Z^2$ subaction that does not have a topological factor (possibly on a different manifold) which is virtually a $\mathbb Z$-action.   Anosov higher-rank actions often display a range of rigidity properties (cf. \cite{KNT}, \cite{KS1}), most strikingly \emph{global rigidity}. Katok  and Spatzier conjectured that any higher rank Anosov $\mathbb Z^k$ action on a compact manifold is \emph{essentially algebraic}, i.e. smoothly conjugate to an affine action on a nilmanifold, up to a finite cover of $M$ and up to a finite index subgroup in $\mathbb Z^k$. (For more on the Katok-Spatzier conjecture, see for example  \cite{DX1} and references therein). The conjecture was proved for Anosov actions {\it on nilmanifolds} by F. Rodriguez Hertz and Wang \cite{HW} (for the statement on $\TT^d$, see Theorem \ref{main: gl rig anosov} in Section \ref{sec: HR act}).

In particular,  if  $T_A$ is an {irreducible, hyperbolic} automorphism of the torus $\mathbb T^d$, where the centralizer  of $T_A$ is  virtually $\ZZ^\ell$, for some $\ell> 1$, then the result of Rodriguez Hertz and Wang  implies the following dichotomy for the centralizer of \emph{every} sufficiently small perturbation $f$ of $T_A$, when $\ell\geq 2$ 
(Corollary \ref{coro: gl rig toral} in Section \ref{sec: HR act}):
{\em either  $\Z_{\infty}(f)$ is virtually trivial, or
$\Z_{\infty}(f)$ is essentially algebraic, and its rank is the same as that of $\Z_{\infty}(T_A)$.}
This has been the only existing situation where the smooth centralizer is completely locally classified. 
Our results in Theorems \ref{intro: cent dic geod fl}  and  \ref{intro: dich} give classification of the centralizer for \emph{ergodic, conservative} perturbations of certain \emph{partially hyperbolic} systems.


One of the main achievements in \cite{HW} is showing existence of  {\em many} independent hyperbolic elements in an action given a {\em single} hyperbolic element. This is also one of the main obstacles to proving the Katok-Spatzier conjecture in full generality. 
In \cite{HW} it is shown that any higher-rank  Anosov action on a nilmanifold has  Anosov elements in {\em every} Weyl chamber; together with \cite{FKS}, this proves the Katok--Spatzier conjecture on nilmanifolds, and gives the dichotomy of the centralizer as mentioned above.   The proof in \cite{HW} makes use of the Franks--Manning conjugacy on nilmanifolds and fine analytic properties of the dynamics of Anosov diffeomorphisms, in particular exponential rates of mixing.


The actions considered here (as in the setting of Theorem \ref{intro: dich}), have a hyperbolic part and a $1$-dimensional nonhyperbolic, central part.  The hyperbolic part is, on a topological level, a maximal Anosov action -- considerably simpler than the actions considered in \cite{FKS, HW}.  On the other hand, the methods in these works are not available to us:  the central part of our actions obstruct conjugacy to a linear system, and the dynamics of the systems are potentially not even mixing.  What is available instead is a {\em leaf conjugacy} to a linear system, that is, a topological conjugacy modulo the center dynamics.  Starting from the leaf conjugacy,
 and using maximality of the action, we build up the partial hyperbolicity of other elements in the action.  
 Existence of many partially hyperbolic elements in the large rank centralizer   in the conservative setting forces Lebesgue disintegration of the volume in the center direction.

Our arguments are geometric rather than analytic in nature and employ a range of techniques, including the theory of normally hyperbolic foliations, rigidity of $1$-dimensional solvable group actions, Weyl chamber analysis, Pesin theory, normal forms, and Liv\v sic theory.   One important idea, also employed in \cite{BMW}, is to use Pesin theory
and uniform estimates to upgrade a uniformly expanded topological foliation $\W^\#$ to a foliation with smooth leaves.  To carry out such an argument requires precise control over the H\"older exponent of leaf conjugacies, something  established relatively recently  in \cite{PSW}. 



\subsection{Acknowledgements}   We thank Sylvain Crovisier, Benson Farb, Federico Rodriguez Hertz,  Curtis McMullen, Yakov Pesin, Rafael Potrie and Zhiren Wang for useful discussions and Andy Hammerlindl and Dennis Sullivan for corrections to an earlier manuscript.   We are grateful to Boris Kalinin for explaining to us the details of his recent results in normal form theory, which are used in this paper. Damjanovi\'c was supported by Swedish Research   Council   grant VR2015-04644.  Wilkinson was supported by NSF Grant DMS$-1402852$.   This research was partially conducted during the period Xu served as a Marie-Curie research fellow in Imperial College London.

\subsection{Structure of this paper}

In Section~\ref{s:statements} we state our main results in the more general context of partially hyperbolic diffeomorphisms with $1$ dimensional center foliations and discuss prior results.  Section~\ref{section: preliminaries} contains background information and some new techniques used in the proofs of our main results.  In Section~\ref{section: geodesic proofs}, we prove the main results about discretized geodesic flows (Theorems \ref{main: cent dic geod fl} and \ref{main: geod fl}). 
Theorem~\ref{main: thm pr}  provides the disintegration dichotomy which is the driver behind one of our main results, Theorem~\ref{main: dich}.  The proof of  Theorem~\ref{main: thm pr} occupies Section~\ref{sec: proof Cartan}.  Finally, in Section~\ref{sec: pf Thm dich}, we prove Theorem~\ref{main: dich}.  The Appendix contains the statement of a result from another work that we use in this paper.

\section{Statements of the main results and discussion}\label{s:statements}
\subsection{The general formulations}In this section we state the following more general versions of the rigidity results for centralizers, which immediately imply Theorems \ref{intro: cent dic geod fl} and  \ref{intro: dich}.
 
\begin{maintheorem}\label{main: cent dic geod fl} Let $X$ be a closed, negatively curved manifold, and let $\psi_t\colon T^1X\to T^1X$ be the geodesic flow.  Suppose that 
 $\psi_1$ satisfies either the $2$-bunched or narrow band spectrum condition given in Section~\ref{sec: normal form}, Definition~\ref{def:narrow band and bunched partially hyperbolic}.

 Then there exists $r_0 = r_0(X)\geq 1$ such that for all $r>r_0$, and any $t_0 \neq 0$, if  $f\in \Diff^\infty_{\mathrm{vol}}(T^1X)$ is sufficiently $C^{1}$ close to $\psi_{t_0}$, then   either $\Z_{r}(f)=\Z_{r,{\mathrm{vol}}}(f)$ is virtually trivial, or $\Z_{s}(f)=\Z_{s,{\mathrm{vol}}}(f)$ is virtually $\RR$ for any  $s\geq 1$.   In the latter case $f$ embeds into a $C^\infty$, volume preserving flow. 
 \end{maintheorem}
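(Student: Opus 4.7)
The plan is to combine the AVW disintegration dichotomy for the center foliation with commuting-diffeomorphism rigidity. Fix a $C^1$-small, volume-preserving perturbation $f$ of $\psi_{t_0}$. By the normally hyperbolic theory of Hirsch--Pugh--Shub, $f$ is partially hyperbolic with a one-dimensional center foliation $\W^c_f$ that is leaf-conjugate to the orbit foliation of $\psi_t$, and any element of $\Z_r(f)$ automatically preserves $\W^c_f$ together with the strong stable and unstable foliations of $f$. Ergodicity of $f$ (open and dense by Burns--Wilkinson and Rodriguez Hertz--Rodriguez Hertz--Ures, and forced here by the hyperbolic behavior) identifies $\Z_r(f)$ with $\Z_{r,\mathrm{vol}}(f)$. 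Under the $2$-bunching or narrow band spectrum hypothesis on $\psi_1$, the AVW dichotomy applies to $f$: the disintegration of $\mathrm{vol}$ along $\W^c_f$ is either equivalent to leafwise Lebesgue, or is purely atomic with a uniform finite number of atoms per center leaf.

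In the Lebesgue case the goal is to upgrade this measure-theoretic information to geometric regularity of $\W^c_f$. Using the nonstationary normal form theory of Kalinin together with the H\"older control of leaf conjugacies from Pugh--Shub--Wilkinson, the bunching/narrow band hypothesis forces $\W^c_f$ to be a $C^r$ foliation, yielding a $C^r$ unit tangent vector field $X^c$ invariant under $Df$ up to sign. Volume preservation then promotes this line field invariance to genuine invariance of $X^c$, so $X^c$ generates a $C^r$ volume-preserving flow $\phi_t$ with $\phi_{t_0}=f$. Any $g\in\Z_r(f)$ preserves $X^c$ up to sign and preserves the strong foliations, so its action on each center leaf is a translation along $\phi_t$; a finite-index subgroup thus lies in $\{\phi_s\}_{s\in\RR}$, giving $\Z_r(f)$ virtually $\RR$.

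In the atomic case let $Y\subset T^1X$ be the full-measure union of atoms, and let $k$ be the uniform number of atoms per center leaf. Any $g\in\Z_r(f)$ permutes the atoms on each leaf, so after passing to the bounded power $g^{k!}$ we may assume $g$ fixes each atom. The restriction of $f$ to $Y$ is a finite-to-one extension of the transverse hyperbolic dynamics of $\psi_{t_0}$, and $g|_Y$ is a homeomorphism commuting with $f|_Y$. Applying the partially hyperbolic Liv\v sic theory and the commuting-element rigidity of \cite{DX0}, together with the well-known triviality of the topological centralizer of the geodesic flow of a negatively curved locally symmetric manifold acting transverse to its orbits, one concludes that $g^{k!}$ coincides with some iterate $f^n$. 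Hence $\Z_r(f)/\langle f\rangle$ is finite, giving virtually trivial centralizer; the alternative $\Z_r(f)$ virtually $\RR$ versus virtually trivial follows.

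The main obstacle is the regularity upgrade in the Lebesgue case: passing from the purely measure-theoretic statement ``disintegration is leafwise Lebesgue'' to the existence of a $C^r$ invariant center vector field requires the full Pesin-theoretic and nonstationary normal form machinery, with sharp control on the H\"older exponents of holonomies --- this is exactly where the $2$-bunching or narrow band spectrum hypothesis enters, and the dependence $r_0=r_0(X)$ reflects the required bunching quantification. A secondary issue is ruling out additional finite symmetries coming from a possible orbifold cover structure on the center leaf space, which forces the conclusions to be ``virtually trivial'' and ``virtually $\RR$'' rather than outright trivial or $\RR$.
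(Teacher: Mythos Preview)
Your overall architecture (AVW dichotomy, then treat the Lebesgue and atomic cases separately) matches the paper, but the execution in both branches has genuine gaps.

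\textbf{Lebesgue case.} You claim that Lebesgue disintegration plus normal forms forces $\W^c_f$ to be a $C^r$ foliation, hence produces a $C^r$ flow containing $f$. This is not how the argument runs, and the claim itself is not what is proved. What AVW actually gives (Theorem~F in \cite{AVW2}) is a \emph{continuous} flow $\varphi_t$ tangent to $\W^c_f$, smooth along center leaves, with $\varphi_1=f$. The center foliation is not shown to be $C^r$. The paper then identifies the center-fixing part $\Z^c_r(f)$ with $\{\varphi_t:t\in D\}$ where $D=\{t:\varphi_t\in\Diff^r\}$, and splits into two subcases: if $D$ is discrete, then $\langle f\rangle$ has finite index in $\Z^c_r(f)$ and the centralizer is virtually trivial; if $D$ is dense, the normal-form machinery (Proposition~\ref{prop: applic normal form}) is used to propagate the $C^r$-ness of $\varphi_t$ for $t\in D$ to all $t$, and only then does one get a $C^\infty$ flow. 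So Lebesgue disintegration alone does \emph{not} force virtually $\RR$; it can still yield virtually trivial centralizer. Your proposal misses this split entirely.

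\textbf{Atomic case.} The paper's argument is more elementary and more direct than what you sketch. The center leaves are leaf-conjugate to geodesic flow orbits, hence all but countably many are noncompact lines, and the atoms on each such leaf form a countable, totally ordered, $f$-shift-invariant set organized into $k$ $f$-orbits. Any center-fixing, orientation-preserving $g$ acts on this set by an order-preserving bijection commuting with the shift, hence $g^k=f^{k'}$ on the atoms for some integer $k'$; ergodicity makes $k'$ constant, and density extends the identity to all of $T^1X$. No Liv\v sic theory or \cite{DX0} is needed here. Your invocation of ``triviality of the topological centralizer of the geodesic flow transverse to its orbits'' is roughly the content of the reduction from $\Z_r(f)$ to $\Z^c_r(f)$, but this is done in the paper via an outer-automorphism argument (Propositions~\ref{prop:innerauto}--\ref{prop: outer auto}), separately for $\dim X\geq 3$ (where $\mathrm{Out}(\pi_1 X)$ is finite) and for surfaces (via the mapping class group). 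You flag this as a ``secondary issue,'' but it is a primary structural step that must precede both branches of the dichotomy.
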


\begin{remark}\label{rem: manifolds that work} The hypotheses of Theorem~\ref{main: cent dic geod fl} are satisfied by a large class of negatively curved manifolds $X$.  In particular:
\begin{enumerate}
\item The $2$-bunched  condition is satisfied if $X$ has {\em pointwise (strictly) $1/4$-pinched curvature:}  the minimum and maximum sectional curvatures $K_{\hbox{\tiny{$\mathrm{min}$}}}(x) \leq K_{\hbox{\tiny{$\mathrm{max}$}}}(x) < 0$ at $x\in X$ satisfy 
\begin{equation}\label{eqn: quarter pinched} \zeta(X):=\inf_{x\in X} \frac{K_{\hbox{\tiny{$\mathrm{max}$}}}(x)}{K_{\hbox{\tiny{$\mathrm{min}$}}}(x)}  > 1/4.\end{equation} (See \cite[Theorem 3.2.17]{Kling} and the discussion following Definition~\ref{def:narrow band and bunched partially hyperbolic}).  This holds for example, if $X$ is a surface.
In this case $r_0(X) = \sqrt{\zeta(X)^{-1}} \in [1,2)$.
\item The  narrow band spectrum condition is satisfied by all locally symmetric $X$. If $X$ is a real hyperbolic manifold, then $r_0(X)=1$, and  if $X$ is locally symmetric but not real hyperbolic, then $r_0(X)=2$ (Lemma~\ref{lem: Loc Symm Narrow Band}).
\end{enumerate}
\end{remark}

Let $g\colon \TT^{d-1}\to \TT^{d-1}$ be a diffeomorphism.  An {\em isometric (circle) extension} of $g$ is a map $f = g_\rho\colon \TT^{d-1}\times \TT \to\TT^{d-1}\times \TT $ of the form
\[g_\rho(x,y)=(g(x),  y + \rho(x)),\]
where $\rho\colon \TT^{d-1}\to \TT$ is a  continuous map   taking values in the circle $\TT=\TT^1$.   If $\rho$ is homotopic   to a constant then it can be lifted to (and hence viewed as) a map taking values in $\RR$.   The map $g_\rho$ is a $C^r$ diffeomorphism if and only if $g$ and $\rho$ are $C^r$ and preserves volume if and only if $g$ does. 

The simplest examples of isometric extensions are products $g\times R_\theta$, where $R_\theta(y) := y+\theta$ is a rotation.  In this case $\rho\equiv\theta$ is a constant function.   It is easy to check that there exists $\beta\colon \TT^{d-1}\to \TT$ such that $\id_\beta\circ g_\rho =  \left(g\times R_\theta\right)\circ \id_\beta$ if and only if $\rho$ satisfies the cohomological equation
\[\rho = -\beta\circ g + \beta +\theta.
\]
In this case we say that $\rho$ is cohomologous to a constant $\theta$.   If $g\in \Diff^2_{\mathrm{vol}}(\TT^{d-1})$ is Anosov, then  $g_\rho$ is ergodic if and only if  $\rho$ is not cohomologous to a rational constant,  and  $g_\rho$ is stably ergodic if and only if $\rho$ is not cohomologous to a constant  \cite{BW99}.
 
If $T_{A}$ is an irreducible hyperbolic automorphism and $(T_{A})_\rho$ is ergodic, then for all $s\geq 1$, the centralizer of $(T_{A})_\rho$ in $\Diff^s(\TT^d)$ contains    $\ZZ\times \TT$. In addition, it contains $\ZZ^{\ell_0(A)}\times \TT$ if $\rho$ is $C^\infty$ cohomologous to a constant, where $\ell_0(A)>0$ is defined in the introduction. 
 \footnote{  In fact for $s$ large enough, the centralizer is either   virtually $\ZZ\times \TT$ or virtually $\ZZ^{\ell_0(A)}\times \TT$, in the latter case $\rho$ is $C^\infty$ cohomologous to a constant.  This follows from Theorem~\ref{main: dich}  but also has a more elementary proof using cocycle rigidity of the centralizer of $T_{A}$.} 
  Our first result addresses perturbations of these maps.

\begin{maintheorem}\label{main: dich} Suppose $A\in \SL(d-1,\ZZ)$ is an irreducible hyperbolic matrix.  Let $\ell_0:= \ell_0(A)$.  Then there exists $r_0\geq 1$ such that for any $r>r_0$ and any $C^\infty$ function $\rho_0\colon\TT^{d-1} \to \RR$,   if $f\in \Diff^\infty_{\mathrm{vol}}(\TT^d)$ is a $C^1-$small, ergodic perturbation of the isometric extension  $f_0:= \left(T_{A}\right)_{\rho_0}$, then one of the following holds for the centralizer  $\Z_{s}(f)=\Z_{s,\mathrm{vol}}(f)$:
\begin{enumerate}
\item (Small centralizer) $\Z_{s}(f)$ is virtually $\ZZ^\ell$ for some $\ell\in [1, \ell_0]$ and any $s\geq r$.   Furthermore, $\ell<\ell_0$ if $\ell_0>1$.
\item (Isometric extension) $\Z_{s}(f)$ is virtually $\ZZ\times \TT$ for all $s\geq r$. In this case $f$ is smoothly conjugate to a smooth isometric extension  $g_\rho$ of an Anosov diffeomorphism $g\in \Diff^\infty_{\mathrm{vol}}(\TT^{d-1})$. Moreover, either $g$ is not $C^\infty$ conjugate to $A$, or $\rho_0$ is not $C^\infty$ cohomologous to a constant.
\item (Rigidity) $\Z_{s}(f)$ is virtually $\ZZ^{\ell_0}\times \TT$ for all $s\geq 1$. In this case,  $f$ is $C^\infty$ conjugate to the product $T_{A}\times R_\theta$ with $\theta\notin \QQ/\ZZ$.  

\end{enumerate}
\end{maintheorem}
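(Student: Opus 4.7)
The plan is to make Theorem~\ref{main: thm pr} (the Lebesgue/atomic disintegration dichotomy) the driver. Since $f$ is a $C^1$-small perturbation of $f_0=(T_A)_{\rho_0}$, normally hyperbolic foliation theory produces an $f$-invariant $1$-dimensional center foliation $\W^c$ with $C^\infty$ circle leaves, leaf-conjugate to the trivial center bundle of $f_0$; every $g\in\Z_\infty(f)$ preserves $\W^c$ and the disintegration of $\mathrm{vol}$ along it. Theorem~\ref{main: thm pr} then splits the analysis into the atomic and the Lebesgue case.

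In the atomic case, the set of atoms $Y$ is an $f$- and $\Z_\infty(f)$-invariant set of full measure that is finite-to-one over the base $\TT^{d-1}$. After passing to the finite-index subgroup $\Gamma\le\Z_\infty(f)$ that fixes atoms, $\Gamma$ descends to a group acting topologically like a higher-rank Anosov action on the base. I would then apply the global rigidity of Rodriguez Hertz--Wang (Theorem~\ref{main: gl rig anosov}) to identify the $\Gamma$-action with an algebraic one and deduce that $\Z_\infty(f)$ is virtually $\ZZ^\ell$ with $\ell\le\ell_0$, which is Case (1). To sharpen to $\ell<\ell_0$ when $\ell_0>1$, I would assume for contradiction that $\ell=\ell_0$ and run the program highlighted in the abstract: use Pesin theory, leafwise conjugacy, and Kalinin-type nonstationary normal forms (in the spirit of \cite{DX0}) to manufacture commuting partially hyperbolic elements in every hyperbolic Weyl chamber, with center foliation compatible with $\W^c$. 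Applying Theorem~\ref{main: thm pr} again to one of these new elements would then force the disintegration to be Lebesgue, contradicting atomicity.

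In the Lebesgue case, I would combine partially hyperbolic Liv\v sic theory, thermodynamic formalism for circle extensions of Anosov diffeomorphisms, and the $C^r$-section control of \cite{PSW} to upgrade $\W^c$ to a genuine $C^\infty$ circle foliation with isometric (rotation) holonomy. This produces a smooth conjugacy of $f$ to an isometric extension $g_\rho$ of some Anosov $g\in\Diff^\infty_\mathrm{vol}(\TT^{d-1})$, and the fiberwise rotation action embeds a $\TT$ in $\Z_\infty(f)$, already yielding at least $\ZZ\times\TT$. To split Cases (2) and (3), I would observe that the full rank-$\ell_0$ piece of $\Z_\infty(T_A)$ lifts to $\Z_\infty(f)$ precisely when $g$ is $C^\infty$ conjugate to $T_A$ and the cocycle $\rho$ is $C^\infty$ cohomologous to a constant (cocycle rigidity over the irreducible hyperbolic $T_A$); in that situation the various conjugacies assemble into an explicit $C^\infty$ conjugacy of $f$ with $T_A\times R_\theta$, and ergodicity forces $\theta\notin\QQ/\ZZ$, giving Case (3); otherwise we land in Case (2).

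The main obstacle will be the contradiction $\ell<\ell_0$ in the atomic case, because it requires converting purely topological information about a rank-$\ell_0$ abelian action on the quotient into genuine smooth partial hyperbolicity of new elements on $\TT^d$, with center foliation matching $\W^c$ up to rotation. The delicate technical input is the precise Hölder control of center leaf conjugacies from \cite{PSW} combined with Kalinin's nonstationary normal form estimates, which together allow the newly constructed elements to satisfy the hypotheses of Theorem~\ref{main: thm pr} a second time and close the argument.
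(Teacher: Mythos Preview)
Your proposal has the right overall architecture---use Theorem~\ref{main: thm pr} to split into cases---but there are two substantive gaps.

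First, you mischaracterize Theorem~\ref{main: thm pr}: it is not merely a Lebesgue/atomic dichotomy. Its conclusion in the non-Lebesgue case is already the \emph{full} statement of Case~(1), including the sharpening $\ell<\ell_0$ when $\ell_0>1$. So your ``atomic case'' work is redundant, and your proposed sharpening argument (construct new partially hyperbolic elements, then ``apply Theorem~\ref{main: thm pr} again'') is circular: the construction of those elements and the resulting forced Lebesgue disintegration is precisely the \emph{content} of the proof of Theorem~\ref{main: thm pr} (via Propositions~\ref{prop: key pr}, \ref{lemma: exst PH}, \ref{coro: hold rig} and the thermodynamic argument of Section~\ref{sec: unique mme}), not a consequence of its statement. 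Moreover, Theorem~\ref{main: thm pr} is formulated for perturbations of $f_0$, so it cannot simply be reapplied to an element in a different Weyl chamber. (Incidentally, Rodriguez Hertz--Wang rigidity is not the tool for bounding $\ell$ here; the bound $\ell\le\ell_0$ comes from the Dirichlet unit theorem via Lemma~\ref{lemma: rank cent}.)

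Second, and more seriously, your Lebesgue case is incomplete. Lebesgue disintegration of $\mathrm{vol}$ along $\W^c_f$ does \emph{not} by itself imply that $\W^c_f$ is a $C^\infty$ foliation, and the tools you list do not produce this upgrade on their own. The paper's route is: Lebesgue disintegration plus \cite{AVW, AVW2} yield a \emph{continuous} flow $\varphi_t$ along $\W^c_f$, and the center-fixing subgroup $\Z^c$ of the centralizer sits inside $\{\varphi_t\}$. One must then split on whether $D=\{t:\varphi_t\in\Diff^r\}$ is discrete or dense in $\TT$. If $D$ is dense, nonstationary normal forms (Proposition~\ref{prop: applic normal form}) upgrade $\varphi_t$ to a $C^\infty$ flow, $\W^c_f$ becomes smooth, and Lemma~\ref{lemma: iso ext classsify} delivers Case~(2) or~(3). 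But if $D$ is discrete, $\Z^c$ is finite and one can still land in Case~(1), with $\W^c_f$ remaining merely H\"older; and when in addition $\ell=\ell_0>1$ and $E^u_f\oplus E^s_f$ is jointly integrable, reaching Case~(3) requires the separate global rigidity result of \cite{DWX} (Theorem~\ref{main: gl 1d cent} in the appendix), not just cocycle rigidity over $T_A$. Your outline misses both the $D$-discrete branch (hence the possibility that Case~(1) arises even with Lebesgue disintegration) and the role of \cite{DWX}.
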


\begin{remark}   The value $r_0$ in Theorem~\ref{main: dich} is explicit: $r_0=\max(\frac{\lambda^s}{\mu^s}, \frac{\lambda^u}{\mu^u})$,   where $\lambda^u,\mu^u$ (resp. $\lambda^s, \mu^s$) are the top and bottom unstable (resp. stable) Lyapunov exponents of $A$.
\end{remark}
\begin{remark} In the interests of space, Theorem \ref{main: dich} treats only isometric extensions homotopic to $T_{A}\times \id_{\TT}$.  For the general case where $\rho_0\colon\TT^{d-1}\to\TT$ is not null-homotopic, similar results hold, up to finite factors.  In particular,  for an ergodic perturbation $f$ of an arbitrary isometric extension  $\left(T_{A}\right)_{\rho_0}$, conclusions (1) and (2) are the same, and in conclusion (3), $f$ is smoothly conjugate to an ergodic affine map isotopic to  $\left(T_{A}\right)_{\rho_0}$.
\end{remark}

\begin{remark}
In the case $\ell_0>1$ the conclusion (1) gives that  the rank of the centralizer of a perturbation is \emph{strictly} less than $\ell_0$. We conjecture that conclusion (1) should be much stronger: the centralizer should be virtually trivial. The main obstacle in obtaining virtually trivial centralizer in this case is that several techniques we use apply currently only to  the \emph{maximal} actions \footnote{  or more generally, for totally non-symplectic (TNS) action (cf. Proposition \ref{prop: cyc rig WH}).} defined in Section~\ref{sec: HR act}, as opposed to general higher rank actions.  

In sufficiently low dimension, Theorems~\ref{intro: dich} and \ref{main: dich} confirm this conjecture and 
give a dichotomy between virtually trivial centralizer and large centralizer.
In particular, if we assume in addition to the hypotheses of Theorem~\ref{main: dich} that
$A\in \SL(d-1,\ZZ)$ satisfies one of the following conditions:
\begin{itemize}
\item $d=3$ or $4$;
\item $d=5$ and $A$ has at least one pair of complex roots;
\item $d=6$ and $A$ has two pairs of complex roots; or
\item $d=7$ and $A$ has three pairs of complex roots;
\end{itemize} 
then $\ell_0(A) = 1$ or $2$,  and the dichotomy in   Theorem~\ref{main: dich}   reduces to the following:
if $f\in \Diff^\infty_{\mathrm{vol}}(\TT^{d})$ is a $C^1-$small, ergodic perturbation of $f_0$, then
  $\Z_{s}(f)$ is   either  virtually trivial,  virtually $\ZZ\times \TT$ for all $s\geq r$, or virtually $\ZZ^2\times \TT$  for all $s\geq 1$.
\end{remark}

Before stating the rest of the main results in this paper, we define partial hyperbolicity and some related concepts.

\subsection{Partially hyperbolic diffeomorphisms and center foliations}\label{sec: PH}

Let $M$ be a complete Riemannian manifold, and let $h\in \Diff(M)$.  A {\em dominated splitting} for $h$ is a
direct sum decomposition of the tangent bundle
\[TM=E^1\oplus E^2\oplus\cdots \oplus E^k\]
such that 
\begin{itemize}
\item the bundles $E^i$ are {\em $Dh$-invariant}: for every $i\in \{1,\ldots, k\}$ and $x\in M$, we have $D_xh(E^i(x)) = E^i(h(x))$; and
\item  $Dh\vert_{E^i}$ {\em  dominates} $Dh\vert_{E^{i+1}}$: there exists $N\geq 1$ such that for any $x\in M$ and any unit vectors $u\in E^{i+1}$ , and $v\in  E^{i}$:
\[ \|D_xh^N (u)\| \leq  \frac{1}{2} \,\|D_x h^N(v)\|.\]

\end{itemize}
The property of a splitting being dominated is independent of choice of equivalent metric (and independent of choice of metric in the case where $M$ is compact).  A dominated splitting is always continuous.  If $M$ is compact and $h'$ is $C^1$ close to $h$ with a dominated splitting, then $h'$ also has a dominated splitting, which varies continuously with $h'$ in the $C^1$ topology. 

A $C^{1}$ diffeomorphism $f: M \rightarrow M$ of a complete Riemannian manifold $M$ is \emph{partially hyperbolic} if there is a dominated splitting $TM = E^{u} \oplus E^{c} \oplus E^{s}$ and $N \geq 1$ such that for any $x \in M$, and any choice of unit vectors $v^{s} \in E^{s}(x)$ and $v^{u} \in E^{u}(x)$, we have
\[
\max\{\|D_xf^{N}(v^{s})\|,  \|D_xf^{-N}(v^{u})\|\} < 1/2.
\]
We always assume the bundles $E^{s}$ and $E^{u}$ are nontrivial. If $E^c$ is trivial then $f$ is  \emph{Anosov}.  

A flow $\varphi\colon M\times \RR\to M$ is {\em Anosov} if for some $t_0\neq 0$, the time-$t_0$ map $\varphi_{t_0}$ is partially hyperbolic, with the center bundle $E^c = \RR\dot\varphi$ tangent to the orbits of the flow.  If $\varphi$ is Anosov, then  the time-$t$ map $\varphi_{t}$
is partially hyperbolic for every $t\neq 0$.  An  example  of an Anosov flow is the geodesic flow over a closed, negatively curved manifold, such as those considered in Theorem~\ref{main: cent dic geod fl}.   

Isometric circle extensions of Anosov diffeomorphisms, such as the diffeomorphisms considered in Theorem~\ref{main: dich},  are also partially hyperbolic, with $E^c$ tangent to the vertical foliation by circles $\{\{x\}\times \TT: x\in \TT^{d-1}\}$ (see, e.g.~\cite{BW99}).
 
 If $M$ is a closed manifold, then partial hyperbolicity is open property in the $C^1$ topology on $\Diff^1(M)$.   Thus the $C^1$-small perturbations considered in Theorems~\ref{main: cent dic geod fl} and  \ref{main: dich} are also partially hyperbolic.

If $f$ is partially hyperbolic and $C^r$, $1\leq r\leq \infty$, then the bundles $E^{s}$ and $E^{u}$ are tangent to foliations $\W^{s}$ and $\W^{u}$, known respectively as the stable and the unstable foliations of $f$. These foliations have $C^{r}$ leaves but are typically only H\"older continuous. For a more detailed discussion of  foliation regularity, see Section \ref{sec: reg fol}.

We say a $Df-$invariant distribution $E\subset TM$ is \emph{integrable}  if there exists an $f-$invariant foliation $\W = \{\W(x)\}_{x\in M}$ with $C^1$ leaves everywhere tangent to the bundle $E$, and {\em uniquely integrable} if every $C^1$ curve tangent to $E$ lies in a single leaf of $\W$.

The bundles $E^u$ and $E^s$ are thus integrable, and are in fact uniquely integrable.
The center bundle $E^c$ is not always
integrable (see \cite{RRU}), but in many examples of interest,  such as the time-one
map of an Anosov flow and its perturbations, or  perturbations of an isometric extension of Anosov map,  the theory of normally hyperbolic foliations developed in \cite{HPS} implies that $E^c$ is integrable, as are the bundles $E^{cs} = E^c\oplus E^s$ and $E^{cu} = E^c \oplus E^u$. In particular, for those $f$ considered in this paper, $E^c$ is integrable, and in fact uniquely integrable,  tangent to a {\em center foliation }$\W^c$,  (see Theorem~\ref{main: HPS}).  Our main results can be recast in terms of the measure theoretic properties of center foliations, as follows.

\subsection{Lebesgue disintegration and large centralizer}

As mentioned in the introduction, some of the key ingredients in proofs of Theorems \ref{main: cent dic geod fl} and \ref{main: dich} are the following {\it dichotomy results} which link the disintegration of volume along the center foliation with the structure of the centralizer. 

For volume-preserving perturbations of the discretized geodesic flow on any negatively curved manifold, we have

\begin{maintheorem}\label{main: geod fl} Let $X$ be a closed, negatively curved Riemannian manifold,  and let $\psi_t\colon T^1X\to T^1X$ be the geodesic flow.  Fix $t_0\neq 0$, and suppose $f\in \Diff^{2}_{\mathrm{vol}}(T^1X)$ is a $C^{1}-$small perturbation of $\psi_{t_0}$. Then either the volume $\mathrm{vol}$ has Lebesgue disintegration along $\W^c_f$,  or $f$ has virtually trivial centralizer in $\Diff(T^1X)$.
\end{maintheorem}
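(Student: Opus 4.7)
The plan is to apply the Avila--Viana--Wilkinson disintegration dichotomy and then handle the atomic alternative by reducing to the rigidity of centralizers of Anosov systems. Since $f \in \Diff^2_{\mathrm{vol}}(T^1X)$ is a $C^1$-small perturbation of $\psi_{t_0}$, the theory of normally hyperbolic foliations gives that $f$ is partially hyperbolic with a one-dimensional center foliation $\W^c_f$ leaf-conjugate to the orbit foliation of $\psi_t$. The results of \cite{AVW, AVW2} then produce the dichotomy for the disintegration of $\mathrm{vol}$ along $\W^c_f$: either it is equivalent to leafwise Lebesgue -- which is the first conclusion of the theorem -- or it is purely atomic, in which case $f$ is ergodic and the center Lyapunov exponent is nonzero. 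The content of the proof is therefore to show that atomic disintegration forces $\Z(f)$ to be virtually $\langle f \rangle$.

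So assume atomic disintegration. Ergodicity of $f$ together with Lemma~\ref{lemma: g pr vol} gives that every $g \in \Z(f)$ preserves $\mathrm{vol}$; unique integrability of $E^c_f$, inherited from $\psi_t$ via the leaf conjugacy, gives that $g$ also preserves $\W^c_f$. Atomicity produces an $f$-invariant, full-volume Borel set $\Lambda \subset T^1X$ whose intersection with each center leaf is a finite set of uniformly bounded cardinality. By Pesin theory, $\Lambda$ admits a purely dynamical characterization (for instance as the set of points at which the center exponent is realized as a backward contraction rate), so $g(\Lambda) = \Lambda$ modulo $\mathrm{vol}$-null sets. The next, and crucial, step is geometric: following the strategy also used in \cite{BMW} and elsewhere in this paper, one combines uniform Pesin estimates with the H\"older control of leaf conjugacies from \cite{PSW} and nonstationary normal forms in the center direction to show that, after passing to a finite cover, $\Lambda$ coincides with a finite union of $f$-invariant topological submanifolds transverse to $\W^c_f$, on each of which $f$ restricts to a topologically Anosov diffeomorphism. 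Every $g \in \Z(f)$ now descends, with finite kernel coming from permutations of atoms on a single leaf, to an element of the centralizer of this Anosov restriction. Invoking centralizer rigidity for Anosov systems on the relevant base -- accessible either via the leaf conjugacy to the genuinely Anosov $\psi_t$ and the symmetries of $X$, or via standard Anosov centralizer results -- one concludes that the descended centralizer is virtually cyclic, and hence so is $\Z(f)$.

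The principal obstacle is the geometric upgrade in the middle of the second paragraph: promoting the measurable set $\Lambda$ to an invariant topological substrate on which $f$ is Anosov. A priori the atoms are defined only up to $\mathrm{vol}$-null sets, and even the Pesin structure on $\Lambda$ is measurable, while rigidity of centralizers is a topological statement. The tools that make the upgrade possible are the precise H\"older exponents of leaf conjugacies controlled in \cite{PSW} and the nonstationary normal forms emphasized in the introduction. Once this geometric structure is in place, the reduction to Anosov centralizer rigidity, together with Liv\v{s}ic-type control in the center direction to lift back to $T^1X$, is essentially routine.
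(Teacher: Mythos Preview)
Your proposal has a genuine structural gap, and it stems from a misreading of what ``atomic disintegration'' means in this setting. The center leaves of $f$ are leaf-conjugate to orbits of the Anosov flow $\psi_t$, so almost every leaf is a \emph{non-compact} line, not a circle. The Avila--Viana--Wilkinson atomic structure (Theorem~F of \cite{AVW2}) does not say that $\Lambda$ meets each leaf in finitely many points; it says there is a full-volume set $S$ and an integer $k$ such that $S$ meets each fundamental domain for the $f$-action on a typical leaf in exactly $k$ points. Concretely, $S\cap\W^c_f(v)=\{x_{i,j}(v): i\in\ZZ,\,1\le j\le k\}$ with $f(x_{i,j})=x_{i+1,j}$. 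So $S$ is a countable set on each leaf, and there is no hope of arranging the atoms into a finite union of $f$-invariant transversals on which $f$ is Anosov; your ``geometric upgrade'' cannot even be formulated. This is not a technical obstacle to be overcome with Pesin theory or normal forms---the picture is simply different from the compact-center case you seem to have in mind.

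The paper's argument avoids all of this and is much more elementary. First, one reduces to center-fixing elements: the quotient $\Z^+(f)/\Z^c(f)$ embeds in $\mathrm{Out}(\pi_1(X))$ (Proposition~\ref{prop: outerauto}), and one shows this image is finite (Proposition~\ref{prop: outer auto}; this uses finiteness of $\mathrm{Out}(\pi_1(X))$ when $\dim X\ge 3$, and a mapping-class-group argument when $\dim X=2$). Second, for $g\in\Z^c(f)$, one argues directly on the atoms: since $g$ preserves $\mathrm{vol}$, orientation, and fixes each leaf, it restricts to an order-preserving bijection of the $\ZZ$-indexed set $S\cap\W^c_f(v)$ commuting with the shift $f$. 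Two commuting order-preserving bijections of $\ZZ$, one of which is a shift, force $g^k=f^{k'(g,v)}$ for some integer $k'(g,v)$; ergodicity makes $k'$ constant, and density of $S$ gives $g^k=f^{k'(g)}$ globally. This yields an embedding $k'\colon\Z^c(f)\hookrightarrow\ZZ$ with image containing $k\ZZ$. No Pesin theory, no normal forms, no Anosov rigidity is needed for this step.
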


For perturbations of an isometric extension of a hyperbolic toral automorphism, we have 
 
\begin{maintheorem}\label{main: thm pr} Let $f_0:\TT^d\to \TT^d$ and  $\ell_0$ be as in Theorem \ref{main: dich}, and   
let $f\in \Diff^2_{\mathrm{vol}}(\TT^d)$ be a $C^1-$small, ergodic perturbation of $f_0$. Then either the volume has Lebesgue disintegration along $\W^c_f$, or $\Z_2(f)$ is virtually $\ZZ^\ell$ for some $\ell\leq \ell_0$.    Moreover   $\ell<\ell_0$ if $\ell_0>1$.\end{maintheorem}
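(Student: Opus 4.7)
The plan is to combine the disintegration dichotomy of Avila--Viana--Wilkinson with a rank analysis of the centralizer, and then invoke higher-rank Anosov rigidity to obtain the strict inequality.

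\textbf{Step 1: Invoke the AVW dichotomy.} Since $f\in \Diff^2_{\mathrm{vol}}(\TT^d)$ is $C^1$-small, ergodic, and a partially hyperbolic perturbation of the isometric extension $f_0=(T_A)_{\rho_0}$, the setting satisfies the hypotheses of \cite{AVW, AVW2}. Thus the disintegration of $\mathrm{vol}$ along $\W^c_f$ is either equivalent to Lebesgue --- giving conclusion (1) --- or atomic: there exist $k\geq 1$ and a full-volume, $f$-invariant set $Y\subset \TT^d$ with $\#(Y\cap \W^c_{f,\mathrm{loc}}(x))\leq k$ for $\mathrm{vol}$-a.e.\ $x$. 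Assume atomicity for the remainder.

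\textbf{Step 2: Discreteness and rank bound for $\Z_2(f)$.} By Lemma~\ref{lemma: g pr vol}, ergodicity gives $\Z_2(f)=\Z_{2,\mathrm{vol}}(f)$, so every $g\in \Z_2(f)$ preserves $\mathrm{vol}$, the foliation $\W^c_f$, and the measurably-defined set $Y$. This rules out any nontrivial continuous $1$-parameter subgroup in $\Z_2(f)$: such a flow would have to preserve the discrete sets $Y\cap \W^c_{f,\mathrm{loc}}(x)$ leafwise and so must be trivial, whence $\Z_2(f)$ is discrete. Next, by normal hyperbolicity each $g\in \Z_2(f)$ is homotopic to a unique toral automorphism $T_{M_g}$, and $g\mapsto M_g$ is a homomorphism whose image centralizes $T_C$, where $C=\diag(A,1)$. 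Because $A$ is hyperbolic and irreducible, $A-I$ is invertible, and a direct matrix computation yields
\[
\Z_{\SL(d,\ZZ)}(T_C)\;\cong\;\Z_{\SL(d-1,\ZZ)}(A)\times\{\pm 1\},
\]
which by the Dirichlet unit theorem is virtually $\ZZ^{\ell_0}$. The kernel of $g\mapsto M_g$ consists of maps isotopic to the identity; such $g$ induce the identity on the Anosov quotient coming from $Y$ (via Franks--Manning-type rigidity applied to the perturbed base) and then permute at most $k$ atoms on each center leaf, giving a finite kernel. Therefore $\Z_2(f)$ is virtually $\ZZ^{\ell}$ for some $\ell\leq \ell_0$.

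\textbf{Step 3: Strict inequality when $\ell_0>1$.} Suppose for contradiction that $\ell_0\geq 2$ and $\ell=\ell_0$. Then $\Z_2(f)$ contains $\ell_0$ commuting partially hyperbolic elements whose linear parts generate a rank-$\ell_0$ subgroup of $\Z_{\SL(d-1,\ZZ)}(A)$. Passing to the finite-to-one Anosov quotient provided by $Y$ produces a smooth Anosov $\ZZ^{\ell_0}$-action on a finite cover of $\TT^{d-1}$, in the isotopy class of the linear centralizer of $A$. Since $\ell_0\geq 2$ and the action is higher rank, the global rigidity theorem of Rodriguez Hertz--Wang (see Theorem~\ref{main: gl rig anosov}, quoting \cite{HW}) gives that it is $C^\infty$-conjugate to its linearization. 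Combining this base conjugacy with the leaf conjugacy supplied by normal hyperbolicity, and upgrading to a genuine conjugacy on $\TT^d$ via partially hyperbolic Liv\v sic theory and nonstationary normal forms, one concludes that $f$ is $C^\infty$-conjugate to an algebraic model $T_A\times R_\theta$. But such a model has smooth center foliation with Lebesgue disintegration, contradicting atomicity. Hence $\ell<\ell_0$.

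\textbf{Main obstacle.} The principal difficulty is Step~3: the atomic set $Y$ is a priori only measurable, so promoting the atomic quotient to a bona fide smooth Anosov $\ZZ^{\ell_0}$-action to which \cite{HW} applies, and then lifting the resulting smooth conjugacy from the base back to $\TT^d$ through the $1$-dimensional center, is the technical heart. This is precisely where the paper's arsenal --- Pesin theory, thermodynamic formalism for circle extensions, partially hyperbolic Liv\v sic theory, and nonstationary normal forms --- is needed.
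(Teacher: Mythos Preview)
Your Step~3 contains the real gap. The ``finite-to-one Anosov quotient provided by $Y$'' does not exist as a smooth object: $Y$ is merely a measurable full-volume set, and the only genuine quotient available is the topological one $\TT^d/\W^c_f$, on which the induced maps are only H\"older conjugate to linear automorphisms (via Franks--Manning). The Rodriguez Hertz--Wang theorem requires a $C^\infty$ Anosov action on a nilmanifold; a H\"older action on a topological torus is not enough, and there is no mechanism here to upgrade the base dynamics to smooth. Even granting a smooth base conjugacy, ``lifting through the center via Liv\v sic theory and normal forms'' is not a step one can wave through: that would amount to proving a separate global rigidity theorem for partially hyperbolic actions with $1$-dimensional center, which is essentially the content of \cite{DWX} and is not available under the mere $C^2$ hypothesis of Theorem~\ref{main: thm pr}. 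Note also that $f$ is only $C^2$, so asserting $f$ is $C^\infty$-conjugate to anything is already inconsistent.

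The paper's route is entirely different and does not use \cite{HW} at this point. Rather than assume atomicity and seek a contradiction, it shows directly (Proposition~\ref{lemma: G high rank ac}) that if the group of linear parts $G$ is maximal (rank $\ell_0>1$), then $\mathrm{vol}$ has Lebesgue disintegration. The mechanism is: (i) use the H\"older leaf conjugacy and Pesin theory to build, chamber by chamber, partially hyperbolic elements of $G_0$ in \emph{every} hyperbolic Weyl chamber (Proposition~\ref{prop: key pr}, Proposition~\ref{lemma: exst PH}); (ii) deduce from this that $E^s_f\oplus E^u_f$ is jointly integrable and $f$ is H\"older conjugate to $T_{A_f}\times R_\theta$ (Proposition~\ref{coro: hold rig}); (iii) show via thermodynamic formalism that $\mathrm{vol}$ is an equilibrium state of $-\log J^u(f)$ (Proposition~\ref{prop: eq state vol}); (iv) use higher-rank cocycle rigidity along $\W^H$ (Proposition~\ref{prop: coh rig ph act}) to reduce this to a constant potential, forcing $\mathrm{vol}$ to be the unique measure of maximal entropy; (v) conclude Lebesgue disintegration because the conjugacy must send $\mathrm{vol}$ to $\mathrm{vol}$ and $\W^H$ is $C^1$. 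Your outline in Steps~1--2 is roughly compatible with the paper's reduction to $G$ and $G_0$ (Proposition~\ref{lemma: prpty G0 G}), but Step~3 needs to be replaced by this argument.
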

 

\subsection{Prior results}  

As mentioned in the introduction,  it is expected that the typical diffeomorphism has small centralizer.  Indeed, Smale asked \cite{Smale1, Smale2} whether the set of $C^r$ diffeomorphisms with trivial centralizer is generic in $\Diff^r(M)$.  Several works have been devoted to this question in various contexts, going back to Kopell's solution \cite{Ko} to the question in the smooth case on the circle: those diffeomorphisms with  trivial centralizer contains a $C^\infty$ open and dense  in $\Diff^\infty(\TT)$.   The question has also been answered in full generality by Bonatti--Crovisier--Wilkinson in the $C^1$ topology: trivial centralizer is generic (but not open)  in $\Diff^1(M)$ and $\Diff^1_{\mathrm{vol}}(M)$, for any closed manifold $M$ \cite{BCW1, BCW2, BCGW}.  See \cite{BCW1} for a discussion of the history of this problem.

In the restricted context of partially hyperbolic systems, stronger results are known in the smooth category: Palis--Yoccoz showed that the set of $C^\infty$ diffeomorphisms with trivial centralizer  contains an open and dense subset of the set of Axiom A diffeomorphisms in $\Diff^\infty(M)$ possessing at least one periodic sink or source \cite{PY1, PY2}.  The conditions have subsequently been relaxed  \cite{F, RV}.  In another direction, Burslem showed that for a class of $C^\infty$ partially hyperbolic systems, (including non-volume-preserving perturbations of the systems considered in this paper), there is a residual subset whose centralizer is trivial.

When it comes to (partially) hyperbolic diffeomorphisms whose centralizers contain large rank abelian subgroups of (partially) hyperbolic diffeomorphisms, the general philosophy has been that a rich variety of (partially) hyperbolic dynamics in an abelian group action should be a rare occurrence. Classes of algebraic examples of such abelian actions have been listed in \cite{KS}  by Katok and Spatzier, who also proved in  \cite {KS1} that such \emph{Anosov}  abelian actions are \emph{locally rigid}:  small perturbations of such an action are all smoothly conjugate to unperturbed action.  Further  local rigidity results  for classes of partially hyperbolic abelian actions are found in \cite{DK3},  \cite{VW}.  Moreover, for Anosov diffeomorphisms,  if the centralizer contains a $\mathbb Z^2$ subgroup  that does not factor onto a virtually $\mathbb Z$-action,  Katok and Spatzier conjectured that $f$ is then smoothly conjugate to a hyperbolic (infra)nilmanifold automorphism, and in particular it has a full rank centralizer smoothly conjugate to a group of automorphisms. We refer to \cite{DX1},  \cite{SV} and references therein for the history and most recent results in the direction of this \emph{global rigidity} conjecture.

 In the case of volume preserving  partially hyperbolic diffeomorphisms with \emph{compact center foliation}, it is found in \cite{DX0} that a large rank centralizer with sufficiently many  partial hyperbolic elements also leads to global rigidity. In particular,  it was first discovered  in \cite{DX0} that the bad disintegration of volume along the leaves of the center foliation should be the main obstacle to rigidity for higher rank partially hyperbolic actions. The forthcoming paper \cite{DWX} exploits this further by obtaining in some cases stronger global rigidity results (see Appendix A).
For the case of commuting isometric extensions over hyperbolic toral automorphisms, local rigidity results have been obtained earlier under Diophantine conditions, in \cite{DF}.

Work of Avila, Viana and Wilkinson \cite{AVW, AVW2} establishes a dichotomy for a class of partially hyperbolic diffeomorphisms with $1$-dimensional center foliation: either the disintegration of Lebesgue is atomic on the center foliation or volume has   Lebesgue disintegration on  the center.   \footnote{Under an accessibility assumption. See Section~\ref{ss: accessibility}.}  Moreover, for these maps, if  volume has Lebesgue disintegration on the center, then there is a {\em continuous} volume-preserving flow commuting with the map.  These results apply directly to the systems considered here, and we take them as a starting point.  Otherwise, our methods are almost entirely disjoint from those in \cite{AVW, AVW2}.

\section{Preliminaries}\label{section: preliminaries}


\subsection{Regularity of maps and foliations}\label{sec: reg fol} 
For $r\in (0,1)$, we say that map between metric spaces 
is $C^r$ if it is  H\"older continuous of exponent  $r$.
For $r \geq 1 $ we say that a map between smooth manifolds is $C^r$ if it is $C^{[r]}$ and the $[r]$th-order derivatives are $C^{r-[r]}$. For $r\geq 0$, a map is $C^{r+}$ if it is $C^{r+\varepsilon}$ for some 
$\varepsilon> 0$.

Let $M$ be a manifold of dimension $d\geq 2$. A {\em $k-$dimensional topological foliation}
 $\F$ of $M$ is a decomposition of $M$ into path-connected subsets
\[M = \bigcup_{x\in M} \F(x)
\]
called {\em leaves}, where $x\in \F(x)$, and two  leaves $\F(x)$ and $\F(y)$ are either disjoint or equal,  and a covering of  $M$ by coordinate neighborhoods $\{U_\al\}$ with local coordinates $(x^1_\al,
\dots, x^d_\al)$ with the following property.  For $x\in U_\al$, denote by $\F_{U_\al}(x)$ the connected component of $\F(x)\cap U_\al$ containing $x$. Then in coordinates on $U_\al$ the local leaf $\F_{U_\al}(x)$ is  given by a set of equations of the form $x^{k+1}_\al=\dots=x^d_\al= cst$. If the local coordinates $(x_{\al}^1,\dots, x_\al^d)$ can be chosen uniformly $C^r$ along the local leaves (i.e., to have uniformly $C^r$ overlaps on the sets  $x^{k+1}_\al=\dots=x^d_\al= cst$) then we say that $\F$ {\em has $C^r-$leaves}. If the $(x_\al^1,\dots, x_\al^d)$ can be chosen $C^r$ on $U_\al$ then  $\F$ is called a {\em $C^r$ foliation.}

Note that the leaves of a foliation with $C^r$ leaves are $C^r$, injectively immersed submanifolds of $M$.


The next lemma follows from an application of $C^r-$section theorem in \cite{HPS}; for a precise proof cf. Corollary 5.6 in \cite{DX} or \cite{PSW}.
\begin{lemma}\label{lemma: impr cr}
Let $f$ be a $C^{r+1}$ diffeomorphism of a closed Riemannian manifold $M$. Let $\W$ be an $f-$invariant foliation with uniformly $C^{r}-$leaves. For $x\in M$, let $\al_x:=\|Df |^{-1}_{T\W(x)}\|$. Let $E^1$ and $E^2$ be  continuous, $f-$invariant distributions on $M$ such that the distribution $E = E^1\oplus E^2$ is uniformly $C^r$ along $\W$ leaves and $E^1\oplus E^2$ is a dominated splitting in the sense that for any $x \in M$,
$$k_x:=\frac{\max_{v\in {E}^2(x), \|v\|=1}\|Df(v)\|}{\min_{v\in {E}^1(x), \|v\|=1}\|Df(v)\|}<1.$$
If $\sup_{x\in M}k_x\al_x^r<1$, then $E^1$ is uniformly $C^r$ along the leaves of $\W$. In particular if $\al_x\leq 1$ for all $x\in M$ then $E^1$ is uniformly $C^r$ along the leaves of $\W$.
\end{lemma}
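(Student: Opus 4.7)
The plan is to realize $E^1$ as the unique fixed section of a fiberwise contracting graph transform over the base dynamics of $f$, and then invoke the leafwise $C^r$-section theorem of \cite{HPS} along the foliation $\W$ to upgrade its regularity. This is essentially the scheme carried out in \cite[Cor.~5.6]{DX} and \cite{PSW}.

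First, I would set $k = \dim E^1$ and form the Grassmannian bundle $B \to M$ whose fiber $B_x$ is a small neighborhood of $E^1(x)$ in the space of $k$-dimensional subspaces of $E(x) = E^1(x) \oplus E^2(x)$. Because $E$ is uniformly $C^r$ along $\W$, so is $B$. Near $E^1(x)$ every plane in $B_x$ is the graph of a unique linear map $\phi \colon E^1(x) \to E^2(x)$. The derivative cocycle $Df$ induces a bundle map $F \colon B \to B$ covering $f$; in these graph coordinates (up to the standard normalization keeping the image a graph) it acts by
\[
\phi \mapsto (Df\vert_{E^2(x)}) \circ \phi \circ (Df\vert_{E^1(x)})^{-1}.
\]
The dominated splitting hypothesis makes $F$ a fiber contraction with pointwise Lipschitz constant bounded by $k_x$, and $E^1$ is the unique continuous $F$-invariant section.

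Next, I would apply the leafwise $C^r$-section theorem of \cite{HPS} to $F$. The base hypotheses are that $f$ is $C^{r+1}$ (so that $Df$ is $C^r$) and that $\W$ has uniformly $C^r$ leaves; both are given. The $r$-bunching condition required for $C^r$-regularity of the fixed section along $\W$-leaves is that the fiber contraction rate times the $r$-th power of the expansion rate of $f^{-1}$ along $T\W$ is strictly less than $1$ uniformly. At $x$ these quantities are bounded by $k_x$ and $\al_x$ respectively, so the condition becomes exactly $\sup_{x\in M} k_x \al_x^r < 1$, which is our hypothesis. The final sentence then follows immediately: if $\al_x \leq 1$ everywhere, then $k_x \al_x^r \leq k_x$, and continuity of $k_x$ with $k_x < 1$ on the compact manifold $M$ forces $\sup_x k_x < 1$, so the bunching condition holds automatically.

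The one point requiring genuine care --- and therefore the main obstacle --- is that the section theorem must be invoked in its \emph{leafwise} form, with $\W$ playing the role of the invariant lamination and the base expansion measured along $T\W$ rather than across all of $TM$. This entails checking that the Grassmannian graph-transform norms on the fibers match the linear contraction rates $k_x$ coming from the dominated splitting, and that the leafwise regularity hypotheses of the HPS theorem are in force with uniform bounds over $M$. Both verifications are by now standard and are worked out in detail in the cited references.
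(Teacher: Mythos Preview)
Your approach is correct and matches the paper's own treatment exactly: the paper does not give a detailed proof but simply states that the lemma follows from the $C^r$-section theorem of \cite{HPS}, referring to \cite[Cor.~5.6]{DX} and \cite{PSW} for the precise argument. Your sketch spells out precisely this Grassmannian graph-transform setup and the leafwise bunching verification that those references carry out.
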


Suppose $\F$ is a foliation of a closed manifold $M$ with $C^1$ leaves, and $\mu$ is a Borel probability measure on $M$. Let $B$ be a foliation box, and let $\mu_B$ be normalized Lebesgue measure on $B$.  There is a unique family of conditional measures $\mu_x$ defined for $\mu_B-$almost every $x$ in $B$ with the following properties (see \cite{Ro}).  First, for almost every $x$, the measure $\mu_x$ is supported on  the plaque
$\F_B(x)$; second,  for every $\mu_B$-integrable function $\psi\colon B\to \RR$, we have
\[\int_B\psi(x)\,d\mu_{B}(x) = \int_B\int_{\F_{B}(x)} \psi(y) d\mu_x(y)\,d\mu_{B}(x).
\]

 We say $\mu$ has {\it Lebesgue disintegration} along $\F$ if for any foliation box $B$ and $\mu_B-$almost every $x$, the conditional measure of $\mu_B$ on $\F_B(x)$ is equivalent to the Riemannian measure on $\F_B(x)$. The measure $\mu$ has {\it atomic disintegration} (along $\F$) if there exists $k\geq 1$ such that for any foliation box $B$ the conditional of $\mu_B$ measure on $\F_B(x)$ is atomic,  with at most $k$ atoms, for $\mu_B-$almost every $x$.

\begin{lemma}\label{lemma:fix center} 
 Let  $\F$ be an orientable topological foliation of a closed manifold $M$ such that all leaves are circles. Suppose that there exists a full volume set  $S\subset M$ and $k\in \NN$ such that $S$ meets almost every leaf of $\F$ in exactly $k$ points. Let 
$\mathcal G_{\hbox{\tiny{$\mathrm{fix}$}}}(\F)$ be  the set of $g\in \Diff_{\mathrm{vol}}(M)$ such that  $g$ preserves orientation on $\F$, and $g(\F(x)) = \F(x)$, for all $x\in M$.  Then $\mathcal G_{\hbox{\tiny{$\mathrm{fix}$}}}(\F)$ is a finite cyclic group.
\end{lemma}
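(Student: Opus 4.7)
The plan is to construct an injective group homomorphism $\phi\colon \mathcal G_{\hbox{\tiny{$\mathrm{fix}$}}}(\F)\to \ZZ/k\ZZ$; this embeds $\mathcal G_{\hbox{\tiny{$\mathrm{fix}$}}}(\F)$ as a subgroup of $\ZZ/k\ZZ$ and hence forces it to be a finite cyclic group.

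First I would disintegrate $\mathrm{vol}$ along $\F$. For a.e.\ $x$ the conditional measure $\mu^c_x$ is purely atomic and supported on the $k$ cyclically ordered points of $S\cap \F(x)\subset \F(x)$. Since $g\in \mathcal G_{\hbox{\tiny{$\mathrm{fix}$}}}(\F)$ preserves $\mathrm{vol}$ and sends each leaf to itself, $g_{\ast}\mu^c_x=\mu^c_x$, so $g|_{\F(x)}$ permutes the $k$ atoms. Any orientation-preserving homeomorphism of a circle acts on a finite invariant set by a cyclic shift, hence on the atoms $g|_{\F(x)}$ acts by a shift $j(g,x)\in \ZZ/k\ZZ$, and the rotation number satisfies $\rho(g|_{\F(x)})=j(g,x)/k$. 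The topological foliation charts of $\F$ combined with $g\in \Diff(M)$ make $x\mapsto g|_{\F(x)}$ continuous in the $C^0$ topology, so $x\mapsto \rho(g|_{\F(x)})$ is continuous on $M$. It takes values in the discrete set $\{0,1/k,\dots,(k-1)/k\}$ on a dense subset, so by connectedness of $M$ it is constant: $j(g,x)\equiv j(g)$. Setting $\phi(g):=j(g)$ gives a group homomorphism, because $g_1|_{\F(x)}$ and $g_2|_{\F(x)}$ act on the same $k$-point set by cyclic shifts $j(g_1),j(g_2)$, and their composition is the cyclic shift by $j(g_1)+j(g_2)$.

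For injectivity, suppose $\phi(g)=0$. Then on a full-$\mathrm{vol}$-measure set of leaves $g$ fixes every atom of $\mu^c_x$ pointwise. Let $Y$ be the union of those atoms; then $Y$ contains $\mathrm{supp}(\mu^c_x)$ for a.e.\ $x$, so $\mathrm{vol}(M\setminus Y)=0$. Because $\mathrm{vol}$ has full support on $M$, the set $Y$ is dense, and by construction $g|_Y=\id|_Y$. Continuity of $g$ forces $g=\id$ on $\overline{Y}=M$, proving $\ker \phi=\{\id\}$ and hence the lemma. The one genuinely technical point I foresee is verifying that $x\mapsto g|_{\F(x)}$ is continuous enough for $x\mapsto \rho(g|_{\F(x)})$ to be continuous on all of $M$; this follows from the local product structure of a topological foliation together with $g$ being a homeomorphism of $M$, and it is exactly what lets one promote an a.e.\ discrete-valued rotation number into a genuinely constant one.
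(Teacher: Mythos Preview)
Your proof is correct and follows essentially the same strategy as the paper: both construct an injective homomorphism $\mathcal G_{\hbox{\tiny{$\mathrm{fix}$}}}(\F)\to \ZZ/k\ZZ$ by observing that each $g$ cyclically permutes the $k$ atoms on almost every leaf, using continuity of the rotation number to show the shift is constant, and deducing injectivity from the density of the set of atoms. Your write-up is in fact a bit more explicit than the paper's (you spell out the disintegration, the connectedness argument, and the kernel computation), but the underlying argument is the same.
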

\begin{proof}Since the action of $\mathcal G_{\hbox{\tiny{$\mathrm{fix}$}}}(\F)$ fixes all the leaves of $\F$ and preserves the volume, on almost every leaf $\F(x)$, any element $g$ of $\mathcal G_{\hbox{\tiny{$\mathrm{fix}$}}}(\F)$ maps atoms to atoms, which means that $g$ induces a permutation on $S\cap \F(x)$. Moreover since $g$ preserves the orientation of each circle leaf of $\F$ it induces a cyclic permutation (with respect to the circle ordering) of the atoms on almost every leaf.  

Thus for every $x\in S$,  the restriction of $g\in\mathcal G_{\hbox{\tiny{$\mathrm{fix}$}}}(\F)$  to $\F(x)$ has rotation number ${k'(g,x)}/{k}~(\mathrm{mod }\,1)$, for some $k\in \ZZ^+$ and $k'=k'(g,x)\in \ZZ/k\ZZ$, where $k$ is the number of atoms. Since the rotation number is a continuous function on diffeomorphisms, and $S$ is dense, $k'(g,x)$ is independent of $x$. Therefore on \textbf{every} center leaf, $g$ has rotation number ${k'(g)}/{k}~~(\mathrm{mod }\,1)$. Moreover for any other $h\in \mathcal G_{\hbox{\tiny{$\mathrm{fix}$}}}(\F)$ such that $k'(g)=k'(h)$,  and every $x\in S$, $h$ induces the same permutation on $S\cap \F(x)$ as $g$, which implies that $g=h$, by the density of $S$. Therefore $k'$ induces an injective homomorphism from $\mathcal G_{\hbox{\tiny{$\mathrm{fix}$}}}(\F)$ to $\ZZ/k\ZZ$.  
\end{proof}

\subsection{Lyapunov exponents and the Oseledec splitting}\label{Oseledec}  Suppose $M$ is a smooth manifold  and $f\in \Diff^1(M)$ is a diffeomorphism preserving a probability measure $\mu$ (for instance, volume). 
In analogy with the Birkhoff ergodic theorem, one can inquire about
the asymptotic behavior of the composition of tangent maps of $f$
$$D_pf^n=D_{f^{n−1}(p)}f\circ\cdots \circ D_pf : T_pM\to T_{f^n(p)}M,$$
for $\mu$-a.e. $p\in M$.
An answer is given by the Oseledets Multiplicative Ergodic theorem, which we describe here in the setting of continuous cocycles.

Suppose $X$ is a compact metric space and $E\to X$ is a (continuous) vector bundle. Let $T: X\to X$ be homeomorphism.  A continuous linear {\em cocycle} over
$T$ is a bundle map $F\colon E\to E$ covering $T$.  
On the fibers, $F$ is given by  linear maps
$F_x\colon E_x \to E_{T x}$
that vary continuously with $x$.
For simplicity we assume that each $F_x$ is invertible, so that $F$ is a bundle isomorphism. 

Suppose that $T$ preserves an ergodic probability measure $\mu$ on $X$, and $E$ is equipped with a continuous Finsler structure $\{\|\cdot \|_x : x\in X\}$.
Then the Oseledec theorem gives real numbers $\lambda_1>\cdots> \lambda_k$   called {\em Lyapunov exponents} and a measurable, $F-$invariant splitting
$E=E^{\lambda_1}\oplus \cdots \oplus E^{\lambda_k}$,
such that for $v\in E_x\setminus\{0\}$, 
\[v\in E^{\lambda_i}_x\iff \lim_{n\to \pm \infty}\frac{1}{n}\log\|F^n(v)\|_{T^n(x)}=\lambda_i.\]
 The splitting  $E=\oplus E^{\lambda_i}$ is called
the  Oseledets splitting for the cocycle.

The following  well-known result allows one to deduce {\em uniform} growth of cocycles from knowledge about exponents for {\em every} invariant measure. The proof is a corollary of a classical result on subadditive sequences (cf. \cite{S98} or chapter 4 in \cite{K11}.)

\begin{lemma}\label{lemma: est cocyc}Let $f : X\to X$ be a continuous map of a compact metric
space, and let $F\colon E\to E$ be a continuous linear cocycle over $f$, where $p:E\to X$ is a continuous vector bundle over $X$. 
\begin{enumerate}
\item If for any $f-$invariant ergodic measure $\nu$, the top Lyapunov exponent $\lambda^{\hbox{\tiny{$\mathrm{max}$}}} (F,\nu)$ is  $\leq\lambda$, then for any $\epsilon>0$, there exists $n\in \ZZ^+$ such that $$\|F^n(x)\|\leq e^{n(\lambda+\epsilon)},~~ \forall x\in X.$$
\item If for any $f-$invariant ergodic measure $\nu$, the bottom Lyapunov exponent $\lambda^{\hbox{\tiny{$\mathrm{min}$}}}(F,\nu)$ is $\geq \lambda'$, then for any $\epsilon>0$, there exists $n\in \ZZ^+$ such that $$\|F^n(x)^{-1}\|^{-1}\geq e^{n(\lambda'-\epsilon)},~~ \forall x\in X.$$
\end{enumerate}
\end{lemma}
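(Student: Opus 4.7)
The plan is to identify $\phi_n(x) := \log \|F^n(x)\|$ as a continuous subadditive sequence over $f$. Subadditivity is immediate from the cocycle identity $F^{n+m}(x) = F^n(f^m(x)) \circ F^m(x)$ and submultiplicativity of the operator norm: $\phi_{n+m}(x) \leq \phi_m(x) + \phi_n(f^m(x))$. Continuity of $\phi_n$ follows from the continuity of $F$ and of the fiberwise norms. By Kingman's subadditive ergodic theorem applied to any $f$-invariant ergodic measure $\nu$, the limit $\lim_{n\to\infty} \phi_n(x)/n$ exists and equals $\lambda^{\max}(F,\nu)$ for $\nu$-a.e.\ $x$.

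The heart of the argument is the variational principle for continuous subadditive cocycles proved in \cite{S98} (see also \cite{K11}, Chapter~4):
\[
L := \lim_{n\to\infty}\frac{1}{n}\sup_{x\in X}\phi_n(x) \;=\; \sup_{\nu}\, \lim_{n\to\infty}\frac{1}{n}\int \phi_n\, d\nu,
\]
where the supremum runs over $f$-invariant ergodic Borel probability measures on $X$. Existence of the limit on the left follows from Fekete's lemma applied to the subadditive sequence $a_n := \sup_x \phi_n(x)$ (subadditivity of $a_n$ being inherited from that of $\phi_n$ together with $f$-invariance of $X$). The hypothesis $\lambda^{\max}(F,\nu) \leq \lambda$ for every ergodic $\nu$ then gives $L \leq \lambda$. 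Fixing $\epsilon > 0$ and choosing $n$ large enough that $a_n/n < \lambda + \epsilon$ yields $\|F^n(x)\| \leq e^{n(\lambda+\epsilon)}$ uniformly in $x$, which is (1).

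For (2), apply the same argument to the sequence $\psi_n(x) := \log \|F^n(x)^{-1}\|$. This sequence is again continuous and subadditive over $f$: from the identity $F^{n+m}(x)^{-1} = F^m(x)^{-1} \circ F^n(f^m(x))^{-1}$ one reads off $\psi_{n+m}(x) \leq \psi_m(x) + \psi_n(f^m(x))$. Its top Lyapunov exponent with respect to any ergodic $\nu$ equals $-\lambda^{\min}(F,\nu) \leq -\lambda'$, so part (1) applied to $\psi_n$ produces, for any $\epsilon > 0$, an $n$ with $\|F^n(x)^{-1}\| \leq e^{-n(\lambda'-\epsilon)}$ for every $x$, which is equivalent to $\|F^n(x)^{-1}\|^{-1} \geq e^{n(\lambda'-\epsilon)}$, as desired.

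The main technical step inside the invoked variational principle is a block-decomposition of orbits: choose $x_n$ with $\phi_n(x_n) \geq a_n - 1$ and form the empirical measures $\mu_n := \frac{1}{n}\sum_{k=0}^{n-1}\delta_{f^k(x_n)}$, whose weak-$\ast$ limit $\mu$ is $f$-invariant. Partitioning $\{0,1,\dots,n-1\}$ into $m$ interleaved blocks of length $m$ and iterating subadditivity yields $m\,\phi_n(x_n) \leq \sum_{k=0}^{n-m}\phi_m(f^k(x_n)) + O(m)$, so that dividing by $mn$ and letting $n\to \infty$ along the subsequence realizing $\mu$ gives $L \leq \frac{1}{m}\int \phi_m\, d\mu$; letting $m\to\infty$ and applying Kingman's theorem together with the ergodic decomposition of $\mu$ identifies this upper bound with $\int \lambda^{\max}(F,\cdot)\, d\hat{\mu}$, which by hypothesis is at most $\lambda$. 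This is the one step I would need to execute carefully; otherwise the lemma is a direct consequence of standard results on subadditive cocycles.
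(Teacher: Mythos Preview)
Your proof is correct and follows exactly the approach indicated in the paper, which simply states that the lemma is a corollary of a classical result on subadditive sequences, citing \cite{S98} and Chapter~4 of \cite{K11}. You have supplied the details the paper omits---the subadditivity of $\phi_n(x)=\log\|F^n(x)\|$ and $\psi_n(x)=\log\|F^n(x)^{-1}\|$, the variational principle for subadditive cocycles, and even a sketch of its proof---so your write-up is more complete than, but entirely consistent with, the paper's one-line justification.
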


\subsection{Some useful properties of commuting maps}\label{commuting}

A basic principle in the study of abelian actions is the following: if $f$ and $g$ are commuting maps, and $\Upsilon$ is an $f$-invariant object, then $g_*(\Upsilon)$ is also $f$-invariant. For example, if $f(p) = p$, then $f(g(p)) = g(f(p)) = g(p)$. Thus $g(\mathrm{Fix}(f)) \subset \mathrm{Fix}(f)$; in other words, the set of $f$-periodic points of period $k$ is a $g$-invariant set.  Similar results hold for invariant sets of commuting homeomorphisms, such as the limit set and non-wandering set.

In the measurable context, if $\mu$ is an $f$-invariant measure, then $g_\ast\mu$ is also $f$-invariant, and so $g_\ast$ preserves the set of $f$-invariant measures.  When further assumptions are added, such as those in the present context, we get the following useful lemma.

\begin{lemma}\label{lemma: g pr vol} Let $M$ be a closed manifold, and suppose that $f,g\in \Diff(M)$ satisfy $fg=gf$.   If $f$ is  volume preserving and  topologically transitive (for example, if $f$ is ergodic with respect to volume), then $g$ is volume preserving as well.
\end{lemma}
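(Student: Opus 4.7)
The plan is to examine the pushforward measure $g_\ast\mathrm{vol}$ and show, using commutativity together with topological transitivity, that it must coincide with $\mathrm{vol}$.

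First, I would observe that $g_\ast\mathrm{vol}$ is $f$-invariant: using $fg=gf$ and the fact that $f_\ast\mathrm{vol}=\mathrm{vol}$,
\[
f_\ast(g_\ast\mathrm{vol}) = (fg)_\ast\mathrm{vol} = (gf)_\ast\mathrm{vol} = g_\ast(f_\ast\mathrm{vol}) = g_\ast\mathrm{vol}.
\]
Second, since $g$ is a $C^1$ diffeomorphism of a closed manifold, $g_\ast\mathrm{vol}$ is absolutely continuous with respect to $\mathrm{vol}$, with a strictly positive continuous density $\phi(x) := |\det D_xg^{-1}|$ (computed in any auxiliary Riemannian metric); moreover $\int_M \phi\,d\mathrm{vol} = g_\ast\mathrm{vol}(M) = \mathrm{vol}(M) = 1$.

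Third, the $f$-invariance of $g_\ast\mathrm{vol}$ combined with the $f$-invariance of $\mathrm{vol}$ translates into pointwise invariance of the density. Indeed, for every Borel set $A\subset M$, an application of the change of variables $f_\ast\mathrm{vol}=\mathrm{vol}$ gives
\[
\int_A \phi\,d\mathrm{vol} = \int_{f^{-1}A}\phi\,d\mathrm{vol} = \int_A \phi\circ f^{-1}\,d\mathrm{vol},
\]
so $\phi\circ f^{-1} = \phi$ almost everywhere, and by continuity of $\phi$ this equality holds everywhere. Thus $\phi$ is a continuous $f$-invariant function on $M$.

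Finally, since $f$ is topologically transitive, there exists $x_0\in M$ whose $f$-orbit is dense. The function $\phi$ is constant along this orbit and hence, by continuity, constant on $M$. Combined with $\int_M \phi\,d\mathrm{vol}=1$ this forces $\phi\equiv 1$, that is, $g_\ast\mathrm{vol}=\mathrm{vol}$, so $g$ is volume preserving. There is no serious obstacle; the only mild point to check is that ``volume preserving'' for $g$ is literally the equality $g_\ast\mathrm{vol}=\mathrm{vol}$, which is what the argument produces.
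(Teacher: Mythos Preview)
Your proof is correct and follows essentially the same approach as the paper: both argue that $g_\ast\mathrm{vol}$ is $f$-invariant, that its continuous Radon--Nikodym derivative with respect to $\mathrm{vol}$ is therefore $f$-invariant, and that transitivity forces this density to be the constant $1$. The only cosmetic difference is that the paper identifies the constant as the degree of $g$, while you obtain it by integrating the density.
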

\begin{proof}The commutativity implies that $\mathrm{vol}_M$ and  ${g}_\ast (\mathrm{vol}_M)$ are both $f-$invariant measures. Since $g$ is $C^1$, the induced Radon-Nikodym derivative $\frac{d ({g}_\ast \mathrm{vol}_M)}{d (\mathrm{vol}_M)}$ is an $f-$invariant continuous function. Transitivity of $f$ implies that this derivative is constant and equal to the degree of $g$, which is $1$.  Thus ${g}_\ast(\mathrm{vol}_M)=\mathrm{vol}_M$. 
\end{proof}

If $f$ and $g$ are commuting diffeomorphisms, then their derivatives commute as well.   It follows that if $f(p) = p$, then the derivative of $f$ at $p$ is conjugate to its derivative at $g(p)$,
and so $D_pf$ and $D_{g(p)}f$ have the same eigenvalues.  More generally:
\begin{lemma}\label{lemma: commLyap} Let $M$ be a closed manifold, and suppose that $f,g\in \Diff(M)$ satisfy $fg=gf$.  If $\mu$ is an ergodic invariant measure for $f$, then the Lyapunov exponents of $\mu$ are the same as the Lyapunov exponents of $g_\ast\mu$.
\end{lemma}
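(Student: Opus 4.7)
The plan is to use the commutation $fg=gf$ at the level of derivatives to transport the Oseledec data of $\mu$ to Oseledec data of $g_\ast\mu$ via the isomorphism $Dg$.

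First I would verify that $g_\ast\mu$ is indeed an $f$-invariant probability measure so that the Oseledec theorem applies: from $fg=gf$ one has $f_\ast g_\ast\mu = g_\ast f_\ast\mu = g_\ast\mu$. (Ergodicity of $g_\ast\mu$ under $f$ also follows immediately, since the preimage under $g$ of an $f$-invariant set for $g_\ast\mu$ is an $f$-invariant set for $\mu$, but ergodicity is not strictly needed to compare Lyapunov spectra.)

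Next I would differentiate the identity $f^n\circ g = g\circ f^n$ at a point $q\in M$ to obtain
\[
D_{g(q)}f^n\cdot D_qg \;=\; D_{f^n(q)}g\cdot D_qf^n,
\]
equivalently $D_{g(q)}f^n = D_{f^n(q)}g\cdot D_qf^n\cdot (D_qg)^{-1}$. Since $M$ is closed and $g\in\Diff(M)$, the constants $C:=\sup_{x\in M}\|D_xg\|$ and $C':=\sup_{x\in M}\|D_xg^{-1}\|$ are finite. Hence for any $q\in M$ and any nonzero $w\in T_qM$, writing $v:=D_qg(w)\in T_{g(q)}M$,
\[
(CC')^{-1}\|D_qf^n(w)\|\;\leq\;\|D_{g(q)}f^n(v)\|\;\leq\;CC'\,\|D_qf^n(w)\|.
\]
Taking logarithms, dividing by $n$, and letting $n\to\pm\infty$, the bounded multiplicative error disappears, so the Lyapunov exponent of $v$ at $g(q)$ equals the Lyapunov exponent of $w=(D_qg)^{-1}(v)$ at $q$.

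Finally I would translate this into the statement about spectra. Pick a full $\mu$-measure set $X$ on which the Oseledec splitting $T_qM=\bigoplus_i E^{\lambda_i}_q$ for $(f,\mu)$ is defined; then $g(X)$ has full $g_\ast\mu$-measure. The previous estimate shows that the linear splitting $T_{g(q)}M=\bigoplus_i D_qg\bigl(E^{\lambda_i}_q\bigr)$ consists of vectors whose forward and backward growth rates under $Df^n$ are exactly $\lambda_i$; a short computation using $D_{g(q)}f\cdot D_qg = D_{f(q)}g\cdot D_qf$ shows this splitting is $Df$-invariant along the $f$-orbit of $g(q)$. By uniqueness of the Oseledec decomposition, this splitting must coincide with the Oseledec splitting of $(f,g_\ast\mu)$ at $g(q)$, with the same exponents and (because $D_qg$ is a linear isomorphism) the same multiplicities. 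There is no real obstacle here; the only subtle point is remembering that the bounded factors $D_qg$ and $D_{f^n(q)}g$ have no effect on the asymptotic growth rate, which is exactly what makes Lyapunov exponents a conjugacy invariant of linear cocycles up to bounded coboundaries.
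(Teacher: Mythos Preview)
Your proof is correct and follows precisely the idea the paper signals in the sentence immediately preceding the lemma: commuting diffeomorphisms have commuting derivatives, so $D_{g(q)}f^n$ and $D_qf^n$ differ by bounded linear isomorphisms, which forces equality of asymptotic growth rates. The paper does not actually write out a proof of this lemma---it is stated as a direct extension of the periodic-point observation just above---so there is nothing further to compare.
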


 Applying the same principle to the invariant subbundles in a dominated splitting, we obtain the following lemma, whose proof is straightforward.

\begin{lemma}\label{lemma: PHcent} Let $M$ be a closed manifold, and suppose that $f,g\in \Diff(M)$ satisfy $fg=gf$.   If $f$ preserves a dominated splitting
$TM = E^1\oplus\cdots\oplus E^\ell$,
then so does $g$.  Moreover if, for some $i$,  $E^i$ is uniquely integrable, with integral foliation $\W^i$, then $g(\W^i) = \W^i$.
\end{lemma}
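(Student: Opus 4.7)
The plan is to establish first that $Dg$ preserves the splitting, then to handle the foliation statement via unique integrability. The key tool throughout is the identity $Df\circ Dg = Dg\circ Df$ (and its iterates $Df^N\circ Dg = Dg\circ Df^N$) obtained by differentiating $fg=gf$.

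First I would push the splitting forward by $Dg$: define $\widetilde E^i(x') := D_{g^{-1}(x')}g(E^i(g^{-1}(x')))$. Then $\dim \widetilde E^i = \dim E^i$, the distributions sum to $TM$, and each $\widetilde E^i$ is $Df$-invariant since for $x' = g(x)$,
\[
D_{x'}f\bigl(Dg(E^i(x))\bigr) = D_x(fg)(E^i(x)) = D_x(gf)(E^i(x)) = Dg\bigl(E^i(f(x))\bigr) = \widetilde E^i(f(x')).
\]
Next I would check that this pushed-forward splitting is dominated for $f$. Using compactness of $M$, fix $C\geq 1$ with $\|Dg^{\pm 1}\| \leq C$ uniformly. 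For unit vectors $u'\in \widetilde E^{i+1}(x')$ and $v'\in \widetilde E^{i}(x')$, write $u' = Dg(u)$, $v' = Dg(v)$ with $u\in E^{i+1}(x)$, $v\in E^i(x)$ of norms in $[C^{-1},C]$. Then $D_{x'}f^{kN}(u') = Dg(D_xf^{kN}(u))$ and similarly for $v'$, so
\[
\frac{\|D_{x'}f^{kN}(u')\|}{\|D_{x'}f^{kN}(v')\|} \leq C^2\cdot \frac{\|D_xf^{kN}(u)\|}{\|D_xf^{kN}(v)\|} \leq C^4 \cdot 2^{-k},
\]
where the second inequality iterates the original domination $k$ times (on unit vectors, then rescaled). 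Choosing $k$ large makes this smaller than $1/2$, so $\widetilde E^1\oplus\cdots\oplus\widetilde E^\ell$ is a dominated splitting for $f$. By the uniqueness of dominated splittings of prescribed dimensions -- any two dominated splittings for $f$ with the same ordered sequence of dimensions must coincide -- we conclude $\widetilde E^i = E^i$, i.e.\ $Dg(E^i) = E^i$.

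For the second claim, suppose $E^i$ is uniquely integrable, tangent to $\W^i$. For any $x\in M$, the image $g(\W^i(x))$ is a path-connected, $C^1$ injectively immersed submanifold whose tangent space at $g(y)$ equals $D_yg(E^i(y)) = E^i(g(y))$ by the previous step. Any $C^1$ curve in $g(\W^i(x))$ is tangent to $E^i$ and hence, by unique integrability, is contained in a single leaf of $\W^i$; since $g(\W^i(x))$ is path-connected and contains $g(x)$, it is contained in $\W^i(g(x))$. Applying the same reasoning to $g^{-1}$ yields the reverse inclusion, so $g(\W^i(x)) = \W^i(g(x))$ and thus $g(\W^i) = \W^i$ as foliations.

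No step presents a real obstacle; the only mildly delicate point is absorbing the bounded distortion constant $C^4$ coming from $Dg$ into the exponential decay from domination, which is handled by iterating $f^N$ enough times.
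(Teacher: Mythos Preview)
Your proof is correct and follows the standard argument; the paper itself does not give a proof, merely stating that the result is obtained by ``applying the same principle to the invariant subbundles in a dominated splitting'' and that the proof is straightforward. Your write-up supplies exactly the details one would expect: pushing the splitting forward by $Dg$, verifying domination up to the bounded distortion constant $C^4$ absorbed by iterating $f^N$, invoking uniqueness of dominated splittings of prescribed dimensions, and then using the paper's definition of unique integrability (every $C^1$ curve tangent to $E^i$ lies in a single leaf) to conclude $g(\W^i(x)) = \W^i(g(x))$.
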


Sufficiently high regularity of a map plus some hyperbolicity can force high regularity of its centralizer.  A basic motivating example is a linear map on $\RR$.  If $f(x) = 2x$ and $fg = gf$, then $g(0) = 0$, and the commutativity of $f$ and $g$ implies that for all $x\neq 0$ and $n$:
\[\frac{g(x)}{x} = \frac{f^n g f^{-n}(x)}{x} = \frac{2^n}{x} g\left(\frac{x}{2^n}\right) = \frac{g\left(\frac{x}{2^n}\right) - g(0))}{\frac{x}{2^n}}.
\]
If $g$ is differentiable at $0$, then the right hand side converges as $n\to \infty$ to $g'(0)$.  Thus $g(x) = g'(0) x$ is linear. 

 A  more sophisticated illustration of this principle  in the setting of linear Anosov diffeomorphisms is the following result, due to Adler and Palais:

\begin{lemma}\label{lemma: cent linear Ansv}[\cite{AP}] Suppose $A\in \SL(k,\ZZ) $ does not have a root of unity as an eigenvalue, and let $T_A$ be  the induced  automorphism of $\TT^k$. Suppose $h:\TT^k\to \TT^k$ is a homeomorphism such that $T_A h=hT_A$.  Then $h$ is affine and $h(0)\in \QQ^k/\ZZ^k$.
\end{lemma}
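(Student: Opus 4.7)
The plan is to lift $h$ to the universal cover $\RR^k$, recast the commutation relation as a twisted functional equation, and then use Fourier analysis on the torus to kill the nonlinear part of $h$.

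First, I would lift $h$ to a homeomorphism $\tilde h\colon\RR^k\to\RR^k$. Because $h$ is a self-homeomorphism of $\TT^k$, there exists $B\in\GL(k,\ZZ)$ (the induced automorphism of $H_1$) such that $\tilde h(x+n)=\tilde h(x)+Bn$ for every $x\in\RR^k$, $n\in\ZZ^k$. Lifting $T_A h=hT_A$ gives $A\tilde h(x)-\tilde h(Ax)\in\ZZ^k$; continuity in $x$ together with discreteness of $\ZZ^k$ forces this expression to be a constant $v\in\ZZ^k$. Setting $\phi(x):=\tilde h(x)-Bx$, the equivariance of $\tilde h$ shows $\phi$ is $\ZZ^k$-periodic and hence descends to a continuous map $\TT^k\to\RR^k$, and the equation becomes
\[A\phi(x)-\phi(Ax)=(BA-AB)x+v.\]
Since $\phi$ is bounded while the right-hand side is linear in $x$, one must have $AB=BA$, so the equation simplifies to $A\phi(x)-\phi(Ax)=v$. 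Because $1$ is not an eigenvalue of $A$, I can choose $w\in\RR^k$ with $(A-I)w=v$ and replace $\phi$ by $\psi:=\phi-w$, obtaining a continuous $\psi\colon\TT^k\to\RR^k$ that intertwines exactly: $\psi\circ T_A=A\circ\psi$.

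The heart of the matter, and the step I expect to require the most care, is proving $\psi$ is constant. I would expand $\psi(x)=\sum_{n\in\ZZ^k}c_n e^{2\pi i n\cdot x}$ with $c_n\in\CC^k$ and $\sum\|c_n\|^2<\infty$. Matching coefficients in $\psi(Ax)=A\psi(x)$ yields the iteration $c_{(A^T)^j n}=A^{-j}c_n$ for all $j\in\ZZ$. For any $n\neq 0$, the orbit $\{(A^T)^j n:j\in\ZZ\}\subset\ZZ^k$ is infinite, for otherwise $(A^T)^N$ would fix the nonzero vector $n$, forcing some $N$-th root of unity to be an eigenvalue of $A$, contradicting the hypothesis. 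The corresponding subsum of Parseval's identity then gives
\[\sum_{j\in\ZZ}\|A^{-j}c_n\|^2 \leq \sum_{m\in\ZZ^k}\|c_m\|^2 < \infty.\]
Decomposing $c_n$ along the generalized eigenspaces of $A$ over $\CC$, this two-sided summability simultaneously demands $|\lambda|>1$ (so that the tail $j\to+\infty$ is summable) and $|\lambda|<1$ (so that $j\to-\infty$ is summable) for every eigenvalue $\lambda$ whose component of $c_n$ is nonzero --- an impossibility. Hence $c_n=0$ for all $n\neq 0$, and $\psi\equiv c_0$ is constant.

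It follows that $\tilde h(x)=Bx+\beta$ with $\beta:=w+c_0$, so $h$ is the affine map $x\mapsto Bx+\beta\pmod{\ZZ^k}$ on $\TT^k$. Finally, reducing $T_A h=h T_A$ modulo $\ZZ^k$ forces $(A-I)\beta\in\ZZ^k$, and since $(A-I)^{-1}$ has rational entries, $\beta\in\QQ^k$; hence $h(0)=\beta\in\QQ^k/\ZZ^k$ as claimed.
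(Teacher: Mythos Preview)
Your argument is correct. The paper does not supply its own proof of this lemma---it simply quotes the result from \cite{AP}---so there is nothing in the paper to compare against; what you have written is essentially the classical Adler--Palais Fourier argument. One cosmetic remark: once you have shown $\psi\equiv c_0$, the intertwining relation $A c_0=c_0$ together with $1\notin\spec(A)$ already forces $c_0=0$, so in fact $\beta=w=(A-I)^{-1}v\in\QQ^k$ directly, and your final paragraph is a (perfectly valid) second derivation of the same rationality.
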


For such toral automorphisms $T_A$, we thus have
\[\Z_{\mathrm{Homeo}(\TT^k)}(T_A) \subseteq \{ x\mapsto T_L x + b  :  L\in \Z_{\GL(k,\ZZ)}(A),\;\, b\in\QQ^k/\ZZ^k \}.
\]
When $A$ is irreducible, the linear part of the right hand side can be computed using the following lemma, which a corollary of the Dirichlet unit theorem (cf. Proposition 3.7 in \cite{KKS}).

\begin{lemma}\label{lemma: rank cent}  Let $A\in \GL(k,\ZZ)$ be a matrix with characteristic polynomial irreducible over $\ZZ$. Denote by $\Z_{\GL(k,\ZZ)}(A)$ and $\Z_{\SL(k,\ZZ)}(A)$  the centralizer  of $A$ in $\GL(k,\ZZ)$ and $\SL(k,\ZZ)$, respectively.
Then $\Z_{\GL(k,\ZZ)}(A)$ and $\Z_{\SL(k,\ZZ)}(A)$ are  abelian,  and both are virtually $\ZZ
^{r+c−1}$ where $r$ is the number of real eigenvalues
and $c$ is the number of pairs of complex eigenvalues, $r + 2c = k$.
\end{lemma}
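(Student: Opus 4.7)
The plan is to identify the centralizer of $A$ with the unit group of an order in a number field, and then invoke Dirichlet's unit theorem. First I would observe that since the characteristic polynomial $p(x)$ of $A$ is irreducible over $\ZZ$ (hence over $\QQ$), it equals the minimal polynomial of $A$. Consequently the centralizer of $A$ in the $\QQ$-algebra $\Mat(k,\QQ)$ coincides with the subalgebra $\QQ[A]$, and the evaluation map $x\mapsto A$ furnishes a ring isomorphism $\QQ[x]/(p(x)) \to \QQ[A]$. Thus $\QQ[A]$ is a number field $K$ of degree $k$, whose real and complex places correspond precisely to the real eigenvalues of $A$ and the pairs of complex conjugate eigenvalues, giving $r$ real embeddings and $c$ pairs of complex embeddings with $r+2c=k$. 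In particular $\QQ[A]$ is commutative, so both $\Z_{\GL(k,\ZZ)}(A)$ and $\Z_{\SL(k,\ZZ)}(A)$, being subgroups of $\QQ[A]^\times$, are abelian.

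Next I would analyze the integral centralizer $\Ooo_A := \Z_{\Mat(k,\ZZ)}(A) = \QQ[A]\cap \Mat(k,\ZZ)$. As a $\ZZ$-submodule of $\Mat(k,\ZZ)\cong\ZZ^{k^2}$, it is finitely generated and torsion-free, and it contains $\ZZ[A]$, which under the isomorphism above is sent to $\ZZ[\alpha]\subset K$ for a root $\alpha$ of $p$. Since $\ZZ[\alpha]$ already spans $K$ over $\QQ$, the ring $\Ooo_A$ is an order in $K$ sitting between $\ZZ[\alpha]$ and the maximal order $\Ooo_K$. I would then verify that an element $B\in \Ooo_A$ lies in $\GL(k,\ZZ)$ iff it is a unit of $\Ooo_A$: under the regular representation of $K$ on itself, the norm $N_{K/\QQ}(B)$ equals $\det(B)$, so $\det(B)=\pm 1$ is equivalent to $N_{K/\QQ}(B)=\pm 1$, equivalent to $B^{-1}\in\Ooo_A$. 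Hence $\Z_{\GL(k,\ZZ)}(A)=\Ooo_A^\times$.

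Now Dirichlet's unit theorem for orders (see e.g.\ the proof of Proposition~3.7 in \cite{KKS}) gives
\[
\Ooo_A^\times \cong \mu(\Ooo_A)\times \ZZ^{r+c-1},
\]
where $\mu(\Ooo_A)$ is the finite cyclic group of roots of unity contained in $\Ooo_A$. This is exactly the conclusion for $\Z_{\GL(k,\ZZ)}(A)$. For the $\SL$ case, note that $\Z_{\SL(k,\ZZ)}(A)$ is the kernel of the determinant homomorphism $\det\colon \Z_{\GL(k,\ZZ)}(A)\to\{\pm 1\}$, hence a subgroup of index at most $2$ in $\Z_{\GL(k,\ZZ)}(A)$; in particular it is also virtually $\ZZ^{r+c-1}$.

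No serious obstacles are expected, as the statement is essentially a translation of Dirichlet's unit theorem. The only subtle point is making sure that the centralizer inside $\Mat(k,\ZZ)$ is a genuine order (finitely generated as a $\ZZ$-module and spanning $K$ over $\QQ$), which is clear once one observes it contains $\ZZ[A]$ and sits inside the free $\ZZ$-module $\Mat(k,\ZZ)$; and checking that the unit group of an arbitrary order, not just the maximal order $\Ooo_K$, still has rank $r+c-1$, which is standard since any order has finite index in $\Ooo_K$ and hence its unit group has finite index in $\Ooo_K^\times$.
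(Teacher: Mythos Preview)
Your argument is correct and is precisely the approach the paper indicates: the paper does not prove this lemma but simply states it as ``a corollary of the Dirichlet unit theorem (cf.\ Proposition~3.7 in \cite{KKS}).'' Your write-up supplies the standard details of that corollary --- identifying $\Z_{\GL(k,\ZZ)}(A)$ with the unit group of the order $\QQ[A]\cap\Mat(k,\ZZ)$ in the number field $\QQ[x]/(p(x))$ and then applying Dirichlet --- and handles the $\SL$ case correctly via the determinant map.
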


\subsection{More on partial hyperbolicity}\label{sec: prl PH} In this section we discuss fundamental concepts in the study of  partially hyperbolic diffeomorphisms: {\em normal hyperbolicity,  leaf conjugacy, center bunching, and accessibility}. We also discuss some  results of Avila--Viana--Wilkinson \cite{AVW, AVW2}    that we use in this paper. 

\subsubsection{Normal hyperbolicity}
Suppose $M$ is closed manifold, and let $f_1, f_2\in \Diff(M)$. Assume that $\F_1,\F_2$ are foliations of $M$ with $C^1$ leaves and that $f_1$ and $f_2$ respectively preserve $\F_1$ and $\F_2$.
\begin{definition} A {\em leaf conjugacy} from  $(f_1, \F_1)$ to $(f_2, \F_2)$ is a homeomorphism $h: M\to M$ sending $\F_1$ leaves diffeomorphically onto $\F_2$ leaves, equivariantly in the sense that \[h(f_1(\F_1(p))) = f_2(\F_2(h(p))), ~~\forall p\in M.\]\end{definition}

\begin{definition}Suppose $f \in \Diff(M)$ and $\F$ is an $f-$invariant foliation of $M$ with $C^1$ leaves. $\F$ is {\em normally hyperbolic} if there exists a $Df-$invariant dominated splitting $TM=E^u\oplus E^c\oplus  E^s$, with at least two of the bundles nontrivial,   such that $Df$ uniformly expands $E^u$, uniformly contracts $E^s$,  and such that $T\F=E^c$.  
\end{definition}
Note that a diffeomorphism with a normally hyperbolic foliation is partially hyperbolic, with $E^c = T\F$, but, as remarked above, the converse does not hold in general: the center bundle of a partially hyperbolic diffeomorphism is not necessarily tangent to a foliation, let alone an invariant foliation.

\begin{definition}A partially hyperbolic diffeomorphism $f$ is {\em dynamically coherent} if there exist
$f-$invariant center stable and center unstable foliations $\W^{cs}$ and $\W^{cu}$, tangent to the bundles $E^{cs}$ and $E^{cu}$, respectively; intersecting their leaves gives an invariant center foliation $\W^c$.
\end{definition}

\subsubsection{Fibered partially hyperbolic systems}\label{sec: comp cent}

In many of the cases of interest here, the center foliation $\W^c$ of a partially hyperbolic diffeomorphism $f$ has compact leaves that form a fibration.  We distinguish between several cases of such fibered systems.

\begin{definition}Let $f$ be a partially hyperbolic diffeomorphism of a closed manifold $M$. Assume that there exists an $f-$invariant center foliation $\W^c_f$ with compact leaves.
\begin{itemize}
\item If $\W^c_f$ is a topological fibration of $M$, i.e. the quotient space $M/\W^c_f$ is a topological manifold\footnote{Or, equivalently, if $\W^c_f$ has trivial holonomy; see \cite{Bo}}, then $f$ is called a \emph{fibered partially hyperbolic system}, and the map $\bar{f}:M/\W^c\to M/\W^c$ canonically induced by $f$ is called the \emph{base map}.
\item A fibered partially hyperbolic system $f$ is  \emph{smoothly fibered} (or \emph{$C^{r}-$fibered}, for  $r\geq 1$)  if $\W^c_f$ is a $C^\infty$ (respectively $C^r$) foliation, and $f$ is $C^\infty$ (resp. $C^r$).
\item A fibered partially hyperbolic system $f$ is  \emph{isometrically fibered} if there is a continuous Riemannian metric on $E^c$ such that $Df|_{E^c_f}$ is an isometry.
\item An isometrically fibered partially hyperbolic system $f$ is an {\em  isometric extension} (or smoothly isometrically fibered) if $f$ is smoothly fibered.
\end{itemize}
\end{definition}

\subsubsection{Leafwise structural stability}
A central result in \cite{HPS} concerns perturbations of normally hyperbolic systems. It provides techniques to study integrability of the central distribution and robustness of the central foliation for partially hyperbolic systems.  

To study the precise smoothness of the leaves of a normally hyperbolic foliation, we refine the definition of normal hyperbolicity.  For $r\geq 1$ we say that $(f,\F)$ is {\em $r$-normally hyperbolic} if there exists $k\geq 1$ such that
\[\sup_p \|D_p f^k|_{E^s}\|\cdot \|(D_p f^k|_{T\F})^{-1}\|^r<1, \hbox{ and }\sup_p \|(D_p f^k|_{E^u})^{-1}\|\cdot \|D_p f^k|_{T\F}\|^r<1.\]
Note that $1$-normally hyperbolic $=$ normally hyperbolic, and $r$-normal hyperbolicity is a $C^1$-open condition.

The proof of the following theorem can be found in  \cite[Theorems 7.5 and 7.6]{HPS} (see also Remark 4 on p. 117), \cite[Theorem 1.26]{Bo}, and \cite{CPnotes},  \cite[Theorem 7.1]{HPS}, and \cite[Theorems A and B]{PSW}.
See the discussion in \cite[Section 3]{PSW}.

\begin{maintheorem}[Foliation Stability and H\"older continuity of the leaf conjugacy] \label{main: HPS} Let $M$ be a closed manifold, and let $(f,\F)$ be an $r$-normally hyperbolic foliation of $M$,  for some $r\geq 1$, with $Df$-invariant splitting $E^u\oplus \left( T\F = E^c\right)\oplus E^s$.    Then the leaves of $\F$ are uniformly $C^r$. The bundles $E^u$ and $E^s$ are uniquely integrable and the leaves of their integral foliations $\W^u$ and $\W^s$ are as smooth as $f$.

 Suppose in addition that one of the following holds:
\begin{enumerate}
\item[(a)] $f$ is a fibered system, with $1$-dimensional center fibration $\F$, or
\item[(b)] the restriction   $\left.Df\right|_{T\F}$ is an isometry.
\end{enumerate}

Then
\begin{enumerate}
\item $f$ is dynamically coherent, and the foliations  $\W^{cu}$, $\W^{cs}$ and $\F = \W^{cu}\cap \W^{cs}$ are $r$-normally hyperbolic and uniquely integrable.
\item Every diffeomorphism $g$ that $C^1$-approximates $f$ is dynamically coherent and the foliations 
 $ \W^{cs}_g$, $\W^{cu}_g$ and $\F_g = \W^{cu}_g\cap \W^{cs}_g$ are $r$-normally hyperbolic. near $\F$. Moreover, $(f, \F)$ is leaf conjugate to $(g, \F_g)$ by a  homeomorphism $h^c\colon M \to M$ close to  the identity.   
\item In case (a) the conjugacy in (2) is H\"older continuous.
\end{enumerate}

\end{maintheorem}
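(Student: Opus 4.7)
The plan is to follow the established framework of Hirsch--Pugh--Shub, supplemented by the refinements of Pugh--Shub--Wilkinson for the H\"older statement. First, to obtain $C^r$ leaves of $\F$, I would realize each leaf locally as the graph of a section of a tubular disc bundle around the zero section of $T\F$. The dynamics of $f$ induces a graph transform on a Banach space of such $C^r$ sections, and the $r$-normal hyperbolicity hypothesis, together with the domination $E^u\oplus E^c\oplus E^s$, ensures this graph transform is a fiber contraction in the sense of the HPS $C^r$ section theorem. Its unique fixed section has $C^r$ graph, showing that the leaves of $\F$ are uniformly $C^r$. Unique integrability of $E^s$ and $E^u$ and the smoothness of the leaves of $\W^s, \W^u$ follow from the classical stable manifold theorem applied to $f$.

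For the remaining conclusions under hypothesis (a) or (b), the key step is to construct the center-stable foliation $\W^{cs}$ (and symmetrically $\W^{cu}$). Under (b), since $Df$ acts isometrically on $E^c$, one can saturate the leaves of $\F$ by leaves of $\W^s$: the absence of expansion along $E^c$ prevents the saturation from becoming pathological, and one obtains a foliation tangent to $E^{cs}$; uniqueness follows from domination. Under (a), where the leaves of $\F$ form a topological fibration with one-dimensional fibers, the quotient map carries the dynamics down to a partially hyperbolic (indeed Anosov) system on the base, whose stable foliation pulls back to $\W^{cs}$. In either case, $r$-normal hyperbolicity of $\W^{cs}, \W^{cu}$, hence of $\F=\W^{cs}\cap \W^{cu}$, is inherited from the corresponding inequalities for $f$. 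The leaf conjugacy between $(f,\F)$ and a $C^1$-nearby $g$ is then produced as the unique fixed point of an operator of the form $h\mapsto g^{-1}\circ h\circ f$ on a suitable space of homeomorphisms near the identity that send $\F$-plaques to $\F_g$-plaques; normal hyperbolicity makes this operator a contraction transverse to $\F$, so the fixed point $h^c$ exists, is continuous and close to the identity, and conjugates $\F$ to $\F_g$ in the required equivariant sense.

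The main obstacle is the H\"older continuity of $h^c$ in case (a), for which the straightforward graph transform argument only yields continuity. To upgrade, I would follow the strategy of Pugh--Shub--Wilkinson: first establish that the leaf conjugacy is H\"older along leaves of $\W^s_g$ and $\W^u_g$ separately, by iterating the defining relation $h^c\circ f = g\circ h^c$ and using the expansion/contraction rates together with cone-field arguments adapted to the bunching inequalities of $r$-normal hyperbolicity, and then combine the two leafwise H\"older estimates via Journ\'e's lemma to obtain a transverse H\"older exponent. The explicit exponent depends on ratios of the central rates to the extremal rates of $Df$, and controlling this exponent carefully is what allows the H\"older estimate to feed into the Pesin-theoretic arguments in the main results of the paper.
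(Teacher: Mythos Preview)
The paper does not give its own proof of this theorem; it assembles the statement from \cite{HPS}, \cite{Bo}, \cite{CPnotes}, and \cite{PSW} and refers the reader to those sources. Your sketch of the HPS graph-transform and fiber-contraction machinery for the $C^r$ leaf regularity, dynamical coherence under (a) or (b), and the leaf-conjugacy construction is broadly in line with what those references do, so for most of the statement there is nothing to correct.

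The one genuine gap is your plan for the H\"older continuity of $h^c$ in case (a). Journ\'e's lemma is a $C^r$ regularity result for $r\geq 1$; it does not combine leafwise H\"older estimates with exponent $\alpha<1$ into a global H\"older estimate. Moreover, $h^c$ does not in general carry $\W^s_f$ or $\W^u_f$ leaves to $\W^s_g$ or $\W^u_g$ leaves, so decomposing the estimate as ``H\"older along $\W^s_g$ and $\W^u_g$ separately, then Journ\'e'' is not the right framework. The approach in \cite{PSW}, and the one the paper itself spells out concretely later when it needs sharp exponents, is direct: fix a smooth normal bundle $\cN$ to $E^c$ and define $h^c(x)=\pi^{-1}(P(x))\cap \cD_\epsilon(x)$, where $\cD_\epsilon(x)$ is the $\exp$-image of the $\epsilon$-disc in $\cN(x)$. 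One then obtains the H\"older exponent by an iteration argument that compares distances along $\W^{ic}_{f_0}$-leaves and $\W^{ic}_f$-leaves using the contraction and expansion rates of $f_0$ and $f$; this yields an explicit exponent of the form $\beta=(\lambda_i+\epsilon)/(\lambda_i-\epsilon)$ that tends to $1$ as $d_{C^1}(f,f_0)\to 0$, which is exactly what the downstream Pesin-theoretic arguments require.
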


  By combining Theorem~\ref{main: HPS} with Lemma~\ref{lemma: PHcent} gives the next proposition as an immediate corollary.

\begin{prop}\label{prop: gwc=wc}    Let $f\colon M\to M$ satisfy one of the following conditions.

\begin{enumerate}
\item[(a)] $M=\TT^d$, and $f$ is a $C^1-$small perturbation of an isometric extension of an Anosov diffeomorphism of $\TT^{d-1}$;
\item[(b)] $M=T^1X$, where $X$ is a closed, negatively curved manifold, and 
$f$  is a $C^1-$small perturbation of the discretized geodesic flow $\psi_{t_0}$, for some $t_0\neq 0$ (or more generally any Anosov flow). 
 
\end{enumerate}
 Then $f$ is dynamically coherent, and for any   $g\in \Z_{1}(f)$   we have  $g\W^{\ast}_f=\W^{\ast}_f$, for $\ast\in \{c,s,u,cs,cu\}$.
\end{prop}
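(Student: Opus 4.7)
The plan is to combine Theorem \ref{main: HPS} (to obtain dynamical coherence and unique integrability of the five relevant foliations) with Lemma \ref{lemma: PHcent} (to promote $f$-invariance of each foliation to $g$-invariance for any commuting $g$).

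First I would verify that hypothesis (b) of Theorem \ref{main: HPS} applies to the unperturbed model in both scenarios. In case (a), $f_0 = (T_{A})_{\rho_0}$ is by construction a smooth isometric circle extension, so $Df_0$ acts as an isometry on the $1$-dimensional center. In case (b), the center bundle of $\psi_{t_0}$ is $\RR\dot\psi$, and in any metric in which $\dot\psi$ has unit length, $D\psi_{t_0}|_{E^c}$ is the identity. In both cases $\|Df_0|_{E^c}\|\equiv 1$ while $E^s,E^u$ are uniformly contracted and expanded, so $f_0$ is $r$-normally hyperbolic for every $r\geq 1$. Since $r$-normal hyperbolicity is a $C^1$-open condition, the $C^1$-small perturbation $f$ inherits it. Theorem \ref{main: HPS} then delivers dynamical coherence of $f$ together with $f$-invariant, uniquely integrable foliations $\W^{cs}_f$, $\W^{cu}_f$, and $\W^c_f=\W^{cs}_f\cap\W^{cu}_f$, alongside the usual uniquely integrable stable and unstable foliations $\W^s_f$, $\W^u_f$.

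To obtain invariance under a commuting $g\in\Z_1(f)$, I would apply Lemma \ref{lemma: PHcent} to the partially hyperbolic splitting $TM = E^u_f\oplus E^c_f\oplus E^s_f$. The lemma yields that $Dg$ preserves each of the three summands, whence it automatically preserves the coarser dominated splittings whose middle terms are $E^{cs}_f = E^c_f\oplus E^s_f$ and $E^{cu}_f = E^c_f\oplus E^u_f$. Since each of $E^s_f$, $E^u_f$, $E^c_f$, $E^{cs}_f$, $E^{cu}_f$ is uniquely integrable (by the previous paragraph), the second clause of Lemma \ref{lemma: PHcent} gives $g(\W^{\ast}_f)=\W^{\ast}_f$ for every $\ast\in\{s,u,c,cs,cu\}$.

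There is no substantial obstacle here; the proposition is essentially a bookkeeping corollary of the two cited results. The only item requiring any thought is checking that the unperturbed models satisfy the isometric-center hypothesis (b) of Theorem \ref{main: HPS}, so that the full set of center-(un)stable foliations is simultaneously available and uniquely integrable. This is what makes the proposition safe to quote in later sections, where arbitrary centralizer elements are routinely assumed to preserve $\W^c_f$ together with the stable and unstable flags.
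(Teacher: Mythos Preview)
Your proposal is correct and matches the paper's own proof, which simply states that the proposition follows ``as an immediate corollary'' of combining Theorem~\ref{main: HPS} with Lemma~\ref{lemma: PHcent}. You have fleshed out exactly that combination: Theorem~\ref{main: HPS} supplies dynamical coherence and unique integrability of the five bundles for the perturbation, and Lemma~\ref{lemma: PHcent} then forces any commuting $g$ to preserve each integral foliation.
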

 
Finally, we have a lemma that we will use in Section~\ref{section: geodesic proofs}.
 \begin{lemma}\label{lem: g fixes periodic}  Let $\psi_t\colon M\to M$  be an Anosov flow with the property that the lift $\tilde\psi_t$ of $\psi_t$ to the universal cover $\widetilde M$ has no closed orbits, and let $f$ be a $C^1$-small perturbation of $\psi_{t_0}$, for some $t_0\neq 0$.  

Then for any  $g\in \Z_{1}(f)$ , and for any closed leaf $\W^c_f(x)$, there exists $k\geq 1$ such that 
$g^k(\W^c_f(x)) = \W^c_f(x)$.  
\end{lemma}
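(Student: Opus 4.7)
The plan is to transfer the problem, via the leaf conjugacy provided by Theorem~\ref{main: HPS}, to an action on the closed orbits of $\psi_t$, and then use the discreteness of the length spectrum of an Anosov flow to obtain finiteness.

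First, by Proposition~\ref{prop: gwc=wc}, $g$ preserves $\W^c_f$, so $g$ permutes the countable set $\mathcal{C}$ of closed leaves of $\W^c_f$; the goal thus reduces to showing every $g$-orbit in $\mathcal{C}$ is finite. Applying Theorem~\ref{main: HPS} yields a leaf conjugacy $h^c\colon M\to M$, close to the identity, from $(\psi_{t_0}, \W^c_{\psi_{t_0}})$ to $(f, \W^c_f)$; since the leaves of $\W^c_{\psi_{t_0}}$ are precisely the orbits of $\psi_t$, this induces a canonical bijection between $\mathcal{C}$ and the set $\mathcal{C}_\psi$ of closed orbits of $\psi_t$.

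Next, I would lift to the universal cover $\pi\colon \widetilde{M}\to M$, choosing a $\pi_1(M)$-equivariant lift $\tilde h^c$ of $h^c$ close to the identity. For any $c\in \mathcal{C}$, a connected component $\tilde c\subset\pi^{-1}(c)$ is preserved by some non-trivial deck transformation $\gamma_c\in \pi_1(M)$ (well-defined up to conjugation; non-triviality uses that $\tilde\psi_t$ has no closed orbits, ruling out compact lifts). Equivariance of $\tilde h^c$ ensures that $\tilde h^c(\tilde c)$ is an orbit of $\tilde\psi_t$ preserved by the same $\gamma_c$. Lifting $g$ to $\tilde g$, the curve $\tilde g(\tilde c)$ is a lift of $g(c)$ preserved by $\tilde g\gamma_c\tilde g^{-1}$, so $[\gamma_{g(c)}]=g_*[\gamma_c]$ in $\pi_1(M)/\mathrm{conj}$, where $g_*$ denotes the induced automorphism. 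Define $T\colon \mathcal{C}\to \RR^+$ by $T(c):=$ period of $h^c(c)$ under $\psi_t$. Discreteness of the length spectrum of an Anosov flow makes $T$ proper: $T^{-1}([0,R])$ is finite for every $R>0$.

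The hard part will be showing that $T$ is $g$-invariant; once this holds, the $g$-orbit of $c$ sits inside the finite set $T^{-1}(T(c))$ and the existence of $k$ follows immediately. The key point is that the period $T(c)$ is an intrinsic invariant of the conjugacy class $[\gamma_c]$, namely the length of the (essentially unique, thanks to the hypothesis on $\tilde\psi_t$) $\gamma_c$-invariant orbit of $\tilde\psi_t$ in $\widetilde{M}$; in the geometric settings where this lemma is applied (geodesic flows on negatively curved manifolds) this coincides with the translation length of $\gamma_c$ acting on $\widetilde X$. To see that $g_*$ preserves this invariant, I would exploit that $g$ also preserves the stable and unstable foliations of $f$ (Proposition~\ref{prop: gwc=wc}), whose leaves H\"older-approximate those of $\psi_t$: the combined rigidity of this triple of foliations together with the commutation $gf=fg$ is what pins down $g_*$ enough to force preservation of translation lengths on the orbit-carrying conjugacy classes.
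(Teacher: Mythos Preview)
Your reduction to showing that the $g$-orbit of any closed leaf is finite, via properness of a length/period function, is exactly the right framework. But the ``hard part'' you identify --- showing that $T$ is exactly $g$-invariant --- is both stronger than needed and not justified by the argument you sketch. There is no reason $g_*$ should preserve translation lengths: $g$ is just a diffeomorphism commuting with $f$, not with the flow $\psi_t$, and your appeal to ``rigidity of the triple of foliations'' does not supply a mechanism. In the surface case especially, $\bar g_*$ can be an arbitrary mapping class, and these certainly do not preserve marked length spectra.

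The paper avoids this entirely by proving only a \emph{bound}: for a closed leaf $\W^c_f(x)$ of length $R$, one shows $\mathrm{length}(g^m(\W^c_f(x)))\leq CR$ for a constant $C$ independent of $m$. The mechanism is the commutation $gf=fg$ together with the fact that the lifted $\tilde f$ acts on each $\widetilde\W^c_f$-leaf uniformly close to translation by $t_0$, so that $d^c(\tilde y,\tilde f^N(\tilde y))\in[N\tau_{\min},N\tau_{\max}]$ for all $\tilde y$. If $z_1,z_2\in\widetilde M$ are lifts of $x$ joined by a $\widetilde\W^c_f$-arc of length $R$ (so $R\in[N\tau_{\min},N\tau_{\max}]$ for some $N$), then this arc sits inside one from $z_1$ to $\tilde f^{N+1}(z_1)$; applying $\tilde g^m$ and using $\tilde g^m\tilde f^{N+1}=\tilde f^{N+1}\tilde g^m$, the image is an arc from $\tilde g^m(z_1)$ to $\tilde f^{N+1}(\tilde g^m(z_1))$, whose length lies in $[(N+1)\tau_{\min},(N+1)\tau_{\max}]$. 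This bounds $d^c(\tilde g^m(z_1),\tilde g^m(z_2))$, hence the length of $g^m(\W^c_f(x))$, by $(N+1)\tau_{\max}\leq CR$. Since closed center leaves of bounded length are finitely many (they correspond, under the leaf conjugacy, to closed $\psi_t$-orbits of bounded period), the $g$-orbit is finite. No control of $g_*$ on $\pi_1$ is needed.
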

\begin{proof}  Consider the lifts $\tilde{\psi}_t,\tilde{f}$ of $\psi_t,f$ respectively to $\widetilde M$, where $\tilde{f}$ is uniformly $C^1-$close to $\tilde{\psi}_t$, and $\tilde f$ preserves the lift $\widetilde\W^c_f$ of $W^c_f$.  On each $\widetilde\W^c_f-$leaf, the action of $\tilde f$ is uniformly close to a translation by $t_0$ on $\RR$ and thus is topologically conjugate to a translation. Thus there exist $0<\tau_{min} \leq \tau_{max}$ such that for every $x\in \widetilde M$ and every $N\geq 1$, we have $d^c(x, \tilde f^N(x)) \in [N\tau_{min}, N\tau_{max}]$, where $d^c$ is the distance measured along $\widetilde\W^c_f$ leaves.

Let $g\in \Z_{\Diff(M)}(f)$, and fix an arbitrary lift $\tilde g\colon \widetilde M\to \widetilde M$.
Fix an arbitrary $\tilde x_0\in \widetilde M$, and let $\gamma\colon [0,T]\to \widetilde M$ be a unit-speed, $C^1$ path tangent to $\widetilde\W^c_f(\tilde x_0)$ with $\gamma(0) = \tilde x_0$ and  $\gamma(T) = \tilde f^N (\tilde x_0)$, for some $N\geq 1$.  Note that $T\in [N\tau_{min},N \tau_{max}]$.

 Proposition~\ref{prop: gwc=wc} implies that  $\tilde g$ preserves the foliation $\widetilde\W^c_f$,  and so for any $m\geq 0$,  $\tilde g^m(\gamma)$ is a path tangent to $\widetilde\W^c_f$ from $\tilde{g}^m(\tilde x_0)$ to $\tilde{f}^N(\tilde{g}^m(\tilde x_0))$.  It follows that the length of  $\tilde g^m(\gamma)$ also  lies in the interval $[N\tau_{min}, N\tau_{max}]$.  Now suppose that $\W^c_f(x)$ is a closed center leaf in $M$ of length $R\in [N\tau_{min}, N\tau_{max}]$. Then there are lifts  $z_1, z_2$  of $x$ to $\widetilde M$ connected by a  unit-speed path in $\widetilde \W^c_f$ of length $R$.  This path is contained in a  unit-speed path connecting $z_1$ to $\tilde f^{N+1}(z_1)$, whose length lies in $[(N+1)\tau_{min}, (N+1)\tau_{max}]$.  Thus, for all $m\geq 1$, the distance between $g^m(z_1)$ and $g^m(z_2)$ is bounded by  $(N+1)\tau_{max}$. 

Since $g$ is a diffeomorphism preserving $\W^c_f$, it permutes the closed leaves.  Thus $g^m(\W^c_f(x))$ is a closed leaf whose length is at most $CR$, where $C$ does not depend on $R$ or $m$.  Since $f$ is a perturbation of $\psi_{t_0}$, its periodic center leaves of bounded length are isolated, and there are only finitely many of length $\leq CR$.  If follows that every closed center leaf of $\W^c_f$ is $g$-periodic.
\end{proof}

 \subsubsection{Bunching conditions}\label{sss: centerbunched}
For $r \geq 1$, we say that a partially hyperbolic diffeomorphism $f$ of a Riemannian manifold $M$ is   {\em center $r-$bunched}   if there exists $k\geq  1$  such that:
\[\sup_p \left \{ \|D_p f^k|_{E^s}\|\cdot \|(D_p f^k|_{E^c})^{-1}\|^r, \|(D_p f^k|_{E^u})^{-1}\|\cdot \|D_p f^k|_{E^c}\|^r\right \}<1,\]
\begin{eqnarray*}
&&\sup_p \|D_p f^k|_{E^s}\|\cdot \|(D_p f^k|_{E^c})^{-1}\|\cdot \|D_p f^k|_{E^c}\|^r<1, \hbox{ and}\\
&&\sup_p \|(D_p f^k|_{E^u})^{-1}\|\cdot \|D_p f^k|_{E^c}\|\cdot \|(D_p f^k|_{E^c})^{-1}\|^r<1.
\end{eqnarray*}
When $f$ is $C^r$ and dynamically coherent, the first of these three inequalities is $r$-normal hyperbolicity and implies that the leaves of  $\W^c, \W^{cs},\W^{cu}$ are $C^r$. If $f$ is $C^{r+1}$  and dynamically coherent they also imply the stable and unstable \emph{holonomy} and $E^s, E^u$ are $C^r$ along $\W^c$, cf. \cite{PSW, W}. We say that $f$ is   \emph{center bunched} if it is center $1$-bunched.   If $E^c$ is $1$-dimensional, then $f$ is automatically center bunched. All systems we consider here have $1$-dimensional center and thus are center bunched.

Unfortunately, the term ``bunching" is also used in a completely different way, to describe stable (and unstable) expansion rates for contracted (and expanded) subbundles.
\begin{definition}\label{def: stable bunched}
Let $f\in \Diff(M)$, and suppose that $E\subseteq TM$ is a continuous $Df$-invariant, 
subbundle.
For $r>0$, we say that $Df\vert_{E}$ is {\em $r$-bunched} if there exists $k\geq  1$  such that:
\[\sup_{p\in M}\; \max \{ \| D_p f^k|_{E} \|,\;\|(D_p f^k|_{E})^{-1} \|\cdot \| D_p f^k|_{E} \|^{r}\}<1.\]
The smaller $r$ is, the harder it is to satisfy $r$-bunching (as opposed to {\em center} $r$-bunching, which is easier to satisfy for small $r$) .  If $Df|_E$ is conformal, then it is $r$-bunched, for all $r>1$.
If $f$ is partially hyperbolic, we say that the stable (resp unstable) spectrum of $f$ is $r$-bunched if 
 $Df\vert_{E^s_f}$ (resp. $Df^{-1}\vert_{E^u_f}$) is $r$-bunched. 
\end{definition}

\subsubsection{Accessibility}\label{ss: accessibility}
  The foliations $\W^u_f$ and $\W^s_f$ of a partially hyperbolic diffeomorphism $f\colon M\to M$ induce an equivalence relation on $M$: we say that $x,y\in M$ are in the same {\em accessibility class} if they can be joined by an $su-$path, that is, a piecewise $C^1$ path such that every piece is contained in a single leaf of $\W^s_f$ or a single leaf of $\W^u_f$.  Then $f$ is \emph{accessible} if $M$ consists of a single accessibility class.
At the opposite extreme of accessibility is joint integrability:  $E^u_f$ and $E^s_f$ are \emph{jointly integrable} if there exists an $f-$invariant foliation $\W^{H}$ with $C^1$ leaves everywhere tangent to the bundle $E^u\oplus E^s$.  In this case, unique integrability of $E^u, E^s$ implies that accessibility classes are the leaves of the foliation $\W^{H}$.

 Pugh and Shub conjectured that if $f\in \Diff^2_{\mathrm{vol}}(M)$ is partially hyperbolic and accessible, then $f$ is ergodic.  This was proved for center bunched $f$ by Burns--Wilkinson \cite{BW}.  In particular, acessibility implies ergodicity for systems with $1$-dimensional center bundle, and {\em stable accessibility} --- i.e., accessibility that persists under $C^1$-small perturbations --- implies stable ergodicity.

Pugh and Shub also conjectured that stable accessibility is a dense property among $C^r$ partially hyperbolic diffeomorphisms, volume-preserving or not.  Dolgopyat--Wilkinson \cite{DW} proved $C^1$ density of stable accessibility among all $C^r$ partially hyperbolic diffeomorphisms, and Hertz-Hertz-Ures \cite{RR} proved $C^r$ density (for any $r$) among the systems with $1$-dimensional center foliation.


The next proposition will be used in the proofs of in Theorems \ref{main: dich} and \ref{main: thm pr}.

\begin{prop}\label{lemma: dich atom rot} Let $f_0, A$ 
be as in Theorem \ref{main: dich}, and let $f\in \Diff^2_{\mathrm{vol}}(\TT^d)$ be  a $C^1-$small, ergodic perturbation of $f_0$. Then   
\begin{enumerate}\item  $f$ is a fibered partially hyperbolic system. There is an equivariant fibration $\pi:\TT^d\to \TT^{d-1}$ such that $\pi \circ f = T_{A}\circ \pi$.  The fibers of $\pi$ are the leaves of the center foliation $\W^c_f$ by circles, where $\W^c_f$ is given by Theorem \ref{main: HPS}.
\item One of the following holds:
\begin{enumerate}
\item there exists a full volume set  $S\subset \TT^d$ and $k\in \NN$ such that $S$ meets every leaf
of $\W^c_f$
in exactly $k$ points, i.e.  volume has atomic disintegration along $\W^c_f$;  \label{case:atomic}

\item $f$ is accessible, $\W^c$ is absolutely continuous, and the disintegration of $\mathrm{vol}_{\TT^d}$ has a continuous density function on the leaves of $\W^c$; \label{case: lebesgue access}

\item  $f$ is topologically conjugate to $T_{A}\times R_\theta$,
for some $\theta\notin \QQ/\ZZ$ by a homeomorphism that is $C^1$ along the leaves of $\W^c_f$.  \label{case: rigid}

\end{enumerate}
\end{enumerate}
\end{prop}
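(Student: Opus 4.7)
\emph{Part (1).} The map $f_0 = (T_{A})_{\rho_0}$ is an isometric extension of the Anosov automorphism $T_A$, so its center foliation $\W^c_{f_0}$ coincides with the smooth circle fibration $\pi_0: \TT^d \to \TT^{d-1}$, and $Df_0|_{E^c}$ is an isometry. In particular $f_0$ falls under case (b) of Theorem~\ref{main: HPS} and is $r$-normally hyperbolic for every $r \geq 1$. Applying that theorem to the $C^1$-small perturbation $f$, I obtain that $f$ is dynamically coherent with an $f$-invariant center foliation $\W^c_f$ whose leaves are circles close to the fibers of $\pi_0$, and a leaf conjugacy $h^c : \TT^d \to \TT^d$ close to the identity sending $\W^c_{f_0}$-leaves to $\W^c_f$-leaves equivariantly. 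Define $\pi := \pi_0 \circ (h^c)^{-1}$. Its fibers are the leaves of $\W^c_f$, and equivariance of the leaf conjugacy (combined with Lemma~\ref{lemma: PHcent}) gives $\pi \circ f = T_A \circ \pi$, yielding~(1).

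\emph{Part (2), the main dichotomy.} Since $f$ is $C^2$, volume preserving, fibered partially hyperbolic with $1$-dimensional compact center foliation, I invoke the disintegration dichotomy of Avila--Viana--Wilkinson \cite{AVW, AVW2}: either the volume has atomic disintegration along $\W^c_f$, or it has Lebesgue disintegration (with continuous leafwise density, again by \cite{AVW, AVW2}). In the atomic case, ergodicity of $f$ forces the atomic cardinality to be constant on a full-volume set $S$, giving conclusion~(\ref{case:atomic}).

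\emph{Part (2), the Lebesgue case.} When the disintegration is Lebesgue, I split on accessibility. If $f$ is accessible, then together with continuous leafwise density this is exactly conclusion~(\ref{case: lebesgue access}). If $f$ is not accessible, then in this perturbative setting the bundle $E^u_f \oplus E^s_f$ is jointly integrable to an $f$-invariant foliation $\W^H$ with $C^1$ leaves. The restriction $\pi|_{\W^H(x)}$ is a local homeomorphism onto $\TT^{d-1}$, and $f$-invariance of $\W^H$ covers $T_A$; using topological transitivity of $T_A$ (Anosov and irreducible), I show that the $\W^H$-holonomy intertwining neighboring center circles is an isometry conjugating their induced center-dynamics to a common rigid rotation $R_\theta$. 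Globalizing via this holonomy produces a topological conjugacy $H \circ f = (T_A \times R_\theta) \circ H$; the rotation $\theta$ must be irrational, for otherwise some power $f^N$ preserves each $\W^c_f$-leaf pointwise, contradicting ergodicity. The conjugacy $H$ is $C^1$ along $\W^c_f$ by applying Lemma~\ref{lemma: impr cr} in the normally hyperbolic direction, giving conclusion~(\ref{case: rigid}). The subtlest step, and the principal obstacle, is the non-accessible Lebesgue case: promoting the local $\W^H$-holonomy to a single global rigid rotation and then upgrading the resulting conjugacy to be smooth along center leaves, where one must exploit both the Anosov transitivity of the base and the uniform isometric-like behavior of $Df|_{E^c}$ inherited from the perturbation of $f_0$.
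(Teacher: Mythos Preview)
Your Part~(1) is fine and matches the paper. The organization of Part~(2), however, has a genuine gap, and the jointly integrable case is missing its main idea.

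\textbf{The case split.} You invoke the Avila--Viana--Wilkinson dichotomy (atomic versus Lebesgue) as an unconditional fact for ergodic fibered systems with one-dimensional center. But as the paper makes explicit, that dichotomy is proved in \cite{AVW2} only under an accessibility-type hypothesis (the existence of an open accessibility class). The paper therefore splits the other way: first on whether there is an open accessibility class. If there is one, \cite[Theorem~C(2)]{AVW2} gives either (a) or (b). If there is none, then by \cite{RRU} the bundle $E^u_f\oplus E^s_f$ is jointly integrable, and one proves (c) directly---without ever needing to know whether the disintegration is atomic or Lebesgue. Your implication ``not accessible $\Rightarrow$ jointly integrable'' is also not valid as stated; it is ``no open accessibility class $\Rightarrow$ jointly integrable'' that holds.

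\textbf{The jointly integrable case.} This is where the real work is, and your sketch does not contain the key mechanism. Once $\W^H$ exists, the problem is to show that its leaves are \emph{compact} and meet each center circle in exactly one point; a priori $\pi\vert_{\W^H(x)}$ is only a covering map, possibly of infinite degree. The paper's argument runs as follows: the $\W^H$-holonomy over $\pi_1(\TT^{d-1})\cong\ZZ^{d-1}$, together with $f\vert_{\W^c_f(x_0)}$, defines a $C^1$ action of the abelian-by-cyclic group $\Gamma_A=\ZZ\ltimes_A\ZZ^{d-1}$ on the fixed center circle (Lemmas~\ref{lemma: H f rel} and~\ref{lemma: WH C2}). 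The rigidity theorem of \cite{BMNR} (Theorem~\ref{theorem: Navas}) forces this action to be abelian, since the non-abelian alternative would produce a fixed point of $f^m\vert_{\W^c_f(x_0)}$ with derivative an eigenvalue of $A^m$, impossible for a $C^1$-small perturbation of a rotation. Abelianness plus hyperbolicity of $A$ then makes the holonomy image finite, and Newman's theorem forces it to be trivial. Your appeal to ``transitivity of $T_A$'' and ``$\W^H$-holonomy is an isometry'' does not substitute for this: the holonomy is only $C^1$ a priori, and nothing about base transitivity alone rules out a nontrivial monodromy group acting on the circle fiber.
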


\begin{proof}[Proof of Proposition~\ref{lemma: dich atom rot}] (1) follows from Theorem \ref{main: HPS}.

The proof of (2) involves  an analysis of the accessibility classes of $f$.  
The first possibility is that $f$ has an open accessibility class $U\neq \emptyset$.  
Since $f$ is an ergodic, fibered partially hyperbolic system, with one dimensional fibers,  \cite[Theorem C (2)]{AVW2} implies that either 
  $f$ is accessible and $\mathrm{vol}$ has absolutely continuous disintegration, or
 $\mathrm{vol}$ has atomic disintegration along the leaves of $\W^c_f$.
The conclusions follow immediately.

The second possibility is that there is no open accessibility class; that is, the extreme case of joint integrability holds \cite{RRU}. Assume then that $E^s\oplus E^u$ is integrable, 
 tangent to a foliation $\W^H$.

Recall that $f$ is a $C^1$-small perturbation of  an isometric extension  $f_{0} :=\left(T_{A}\right)_{\rho_{0}}$, where  $\rho_{0} : \mathbb{T}^{d-1} \rightarrow \mathbb{R}$.  If $\rho_0$ is not cohomologous to a constant function, then it is stably accessible  \cite{BW99}.
Since we are assuming there is no open accessibility class, we may assume that $\rho_0$ is cohomologous to a constant function. Liv\v sic's theorem implies that by conjugating by a $C^\infty$ diffeomorphism of $\TT^d$ covering the identity on $\TT^{d-1}$, we may assume that $\rho_0=\theta_0$ is constant.  This implies that $E^u_{f_0}\oplus E^u_{f_0}$ is integrable, the leaves of the integral foliation $\W^H_{f_0}$ are compact, and $f_0$ is conjugate   to the product of $T_{A}$ with  a rotation.  We  show that the same holds for $f$.

\begin{lemma}\label{lemma: top rig integrab} If the distribution $E^u_f\oplus E^s_f$ is integrable then the leaves of 
its integral foliation $\W^H$ are compact.  Each leaf of  $\W^H$  intersects each leaf of $\W^c$ in exactly one point.
\end{lemma}
\begin{proof}[Proof of Lemma~\ref{lemma: top rig integrab}]  We show that  the monodromy representation on  the circle bundle $\TT^d\to \TT^{d-1}$ induced by the foliation $\W^H$, combined with the action of $f$ on an invariant $\W^c_f$ fiber, gives a $C^1$ action of an abelian-by-cyclic group.  These actions have well-known rigidity properties, which we exploit to show that the monodromy part of the representation must have finite image.

To this end, fix $x_0\in \TT^{d}$ such that $f\left(\W^c_f(x_0)\right) = \W^c_f(x_0)$, and consider the map
\[H\colon \pi_1(\TT^{d-1},\pi(x_0)) \cong  \ZZ^{d-1} \to \mathrm{Homeo}^+(\W^c_f(x_0))
\]
defined by $\W^H_f$-holonomy along lifted paths:  for  $y\in \W^c(x_0)$ and $\gamma:[0,1]\to \TT^{d-1}$ in the class $[\gamma]$, consider the unique continuous lift $\gamma^{y}:[0,1]\to \TT^{d}$ such that $\gamma^{y}(0) =  y$, $\gamma^{y}[0,1]\subset \W^H(y)$, and $\pi\circ\gamma^{y}=\gamma$. We then define
\[H([\gamma])(y) := \gamma^{y}(1).\]
Then $H$ is a homomorphism, which we call the {\em monodromy representation.}

We remark that for $f = f_0$, where the leaves of $\W^H$ are compact, the map $H$ is trivial.

\begin{lemma}\label{lemma: H f rel}For any  $[\gamma]\in \pi_1(\TT^{d-1})$:
\[H({T_{A}}_\ast [\gamma])= f \circ H([\gamma])\circ f^{-1}.\]
\end{lemma}
\begin{proof}(cf. \cite{NT}) Fix $y\in \W^c(x_0)$ and $\gamma:[0,1]\to \TT^{d-1}$ in the class $[\gamma]$.  Consider the lift $\gamma^{f^{-1}(y)}$ of $\gamma$ starting at $f^{-1}(y)$.  Note that the path $f\circ \gamma^{f^{-1}}(y)$ is a lift of $T_{A}\circ\gamma$ starting at $y$ and tangent to $\W^H$ (by $f$-invariance of $\W^H$).  But $(T_{A}\circ\gamma)^{y}$ is the unique such lift.  It follows that 
$(T_{A}\circ\gamma)^{y} = f\circ \gamma^{f^{-1}(y)}$; evaluating both paths at their endpoint gives the desired conclusion. This completes the proof of Lemma~\ref{lemma: H f rel}.
\end{proof}

\begin{lemma}\label{lemma: WH C2} If $E^u_f\oplus E^s_f$ is integrable, then its integral foliation $\W^H$ is a $C^1$ foliation.
\end{lemma}
\begin{proof} Since $E^c$ is $1$-dimensional, $f$ is center bunched.  Then \cite[Theorem B]{PSW97} implies that the leaves of $\W^s_f$ and $\W^u_f$ uniformly $C^1$ subfoliate the leaves of $\W^{cs}_f$ and $\W^{cu}_f$, respectively. This implies that the stable and unstable holonomy maps between  $\W^c_f$ leaves is $C^1$.  The holonomy maps along $\W^H$ between $\W^c$ leaves can be written as a composition of stable and unstable holonomies, and thus are uniformly $C^1$.  

A foliation with unifomly $C^1$ leaves and uniformly $C^1$ holonomy maps is $C^1$ (see \cite{PSW97}), and thus $\W^H$ is a $C^1$ foliation.
\end{proof}

Lemma~\ref{lemma: WH C2} implies that the monodromy representation $H$ above has $C^1$  image:
\[H(\pi_1(\TT^{d-1},\pi(x_0))) \subset \Diff^1(\W^c_f(x_0)).\]
Note that the induced  action of $T_{A}$ on $\pi_1(\TT^{d-1},\pi(x_0))$ is just matrix multiplication by $A$ under the natural  identification $\pi_1(\TT^{d-1},\pi(x_0)) \cong  \ZZ^{d-1}$.  Consider the abelian-by-cyclic group $\Gamma_{A}=  \ZZ \ltimes_{A} \ZZ^{d-1}$ defined by
\[
\Gamma_{A}
 :=\left\langle a, e_{1}, \ldots, e_{d-1} :  e_{i} e_{j}=e_{j} e_{i}, \quad a e_{i} a^{-1}= e_{1}^{\alpha_{1, i}}\cdots e_{d-1}^{\alpha_{d-1, i}}\right\rangle,
\]
where $A = (\alpha_{i,j})$.
Lemma~\ref{lemma: H f rel} implies that we have a representation  $\eta\colon \Gamma_{A}\to \Diff^1(\W^c_f(x_0))$ defined by
\[\eta(a) := \left. f\right|_{\W^c_f(x_0) };\;\, \eta(e_i) := H([e_i]).\]
Such representations are quite rigid.  In particular, we have
\begin{theorem}\label{theorem: Navas}\cite[Theorems 1.3, 1.7 and 1.10]{BMNR} For any representation $\eta\colon  \Gamma_{A}\to \Diff^1(\TT)$, either the image $\eta(\Gamma_{A})$ is abelian
or  there exists an integer $m\geq 1$, a real eigenvalue $\lambda$ of $A$,  and a point $x\in \TT$ such that $\eta(a^m)(x)=x$, and 
$\eta(a^m)'(x) = \lambda^m$.
\end{theorem}

Applying Theorem~\ref{theorem: Navas} to the situation at hand,  we obtain that either $\eta(\Gamma_{A})$ is abelian,  or there exists $m\geq 1$ such that  $\eta(a^m) = \left.f^m\right|_{\W^c_f(x_0)}$ has fixed point with derivative $\lambda^m$, for some real eigenvalue $\lambda$ of $A$.  But since $A$ is hyperbolic, the  eigenvalues of $A$ are bounded in absolute value away from $1$. Since  $\left.f_0\right|_{\W^c_{f_{0}}(0)}$  is a rotation by $\theta_0$, whose derivative is everywhere $1$, if $f$ is sufficiently $C^1$-close to $f_0$, this is impossible.

Hence $\eta(\Gamma_{A})$ is abelian, which implies that
\begin{equation}\label{eqn: ABC abelian image}
\hbox{$\eta(a e_{i} a^{-1}) = \eta(e_i) = \eta(e_{1})^{\alpha_{1, i}}\cdots \eta(e_{d-1})^{\alpha_{d-1, i}}$, for $i=1,\ldots, d-1$.}
\end{equation}
Since $1$ is not an eigenvalue of $A$, it follows that $A-\id$ is invertible over $\QQ$, and so equations (\ref{eqn: ABC abelian image}) imply that the group generated by $\eta(e_{1}), \ldots, \eta(e_{d})$
is finite, of order $k \leq |\det(A-\id)|$.

Thus the image of $H$ is isomorphic to group of order $k$, and the leaves of $\W^H$ are  compact, meeting each leaf of $\W^c_f$ in exactly $k$ points.  We claim that $k=1$. As observed above, for $f_0$, the image of $H$ is trivial, and the leaves of $\W^H_{f_0}$  are horizontal.  Since $f$ is  close to $f_0$, the leaves of $\W^H_f$ are nearly horizontal; in particular if  $d_{C^1}(f_0,f)$ is sufficiently small, either $k=1$ or all the orbits of $H$ on $\W^c_f(x_0)$ have arbitrarily small diameter (since $k$ is bounded by $|\det(A-\id)|$). Thus by the following theorem of Newman \cite{New}, we have $k=1$. 
\begin{maintheorem} Let $N$ be a connected topological manifold endowed with a metric. Then
there is $\epsilon > 0$ such that any non-trivial action of a finite group on $N$ has an orbit of
diameter larger than $\epsilon$.
\end{maintheorem}
This completes the proof of  Lemma~\ref{lemma: top rig integrab}.  \end{proof}

To finish the proof of Proposition~\ref{lemma: dich atom rot}, we construct a $C^1$ projection $\mathrm{Pr}^c\colon \TT^d\to \W^c_{f}(x_0)$ sending $x$ to the unique point of intersection of $\W^H_f(x)$ and $\W^c_{f}(x_0)$. Let $\sigma \colon \W^c_{f}(x_0) \to\TT$ be a $C^1$ diffeomorphism  and define $\zeta \colon \TT^d\to \TT^d$
by $\zeta(x):= (\pi(x), \sigma \circ \mathrm{Pr}^c (x))$.  Then $\zeta$ conjugates $f$ to $T_{A} \times g$, where $g\colon \TT\to\TT$ is a diffeomorphism preserving a smooth ergodic measure.  By a further $C^1$  change of coordinates, we may assume that $g$ is an irrational rotation $R_\theta$.  We are thus in case~\ref{case: rigid}.  This completes the proof of Proposition~\ref{lemma: dich atom rot}.
\end{proof}

\subsubsection{Estimate of the H\"older exponents of  leaf conjugacies in the presence dominated splittings}\label{sec: multi distri PSW}
As in Theorems \ref{main: dich} and \ref{main: thm pr} let $f_0\in \Diff^\infty_{\mathrm{vol}}(\TT^d)$ be an isometric extension of an automorphism $T_{A_f}$  on $\TT^{d-1}$, where $A_f\in \SL(d-1,\ZZ)$  is hyperbolic (we do not need irreducibility here).  We denote by $P:\TT^d\to \TT^{d-1}$ the projection along 
the $\W^c_{f_0}$ leaves, which is just projection onto the first factor in $\TT^{d-1}\times\TT$. Under the identification $T\TT^{d-1}\cong\TT^{d-1}\times \RR^{d-1}$, the action of $DT_{A_f}$ is just $T_{A_f}\times A_f$, and 
$T\TT^{d-1}= \TT^{d-1}\times (\oplus V^i)$ is the $T_{A_f}-$invariant dominated splitting, where 
$\RR^{d-1} = \oplus V^i$ is the decomposition into  Lyapunov subspaces  of $A_f$.  

There is a $Df_0$-invariant dominated splitting $TM=\oplus E^i_{f_0}$ projecting to the dominated splitting for $T_{A_f}$, so that $D_{p} P(E^i_{f_0}) = \{P(p)\}\times V^i$, for each $i$.  Moreover the Lyapunov exponent of $Df_0\vert_{E^i_{f_0}}$ is equal to Lyapunov exponent of $A_f\vert_{V^i}$.

As in Theorem \ref{main: thm pr},  we now assume that $f\in \Diff^2_{\mathrm{vol}}(\TT^d)$ is a $C^1$-small perturbation of $f_0$. Then $Df$ also preserves a dominated splitting $TM=\oplus E_f^i$. By Theorem \ref{main: HPS}, $f$ is a fibered partially hyperbolic system,  and $(f_0;\W^c_{f_0})$ is leaf
conjugate to $(f;\W^c_f)$ by a \textit{bi-H\"older} continuous homeomorphism $h^c : \TT^d\to \TT^d$. 
The leaf conjugacy $h^c$ is canonical in the sense that
\begin{equation}\label{eqn: h pi P}
\pi\circ h^c=P,
\end{equation}
where $\pi$ is defined in Proposition~\ref{lemma: dich atom rot}. For the estimate of the bi-H\"older exponents of $h^c$, cf. \cite{PSW}.   In this context we can give a concrete description of how $h^c$ is constructed.  Fixing a smooth normal bundle $\cN$ to $E^c$, the map $h^c = h^c_\cN$ is defined by 
\[h^c(x) = \pi^{-1}(P(x))\cap \cD_\epsilon(x),
\]
where $\{\cD_\epsilon(x) : x\in M\}$ is the smooth family of embedded disks defined by
\[\cD_\epsilon(x) = \exp_x\left( \{tv: t\in [0,\epsilon), v\in \cN(x)\}\right).
\]
If $f$ is sufficiently $C^1$ close to $f_0$ and $\epsilon>0$ is sufficiently small, then $h^c$ is a well-defined homeomorphism that is smooth along the leaves of $\W^c_{f_0}$ (as smooth as the leaves of $\W^c_{f_0}$).

It is easy to see that $E^i_{f_0}, E^i_{f_0}\oplus E^c_{f_0}$ are integrable; we denote by $\W^i_{f_0}, \W^{ic}_{f_0}$ their integral manifolds respectively. In general, $E^i_{f}$ and $E^i_{f}\oplus E^c_{f}$ might not be integrable.

\begin{lemma}\label{lemma: multi distri PSW} Suppose  $E^i_f, E^i_f\oplus E^c_f$ are integrable and their integral manifolds are denoted by $\W^i_f, \W^{ic}_f$ respectively. If, for some normal bundle $\cN$, the map $h^c = h^c_\cN$ sends $\W^{ic}_{f_0}$ to $\W^{ic}_f$, for each $i$,  then
 the H\"older exponents of $h^c$ and $(h^c)^{-1}$ approach $1$ as $d_{C^1}(f,f_0)\to 0$.
\end{lemma}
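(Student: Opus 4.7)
The conclusion is a two-sided quantitative statement: both the H\"older exponents of $h^c$ and of $(h^c)^{-1}$ should approach $1$ as $d_{C^1}(f,f_0)\to 0$. The plan is to exploit the hypothesis $h^c(\W^{ic}_{f_0})=\W^{ic}_f$ to reduce the global H\"older problem to a finite family of single-rate leaf conjugacies, one for each $i$, apply PSW-style H\"older estimates on each, and then globalize by iterating Journ\'e's lemma along the collection $\{\W^{ic}_{f_0}\}_{i}$, which is pairwise transverse modulo the common center factor.

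For the leafwise estimate, fix $i$ and $x$, and consider the restriction
\[
h^c_i := h^c|_{\W^{ic}_{f_0}(x)} \colon \W^{ic}_{f_0}(x) \longrightarrow \W^{ic}_f(h^c(x)).
\]
This is a leaf conjugacy between two normally hyperbolic systems whose non-center transverse dynamics has a single Lyapunov rate: $\lambda_i$ in the $f_0$-picture (the rate of $A_f$ on $V^i$) and a $C^1$-close rate in the $f$-picture. Under $f_0$ the center action is isometric, and under $f$ it is $C^1$-close to isometric. The PSW bi-H\"older estimate for leaf conjugacies between normally hyperbolic foliations, specialized to this single-rate setting (\cite[Theorems A and B]{PSW}), gives a lower bound of the form $\alpha_i(f) \geq \tau_i(f)$, where $\tau_i$ depends continuously on $f$ in the $C^1$ topology and $\tau_i(f_0)=1$. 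Hence $\alpha_i(f)\to 1$ uniformly in $x$ as $d_{C^1}(f,f_0)\to 0$, and by symmetry the same holds for $(h^c_i)^{-1}$.

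To globalize, I would iterate Journ\'e's lemma over the finite family $\{\W^{ic}_{f_0}\}_{i=1}^{k}$. The bundles $E^i_{f_0}$ are mutually transverse by the dominated splitting, so the $\W^{ic}_{f_0}$ are pairwise transverse modulo the common center $\W^c_{f_0}$, and their leaves are uniformly smooth since under $P$ they descend to smooth $A_f$-invariant subtori of $\TT^{d-1}$. Starting from uniform H\"older estimates with exponent $\alpha:=\min_i \alpha_i$ along each $\W^{ic}_{f_0}$, each Journ\'e step joins one additional factor with a loss in the H\"older exponent controlled by geometric data of the foliations. After $k-1$ such steps, the joined foliation is $\TT^d$ itself, and $h^c$ is globally H\"older with exponent arbitrarily close to $\min_i \alpha_i$. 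Applying the symmetric argument to $(h^c)^{-1}$ then finishes the proof.

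The main obstacle I anticipate is ensuring that the loss in exponent at each Journ\'e step can be made arbitrarily small by taking $f$ close enough to $f_0$. The classical Journ\'e lemma in H\"older form requires quantitative transversality and regularity of holonomies for the two foliations at hand, so that the interpolation between leafwise estimates does not erase the gain obtained in the leafwise PSW step. In the present setting this is supplied by the identity $\pi\circ h^c = P$: the foliations $\W^{ic}_{f_0}$ are images under a canonical projection of $C^\infty$ invariant subtori, their holonomies are smooth with estimates continuous in $f$, and the transversality angles are bounded away from $0$ uniformly near $f_0$. Handling these quantitative dependencies carefully is what allows the global H\"older exponent to track $\min_i \alpha_i \to 1$ and thus yields the stated conclusion.
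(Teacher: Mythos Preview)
Your overall strategy---obtain a H\"older estimate leafwise on each $\W^{ic}_{f_0}$ using the single contraction rate on $E^i$, then globalize---matches the paper's, but both steps are executed differently.

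For the leafwise estimate, the paper does not invoke PSW as a black box. It gives a direct, elementary argument: for $x'\in\W^i_{f_0}(x)$ with $d(x,x')\in[\delta\mu_i^{n+1},\delta\mu_i^n)$ where $\mu_i=e^{\lambda_i-\epsilon}$, iterate $f_0^{-n}$ to bring the pair to scale $\delta$, use uniform continuity of $h^c$ at that fixed scale, then iterate $f^n$ forward using the contraction rate $\nu_i=e^{\lambda_i+\epsilon}$ on $\W^i_f$. This yields the explicit exponent $\beta=\log\mu_i/\log\nu_i=(\lambda_i+\epsilon)/(\lambda_i-\epsilon)\to 1$ as $\epsilon\to 0$. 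Your PSW citation would ultimately reproduce this bound, but the direct argument is shorter and makes the dependence on $d_{C^1}(f,f_0)$ transparent without needing to verify that the PSW machinery applies to the restricted systems on non-invariant leaves.

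The more substantial difference is in globalization. You propose iterating Journ\'e's lemma and then spend a paragraph worrying about exponent loss at each step. This concern is misplaced because Journ\'e is the wrong tool here. The foliations $\W^i_{f_0}$ and $\W^c_{f_0}$ are \emph{linear} (affine) foliations of $\TT^d$, hence $C^\infty$ with uniformly bounded transversality angles. For such foliations, a map that is uniformly $\beta$-H\"older along each is globally $\beta$-H\"older with \emph{no loss}: given nearby $p,q$, connect them by a broken path with one segment in each foliation; linearity and transversality force each segment to have length $\leq C\,d(p,q)$, and summing the leafwise H\"older bounds gives $d(h^c(p),h^c(q))\leq C'\,d(p,q)^\beta$. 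This is the paper's one-line globalization (``It is thus $\beta$-H\"older continuous''). Journ\'e's lemma is built for $C^r$ regularity with $r\geq 1$, where the path-joining trick fails; for H\"older regularity along smooth transverse foliations it is simply not needed, and your anticipated obstacle evaporates.
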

\begin{proof}

Fix $i$ and consider the foliation  $\W^{ic}_f$.  Its leaves  are jointly foliated by $\W^i_f$ and the uniformly compact foliation $\W^c_f$.  By taking $f^{-1}$ if necessary, we may assume that the leaves of $\W^{i}_f$ are uniformly contracted by the dynamics.  Let $\lambda_i<0$  be the corresponding Lyapunov exponent for $A_f\vert_{L_i}$.  Since $f_0$ is an isometric extension of a linear map, for any $\epsilon>0$ we may choose a continuous adapted metric on $\TT^{d}$ such that for all $f$ sufficiently $C^1$-close to $f_0$, for all $p\in \TT^d$, and all $v\in E^i_{f}(p)$:
\[e^{\lambda_i - \epsilon}\|v\|  \leq  \|D_p f(v)\| \leq e^{\lambda_i + \epsilon}\|v\|.
\]
Let $\mu_i =  e^{\lambda_i - \epsilon}$ and $\nu_i = e^{\lambda_i + \epsilon}$.
If $f$ is sufficiently $C^1$-close to $f_0$, then for any $w\in M$ and $w'\in W^i_f(w,loc)$, if $f^{-j}(w') \in \W^i_f( f^{-j}(w'),loc)$ for $j=0,\ldots n$, then 
\[   \nu_i^{-n} \,d(w,w') \leq   d_{\W^i_f} (f^{-n}(w), f^{-n}(w'))  \leq   \mu_i^{-n}\, d(w,w').\]
(This is easily proved by induction on $n$).

 Consider the restriction of $h^c$ to $\bigsqcup \W^{ic}_{f_0}$, whose image is $\bigsqcup \W^{ic}_{f}$, sending  $\W_{f_0}^c$ leaves smoothly to $\W_{f}^c$ leaves.  Now $h^c$ does not necessarily send $\W^{i}_{f_0}$ leaves to $\W^{i}_{f}$, but we can estimate the H\"older exponent of $h^c$ restricted to $\W^i_{f_0}$ leaves via a standard argument, which we now describe.

Fix $\eta>0$ such that for all $w, w'\in \bigsqcup \W^{ic}_{f_0}$,   with $d(w,w') < \eta$, then for any $z\in \W^c_f(w)$, there is a unique point $z'$ in  $\W^i_f(z,loc)\cap \W^c_f(w')$ and the distance between $z$ and $z'$ is uniformly comparable to the distance between $z$ and $z'$ as measured along $\W^i_{f}(z,loc)$.  This is possible because the foliation $\W^c_f$ has uniformly compact leaves. Next fix a small constant $\delta>0$ such that  $d(w,w')<\delta$ implies $d(h^c(w), h^c(w'))<\eta$.

Now let $x\in M$ and $x'\in \W^i_{f_0}(x)$. Let $y= h^{c}(x)$ and $y' = h^c(x')$. We want to estimate $d(y,y')$ in terms of $d(x,x')$.   Let $z=\W^i_f(y)\cap \W^c_f(y')$.  By the construction of $h^c$ using the smooth normal bundle $\cN$, we have that 
$d(y',z) = O(d(y,z))$, so it suffices to estimate $d(y,z)$ in terms of $d(x,x')$.

We may assume that $d(x,x') < \delta$.  Fix $n\geq 0$ such that  $d(x,x')\in [\delta\mu_i^{n+1}, \delta\mu_i^{n})$.
Since $x'\in \W^i_{f_0}(x,loc)$, we have $d(f_0^{-n}(x), f_0^{-n}(x')) <  \mu_i^{-n} d(x,x') < \delta$.
By our choice of $\delta$, we have that  $d(h^{c}(f^{-n}(x)), h^c(f^{-n}(x'))) < \eta$.
Since $h^c$ is a leaf conjugacy, $h^{c}(f_0^{-n}(x))\in \W_f^c(f^{-n} (y))$ and $h^{c}(f_0^{-n}(x'))\in \W_f^c(f^{-n} (y')) = \W_f^c(f^{-n} (z))$.   Since $f^{-n} (y) \in \W^i(f^{-n} (z),loc)$, our choice of $\eta$ implies that $d(f^{-n} y, f^{-n} z)$ is comparable to the distance measured along 
$\W^i_f$, which is at least  $\nu_i^{-n} d(y,z)$.  Thus $d(y,z) = O(\nu_i^{-n}) = O(\mu_i^{-n\beta}) = O(d(x,x')^\beta)$, where
\[\beta = \frac{\log \mu_i }{\log{\nu_i}} =  \frac{\lambda_i + \epsilon} {\lambda_i - \epsilon}.
\]  
Since we may choose $\epsilon>0$  arbitrarily small by setting $d_{C^1}(f_0,f)$ small enough, this shows that we may choose $\beta$ arbitrarily close to $1$.

This shows that $h^{c}$ is uniformly $\beta$-H\"older continuous along $\W^{ic}_{f_0}$-leaves, for all $i$.  It is thus $\beta$-H\"older continuous.

A similar argument  (reversing the roles of $f_0$ and $f_0^{-1}$) shows that $(h^{c})^{-1}$ is $\beta$-H\"older continuous.
\end{proof}

\subsection{Some Pesin theory}

We will also use the following well-known corollaries of Pesin theory. Let $f$ be a $C^{r}(r>1)$ diffeomorphism of a closed $d-$manifold $M$, let $\nu$ be an $f-$invariant ergodic probability measure, and let $\lambda^{\hbox{\tiny{$\mathrm{max}$}}} =\lambda_1 \geq \cdots\geq \lambda_d\geq \lambda^{\hbox{\tiny{$\mathrm{min}$}}}$ be the Lyapunov exponents of $Df$ with respect to $\nu$.

For $x\in M, \,\delta> 0$, and $\lambda< 0$, we define the {\em local stable set}\[\W^s(x,  \lambda, \delta) := \{y\in M : d(f^n(x), f^n(y)) \leq  \delta \exp(\lambda n), \;\forall n \geq  0\}.\]
The set of  \emph{regular} points  for $(f,\nu)$ in $M$ (also called the Lyapunov--Perron regular points, cf. \cite{BP}) have full $\nu$-measure in $M$ and the following important property.
\begin{prop}[Stable manifold theorem]\label{prop: stb mfd}Fix $\lambda<0$ such that $\lambda_{k+1}<\lambda<\lambda_k$ holds for some $k$. Then for any regular point $x$, the local stable set $\W^s(x, \lambda, \delta)$ is a $C^r$  embedded disk in  $M$ for small enough $\delta$. The dimension of the disk is  $d-k$.
\end{prop}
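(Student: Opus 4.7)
The plan is to invoke the standard Hadamard--Perron/Pesin graph transform construction, and the proof is entirely classical; I will sketch how I would organize it here for completeness. Fix a regular point $x$ and let $\lambda$ satisfy $\lambda_{k+1}<\lambda<\lambda_k$ with $\lambda<0$. By the Oseledec theorem (Section~\ref{Oseledec} applied to the derivative cocycle $Df$) we have a measurable, $Df$-invariant splitting $T_yM=E^{<\lambda}(y)\oplus E^{>\lambda}(y)$ defined at every regular $y$, where $E^{<\lambda}$ collects all Lyapunov subspaces with exponent $<\lambda$ and $E^{>\lambda}$ the rest. By construction $\dim E^{<\lambda}=d-k$, and for every $v\in E^{<\lambda}(y)\setminus\{0\}$, $w\in E^{>\lambda}(y)\setminus\{0\}$,
\[
\limsup_{n\to\infty}\tfrac1n\log\|D_yf^n v\|\le \lambda_{k+1}<\lambda,\qquad \liminf_{n\to\infty}\tfrac1n\log\|D_yf^n w\|\ge \lambda_k>\lambda.
\]

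The first main step is to pass to Lyapunov (Pesin) charts. Fixing $\varepsilon>0$ small compared to the gaps $\lambda-\lambda_{k+1}$ and $\lambda_k-\lambda$, there is a measurable family of linear changes of coordinates $L_y\colon \RR^d\to T_yM$, defined on a set of full $\nu$-measure, such that in the charts $\Psi_y:=\exp_y\circ L_y$ the map $\tilde f_y:=\Psi_{f(y)}^{-1}\circ f\circ \Psi_y$ has the form $\tilde f_y=D_y\tilde f+R_y$ where $D_y\tilde f$ is block-diagonal with respect to $\RR^{d-k}\oplus \RR^k$, contracts the first block by a factor at most $e^{\lambda+\varepsilon}$, expands the second block by a factor at least $e^{\lambda-\varepsilon}\cdot e^{(\lambda_k-\lambda-2\varepsilon)}>e^{\lambda+\varepsilon}$, and the remainder $R_y$ has small Lipschitz constant on a chart of size $\delta(y)>0$ (with $\delta(y)$ varying only subexponentially along the orbit). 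This is precisely the setting in which one has a hyperbolic splitting with rate gap at $e^\lambda$.

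The second step is the graph transform. On the space of Lipschitz graphs $\varphi\colon B^{d-k}_{\delta(y)}(0)\to \RR^k$ of slope $\le 1$, define $\Gamma_y\varphi$ by pulling back the graph of $\varphi$ under $\tilde f_y$ and restricting to the first block. The gap in expansion/contraction rates ensures $\Gamma$ is a uniform contraction in the $C^0$ norm, and by the standard estimates of Hirsch--Pugh--Shub (or by a direct Picard iteration in $C^r$), iterating $\Gamma$ backwards along the orbit $y=f^n(x)$ produces a unique fixed section whose graph at $x$ is a $C^r$ disk $W(x)$ of dimension $d-k$ tangent to $E^{<\lambda}(x)$. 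Because the contraction rate on graphs is $e^{\lambda+\varepsilon}$ and the bunching condition $e^{r(\lambda+\varepsilon)}\cdot e^{-(\lambda-\varepsilon)}<1$ is satisfied for all sufficiently small $\varepsilon$ (since $\lambda<0$ and $r>1$), the same iteration applied in $C^r$ spaces converges, giving $C^r$ regularity of $W(x)$; this is the $C^r$ section theorem of \cite{HPS} in disguise.

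The final step is to identify $W(x)$ with the local stable set $\W^s(x,\lambda,\delta)$. By construction every $y\in W(x)$ satisfies $\|\Psi_{f^n(x)}^{-1}f^n(y)\|\le e^{n(\lambda+\varepsilon)}\|\Psi_x^{-1}y\|$, and distortion of the charts is controlled, hence $d(f^n(x),f^n(y))\le \delta e^{n\lambda}$ for $\delta$ small. Conversely, if $y\in\W^s(x,\lambda,\delta)$, then its image in the Lyapunov chart at $f^n(x)$ stays bounded by $\delta e^{n\varepsilon}$, while any component along $E^{>\lambda}$ would grow at rate $\ge e^{n(\lambda_k-\varepsilon)}>e^{n\lambda}$; this forces $y$ to lie in the graph, so $\W^s(x,\lambda,\delta)=W(x)$ once $\delta$ is small enough. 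The only technical obstacle is the usual one in Pesin theory, namely keeping track of the tempered (subexponential) dependence of the chart size $\delta(y)$ along the orbit; this is well-known and is the content of the Lyapunov-regular point hypothesis.
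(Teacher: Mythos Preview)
The paper does not supply its own proof of this proposition: it is stated as a classical result of Pesin theory, with a parenthetical reference to \cite{P77} for the quantitative estimate on $\delta$ (and \cite{BP} is cited for background). Your sketch is exactly the standard argument --- Lyapunov charts to reduce to a nonstationary hyperbolic situation with a rate gap at $e^\lambda$, followed by the Hadamard--Perron graph transform --- and is correct in outline.

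Two minor remarks on the write-up. First, the phrase ``expands the second block'' is slightly misleading: since $\lambda_k>\lambda$ but $\lambda_k$ may still be negative, $D\tilde f$ need not expand $E^{>\lambda}$; what matters is only the \emph{gap} $\lambda_k-\lambda>0$, which your computation correctly uses. Second, the bunching inequality you invoke for $C^r$ regularity is not really the operative mechanism here: individual Pesin stable manifolds are $C^r$ whenever $f$ is $C^r$ (with $r>1$), as a direct consequence of the graph-transform contraction in $C^r$ together with the tempered estimates on the nonlinear remainder; no spectral bunching is required (bunching enters only when one asks for transverse regularity of the \emph{family} of stable manifolds). Your condition happens to hold, so nothing is wrong, but the justification as written slightly misidentifies the source of the regularity.
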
 
We call the set $\W^s(x, \lambda, \delta)$ defined by the Proposition the \emph{local Pesin stable manifold}, and we denote it by $\W^s(x,\lambda, loc)$ (cf. \cite{P77} for a concrete estimate on $\delta$). 
Suppose $x$ is a regular point  and $\W^s(x,\lambda, loc)$ is defined as above. The {\em global Pesin manifold} (of $\W^s(x, \lambda, loc)$) is defined by 
\[\W^s(x,\lambda) =\cup_{n=0}^\infty f^{-n}(\W^s(f^n(x), \lambda, loc)).\]


We also  obtain the following criterion for a diffeomorphism to contract an invariant bundle.
\begin{lemma}\label{lemma: h dim reg chart} Let $f\colon M\to M$ be a $C^{1+}$ diffeomorphism, and let $\W$ be an $f$-invariant foliation with $C^{1+}$ leaves.  Suppose there exist  $\kappa_1 < \kappa_2 <  0$ such that for all $x\in M$, there exist $\delta>0$ and $N\in \NN$ such that  for all $y\in \W(x)$:
\[d(x,y)<\delta \implies  e^{\kappa_1 n} \leq d(f^n(x), f^n(y)) \leq  e^{\kappa_2 n},
\]
for all $n\geq N$.

Then for all  $\epsilon>0$, there exists $N'\in \NN$ such that for all $v\in T\W$, and all $n\geq N'$, we have
\[ e^{ (\kappa_1 -\epsilon) n}\|v\| \leq \|Df^n v\| \leq  e^{ (\kappa_2+\epsilon) n}\|v\|.\]
\end{lemma}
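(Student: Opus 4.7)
I would apply Lemma~\ref{lemma: est cocyc} to the continuous linear cocycle $A := Df|_{T\W}$ over the base map $f$. The conclusion of the lemma then reduces to showing that for every ergodic $f$-invariant probability measure $\nu$ on $M$, the Lyapunov spectrum of $A$ with respect to $\nu$ is contained in $[\kappa_1, \kappa_2]$. I would handle the top and bottom of the spectrum separately, each time via a Pesin-theoretic argument using Proposition~\ref{prop: stb mfd}.

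For $\lambda^{\max}(A, \nu) \leq \kappa_2$, suppose some ergodic $\nu$ satisfies $\lambda^{\max}(A, \nu) > \kappa_2$, and fix $\epsilon > 0$ with $\kappa_2+\epsilon < \lambda^{\max}(A, \nu)$ and $\kappa_2+\epsilon$ strictly between two consecutive Lyapunov exponents of $Df$ on $TM$ with respect to $\nu$. For $\nu$-a.e.\ $x$, Proposition~\ref{prop: stb mfd} gives a local Pesin stable manifold $W^s(x, \kappa_2+\epsilon, \mathrm{loc})$, a $C^{1+}$ embedded disk tangent at $x$ to the Oseledets subspace $E^{\leq}_x \subset T_x M$ spanned by the directions of exponent $\leq \kappa_2 + \epsilon$. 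Because $T_x\W$ contains a vector whose exponent exceeds $\kappa_2+\epsilon$, we have $T_x\W \not\subset E^{\leq}_x$, and hence $\W(x)$ is not locally contained in $W^s(x, \kappa_2+\epsilon, \mathrm{loc})$ near $x$. Choosing any $y \in \W(x)$ arbitrarily close to $x$ off this Pesin disk, one has $\limsup_n \tfrac{1}{n}\log d(f^n x, f^n y) > \kappa_2$, so $d(f^n x, f^n y) > e^{\kappa_2 n}$ for infinitely many $n$, contradicting the upper estimate in the hypothesis.

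For $\lambda^{\min}(A, \nu) \geq \kappa_1$, suppose an ergodic $\nu$ has $\lambda^{\min}(A, \nu) < \kappa_1$, and fix $\epsilon > 0$ with $\lambda^{\min}(A, \nu) + \epsilon < \kappa_1$. At a $\nu$-regular $x$, pick a unit $v \in T_x\W$ realizing $\lambda^{\min}(A, \nu)$ and consider the curve $\gamma(s) := \exp^\W_x(sv)$ in $\W(x)$. I would combine the uniform upper bound $\|Df^n|_{T\W}\| \leq e^{(\kappa_2+\epsilon)n}$ from the previous step with a bounded distortion argument along $\gamma$: the hypothesis gives $d(f^k x, f^k \gamma(\tau)) \leq e^{\kappa_2 k}$ for small $\tau$ and large $k$, so the H\"older variation of $z \mapsto \log\|Df|_{T_z\W}\|$ (which is H\"older in $z$ since $f$ is $C^{1+}$ and $T\W$ is H\"older as a subbundle) telescopes to a uniformly bounded error in $n$. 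Hence $\|Df^n|_{T_{\gamma(\tau)}\W}\|$ is comparable to $\|Df^n v\|$ uniformly in $\tau$ and $n$, and integrating along $\gamma$ yields $d_\W(f^n x, f^n \gamma(s)) \lesssim s \|Df^n v\| \leq s e^{(\lambda^{\min}(A,\nu)+\epsilon/2) n}$, which is strictly less than $e^{\kappa_1 n}$ for fixed small $s$ and $n$ large---contradicting the lower estimate in the hypothesis.

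The main obstacle is the bounded distortion step in the lower-exponent bound: one must control the variation of the cocycle norm as one moves along a leaf, uniformly in $n$. For $1$-dimensional $T\W$ (the only case used in the paper's applications), the log-derivative is a scalar and the telescoping sum is immediate from the H\"older regularity of $\log|Df|_{T\W}|$ together with the geometric decay of $d(f^k x, f^k \gamma(\tau))$ supplied by the hypothesis. In higher dimensions, one must additionally track the moving subspace $T_{\gamma(\tau)}\W$ inside $TM$, but the $C^{1+}$ regularity of the leaves, together with the same geometric decay, supplies the necessary H\"older control on $T\W$ as a subbundle and allows the same telescoping estimate to go through.
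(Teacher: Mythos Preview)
Your reduction via Lemma~\ref{lemma: est cocyc} to bounding the Lyapunov spectrum of $Df|_{T\W}$ for every ergodic $\nu$ is exactly the paper's first move, and your upper-bound argument (forcing $\W(x,\mathrm{loc})$ into the ambient Pesin stable disk for the rate $\kappa_2+\epsilon$, hence $T_x\W\subset E^{\le\kappa_2+\epsilon}_x$) is a valid variant of what the paper does. The paper instead stays inside $\W$ and builds, via the graph transform of \cite[Theorem~3.16]{PSh}, $C^{1+}$ disks $\cD_i(x)\subset\W(x)$ tangent to each Oseledets subspace $E_i(x)$, with the two-sided rate $e^{(\beta_i\pm\epsilon)n}$ along $\cD_i$.

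The lower-bound step, however, has a genuine gap when $\dim T\W>1$. Your distortion estimate only controls the \emph{operator norm}: telescoping the H\"older variation of $\log\|Df|_{T_z\W}\|$ gives $\|Df^n|_{T_{\gamma(\tau)}\W}\|\asymp\|Df^n|_{T_x\W}\|$, and the latter grows like $e^{\beta_1 n}$ (the \emph{top} exponent of the restricted cocycle), not like $\|Df^n v\|\approx e^{\beta_k n}$. The exponential curve $\gamma(s)=\exp^\W_x(sv)$ does not remain tangent to the bottom Oseledets subspace $E_k$ (which is only measurable), so $\|Df^n_{\gamma(\tau)}\gamma'(\tau)\|$ is, in general, of order $e^{\beta_1 n}$; integrating then yields only $d_\W(f^n x,f^n\gamma(s))\lesssim s\,e^{(\beta_1+\epsilon)n}$, which gives no contradiction with $d\ge e^{\kappa_1 n}$. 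Your proposed fix---H\"older control of $T\W$ as a subbundle---addresses the wrong object: what is needed is control of the measurable Oseledets subspace $E_k\subset T\W$, and that does not come for free.

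Two remarks. First, your parenthetical that the $1$-dimensional case ``is the only case used in the paper's applications'' is not correct: the lemma is invoked for the foliations $\W^i_f$ and $\W^{[i+1,k]}_f$ in Section~\ref{sec: key sec} and in Lemma~\ref{lemma: bd LE Dh E}, and these have arbitrary dimension. Second, the paper's cure is precisely to replace your geodesic $\gamma$ by the graph-transform disk $\cD_k(x)$ tangent to $E_k(x)$: this disk is forward invariant and carries the two-sided estimate $e^{(\beta_k-\epsilon)n}\le d(f^n x,f^n y)\le e^{(\beta_k+\epsilon)n}$ for $y\in\cD_k(x)$, so comparison with the hypothesis $d\ge e^{\kappa_1 n}$ gives $\beta_k\ge\kappa_1-\epsilon$ directly.
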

\begin{proof} 
Assume the hypotheses; we show how to establish the upper inequality  $\|Df^n v\| \leq  e^{(\kappa_2+\epsilon) n}\|v\|$ (the lower inequality is similarly proved).

By  Lemma~\ref{lemma: est cocyc}, it suffices to show that for every $f$-invariant, ergodic measure $\nu$, the top Lyapunov exponent of the cocycle $\left. Df\right|_{T\W}$ with respect to $\nu$ is at most $\kappa_2$.  To this end, fix $\nu$, and let $\beta_k< \beta_{k-1} <\cdots < \beta_1<0$ be the Lyapunov exponents of the cocycle  $\left. Df\right|_{T\W}$ with respect to $\nu$.  Let 
\[T\W = E_k \oplus \cdots \oplus E_1\]
be the corresponding Oseledec decomposition.  

Using the graph transform argument in  \cite[Theorem 3.16]{PSh} in restriction to the leaves of $\W$, one can construct for each $i$ a measurable family of $C^{1+}$ disks $\cD_i(x)$, defined over a full $\nu$-measure set $S_\nu \in M$, with the following properties, for all $x\in S_\nu$:
\begin{itemize}
\item $\cD_i(x)\subset \W(x)$,
\item $T_x\cD_i(x) = E_i(x)$,
\item $f(\cD_i(x)) \subset \cD_i(f(x))$, and
\item for every $\epsilon>0$,  there exists $N=N_x$ such that for all $y\in \cD_i(x)$ and all $n\geq N$, we have
\[ e^{ (\beta_i-\epsilon)n} \leq  d(f^n(x), f^n(y)) \leq e^{ (\beta_i+\epsilon)n} .\]
\end{itemize}

Fix $\epsilon>0$ and  a Pesin regular point $x$ for $\nu$, and consider $\cD_1(x)$.  For $n$ sufficiently large, the action of $f^n$ in restriction to  $\cD_1(x)$ is a uniform contraction by a factor bounded below by $e^{(\beta_1-\epsilon)n}$.  On the other hand, the hypothesis implies that this contraction factor is bounded above by  $e^{\kappa_2 n}$.  It follows that $\beta _1 <  \kappa_2+\epsilon$. Since $\epsilon>0$ was arbitrary, we conclude that $\beta_1\leq \kappa_2$.
\end{proof}

\subsection{Normal forms for uniformly contracting foliations}\label{sec: normal form}

We will use non-stationary normal form theory  to upgrade the regularity of certain homeomorphisms in the centralizer of the partially hyperbolic systems under consideration. 

Let $f$ be a diffeomorphism of a closed manifold $M$, and
let $\W$ be an $f-$invariant  foliation of $M$ with uniformly $C^1$ leaves.  We assume that $f$ uniformly contracts the leaves $\W$.
Let $E = T\W$ be the tangent bundle to $\W$. We denote by $F\colon E\to E$ the bundle automorphism induced by the derivative of $f$:
$F_x =Df |_{Tx\W}\colon E_x\to E_{f x} $.
Then $F$ induces a bounded linear operator $F^\ast$ on the space of continuous sections of $E$ by $F^\ast v(x) =
F(v( f^{−1}x))$. The spectrum of the complexification of $F^\ast$ is called the \emph{Mather spectrum} of $F$. If the non-periodic points of $f$ are dense in $M$, then the Mather spectrum consists of finitely many closed annuli   $A_i$, $i = 1, \ldots , \ell$, centered at $0$ and bounded by circles of radii $e^{\lambda_i}$ and $e^{\mu_i}$, with $\lambda_i = \lambda_i(F)$ and $\mu_i = \mu_i(F)$ satisfying
\begin{equation}\label{eqn: Exponent Ordering}
\lambda_1 \leq \mu_1 < \lambda_2 \leq \mu_2  <\cdots <\lambda_\ell \leq \mu_\ell < 0;
\end{equation}
see \cite{M65, P04}.

The spectral intervals $\{[\lambda_i(F), \mu_i(F)]: i=1,\ldots, \ell\}$ correspond to a splitting of the bundle $E$ into a direct sum
\[E = E^1\oplus \cdots \oplus E^\ell\]
of continuous, $F-$invariant sub-bundles such that Mather spectrum of $F|_{E^i}$ is contained in the annulus $A_i$ (this splitting is thus dominated and invariant under perturbations of $F$). This can be expressed using the Lyapunov metric \cite{GK}: for each $i=1,\dots, \ell$ and $\epsilon>0$, there exists a continuous metric $
\|\cdot\|_{x,\epsilon}$ on $E^i$ such that $$e^{\lambda_i-\epsilon}\cdot \|v\|_{x,\epsilon}\leq \|F_x(v)\|_{f(x)
,\epsilon}\leq e^{\lambda_i+\epsilon}\cdot \|v\|_{x,\epsilon}, \forall v\in E^i_x.$$

\begin{definition}\label{def: narrow band}We say that the bundle automorphism $F$  has \emph{narrow band spectrum} if $\mu_i(F) + \mu_\ell(F)  < \lambda_i(F)$, for $i = 1, \ldots , \ell$.
\end{definition}

For vector spaces $E$ and $\bar E$ we say that a map $P : E\to  \bar E$ is {\em polynomial} if with respect to some bases of $E$ and $\bar E$, each component of $P$ is a polynomial. A polynomial map
$P$ is {\em homogeneous of degree $n$} if $P(av) = a^n P(v)$ for all $v \in E $ and $a \in \RR$. More generally,
for a given splitting $E = E^1 \oplus\dots\oplus E^\ell$ we say that $P\colon E \to \bar E$ has {\em homogeneous type}
$s = (s_1,\ldots,s_\ell)$ if for any real numbers $a_1,\ldots , a_\ell$ and vectors $t_j \in E^j$,  $j = 1,\ldots ,\ell$, we
have
\[P(a_1t_1 +\dots+ a_\ell t_\ell) = a^{s_1}
_1 \cdots a^{s_\ell}_\ell P(t_1 +\cdots+ t_\ell).\]

Suppose $E = E^1 \oplus\cdots\oplus E^\ell$, $\bar E = \bar E^1 \oplus\cdots\oplus \bar E^\ell$ and $P\colon E \to \bar E$ is a
polynomial map. Split $P$ into components $P_i \colon E\to  \bar E^i$ and write
$P = (P_1, \ldots , P_\ell)$.  Let $\lambda=(\lambda_1,\ldots, \lambda_\ell)$ and $\mu = (\mu_1,\ldots,\mu_\ell)$ with  $\lambda_1 \leq \mu_1 <\cdots < \lambda_\ell \leq \mu_\ell<0$. 
 We say that
$P$ is of {\em $(\lambda, \mu)$ sub-resonance type} if for each $i=1,\ldots, \ell$, there exists
$s = s(i) = (s_1,\ldots, s_\ell)$  satisfying the sub-resonance relation
\[\lambda_i \leq \sum_{j=1}^\ell  s_j\mu_j,\]
such that 
$P_i$ has homogeneous type $s$.

Now we can state the main results in this section.

\begin{maintheorem}\label{thm: normal form} \cite{KalininNormalForms, KS06, Sad}  
Let $f$ be a $C^{r}$ diffeomorphism of a closed manifold $M$, and let $\W$ be an $f-$invariant topological foliation of $M$ with uniformly $C^r$ leaves.  Suppose that the leaves of $\W$ are contracted by $f$ and that either: the spectrum of $F=\left.Df\right|_{T\W}$ is $r_{\W}$-bunched, for some $r_{\W} \leq 2$. (See Definition~\ref{def: stable bunched}); or $F$ has narrow band spectrum (see Definition~\ref{def: narrow band}).

Fix  $r > r_{\W}$ (in the bunched case) or  $r > {\lambda_1(F)}/{\mu_\ell(F)}$, setting $\lambda=(\lambda_1(F),\ldots, \lambda_\ell(F))$ and $\mu = (\mu_1(F),\ldots,\mu_\ell(F))$ (in the narrow band case).   Then there exists a family $\{H_x\}_{x\in M}$ of $C^r$ diffeomorphisms
$H_x\colon \W_x \to E_x = T_x\W$ such that
\begin{enumerate}
\item $P_x = H_{fx} \circ f \circ H^{−1}_x\colon E_x\to E_{fx}$ is a linear map (in the bunched case) or a polynomial map of $(\lambda, \mu)$ sub-resonance type (in the narrow band case) for each $x\in M$;
\item $H_x (x) = 0$ and $D_xH_x$ is the identity map for each $x\in M$;
\item $H_x$ depends continuously on $x\in M$ in the $C^r$ topology and  is jointly $C^r$ in $x$
and $y\in \W_x$  along the leaves of $\W$;
\item $H_y\circ H^{−1}_x : E_x \to E_y$ is a linear map (in the bunched case) or a  polynomial map  of $(\lambda,\mu)$ sub-resonance type  (in the narrow band case) for each $x \in M$ and
each $y \in  \W_x$; and
\item if $g$ is a homeomorphism of $M$  that commutes with $f$, preserves $\W$, and is $C^{s}$ along the leaves of $\W$, with 
$s>r_\W$ (in the bunched case) or $s>{\lambda_1(F)}/{\mu_\ell(F)}$ (in the narrow band case), then the maps $H_x$ bring $g$ to a normal form as well, i.e. the map
$Q_x = H_{fx} \circ g \circ H^{−1}_x$ is a  linear map (in the bunched case) or a polynomial  of $(\lambda,\mu)$ sub-resonance type (in the narrow band case) for each $x\in M$.
\end{enumerate}
\end{maintheorem}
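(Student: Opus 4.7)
The plan is to construct the family $\{H_x\}$ by a ``pullback and correct'' iterative procedure, exploiting the contraction of $f$ along $\W$, and then to deduce the centralizer statement (5) from uniqueness of the normal form.

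First, for each $x\in M$ I would fix a $C^{r}$ chart on $\W_{x}$ centered at $x$ with derivative the identity, for instance $\phi_x := \exp_x^{-1}$ restricted to $\W_{x}$ using a metric adapted to the Lyapunov splitting. Setting $F_x = D_x f|_{T_x\W}$, I define candidate normal form coordinates
\[
H_x \;=\; \lim_{n\to\infty} \Psi^{(n)}_{f^{-n}x}\circ\phi_{f^{-n}x}\circ f^{-n},
\]
where $\Psi^{(n)}_y : E_y \to E_{f^n y}$ is the appropriate $n$-fold composition: in the bunched case, $\Psi^{(n)}_y = F_y^n := F_{f^{n-1}y}\circ\cdots\circ F_y$; in the narrow band case $\Psi^{(n)}_y$ is a composition of polynomial approximations of $(\lambda,\mu)$ sub-resonance type obtained from the $r$-jet of $f$ along $\W$. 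By normalizing at each step so that $H_x(x)=0$ and $D_xH_x=\mathrm{id}$, one obtains candidates satisfying properties (1) and (2) formally.

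The core work is the convergence estimate. Along each leaf, the derivative cocycle has spectral intervals $[\lambda_i,\mu_i]$, and the error between successive approximations is controlled by the product of a contraction $e^{n\mu_\ell}$ (from iterating $f^{-n}$) and an expansion of the linear/polynomial approximation on the tangent bundle. Bunching ($\|F_x\|\cdot \|F_x^{-1}\|^{r_\W}<1$) in the linear case, or narrow band ($r\mu_\ell<\lambda_1$) in the polynomial case, is exactly what is needed to make the product contract in the $C^r$ norm along leaves; the telescoping sum then converges uniformly in $C^r$. Joint $C^r$ regularity in the transverse direction (part (3)) follows from the fact that $\W$ has uniformly $C^r$ leaves and the invariance of $\W$ by $f$, combined with the same estimates applied to sections over foliation plaques and an application of the $C^r$-section theorem of \cite{HPS} in the spirit of Lemma~\ref{lemma: impr cr}. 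I would expect the main technical obstacle to lie here: in the narrow band case one must keep track of the homogeneous-type decomposition for every spectral index simultaneously, and the sub-resonance polynomials of degree up to $\lfloor \lambda_1/\mu_\ell\rfloor$ must be handled with estimates tailored to each homogeneous component.

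Part (4) reduces to uniqueness: if $H_x$ and $H_y$ are both in normal form, then the transition $H_y\circ H_x^{-1}$ must conjugate the linear (resp.\ polynomial) representation of $f$ on $E_x$ to that on $E_y$ and fix the origin up to an affine correction. The spectral ordering \eqref{eqn: Exponent Ordering} together with the sub-resonance type of the $P_z$'s forces any such map to itself be linear (resp.\ sub-resonance polynomial); this is a standard Lyapunov-exponent argument using that no non-resonant monomial is compatible with both sides simultaneously. Finally, for the centralizer statement (5), the idea is to feed $g$ into the same limiting construction. Since $g$ commutes with $f$, preserves $\W$ and is $C^s$ along leaves with $s$ above the bunching/narrow band threshold, the conjugate $Q_x := H_{gx}\circ g\circ H_x^{-1}$ satisfies $Q_{fx}\circ P_x = P_{gx}\circ Q_x$; differentiating this relation and writing $Q_x$ in its homogeneous decomposition of type $s=(s_1,\dots,s_\ell)$, every component corresponding to a non-sub-resonance type is killed by the domination coming from \eqref{eqn: Exponent Ordering} (expanding non-sub-resonance components would violate the Lyapunov bounds one has for $g$). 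Hence $Q_x$ is linear (resp.\ of sub-resonance type), as required. The main subtlety here will be verifying that the commutation relation indeed forces the non-sub-resonance coefficients of $Q_x$ to vanish uniformly in $x$, which again relies on the narrow band inequality together with the regularity hypothesis on $g$ along $\W$.
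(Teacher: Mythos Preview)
The paper does not prove this theorem; it is quoted with attribution to \cite{KalininNormalForms, KS06, Sad} and used as a black box (see Section~\ref{sec: normal form} and the applications in Proposition~\ref{prop: applic normal form}). So there is no ``paper's own proof'' to compare against.

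That said, your sketch is the standard Guysinsky--Katok/Kalinin--Sadovskaya construction and is broadly correct in outline: one builds $H_x$ as a limit of iterated pullbacks of charts corrected by the linear (resp.\ sub-resonance polynomial) cocycle, proves $C^r$ convergence from the bunching/narrow band inequalities, and obtains (4) and (5) from uniqueness of the normal form within the class of jets with identity linear part. Two remarks. First, your limit formula has an orientation issue: since $f$ contracts the leaves, the convergent expression is $H_x=\lim_{n\to\infty}(P^{(n)}_x)^{-1}\circ\phi_{f^nx}\circ f^n$ with $P^{(n)}_x=P_{f^{n-1}x}\circ\cdots\circ P_x$, not one involving $f^{-n}$; the version you wrote expands leaves and would need a separate argument for domain control. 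Second, and more substantively, in the narrow band case the inductive step is not just a contraction estimate on a single error term but a finite sequence of homological equations, one for each homogeneous degree up to $\lfloor \lambda_1/\mu_\ell\rfloor$: at each degree $d$ one must (i) solve for the sub-resonance part of the $d$-jet by choosing the polynomial $P_x$ so that the resonance terms are absorbed, and (ii) show the non-sub-resonance part converges to zero under iteration. Your sketch collapses these into one estimate; the actual proofs in \cite{GK,KS16,KalininNormalForms} treat them separately and this is where the narrow band hypothesis $\mu_i+\mu_\ell<\lambda_i$ is used in full. Your treatment of (5) via the intertwining relation $Q_{fx}\circ P_x=P_{gx}\circ Q_x$ is the right idea, and you correctly write $H_{gx}$ rather than the $H_{fx}$ that appears (erroneously) in the statement.
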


\begin{definition}\label{def:narrow band and bunched partially hyperbolic} Let $f$ be a $C^\infty$ partially hyperbolic diffeomorphism of a closed manifold $M$.   
We say that $f$ has {\em $r$-bunched spectrum} if the cocycles $F^s=\left.Df\right|_{E^s_f}$ and $F^u=\left.Df^{-1}\right|_{E^u_f}$ are $r$-bunched. (see Definition~\ref{def: stable bunched}); we call the infimum of such $r$ the {\em critical regularity $r(f)$ of $f$}.

We say that $f$ has  {\em narrow band spectrum} if the cocycles $F^s$ and $F^u$ have narrow band spectrum. In this case, we define the {\em critical regularity $r(f)$} of $f$ by
\begin{equation}\label{eqn: Critical Regularity}
r(f):=\max\left(\frac{\lambda_1^s(f)}{\mu_{\ell_s}^s(f)}, \frac{\lambda_1^u(f)}{\mu_{\ell_u}^u(f)}\right),
\end{equation}
where $\mu_i^*(f) := \mu_i(F^*)$,  $\lambda_i^*(f) :=\lambda_i(F^*)$,
 for $i =1,\ldots, \ell_*$,   for $\ast\in\{s,u\}$.
\end{definition}
We remark that if $f=\psi_{t_0}$, where $\psi_t$ is the geodesic flow over a negatively curved $X$, then
transverse symplecticity of the flow implies that it suffices to check that one of $F^s$ are $F^u$ has $r$-bunched (resp. narrow band) spectrum to verify that $f$ itself has $r$-bunched (resp. narrow band) spectrum in the sense of Definition~\ref{def:narrow band and bunched partially hyperbolic}.

Hasselblatt \cite{Hass} also defines an $\alpha$-bunched condition for Anosov flows, for $\alpha\in(0,2)$.  For a transversely symplectic Anosov flow $\psi_t$, we have that $\psi_t$ is $\alpha$-bunched in the sense of \cite{Hass} if and only if $\psi_1$ has $2/\alpha$-bunched spectrum, in the sense of Definition~\ref{def:narrow band and bunched partially hyperbolic}. The connection between $\alpha$-bunching and pointwise pinching of the curvature in (\ref{eqn: quarter pinched}) is discussed in \cite{Hass}.

\begin{lemma}\label{lem: Loc Symm Narrow Band}  If $\psi_t$ is the geodesic flow over a locally symmetric space $X$, then for any $t_0\neq 0$, the partially hyperbolic map $\psi_{t_0}$ has narrow band spectrum. If $X$ is a real hyperbolic manifold, then $r(\psi_{t_0})=1$, and  if $X$ is locally symmetric but not real hyperbolic, then $r(\psi_{t_0})=2$. 
\end{lemma}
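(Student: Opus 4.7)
The strategy is to appeal to the classification of closed, negatively curved, locally symmetric manifolds: up to finite cover and rescaling of the metric, $X$ is a quotient of one of the rank-one symmetric spaces of noncompact type, namely $\HH^n_\RR$, $\HH^n_\CC$, $\HH^n_\HH$, or the Cayley plane $\HH^2_\OO$. For each model, the derivative cocycle $D\psi_t|_{E^s}$ preserves a smooth $\psi_t$-invariant splitting arising from the restricted root-space decomposition of the associated semisimple Lie group, and on each summand $D\psi_t$ acts by a scalar in a canonical adapted metric. After normalizing so that the shortest-root stable exponent is $-|t_0|$, the stable Lyapunov data are as follows: for $\HH^n_\RR$, a single exponent $-|t_0|$ (multiplicity $n-1$); for $\HH^n_\CC$, two exponents $-|t_0|$ and $-2|t_0|$ (multiplicities $2n-2$ and $1$); for $\HH^n_\HH$, exponents $-|t_0|$ and $-2|t_0|$ (multiplicities $4n-4$ and $3$); and for $\HH^2_\OO$, exponents $-|t_0|$ and $-2|t_0|$ (multiplicities $8$ and $7$). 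Since each exponent is constant on its summand, the Mather spectrum of $F^s=D\psi_{t_0}|_{E^s}$ is a finite union of \emph{circles} rather than genuine annuli, so $\lambda^s_i=\mu^s_i$ for each $i$, with $\ell_s=1$ in the real hyperbolic case and $\ell_s=2$ otherwise. Applying the same analysis to $F^u=D\psi_{t_0}^{-1}|_{E^u}$ (equivalently, using transverse symplecticity of the Liouville structure) gives the analogous picture on the unstable side.

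With these numerical data in hand the narrow band condition of Definition~\ref{def: narrow band} reduces to arithmetic. In the real hyperbolic case $\ell_s=1$ and the single required inequality is $2\mu^s_1<\mu^s_1$, which holds because $\mu^s_1=-|t_0|<0$. In each of the three remaining cases $\ell_s=2$ with $\lambda^s_1=\mu^s_1=-2|t_0|$ and $\lambda^s_2=\mu^s_2=-|t_0|$, and the two required inequalities
\[
-2|t_0|+(-|t_0|)<-2|t_0|\qquad\text{and}\qquad -|t_0|+(-|t_0|)<-|t_0|
\]
are trivial. The unstable side is identical. Hence $\psi_{t_0}$ has narrow band spectrum in all four cases. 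Plugging into the definition (\ref{eqn: Critical Regularity}) then gives
\[
r(\psi_{t_0})=\max\bigl(\lambda^s_1/\mu^s_{\ell_s},\,\lambda^u_1/\mu^u_{\ell_u}\bigr)
=\frac{-|t_0|}{-|t_0|}=1
\]
in the real hyperbolic case, and $r(\psi_{t_0})=(-2|t_0|)/(-|t_0|)=2$ in the other three cases, which is the assertion of the lemma.

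The only step that requires genuine justification, as opposed to bookkeeping, is the identification of the numbers listed above with the Mather spectral endpoints $\lambda^*_i,\mu^*_i$: one must check that on an invariant subbundle where $D\psi_t$ acts as a conformal cocycle with \emph{constant} Lyapunov exponent, the corresponding component of the Mather spectrum degenerates to a single circle. This follows directly from the spectral description in \cite{M65, P04} cited above Definition~\ref{def: narrow band}. No substantial analytic obstacle is anticipated beyond invoking the root-space description of the geodesic flow on rank-one symmetric spaces.
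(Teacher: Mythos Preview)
Your proposal is correct and follows essentially the same approach as the paper: both observe that the derivative cocycle on $E^s$ (and $E^u$) splits into one or two invariant subbundles with constant contraction rates, so the Mather spectrum is a union of circles ($\lambda_i=\mu_i$), from which the narrow band inequalities and the value of $r(\psi_{t_0})$ follow by direct arithmetic. The paper's proof is terser, simply recording the point spectrum values $\{-1\}$ or $\{-1,-2\}$ without enumerating the four rank-one model spaces or their multiplicities as you do, but the logical content is identical.
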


\begin{proof} The geodesic flow on a locally symmetric space has constant expansion and contraction factors on one or two invariant subbundles, depending  on whether $X$ is real hyperbolic or not. 
In particular, the 
Mather spectrum of $D\varphi_{t_0}|_{E^s}$ and $D\varphi_{t_0}^{-1}|_{E^u}$ has either one or two bands, and either
$\lambda_1^s = \mu_1^s =-1=\lambda_1^u = \mu_1^u$,.
in the case where $X$ is real hyperbolic, or
\[\lambda_1^s = \mu_1^s = -2=\lambda_1^u = \mu_1^u; \;  \lambda_2^s = \mu_2^s = -1=\lambda_2^u = \mu_2^u,\]
otherwise. 
Thus  $D\varphi_{t_0}|_{E^s}$  and $D\varphi_{t_0}^{-1}|_{E^u}$ have point Mather spectrum 
(i.e., $\lambda_i^{u/s}=\mu_i^{u/s}$), and the conclusions follow.
\end{proof}

The following lemma follows immediately from the continuity of dominated splittings.
\begin{lemma}\label{lem: NBS Stable} Suppose that $f_0\in \Diff^1(M)$ is partially hyperbolic and has
bunched spectrum, (resp. narrow band spectrum) with critical regularity $r_0 = r(f_0)$. 
Then for any $r> r_0$, if  $f\in \Diff^1(M)$ is sufficiently $C^1$-close to $f_0$,  then $f$ has $r$-bunched spectrum (resp. narrow band spectrum), and $r(f) < r$.
\end{lemma}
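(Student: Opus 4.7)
The plan is to deduce this from the $C^1$-openness of dominated splittings together with upper/lower semi-continuity of the relevant Mather spectrum data. First, since partial hyperbolicity is $C^1$-open (as noted in Section~\ref{sec: PH}), any $f$ sufficiently $C^1$-close to $f_0$ is itself partially hyperbolic, and the bundles $E^s_f, E^u_f$ vary $C^0$-continuously in $f$. Thus all the quantities appearing in Definitions~\ref{def: stable bunched} and \ref{def:narrow band and bunched partially hyperbolic} can be compared between $f$ and $f_0$ in a controlled way.

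In the bunched case the argument is essentially tautological. The $r$-bunching condition for $F^s_{f_0}=\left.Df_0\right|_{E^s_{f_0}}$ and $F^u_{f_0}=\left.Df_0^{-1}\right|_{E^u_{f_0}}$ consists of finitely many strict inequalities on the suprema of $\|D_pf_0^k|_E\|$ and $\|(D_pf_0^k|_E)^{-1}\|\cdot \|D_pf_0^k|_E\|^r$. These quantities depend continuously on the pair $(f,E)$ in the $C^1$ topology, so for any fixed $r>r_0$, if $f$ is sufficiently $C^1$-close to $f_0$, the same strict inequalities hold for $F^s_f$ and $F^u_f$, yielding $r$-bunched spectrum and hence $r(f)<r$.

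In the narrow band case, I would argue as follows. The Mather spectrum decomposition of $F^s_{f_0}$ into bands gives a $Df_0$-invariant dominated splitting $E^s_{f_0} = E^{s,1}_{f_0}\oplus \cdots \oplus E^{s,\ell_s}_{f_0}$ refining the partial hyperbolicity, with the spectrum of $F^s_{f_0}|_{E^{s,i}_{f_0}}$ contained in $[\lambda^s_i(f_0),\mu^s_i(f_0)]$. By the $C^1$-stability of dominated splittings, this refined splitting persists for $f$ near $f_0$, giving continuously varying sub-bundles $E^{s,i}_f$. Now the Mather spectrum of $F^s_f|_{E^{s,i}_f}$ is characterized by the suprema and infima over $f$-invariant ergodic measures of the top and bottom Lyapunov exponents of $F^s_f|_{E^{s,i}_f}$. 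These suprema/infima vary upper/lower semi-continuously in $f$ (by Lemma~\ref{lemma: est cocyc} combined with the fact that the space of invariant measures varies upper semi-continuously in the weak-$*$ topology). Hence, given $\epsilon>0$, for $f$ sufficiently $C^1$-close to $f_0$, we have
\[
\lambda^s_i(f_0) - \epsilon \;\leq\; \lambda^s_i(f) \;\leq\; \mu^s_i(f) \;\leq\; \mu^s_i(f_0) + \epsilon,
\]
for every $i$, and analogously for $F^u$. The strict narrow band inequalities $\mu^*_i(f_0)+\mu^*_{\ell_*}(f_0) < \lambda^*_i(f_0)$ then persist, and the critical regularity (\ref{eqn: Critical Regularity}) varies continuously in the band endpoints, so $r(f)\to r(f_0)=r_0$ as $f\to f_0$ in $C^1$. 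Taking $\epsilon$ small enough that the perturbed critical regularity stays below the fixed $r>r_0$ gives $r(f)<r$.

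The only point requiring care is the semi-continuity of the Mather spectrum band endpoints on each invariant sub-bundle of a persistent dominated splitting. This is a standard fact about linear cocycles over $C^1$-perturbations (see \cite{P04, GK}), and with the Lyapunov metric characterization of the spectrum already recalled above, it reduces to applying Lemma~\ref{lemma: est cocyc} uniformly in the perturbation parameter. No genuinely new estimate is needed, which is why the lemma was stated as a direct consequence of continuity of dominated splittings.
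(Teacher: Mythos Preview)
Your proposal is correct and is precisely a detailed unpacking of what the paper asserts in one line (``follows immediately from the continuity of dominated splittings''): you make explicit that the persistent dominated splitting $E^s_{f_0}=\bigoplus_i E^{s,i}_{f_0}$ carries over to $f$, and that the endpoints of the spectral intervals on each $E^{s,i}_f$ move only slightly, so the strict bunching or narrow band inequalities and the bound $r(f)<r$ survive. One small remark: your use of Lemma~\ref{lemma: est cocyc} plus semi-continuity of invariant measures is a valid route to the spectral-endpoint estimates, but it is equivalent (and slightly more direct) to observe that continuity of the sub-bundles $E^{s,i}_f$ in $f$ gives continuity of the norms $\|D_pf^k|_{E^{s,i}_f}\|^{\pm 1}$, which already controls the band edges.
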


Here is our central application of Theorem~\ref{thm: normal form}.

\begin{prop}\label{prop: applic normal form}
Let $f$ be a $C^\infty$ partially hyperbolic diffeomorphism of a closed manifold $M$.  Assume that $f$ has a $1-$dimensional center foliation  $\W^c_f$  with $C^\infty$ leaves.  Suppose that $\varphi=\varphi_t:M\times \RR\to M$ is a flow generated by a continuous vector field $X$  such that $\varphi_t\circ f = f\circ \varphi_t$, for all $t$. Assume that $f$, $\varphi_t$, and $X$ satisfy the following conditions.
\begin{enumerate}
\item    $f$ has  $2$-bunched spectrum,   or $f$ has narrow band spectrum.
\item The vector field $X$ is tangent to $E^c_f$ and uniformly $C^\infty$ along the leaves of 
$\W^c_f$.
\item There exists a dense set $D\subset \RR$ such that for all $t\in D$, $\varphi_t\in \Diff^r(M)$, for some
$r>r(f)$.
\end{enumerate}
Then $\varphi_t$ is a $C^\infty$ flow.
\end{prop}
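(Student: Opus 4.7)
\emph{Proof plan.} The plan is to combine Theorem~\ref{thm: normal form}, applied to the stable and unstable foliations of $f$, with a density/approximation argument and Journ\'e's lemma for smoothness along transverse foliations.

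First, since $\varphi_t$ commutes with $f$ for every $t\in\RR$, Lemma~\ref{lemma: PHcent} gives $\varphi_t(\W^*_f)=\W^*_f$ for $*\in\{s,u,c\}$. Hypothesis~(1), together with Lemma~\ref{lem: NBS Stable}, lets us apply Theorem~\ref{thm: normal form} to both $f|_{\W^s_f}$ and $f^{-1}|_{\W^u_f}$, producing continuous families of diffeomorphisms $H^s_x\colon \W^s_f(x)\to E^s_x$ and $H^u_x\colon \W^u_f(x)\to E^u_x$, $C^\infty$ along leaves, in which $f$ becomes a polynomial map of sub-resonance type (linear in the $2$-bunched case). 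For each $t\in D$, the diffeomorphism $\varphi_t$ is $C^r$ with $r>r(f)$, preserves $\W^s_f$ and $\W^u_f$, and commutes with $f$; item~(5) of Theorem~\ref{thm: normal form} then implies that the same coordinates bring $\varphi_t|_{\W^s_f(x)}$ and $\varphi_t|_{\W^u_f(x)}$ to polynomial maps of sub-resonance type. Crucially, their degrees are bounded by a constant $N=N(f)$ depending only on the spectral data of $f$.

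Second, I would extend this conclusion to arbitrary $t\in\RR$ by density. For each $x$ the space of sub-resonance polynomial maps $E^s_x\to E^s_{\varphi_t(x)}$ of degree at most $N$ is finite-dimensional, and in particular closed under uniform convergence on bounded sets. Given $t\in\RR$, choose $t_n\in D$ with $t_n\to t$. Continuity of the flow in $t$ (as a map into $C^0(M,M)$), combined with continuity of $x\mapsto H^s_x$ (item~(3) of Theorem~\ref{thm: normal form}) and compactness of $M$, implies that on compact subsets of each $E^s_x$ the conjugates $H^s_{\varphi_{t_n}(x)}\circ\varphi_{t_n}\circ (H^s_x)^{-1}$ converge uniformly to $H^s_{\varphi_t(x)}\circ\varphi_t\circ (H^s_x)^{-1}$. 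Uniform limits of bounded-degree polynomials in a finite-dimensional polynomial space remain polynomial of the same bounded degree, so the limit is a sub-resonance polynomial. Hence $\varphi_t$ is $C^\infty$ along each leaf of $\W^s_f$ and of $\W^u_f$, for every $t\in\RR$.

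Third, hypothesis~(2) gives that $\varphi_t$ is $C^\infty$ along each leaf of $\W^c_f$, since on each such leaf it is the time-$t$ map of a $C^\infty$ ODE generated by $X|_{\W^c_f(x)}$. Applying Journ\'e's lemma iteratively---first within $\W^{cs}_f$ leaves to combine $C^\infty$-regularity along $\W^c_f$ and $\W^s_f$, then on $M$ to combine $\W^{cs}_f$ with $\W^u_f$---yields $\varphi_t\in\Diff^\infty(M)$ for every $t$. Joint $C^\infty$ regularity of $(t,x)\mapsto\varphi_t(x)$ then follows from the smoothness of $X$ along centers and the transverse smoothness of each $\varphi_t$. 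The main obstacle is the density step: one must argue, uniformly in $x$, that $C^0$-approximation of $\varphi_t$ by $\varphi_{t_n}$ on $M$ passes through the normal form charts to convergence of polynomial coefficients in a finite-dimensional space. This works thanks to the degree bound built into Theorem~\ref{thm: normal form} and the uniform continuity of the normal form family $\{H^s_x,H^u_x\}$ on the compact manifold $M$, but it is the one place where the whole scheme could fail if the polynomial degrees were not uniformly controlled.
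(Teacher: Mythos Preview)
Your proposal is correct and follows essentially the same approach as the paper's proof: apply Theorem~\ref{thm: normal form} to $\W^s_f$ and $\W^u_f$, use item~(5) for each $\varphi_t$ with $t\in D$, pass to arbitrary $t$ by taking $C^0$-limits of sub-resonance polynomials (which remain sub-resonance polynomials since the type is fixed), and then assemble smoothness via Journ\'e's lemma. One small remark: Lemma~\ref{lem: NBS Stable} is about $C^1$-perturbations of $f$ and is not actually needed here, since hypothesis~(1) already places $f$ itself in the scope of Theorem~\ref{thm: normal form}.
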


\begin{proof} Hypothesis (1) implies that for $r>f(f)$, the cocycle $Df\vert_{T\W^s_f}$ satisfies the hypotheses of Theorem \ref{thm: normal form}, and so there exists a non-stationary normalization $\{H_x,x\in M\}$ for $f|_{\W^s_f}$ such that for any   $g\in \Z_{r}(f)$,   the map $H_{gx}\circ g \circ H_x^{-1}$ is a sub-resonance polynomial (with fixed type) as well.

Thus  $\{ H_x\}$ is also a normalization for $\varphi_t$ on $\W^s_f$, for all $t\in D$. Now consider the homeomorphism $\varphi_t$ for an arbitrary fixed $t\in \RR$. Pick $t_k, k=1,2,\dots$ in $D$ such that $\lim_{k\to \infty}t_k=t$. Then the sequence $$H_{\varphi_{t_k}x}\circ \varphi_{t_k}\circ H_x^{-1}: E^s_f(x)\to E^s_f(\varphi_{t_k}(x))$$ uniformly converges to   $H_{\varphi_{t}x}\circ \varphi_{t}\circ H_x^{-1}: E^s_f(x)\to E^s_f(\varphi_{t}(x))$.

But each of $H_{\varphi_{t_k}x}\circ \varphi_{t_k}\circ H_x^{-1}$ is a sub-resonance polynomial (with fixed type), so their $C^0-$limit  is a sub-resonance polynomial as well. Thus $H_{\varphi_{t}x}\circ \varphi_{t}\circ H_x^{-1}$ is uniformly smooth along $E^s_f$, which means $\varphi_{t}$ is uniformly smooth along $\W^s_f$. A similar argument shows that  $\varphi_{t}$ is uniformly smooth along $\W^s_f$.

  Item (1) of Proposition \ref{prop: applic normal form} implies that $\varphi_{t}$ is uniformly smooth along $\W^c$, and the evaluation map $t\mapsto \varphi_t(x), x\in M$ is smooth, uniformly in $x$.  Applying Journ\'e's Lemma as in \cite{AVW},
we obtain that $\{\varphi_t\}$ is a smooth flow and $D=\RR$. 
\end{proof}

\subsection{Partially hyperbolic higher rank abelian actions}\label{sec: HR act}

A detailed ground treatment of  Anosov and partially hyperbolic abelian higher rank actions, including a variety of techniques and examples,  can be found in \cite{KNbook}. For a detailed discussion of smooth ergodic theory of general abelian actions, see \cite{BRHW}.  

An action  $\al:\ZZ^k\to \Diff(M)$ on a closed manifold $M$ is   \emph{partially hyperbolic} if it contains a partially hyperbolic diffeomorphism  $\alpha(a)$, for some $a\in \ZZ^k$, and  \emph{Anosov} if it contains an Anosov diffeomorphism.  Some basic questions and difficulties related to partially hyperbolic actions are described in \cite{DK3},  \cite{DK4}. For  background on  partially hyperbolic abelian actions  with compact center foliation we refer to \cite{DX0} and the references therein.

Oseledec's theorem for a cocycle over an ergodic  transformation has a version for abelian actions  
\cite[Theorem 2.4]{BRHW}. Let $E\to M$ be a continuous vector bundle, and let $\mathcal A$ be a linear $\ZZ^k$-cocycle on $E$  over an ergodic, $\mu$-preserving action $\al$ of $M$, i.e.   ${\mathcal A}\colon \ZZ^k \to \mathrm{Aut}(E)$ is a $\ZZ^k$-action by bundle isomorphisms projecting to the action of $\alpha$ on $M$.
The higher-rank Oseledec theorem implies the existence of finitely many linear functionals $\chi\colon \RR^k\to \RR$, called (\emph{Lyapunov functionals  for $\mathcal A$}), and an $\mathcal A$- invariant measurable splitting $\oplus E_\chi$  of $E$, called the (\emph{Oseledec decomposition for $\mathcal A$}), on a full $\mu$-measure set,  such that for $a\in\ZZ^k$
and $v\in E_\chi(x)$: \[\lim_{a\to \infty}\frac{\log\|\mathcal A(a, x)(v)\|-\chi(a)}{\|a\|}=0.\]

The hyperplanes $\ker_\chi\subset \RR^k$ are called
 \emph{Weyl chamber walls}, and the connected components of $\RR^k-\cup_\chi \ker_\chi$ are the \emph{Weyl chambers} for ${\mathcal A}$ (with respect to $\mu$). Even though elements of the Weyl chambers are vectors in $\RR^k$, we will often say that the diffeomorphism $\al(a)$ is in the Weyl chamber $\mathcal C$ if $a\in   \mathcal C$. 

 Two nonzero Lyapunov functionals $\chi_i$ and $\chi_j$ are \emph{coarsely equivalent} if they are positively proportional: there exists $c>0$ such that $\chi_i=c\cdot \chi_j$. This is an equivalence relation on the set of Lyapunov functionals, and a \emph{coarse Lyapunov functional} is an equivalence class under this relation. Given a fixed ordering of non-zero coarse Lyapunov  functionals $(\chi_1, \dots, \chi_r)$, each Weyl chamber $\mathcal C$ can be labelled by its \emph{signature}:   $(\mathrm{sgn} \chi_1 (a), \dots, \mathrm{sgn}\chi_r(a))$,   where $a$ is any element in $\mathcal C$. The Weyl chambers of $\mathcal A$ in $\RR^k$ together with their assigned signatures we call \emph{the Weyl chamber picture of $\mathcal A$ over $\alpha$}.
Two $\ZZ^k$ cocycles (over possibly two distinct  $\ZZ^k$ actions), have {\it the same Weyl chamber picture} if the  
walls  in $\mathbb R^k$ coincide and the signatures of each Weyl chamber
coincide.  If for two Lyapunov functionals $\chi^1, \chi^2$, we have $\ker\chi^1=\ker\chi^2$ and $\chi^1(a)\chi^2(a)>0$ for some $a$, then $\chi^1,\chi^2$ are positively proportional. This implies the following: 
    
\begin{lemma}\label{lemma: crtrn same weyl}Suppose that the Lyapunov functionals $\{\chi^i \}, \{\chi'^i\}$ of two ergodic cocycles ${\mathcal A}$ and ${\mathcal A}'$  have the same Weyl chambers, and  suppose that for any $i$, there is an element $a\in \ZZ^k$ such that $\chi^i(a)\chi'^i(a)>0$.   Then ${\mathcal A}$ and ${\mathcal A}'$ have the same the Weyl chamber picture.
\end{lemma}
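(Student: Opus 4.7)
The plan is to extract a bijection between the coarse Lyapunov functionals of $\mathcal A$ and $\mathcal A'$ from the coincidence of the Weyl chambers, and then upgrade this match from ``same kernel'' to ``positively proportional'' using the sign hypothesis and the observation recorded immediately before the lemma.

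First, recall that the Weyl chamber walls of a cocycle are exactly the kernels $\ker\chi^i$ of its nonzero coarse Lyapunov functionals, and two distinct coarse Lyapunov functionals are not positively proportional, hence (since they are defined up to positive scaling) have distinct kernels. Thus, if I know the set of Weyl chambers of $\mathcal A$ as subsets of $\RR^k$, I can recover the union $\bigcup_i \ker\chi^i$ as the complement of this open set; decomposing this closed set into maximal linear hyperplanes recovers the individual kernels $\ker\chi^i$. The hypothesis that $\mathcal A$ and $\mathcal A'$ have the same Weyl chambers therefore forces the two families of hyperplanes $\{\ker\chi^i\}$ and $\{\ker\chi'^j\}$ to coincide as sets. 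So after reindexing, I may assume $\ker\chi^i=\ker\chi'^i$ for every $i$.

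Now I invoke the hypothesis: for each $i$ there is $a_i\in\ZZ^k$ with $\chi^i(a_i)\chi'^i(a_i)>0$. By the observation stated just before the lemma, $\ker\chi^i=\ker\chi'^i$ together with $\chi^i(a_i)\chi'^i(a_i)>0$ implies that $\chi^i$ and $\chi'^i$ are positively proportional, say $\chi'^i=c_i\chi^i$ with $c_i>0$. Positive proportionality implies $\mathrm{sgn}\,\chi^i(a)=\mathrm{sgn}\,\chi'^i(a)$ for every $a\in\RR^k$, and in particular for every $a$ in a Weyl chamber $\mathcal C$. Therefore the signature $(\mathrm{sgn}\,\chi^1(a),\dots,\mathrm{sgn}\,\chi^r(a))$ of $\mathcal C$ with respect to $\mathcal A$ equals its signature $(\mathrm{sgn}\,\chi'^1(a),\dots,\mathrm{sgn}\,\chi'^r(a))$ with respect to $\mathcal A'$, which is the claim that the two Weyl chamber pictures coincide.

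There is no serious obstacle here: the only mild subtlety is the book-keeping of the reindexing $i\mapsto i$ so that $\ker\chi^i=\ker\chi'^i$, which is justified simply because the hyperplane arrangement is the same and each hyperplane determines its coarse functional up to positive scaling. Once the bijection is fixed, the sign hypothesis is exactly what is needed to rule out the remaining ambiguity (replacing $\chi^i$ by $-\chi^i$), after which positive proportionality and hence equality of signatures is automatic.
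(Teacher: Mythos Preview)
Your proposal is correct and matches the paper's approach: the paper presents the lemma as an immediate consequence of the observation, stated just before it, that two linear functionals with the same kernel and agreeing sign at some point are positively proportional. You simply spell out this deduction, including the book-keeping of recovering the common hyperplane arrangement from the Weyl chambers and matching indices accordingly (which in the paper's application is already arranged before the lemma is invoked).
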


For Anosov actions, the higher-rank Oseledec theorem is applied to the derivative cocycle $D\al$, and 
 the \emph{Weyl chamber picture} depends only on $\al$ and on the invariant measure. In the presence of sufficiently many Anosov elements of the action (for example, one Anosov element in each Weyl chamber), and an ergodic measure of full support,  even the dependence on the measure can be removed. Moreover, in this case 
 the coarse Lyapunov distributions are intersections of stable distributions for finitely many elements of the action, they are well defined everywhere
 and tangent to foliations with smooth leaves.  (For more details see Section 2.2 in \cite{KSp}) 
 The same holds for actions that have many elements normally hyperbolic to a common center foliation \cite{DK4}.

Suppose $\al:\ZZ^k\to \Diff_{\mathrm{vol}}^2(M)$ is a conservative ergodic partially hyperbolic action on a compact manifold $M$ and let $\al(a)$ be a partially hyperbolic element.
By the discussion in Section \ref{commuting}  the sum $E^H:=E^u_a\oplus E^s_a$ of the stable and unstable  distributions of $\al(a)$ is $\alpha-$invariant.
We will apply the higher-rank Oseledec theorem
to the cocycle $D\al\vert_{E^H}$ and to stress this restriction of the derivative cocycle to the smaller bundle,  we call the corresponding  picture the 
  \emph{hyperbolic} Weyl chamber picture for $\alpha$.
  
 
 

 An action    $\al$ is \emph{maximal} if there are exactly $k+1$ coarse Lyapunov   functionals    corresponding to  $k+1$ distinct Lyapunov    hyperspaces,    and if the Lyapunov hyperspaces are in \emph{general position}, i.e.  if no Lyapunov hyperspace contains a non-trivial intersection of two other Lyapunov hyperspaces. 
 Maximality implies a special property of Weyl chambers: there is any combination of signs of Lyapunov functionals among the Weyl chambers, except all positive, and all negative. Prime  examples of maximal  Anosov actions are actions by toral automorphisms. Namely
\begin{lemma}\label{lemma: za max}[\cite{KKS}] Suppose $A\in \SL(k,\ZZ)$ is a hyperbolic irreducible matrix. Then $\Z_{\SL(k,\ZZ)}(A)$ induces a maximal abelian Anosov action on $\TT^k$  if $\ell_0(A)>1$.
\end{lemma}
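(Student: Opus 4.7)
The goal is to verify the three ingredients of maximality for the $\mathcal{Z}_{\SL(k,\ZZ)}(A)$-action on $\TT^k$: it is Anosov, it has exactly $\ell_0 + 1$ distinct coarse Lyapunov hyperspaces, and these hyperspaces are in general position. Anosov-ness is immediate since $T_A$ itself lies in the action. By Lemma \ref{lemma: rank cent} the centralizer is virtually $\ZZ^{\ell_0}$, so the action is effectively a $\ZZ^{\ell_0}$-action up to finite index.

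For the Lyapunov structure I plan to pass to the algebraic picture. Irreducibility of $A$ makes $\ZZ[A]$ an order in the number field $K := \QQ(\alpha)$, where $\alpha$ is a root of the characteristic polynomial of $A$; $K$ has $r$ real embeddings $\sigma_1,\ldots,\sigma_r$ and $c$ pairs of complex embeddings $\{\sigma_{r+j}, \overline{\sigma}_{r+j}\}_{j=1}^c$. The centralizer $\mathcal{Z}_{\SL(k,\ZZ)}(A)$ embeds, via the regular representation, into the unit group of the order $\mathcal{O} := \mathrm{End}_{\ZZ[A]}(\ZZ^k) \supseteq \ZZ[A]$ as precisely those units of norm $\pm 1$; for $B \leftrightarrow \beta \in \mathcal{O}^*$, the eigenvalues of $B$ on $\CC^k$ are $\{\sigma_i(\beta)\}_{i=1}^k$, giving a joint eigenspace splitting of $\RR^k$ into $r$ one-dimensional and $c$ two-dimensional subspaces. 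The Lyapunov exponents of $T_B$ with respect to Lebesgue measure are therefore $\log|\sigma_i(\beta)|$, so setting $\chi_i(B) := \log|\sigma_i(\beta)|$ and grouping complex conjugates (which share absolute values) produces exactly $r + c = \ell_0 + 1$ coarse Lyapunov functionals $\chi_1,\ldots,\chi_{\ell_0+1}$.

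The heart of the argument is then Dirichlet's unit theorem: the map $\beta \mapsto (\log|\sigma_i(\beta)|)_{i=1}^{r+c}$ sends $\mathcal{O}^*$ modulo torsion isomorphically onto a lattice of full rank $\ell_0$ in the hyperplane $L := \{\sum_{i \leq r} x_i + 2\sum_{i > r} x_i = 0\} \subset \RR^{r+c}$, and this persists upon passing to the finite-index subgroup of norm-$+1$ units. It follows that the only linear relation satisfied by $\chi_1,\ldots,\chi_{\ell_0+1}$, viewed as functionals on $\RR^{\ell_0}$ containing $\mathcal{Z}_{\SL(k,\ZZ)}(A)$, is the expected one $\sum n_i \chi_i = 0$ with $n_i \in \{1,2\}$; in particular any $\ell_0$ of them are linearly independent. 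This immediately yields distinctness of the $\ell_0 + 1$ hyperplanes $H_i := \ker \chi_i$, and for any three distinct indices $i,j,k$, linear independence of $\chi_i,\chi_j,\chi_k$ precludes a containment $H_i \cap H_j \subseteq H_k$ whenever $H_i \cap H_j$ is nontrivial. The hypothesis $\ell_0 > 1$ ensures there are at least three functionals, so the general position condition is meaningful and is satisfied.

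The only mild technical obstacle is the algebraic identification of $\mathcal{Z}_{\SL(k,\ZZ)}(A)$ with the norm-$\pm 1$ units of an order in $K$, together with the computation of eigenvalues of the associated $B$ in terms of the embeddings $\sigma_i$; both are standard consequences of working with the regular representation and power-basis of $\alpha$. Once these are in place, the lemma is a direct translation of Dirichlet's unit theorem into the language of Lyapunov functionals.
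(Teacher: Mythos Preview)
The paper does not prove this lemma; it simply cites \cite{KKS} for the result. Your proposal supplies the standard number-theoretic argument that underlies that reference: identify $\mathcal{Z}_{\SL(k,\ZZ)}(A)$ with the norm-one units of an order in $K=\QQ(\alpha)$, read off the Lyapunov functionals as $\chi_i(B)=\log|\sigma_i(\beta)|$ for the archimedean places, and use Dirichlet's unit theorem to conclude that the only linear relation among the $\chi_i$ is $\sum n_i\chi_i=0$ with all $n_i>0$, whence any $\ell_0$ of them are independent and the $\ell_0+1$ kernels are distinct and in general position.

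The argument is correct. Two very minor points of phrasing: all units of an order have norm $\pm 1$, so the first restriction you state is vacuous; the genuine restriction is to norm $+1$ for $\SL$, which you do impose one sentence later. Also, when $\ell_0=2$ the three functionals are not linearly independent, but you have already covered this by noting that general position is vacuous when $H_i\cap H_j=\{0\}$. Neither affects the validity of the proof.
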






Results of Franks and Manning \cite{Fr, Ma} imply that every Anosov action $\al\colon \ZZ^k\to \Diff(\TT^d) $ is topologically conjugate to an action $\kappa\colon \ZZ^k\to \mathrm{Aff}(\TT^d)$ by
affine automorphisms of the torus. Such an action $\kappa$ is called a \emph{linearization} of $\al$. The {\em linear part} of  $\kappa$ is the action $\kappa_0\colon  \ZZ^k\to \mathrm{Aut}(\TT^d)$ that sends
$g$ to $T_{A_g}$, where $\kappa(g) = T_{A_g} + v(g)$.  The linear part does not depend on the choice of linearization of $\alpha$.

An affine $\ZZ^{k}$-action $\kappa'$ on $\TT^{d'}$, is called an (algebraic) factor of an affine $\ZZ^k$-action $\kappa$ on $\TT^d$ if there exists a surjective homomorphism $\varphi : \TT^d\to \TT^{d'}$ such that $ \varphi\circ \kappa = \kappa'\circ \varphi$.  An affine action $\kappa$ is said to have a {\em rank one factor} if its linear part $\kappa_0$ has a nontrivial  factor $\kappa'\colon \ZZ^{k} \to \mathrm{Aut}(\TT^{d'})$ such that the image $\kappa'(\ZZ^{k})$ is virtually cyclic. A smooth $\ZZ^k-$action on $\TT^d$ is \emph{higher rank} if  its linearizations have no rank one factor. In particular, when one element of a linear action is an irreducible toral automorphism, the action is called \emph{irreducible} and we have the following easy lemma:  

\begin{lemma}\label{lemma: lnr high ran}[ \cite{KKS}, Section 3.1.] 
Suppose $A,B\in \GL(n,\ZZ)$ satisfies $AB=BA$. Assume that $A$ is irreducible and the group generated by $A$ and $B$ is not virtually $\ZZ$. Then the action generated by $<T_A, T_B>$ on $\TT^n$ is a higher rank action. 
\end{lemma}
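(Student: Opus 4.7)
The plan is to unpack the definition of ``higher rank'' and reduce the statement to a short linear-algebra argument. Since $\langle T_A,T_B\rangle$ is already a linear $\ZZ^2$-action on $\TT^n$, its own linearization is itself, so it suffices to show that this action admits no nontrivial algebraic factor with virtually cyclic image in $\mathrm{Aut}(\TT^{n'})$. I would suppose for contradiction that a surjective homomorphism $\varphi\colon \TT^n\to\TT^{n'}$ with $n'\geq 1$ intertwines $(T_A,T_B)$ with a pair $(T_{A'},T_{B'})$ for which $\langle T_{A'},T_{B'}\rangle\subset\mathrm{Aut}(\TT^{n'})$ is virtually cyclic, and then derive a contradiction.

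The first substantive step is to exploit the irreducibility of $A$. The connected component of $\ker\varphi$ is a subtorus of $\TT^n$ corresponding to a rational subspace $V\subset\QQ^n$, and the equivariance $\varphi\circ T_A=T_{A'}\circ\varphi$ forces $AV=V$ (and similarly $BV=V$). Since the characteristic polynomial of $A$ is irreducible over $\ZZ$, hence over $\QQ$, the only $A$-invariant rational subspaces of $\QQ^n$ are $\{0\}$ and $\QQ^n$. The latter is ruled out by $n'\geq 1$. Hence $\ker\varphi$ is finite, $n'=n$, and $\varphi$ is an isogeny represented by some $P\in M_n(\ZZ)$ with $\det P\neq 0$.

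From the intertwining relations $PA=A'P$ and $PB=B'P$ I would then conclude that $A'=PAP^{-1}$ and $B'=PBP^{-1}$ in $\GL(n,\QQ)$, so conjugation by $P$ inside $\GL(n,\QQ)$ is a group isomorphism carrying $\langle A,B\rangle$ onto $\langle A',B'\rangle$. If $\langle T_{A'},T_{B'}\rangle$ were virtually cyclic, then $\langle A',B'\rangle\subset\GL(n,\ZZ)$ would be virtually cyclic, and by this isomorphism so would $\langle A,B\rangle$, contradicting the hypothesis that it is not virtually $\ZZ$.

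I do not foresee any serious obstacle: the entire argument consists of an invocation of irreducibility to rule out all nontrivial toral quotients except isogenies, together with the fact that isogenies induce abstract group isomorphisms on the linearly acting subgroups inside $\GL(n,\QQ)$. The only minor subtlety to watch is that $P$ need not lie in $\GL(n,\ZZ)$ (only in $M_n(\ZZ)$ with nonzero determinant); however, since conjugation in $\GL(n,\QQ)$ preserves abstract group structure and since $A',B'$ lie in $\GL(n,\ZZ)$ by construction as automorphisms of $\TT^n$, this detail does not affect the conclusion.
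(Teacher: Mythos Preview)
The paper does not supply a proof of this lemma; it is stated as a citation to \cite{KKS}, Section~3.1, and used as a black box. So there is no ``paper's own proof'' to compare against.

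Your argument is correct and is essentially the standard one. The only point worth making explicit is the identification of the acting group: the definition of ``higher rank'' in the paper is phrased for $\ZZ^k$-actions, so one should say that the $\ZZ^2$-action $(m,n)\mapsto T_{A^mB^n}$ is being tested for rank-one factors, and that ``virtually cyclic image'' for the factor action $\kappa'$ means exactly that $\langle A',B'\rangle\subset\GL(n,\ZZ)$ is virtually cyclic. With that in place, your reduction via irreducibility (forcing any equivariant quotient torus to arise from an isogeny) and the observation that conjugation by $P\in\GL(n,\QQ)$ gives a group isomorphism $\langle A,B\rangle\cong\langle A',B'\rangle$ closes the argument cleanly. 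The remark that $P$ need only be invertible over $\QQ$, not over $\ZZ$, is well taken and handled correctly.
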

 One important feature of higher rank Anosov actions is cocycle rigidity, which has the following  application to isometric extensions: 
\begin{lemma}\label{lemma: R-val cocy rig tori}[
\cite{KS}, Theorem 2.9] Suppose $A,B\in \GL(n,\ZZ)$ commute and generate a higher rank Anosov action $<T_A, T_B>$ on $\TT^n$. Let $\rho_A, \rho_B$ be H\"older functions on $\TT^{n}$. Then the isometric extensions $(T_A)_{\rho_A}, (T_B)_{\rho_B}$ commute iff there exist a H\"older function $\beta$ on $\TT^n$ and $\theta_A, \theta_B\in \RR$ such that $\rho_A=-\beta\circ T_A+\beta+\theta_A$, and $\rho_B=-\beta\circ T_B+\beta+\theta_B$. 
\end{lemma}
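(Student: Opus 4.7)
My plan is to reduce the commutation condition to a cocycle equation over the $\ZZ^2$-action generated by $T_A$ and $T_B$, and then invoke Katok--Spatzier cocycle rigidity for higher rank Anosov actions.

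For the ``if'' direction, I would argue by a direct conjugation calculation. Assuming $\rho_A = -\beta\circ T_A + \beta + \theta_A$ and $\rho_B = -\beta\circ T_B + \beta + \theta_B$, I would consider the H\"older homeomorphism $H:\TT^{n+1}\to \TT^{n+1}$ defined by $H(x,y) = (x, y+\beta(x))$. A one-line computation shows that $H$ conjugates $(T_A)_{\rho_A}$ to $T_A\times R_{\theta_A}$ and $(T_B)_{\rho_B}$ to $T_B\times R_{\theta_B}$. Since these two products commute trivially (as $T_A T_B = T_B T_A$), so do $(T_A)_{\rho_A}$ and $(T_B)_{\rho_B}$.

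For the ``only if'' direction, I would first compute both compositions explicitly:
\begin{equation*}
(T_A)_{\rho_A}\circ (T_B)_{\rho_B}(x,y) = (T_AT_Bx,\; y+\rho_B(x)+\rho_A(T_Bx)),
\end{equation*}
and symmetrically for the reverse order. Commutativity is then equivalent to the cocycle equation $\rho_A(T_Bx)-\rho_A(x) = \rho_B(T_Ax)-\rho_B(x)$ for all $x$. This equation says exactly that the assignment $e_1\mapsto \rho_A$, $e_2\mapsto \rho_B$ extends to a H\"older $\RR$-valued cocycle $c:\ZZ^2\times \TT^n \to \RR$ over the $\ZZ^2$-action $\langle T_A, T_B\rangle$.

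Now the hypothesis that $\langle T_A, T_B\rangle$ is a higher rank Anosov action on $\TT^n$ allows me to apply Katok--Spatzier cocycle rigidity: every H\"older $\RR$-valued cocycle over such an action is cohomologous to a constant cocycle, i.e.\ there exist $\theta_A,\theta_B\in \RR$ and a H\"older function $\beta:\TT^n\to \RR$ such that
\begin{equation*}
c((m,k),x) - (m\theta_A + k\theta_B) = \beta(x) - \beta(T_A^m T_B^k x).
\end{equation*}
Evaluating at $(1,0)$ and $(0,1)$ yields the two desired cohomological equations. The hard part of this argument is of course the Katok--Spatzier rigidity theorem itself, whose proof relies on the harmonic analysis/dual cocycle method for higher rank abelian Anosov actions on tori; here I would simply quote it as a black box, since it is the main conclusion of \cite{KS}, Theorem 2.9.
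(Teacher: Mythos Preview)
Your proposal is correct. The paper does not supply its own proof of this lemma; it simply cites it as \cite{KS}, Theorem 2.9. Your argument correctly unpacks why the statement is equivalent to Katok--Spatzier cocycle rigidity: the ``if'' direction is the elementary conjugation, and the ``only if'' direction is precisely the observation that commutativity of the extensions is the cocycle condition for $(\rho_A,\rho_B)$ over the $\ZZ^2$-action, after which the black-box rigidity theorem gives the H\"older coboundary. There is nothing to compare against in the paper beyond the citation itself.
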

We state the global rigidity result  \cite{HW} and its corollaries concerning centralizers.
\begin{maintheorem}\label{main: gl rig anosov}[\cite{HW}] Let $\al:\ZZ^k\to  \Diff^\infty(\TT^d)$ be an Anosov action, and let $\kappa$ be a linearization of $\al$.    If  $\kappa$ is higher rank, then $\al$ is $C^\infty$ conjugate to $\kappa$. 
\end{maintheorem}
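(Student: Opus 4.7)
The plan is to follow the Rodriguez Hertz--Wang strategy of bootstrapping the Franks--Manning topological conjugacy to a smooth conjugacy, using the higher rank hypothesis to produce sufficiently many hyperbolic elements in the action.

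First, I would invoke the Franks--Manning theorem to obtain a bi-Hölder homeomorphism $h\colon \TT^d \to \TT^d$ with $h\circ \alpha(g) = \kappa(g)\circ h$ for all $g\in \ZZ^k$. This conjugacy sends the stable foliation $\W^s_{\alpha(a)}$ to the (linear) stable foliation $\W^s_{\kappa(a)}$ for every Anosov element $\alpha(a)$. Since the higher-rank hypothesis on $\kappa$ is a property of the linear part $\kappa_0$, we get from this that the Lyapunov functionals of $\alpha$ and of $\kappa_0$ coincide (after identification via $h$), and therefore that the Weyl chamber pictures agree. In particular, $\kappa$ has at least two Lyapunov hyperspaces in general position.

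The second and hardest step is to produce an Anosov element of $\alpha$ in every hyperbolic Weyl chamber. Topologically, every $a$ in the interior of a Weyl chamber is a candidate -- $\kappa(a)$ is Anosov there -- but a priori $\alpha(a)$ need only be partially hyperbolic. To upgrade it to Anosov one shows that the weak Lyapunov data of $\alpha(a)$ against an \emph{ergodic} $\alpha$-invariant measure cannot have zero exponent in a Weyl chamber direction. The higher rank property supplies a second element whose strong exponential mixing properties rule out invariant measures supported on the weak-center foliation; concretely, one uses the Franks--Manning semi-conjugacy together with exponential mixing of $\kappa$ to push measurable invariant structures down, and then rigidity of $\kappa_0$ (no rank-one factor) forces them to be trivial. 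This is the step that occupies most of \cite{HW} and is the principal obstacle: all the geometric/smooth information about $\alpha$ has to be extracted from the purely topological Franks--Manning map combined with the algebraic higher rank assumption.

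With Anosov elements in every Weyl chamber in hand, the standard Katok--Spatzier machinery takes over. The coarse Lyapunov distributions of $\alpha$ are everywhere defined as intersections of stable distributions of Anosov elements coming from adjacent chambers, each such intersection being a transverse intersection of stable foliations with $C^\infty$ leaves; hence the coarse Lyapunov foliations of $\alpha$ exist and have uniformly $C^\infty$ leaves. By Livšic-type and normal-form arguments applied to pairs of commuting Anosov elements, the conjugacy $h$ is smooth along each coarse Lyapunov foliation: it conjugates the restriction of $\alpha$ to a coarse Lyapunov leaf to a linear action, and the linearizing maps agree with $h$ up to a polynomial/affine normalization.

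Finally, I would invoke Journé's lemma iteratively across the coarse Lyapunov foliations. Because the higher rank assumption guarantees that the coarse Lyapunov foliations span $T\TT^d$ and pairwise intersect transversely in smooth subfoliations, repeated application of Journé shows that $h$ is $C^\infty$ along a sequence of subfoliations exhausting $T\TT^d$, and hence $C^\infty$ on $\TT^d$. This yields the desired smooth conjugacy of $\alpha$ to $\kappa$.
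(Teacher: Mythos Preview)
The paper does not prove this theorem; it is quoted as a black-box result from \cite{HW} and used only to derive Corollary~\ref{coro: gl rig toral}. So there is no ``paper's own proof'' to compare against. That said, your outline is consistent with the summary of \cite{HW} given in the paper's introduction: the paper explicitly identifies the production of Anosov elements in every Weyl chamber as ``one of the main achievements'' of \cite{HW}, obtained via the Franks--Manning conjugacy together with exponential mixing, and notes that combining this with \cite{FKS} yields the smooth conjugacy. Your four-step sketch (Franks--Manning, Anosov in every chamber, coarse Lyapunov foliations via intersections of stable foliations, Journ\'e) matches this structure.

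One caveat: your description of step two is more impressionistic than the actual argument. The phrase ``exponential mixing properties rule out invariant measures supported on the weak-center foliation'' gestures in the right direction but does not really capture the mechanism in \cite{HW}, which involves a delicate analysis of the H\"older exponent of the Franks--Manning conjugacy and quantitative mixing rates to control the Lyapunov exponents of arbitrary invariant measures. Also, in step three you attribute the post-``Anosov-in-every-chamber'' part to Katok--Spatzier machinery; the paper attributes it to \cite{FKS} (Fisher--Kalinin--Spatzier), which is the relevant reference for the torus/nilmanifold setting. These are refinements rather than errors, but worth noting if you intend this as a genuine proof sketch rather than a citation.
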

As a corollary we have following result about centralizers:
\begin{coro}\label{coro: gl rig toral}Let $A\in \SL(d-1,\ZZ)$ and let $r_0$ be as in Theorem \ref{main: dich}.
Fix $r> r_0$.  Suppose $g\in \Diff^\infty(\TT^{d-1})$ is a $C^1-$small perturbation of $T_{A}$ (or, more generally, has narrow band spectrum). Then either $g$ is $C^\infty$ conjugate to $T_{A}$ or $\Z_{\Diff^s(\TT^d)}(g)$ is virtually trivial for any $s\geq r$. 
\end{coro}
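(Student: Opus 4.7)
The plan is to follow the template of establishing the dichotomy via a higher-rank Anosov action, reducing to Hertz--Wang global rigidity (Theorem \ref{main: gl rig anosov}) after upgrading the regularity of a commuting diffeomorphism to $C^\infty$ via non-stationary normal forms. The main obstacle will be this regularity upgrade.

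Since $g$ is Anosov and $C^1$-close to $T_A$, structural stability yields a homeomorphism $h$ with $h g h^{-1} = T_A$. For every $\phi \in \Z_{\Diff^s(\TT^{d-1})}(g)$ the conjugate $h\phi h^{-1}$ commutes with $T_A$, so Adler--Palais (Lemma \ref{lemma: cent linear Ansv}) identifies it with an affine map $x \mapsto T_B x + v$, where $B \in \Z_{\GL(d-1,\ZZ)}(A)$ and $v \in \QQ^{d-1}/\ZZ^{d-1}$ satisfies $(A-I)v \in \ZZ^{d-1}$. This yields a homomorphism $\Phi \colon \Z_{\Diff^s(\TT^{d-1})}(g) \to \Z_{\GL(d-1,\ZZ)}(A)$, $\phi \mapsto B$, whose kernel is finite since hyperbolicity of $A$ makes $\ker(A-I) \cap (\QQ^{d-1}/\ZZ^{d-1})$ finite. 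If the image of $\Phi$ is virtually contained in $\langle A \rangle$ (so virtually cyclic inside $\Z_{\GL(d-1,\ZZ)}(A)$, which is virtually $\ZZ^{\ell_0}$ by Lemma \ref{lemma: rank cent}), then together with the finite kernel we conclude $\Z_{\Diff^s(\TT^{d-1})}(g)$ is virtually $\langle g\rangle$, hence virtually trivial, and we are done.

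In the remaining case, choose $\phi$ with $B := \Phi(\phi)$ such that $\langle A, B\rangle$ is not virtually $\ZZ$. The main technical step, which I expect to be the principal difficulty, is upgrading $\phi$ from $C^s$ to $C^\infty$. By Lemma \ref{lemma: PHcent}, $\phi$ preserves the foliations $\W^s_g$ and $\W^u_g$, whose leaves are $C^\infty$ since $g$ is smooth. Lemma \ref{lem: NBS Stable} transfers the narrow band spectrum from $T_A$ to $g$ with critical regularity $r(g) < r \leq s$. I would then apply Theorem \ref{thm: normal form}(5) separately to the uniformly contracted foliations $\W^s_g$ under $g$ and $\W^u_g$ under $g^{-1}$: in each case the normalizing family $\{H_x\}$ conjugates $\phi$ into a sub-resonance polynomial along each leaf, so $\phi$ is uniformly $C^\infty$ along both $\W^s_g$ and $\W^u_g$. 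Journ\'e's lemma then promotes $\phi$ to a $C^\infty$ diffeomorphism of $\TT^{d-1}$.

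The action $\langle g, \phi\rangle$ is now a $C^\infty$ Anosov $\ZZ^2$ action with linear part $\langle T_A, T_B\rangle$. Because $A$ is irreducible and $\langle A, B\rangle$ is not virtually $\ZZ$, Lemma \ref{lemma: lnr high ran} yields that this linear part is higher rank. Theorem \ref{main: gl rig anosov} then produces a $C^\infty$ conjugacy between $\langle g, \phi\rangle$ and its affine linearization, and in particular $g$ is $C^\infty$ conjugate to $T_A$ (any residual translation can be absorbed by an affine change of coordinates, which is possible since $A - I$ is invertible over $\QQ$). This completes the dichotomy.
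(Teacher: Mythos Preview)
Your proposal is correct and follows essentially the same route as the paper: the paper's proof (via Lemma~\ref{lemma: coro HRW}) also uses the narrow band normal forms of Theorem~\ref{thm: normal form} together with Journ\'e to upgrade every $C^s$ element of the centralizer to $C^\infty$, then invokes Adler--Palais and irreducibility to embed the centralizer in a virtually $\ZZ^\ell$ group, and finally applies Rodriguez Hertz--Wang (Theorem~\ref{main: gl rig anosov}) with Lemma~\ref{lemma: lnr high ran} to force either $\ell=1$ or smooth conjugacy to $T_A$. The only cosmetic difference is that the paper upgrades the whole centralizer to $C^\infty$ at once and then argues with the full group, whereas you single out one element $\phi$ and work with the $\ZZ^2$-action $\langle g,\phi\rangle$; both organizations yield the same conclusion.
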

\begin{proof}  Clearly $T_{A}$ has narrow band spectrum. Fix $r'\in (r_0, r)$;  Lemma \ref{lem: NBS Stable} implies that any $g$ sufficiently close to $T_{A}$ has narrow band spectrum, and $r(g) <r'$. Corollary \ref{coro: gl rig toral} then follows from the lemma that follows.\end{proof}

\begin{lemma}\label{lemma: coro HRW} Let $g\colon\TT^n\to \TT^n$ be a $C^\infty$ Anosov diffeomorphism with narrow band spectrum, let $\kappa(g)$ be a linearization of $g$, and let $\kappa_0(g)\in \mathrm{Aut}(\TT^n)$ be its linear part. If $\kappa_0(g)$ is irreducible, then either $g$ is $C^\infty$ conjugate to $\kappa(g)$ (equivalently, to $\kappa_0(g)$) or $\Z_{s}(g)$ is virtually trivial for any $s>r(g)$.
\end{lemma}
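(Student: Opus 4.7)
The plan is to prove the contrapositive: assuming $\Z_s(g)$ is \emph{not} virtually trivial for some $s > r(g)$, I will derive that $g$ is $C^\infty$ conjugate to $\kappa(g)$.

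First, I identify a ``second generator'' producing a genuine higher-rank action. By Franks--Manning, there is a bi-H\"older conjugacy $\phi\colon \TT^n \to \TT^n$ with $\phi\circ g = \kappa(g)\circ \phi$. Setting $A := \kappa_0(g)$, hyperbolicity plus irreducibility ensure no eigenvalue of $A$ is a root of unity, so Lemma~\ref{lemma: cent linear Ansv} (Adler--Palais) yields $\Z_{\mathrm{Homeo}(\TT^n)}(\kappa(g)) \subseteq \{T_L + w : L \in \Z_{\GL(n,\ZZ)}(A),\, w\in \QQ^n/\ZZ^n\}$; a direct computation using $(A-I)w \equiv 0 \pmod{\ZZ^n}$ shows the admissible translations form a finite group (since $A$ is hyperbolic), so modulo this finite subgroup the homeomorphism centralizer embeds into $\Z_{\GL(n,\ZZ)}(A)$. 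Conjugation by $\phi$ injects $\Z_s(g)$ into $\Z_{\mathrm{Homeo}(\TT^n)}(\kappa(g))$, and since $\langle g\rangle$ has infinite index in $\Z_s(g)$ by hypothesis, there must exist $h \in \Z_s(g)$ whose image $\phi\circ h\circ \phi^{-1} = T_B + w$ has linear part $B$ with $\langle A, B\rangle \subset \GL(n,\ZZ)$ not virtually cyclic. Lemma~\ref{lemma: lnr high ran} then gives that $\langle T_A, T_B\rangle$ generates a higher rank action on $\TT^n$.

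Next, I upgrade $h$ from $C^s$ to $C^\infty$ via non-stationary normal forms. Since $h$ commutes with $g$, Lemma~\ref{lemma: PHcent} yields $h(\W^\sigma_g) = \W^\sigma_g$ for $\sigma\in \{s,u\}$, foliations whose leaves are $C^\infty$. As $g$ has narrow band spectrum with critical regularity $r(g) < s$, Theorem~\ref{thm: normal form}(5) applies to the uniformly contracting pair $(g,\W^s_g)$: the non-stationary normalization $\{H_x\}$ (which is uniformly $C^\infty$ along $\W^s_g$-leaves since $g$ is $C^\infty$) conjugates $h|_{\W^s_g}$ to a sub-resonance polynomial map of fixed bounded degree, so $h$ is uniformly $C^\infty$ along $\W^s_g$. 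Applying the symmetric argument to $(g^{-1},\W^u_g)$ gives uniform $C^\infty$ regularity along $\W^u_g$. Transversality of these foliations with complementary dimensions, combined with Journ\'e's lemma (invoked exactly as in the proof of Proposition~\ref{prop: applic normal form}), then yields $h \in \Diff^\infty(\TT^n)$.

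Finally, $\langle g, h\rangle$ generates a $C^\infty$ Anosov $\ZZ^2$-action on $\TT^n$ whose Franks--Manning linearization is the affine action $\langle \kappa(g), \phi\circ h\circ \phi^{-1}\rangle$, with higher rank linear part $\langle T_A, T_B\rangle$ by the first step. Theorem~\ref{main: gl rig anosov} (Rodriguez Hertz--Wang) produces a $C^\infty$ conjugacy between the action and its linearization; in particular $g$ is $C^\infty$ conjugate to $\kappa(g)$. The equivalence with conjugacy to $\kappa_0(g)$ is routine: $\kappa(g)$ has a fixed point on $\TT^n$ by Lefschetz (using hyperbolicity of $A$), and translating by it in smooth coordinates absorbs the affine translation part. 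The main obstacle is the regularity upgrade in the second paragraph: without the narrow band spectrum condition and the sharp threshold $s > r(g)$, one cannot invoke the Kalinin--Sadovskaya normal forms to reduce $h|_{\W^\sigma_g}$ to polynomial form, and therefore cannot manufacture the $C^\infty$ higher-rank action needed to feed into the Rodriguez Hertz--Wang global rigidity theorem.
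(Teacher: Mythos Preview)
Your proof is correct and follows essentially the same approach as the paper: upgrade the regularity of centralizer elements from $C^s$ to $C^\infty$ via the normal forms of Theorem~\ref{thm: normal form} and Journ\'e's lemma, then feed the resulting smooth higher-rank Anosov action into the Rodriguez Hertz--Wang global rigidity theorem. The paper's argument is organized slightly differently---it first proves $\Z_s(g)=\Z_\infty(g)$ for \emph{all} elements at once, then argues by contrapositive on the rank of this (now known to be finitely generated abelian) group---whereas you isolate a single element $h$ and work with the $\ZZ^2$-action $\langle g,h\rangle$; but the key ideas and the tools invoked are identical.
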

\begin{proof} The narrow band spectrum assumption and Theorem \ref{thm: normal form} imply that $g$ preserves some $C^\infty$ normal forms on $\W^{\ast}_g, \ast=s,u$, which are also preserved by  any $h\in \Z_{s}(g)$ for $s>r(g)$.  Since $h$ is smooth along the tranverse foliations $\W^s_g$ and $\W^u_g$, Journ\'e's  lemma implies that $h$ is smooth.  Thus $\Z_s(g)=\Z_\infty(g)\subset \Z_{\mathrm{Homeo}(\TT^n)}(g)$, which has a finite index subgroup $G\cong\ZZ^\ell$, by irreducibility of $\kappa_0(g)$, and Lemmas \ref{lemma: cent linear Ansv} and \ref{lemma: rank cent}.

Suppose that $g$ is not $C^\infty$ conjugate to $\kappa(g)$. Applying Theorem \ref{main: gl rig anosov} to the action of $G$ gives a rank one factor for a linearization of $G$. By the irreducibility of $\kappa_0(g)$ and Lemma \ref{lemma: lnr high ran}, the rank of $G$ must be $1$. Therefore  $\Z_s(g)=\Z_\infty(g)$ is virtually trivial. \end{proof} 

\section{Proofs of Theorems \ref{main: cent dic geod fl} and \ref{main: geod fl}}\label{section: geodesic proofs}

We begin with a general discussion of perturbations of discretized geodesic flows in negative curvature.
Let $X$ be a closed, negatively curved Riemannian manifold of any dimension, and let $\psi_t\colon T^1X\to T^1X$ be the geodesic flow on the unit tangent bundle $T^1X$.  

The centralizer of the flow  $\psi_t$ (and hence any element of the flow) contains the flow itself.   If $X$ admits an isometry $h$, then the derivative $Dh$ preserves the unit tangent bundle $T^1X$ and commutes with the flow.  While the flow fixes its own orbits, the derivative of a nontrivial isometry permutes the orbits nontrivially. 

Suppose $g\colon T^1X\to T^1X$ is an arbitrary continuous map, and let $g_\ast \colon \pi_1(T^1X)\to \pi_1(T^1X)$ be the induced map on the fundamental group.  We claim that $g$ induces a homomorphism $\bar g_\ast \colon \pi_1(X) \to \pi_1(X)$ such that $\bar g_\ast p_\ast = p_\ast g_\ast$, where $p\colon T^1X\to X$ is the canonical projection.  When $\dim(X)\geq 3$, this is immediate, because the fibers of $T^1X$ are simply connected.  When $X$ is a surface, this follows from the fact that $\pi_1(T^1X)$ is a central extension of the simple group $\pi_1(X)$.
 
Note that since $\psi_t$ is isotopic to the identity, it induces a trivial map  on $\pi_1(X)$, whereas the derivative of a nontrivial isometry $h$ induces a nontrivial automorphism $\overline{Dh}_\ast$ of $\pi_1(X)$, namely $h_\ast$ itself.  The latter automorphism $h_\ast$ induces a nontrivial {\em outer} automorphism; that is, it is not induced by a conjugacy on $\pi_1(X)$.  This is because, as we shall see, homeomorphisms of $T^1X$ that leave invariant the orbit foliation of    $\psi_t$    and that induce inner automorphisms of $\pi_1(X)$ must fix the leaves of the orbit foliation.

\begin{prop}\label{prop:innerauto}  Let $X$ be a closed, negatively curved manifold, and suppose that $g\colon T^1X\to T^1X$ is a homeomorphism that leaves invariant the orbit foliation of the geodesic flow $\psi_t$.  The following are equivalent:
\begin{enumerate}
\item there exists  $\hat\gamma\in \pi_1(X)$ such that $\bar g_\ast(\gamma) = \hat\gamma \gamma \hat\gamma^{-1},$ for every $\gamma\in \pi_1(X)$.
\item  $g$ leaves invariant each orbit of $\psi_t$.
\end{enumerate}
\end{prop}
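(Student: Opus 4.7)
My plan is to work on the universal cover $T^1\widetilde{X}$, where the orbit foliation of the lifted flow $\widetilde{\psi}_t$ has a canonical description via boundary pairs. Fix a lift $\widetilde{g}\colon T^1\widetilde{X}\to T^1\widetilde{X}$ of $g$; it satisfies $\widetilde{g}\circ \gamma = \bar g_\ast(\gamma)\circ \widetilde{g}$ for each deck transformation $\gamma\in\pi_1(X)$, and preserves the orbit foliation of $\widetilde{\psi}_t$, whose leaf space is canonically identified with $\partial^2\widetilde{X} := (\partial\widetilde{X})^2\setminus\Delta$, with $\pi_1(X)$ acting diagonally via its boundary extension. Every nontrivial $\gamma\in\pi_1(X)$ is a hyperbolic isometry of $\widetilde{X}$ with a unique axis $\tau_\gamma\subset T^1\widetilde{X}$ corresponding to $(\gamma^-,\gamma^+)\in\partial^2\widetilde{X}$, and if $\gamma$ is primitive then $p(\tau_\gamma)$ is the unique closed $\psi_t$-orbit in the conjugacy class $[\gamma]$. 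The set of axis pairs is dense in $\partial^2\widetilde{X}$ by density of periodic orbits for Anosov geodesic flows. Throughout I assume $\widetilde{g}$ preserves orientation along orbits; the orientation-reversing case is handled by passing to $g^2$ and noting that the inner/non-inner dichotomy is unaffected.

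For $(1)\Rightarrow(2)$: if $\bar g_\ast = \mathrm{Inn}(\hat\gamma)$, then the alternative lift $\widetilde{g}' := \hat\gamma^{-1}\circ\widetilde{g}$ commutes with every deck transformation. In particular, for each primitive $\gamma$, $\widetilde{g}'(\tau_\gamma)$ is a $\gamma$-invariant oriented geodesic, so by uniqueness of the axis $\widetilde{g}'(\tau_\gamma)=\tau_\gamma$. Hence the induced homeomorphism $\widetilde{g}'_\partial$ on $\partial^2\widetilde{X}$ fixes every axis pair $(\gamma^-,\gamma^+)$; continuity and density force $\widetilde{g}'_\partial=\mathrm{id}$, so $\widetilde{g}'$ preserves every orbit of $\widetilde{\psi}_t$, and descending, $g$ preserves every orbit of $\psi_t$.

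For $(2)\Rightarrow(1)$: assume $g$ fixes each orbit. For every primitive $\gamma$, the closed orbit $p(\tau_\gamma)$ is $g$-invariant, so $\widetilde{g}(\tau_\gamma)=\eta_\gamma\cdot \tau_\gamma$ for some $\eta_\gamma\in \pi_1(X)$. On the other hand, the equivariance $\widetilde{g}\gamma\widetilde{g}^{-1}=\bar g_\ast(\gamma)$, together with primitivity of $\bar g_\ast(\gamma)$ (automorphisms of torsion-free word-hyperbolic groups preserve primitivity) and uniqueness of axes, forces $\widetilde{g}(\tau_\gamma)=\tau_{\bar g_\ast(\gamma)}$. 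Comparing yields $\bar g_\ast(\gamma) = \eta_\gamma \gamma \eta_\gamma^{-1}$ for every primitive $\gamma$, and since every nontrivial element of the torsion-free group $\pi_1(X)$ is a power of a primitive one, $\bar g_\ast$ preserves every conjugacy class of $\pi_1(X)$.

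The remaining step, and the main obstacle, is to promote ``preserves every conjugacy class'' to ``inner.'' For closed negatively curved $X$, the group $\pi_1(X)$ is torsion-free, non-elementary, word-hyperbolic, and has trivial center, so by a theorem of Minasyan--Osin it has no non-inner pointwise inner (i.e.\ conjugacy-class-preserving) automorphisms, which gives the conclusion. A more self-contained alternative normalizes $\widetilde g$ so that $\bar g_\ast$ fixes a chosen primitive $\gamma_0$ (by composing with $\eta_{\gamma_0}^{-1}$), then uses the induced boundary homeomorphism $\widetilde{g}_\partial\colon \partial\widetilde X\to \partial\widetilde X$, which fixes $\gamma_0^\pm$, and combines the equivariance $\widetilde{g}_\partial\circ \alpha = \bar g_\ast(\alpha)\circ \widetilde{g}_\partial$ with density of attracting fixed points of primitive hyperbolic elements in $\partial\widetilde{X}$ and faithfulness of the boundary action to deduce $\widetilde{g}_\partial=\mathrm{id}$, hence the normalized $\bar g_\ast$ is trivial and the original one is inner.
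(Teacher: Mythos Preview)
Your argument is correct and follows essentially the same route as the paper: lift to the universal cover, identify orbits with axes of hyperbolic deck transformations, and use density of periodic orbits. The paper does not explicitly invoke $\partial^2\widetilde X$, instead working directly with preimages $\pi^{-1}(c_\gamma)=\bigsqcup_\eta \alpha_{\eta\gamma\eta^{-1}}$, but the content is the same.

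One place where your treatment is actually more complete than the paper's: for $(2)\Rightarrow(1)$ the paper simply writes that since $\bar g_\ast$ preserves every conjugacy class it ``must act by conjugation,'' with no justification. You correctly flag this as the nontrivial step---there exist groups with non-inner class-preserving automorphisms---and supply two resolutions: the Minasyan--Osin theorem for torsion-free non-elementary hyperbolic groups, and a self-contained boundary argument. The latter is the more natural fit here, since the geometric setup already hands you a $\bar g_\ast$-equivariant boundary homeomorphism, and it avoids importing outside machinery. Either way, you have closed a gap the paper leaves open.
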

\begin{proof}  (1): Since $g$ preserves the orbits of the geodesic flow, the map $\bar g_\ast$ has a simple discription: given $\gamma\in \pi_1(X)$, represent $\gamma$ by a closed, unit-speed geodesic $c_\gamma$ in $X$ (here we are using free homotopy equivalence): this representation is unique up to reparametrization, because $X$ is negatively curved.  The lift $c_\gamma'$ to $T^1X$ is a closed orbit of $\varphi_t$ and is taken to a closed orbit $ \hat c'$ by $g$; the projection of this orbit to $X$ is a closed geodesic $\hat c = c_{\bar g_\ast(\gamma)}$ representing the class $\bar g_\ast(\gamma)$.

Now suppose that  there exists $\hat\gamma\in \pi_1(X)$ such that for every $\gamma\in \pi_1(X)$,
$\bar g_\ast(\gamma) = \hat\gamma \gamma \hat\gamma^{-1}$.  The group $\Gamma = \pi_1(X)$ acts freely on the universal cover $\widetilde X$ on the left by isometries.  Since $X$ is closed and negatively curved, each $\gamma\in \Gamma$ has a unique axis $\alpha_\gamma$, which is a geodesic in $\widetilde X$, invariant under $\gamma$ and on which $\gamma$ acts by translations.

Denote by $\pi\colon \widetilde X \to X$ the covering projection.   It is easy to see that 
\[\pi^{-1}(c_\gamma) = \bigsqcup_{\eta\in\Gamma} \eta \alpha_\gamma = \bigsqcup_{\eta\in\Gamma}  \alpha_{\eta\gamma\eta^{-1}}.
\]
Denote by $\tilde g$ the action of $g$ on lifted geodesics in $\widetilde X$, which is well-defined up to deck transformations.  Then
\[ \tilde{g}\left(\pi^{-1}(c_\gamma) \right)  = \pi^{-1}(c_{\bar g_\ast{\gamma}})  =    \bigsqcup_{\eta\in\Gamma} \alpha_{\eta\hat\gamma\gamma(\eta\hat\gamma)^{-1}} = \pi^{-1}(c_{\gamma}).\]
Thus $g(c_\gamma'(\RR))  = c_\gamma'(\RR)$, for every closed $\psi_t$-orbit $c_\gamma'(\RR)$.   Since $X$ is closed and negatively curved,  $\psi_t$-periodic orbits are dense in $T^1X$, and so $g$ fixes all $\psi_t$-orbits.

(2) If $g$ fixes all $\psi_t$-orbits, then by the argument for (1),  we obtain that $\bar g_\ast$ preserves the conjugacy classes in $\pi_1(X)$ and thus must act by conjugation.
\end{proof}

Suppose that  $f\in \Diff^r(T^1X), r\geq 1$ is a $C^1$-small perturbation of $\psi_{t_0}$. By Theorem \ref{main: HPS}, $f$ is  dynamically coherent, and $(f,\W^c)$ is leaf conjugate to $(\psi_{t_0}, \W^c_{\psi_{t_0}})$.  Proposition \ref{prop: gwc=wc} implies that for any $g\in \Z_{1}(f)$, $g(\W^*)=\W^*$, for $\ast\in\{u,c,s,cu,cs\}$.

Let $\Z^+_{r}(f)$ be the subgroup of $\Z_{r}(f)$ consisting of the elements that preserve the orientation of $\W^c$. Clearly $\Z^+_{r}(f)$ has finite index in $\Z_{r}(f)$. We denote by $\Z_{r}^c(f)$ the set of $g\in \Z^+_{r}(f)$ fixing the leaves of $\W^c(f)$.  Observe that $\Z^c_{r}(f)$ is a normal subgroup of $\Z^+_{r}(f)$. 

\begin{prop}\label{prop: outerauto}  Let $\psi_{t_0}$ be the discretized geodesic flow over a closed, negatively curved manifold $X$.  There exists $\epsilon>0$ such that for any $r\geq 1$,  if $f\in \Diff^r(T^1X)$, and $d_{C^1}(f,\psi_{t_0})<\epsilon$, then 
$\Z^+_r(f)/\Z_r^c(f)$ is isomorphic to a subgroup of the outer automorphism group $\mathrm{Out}( \pi_1(X))$.
\end{prop}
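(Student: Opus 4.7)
The plan is to construct a group homomorphism
\[
\Phi\colon \Z^+_r(f) \longrightarrow \mathrm{Out}(\pi_1(X))
\]
whose kernel is exactly $\Z^c_r(f)$; the first isomorphism theorem then gives the desired embedding. As noted in the discussion preceding Proposition \ref{prop:innerauto}, any continuous self-map $g$ of $T^1X$ induces a homomorphism $\bar g_\ast\colon \pi_1(X) \to \pi_1(X)$ (directly when $\dim X\geq 3$, and via the characteristic center of $\pi_1(T^1X)$ when $X$ is a surface). The usual basepoint/path ambiguity makes $\bar g_\ast$ well-defined only up to inner automorphism, so passing to $\mathrm{Out}(\pi_1(X))$ gives a well-defined assignment $\Phi(g) := [\bar g_\ast]$, which is easily checked to be a group homomorphism.

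To compute $\ker\Phi$ I would use the leaf conjugacy from Theorem \ref{main: HPS} to transfer the problem back to $\psi_t$, where Proposition \ref{prop:innerauto} is directly applicable. Shrinking $\epsilon$ if necessary, Theorem \ref{main: HPS} provides a leaf conjugacy $h^c\colon T^1X \to T^1X$ from $(\psi_{t_0}, \W^c_{\psi_{t_0}})$ to $(f, \W^c_f)$ which is uniformly $C^0$-close to the identity, hence isotopic to the identity, so that $h^c_\ast$ is an inner automorphism of $\pi_1(T^1X)$. For $g \in \Z^+_r(f)$ set $\tilde g := (h^c)^{-1} \circ g \circ h^c$. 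By Proposition \ref{prop: gwc=wc}, $g$ preserves $\W^c_f$, and since $h^c$ sends $\W^c_{\psi_{t_0}}$-leaves onto $\W^c_f$-leaves, $\tilde g$ preserves the orbit foliation of $\psi_t$; by this same leaf correspondence, $\tilde g$ fixes every orbit of $\psi_t$ if and only if $g$ fixes every leaf of $\W^c_f$, i.e. if and only if $g \in \Z^c_r(f)$. The isotopy of $h^c$ to the identity also yields $\Phi(\tilde g) = \Phi(g)$ in $\mathrm{Out}(\pi_1(X))$. Note that $\tilde g$ need not commute with $\psi_{t_0}$; only its orbit-preservation is used.

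To conclude, I would apply Proposition \ref{prop:innerauto} to $\tilde g$: since $\tilde g$ preserves the orbit foliation of $\psi_t$, the equivalence (1)$\Leftrightarrow$(2) there says that $\tilde g$ fixes every orbit of $\psi_t$ if and only if $\bar{\tilde g}_\ast$ is inner, i.e. $\Phi(\tilde g) = 1$. Combining, $g \in \Z^c_r(f) \iff \Phi(g) = 1$, so $\ker\Phi = \Z^c_r(f)$ and $\Z^+_r(f)/\Z^c_r(f)$ embeds into $\mathrm{Out}(\pi_1(X))$. The main (mild) technical point will be to verify that the leaf conjugacy $h^c$ produced by \cite{HPS} can be taken $C^0$-close to the identity (so that it is genuinely isotopic to the identity); this follows from the standard construction of $h^c$ as the endpoint of a path of leaf conjugacies starting from $\mathrm{id}$ along a $C^1$-path of perturbations from $\psi_{t_0}$ to $f$. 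The remaining verifications (that $\Phi$ is a homomorphism, that $\Z^c_r(f)$ is the kernel, and that the leaf correspondence intertwines ``$g$ fixes $\W^c_f$-leaves'' with ``$\tilde g$ fixes $\psi_t$-orbits'') are formal consequences of Propositions \ref{prop:innerauto} and \ref{prop: gwc=wc}.
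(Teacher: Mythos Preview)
Your proof is correct and is essentially identical to the paper's argument: both define the map $g\mapsto[\bar g_\ast]\in\mathrm{Out}(\pi_1(X))$, conjugate by the leaf conjugacy (homotopic to the identity) to transfer $g$ to a homeomorphism preserving the $\psi_t$-orbit foliation, and then invoke Proposition~\ref{prop:innerauto} to identify the kernel with $\Z^c_r(f)$. The only cosmetic difference is the direction of the leaf conjugacy; the paper writes $g_1=h\circ g\circ h^{-1}$ with $h$ going from $\W^c_f$ to $\W^c_{\psi_{t_0}}$, while you write $\tilde g=(h^c)^{-1}\circ g\circ h^c$ with $h^c$ going the other way.
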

 
\begin{proof} Consider the map that sends $g\in \Z^+_r(f)$ to $[\bar g_\ast] \in \mathrm{Out}(\pi_1(X))$. It suffices to prove that the kernel of this map is $Z_r^c(f)$.    Suppose then that $g$ lies in the kernel, i.e. that there exists $\hat\gamma\in \pi_1(X)$ such that $\bar g_\ast (\gamma) = \hat\gamma\gamma\hat\gamma^{-1}$, for all $\gamma\in \pi_1(X)$.

Let $h\colon T^1X\to T^1X$ be the leaf conjugacy between   $(\W^c_{f}, \psi_{f})$ and  $(\W^c_{\psi_{t_0}}, \psi_{t_0})$, satisfying
\[h\left( \W^c_{f} (v) \right) = \W^c_{\psi_{t_0}}(h(v)),\]
for all $v\in T^1X$, and let $g_1 = h\circ g \circ h^{-1}$, which is a homeomorphism preserving the orbit foliation of $\psi_t$.  Since $h$ is homotopic to the identity, the induced maps $\bar g_\ast$ and $\bar {g_1}_\ast$ are the same (i.e., conjugacy by $\hat\gamma$).   Proposition~\ref{prop:innerauto} implies that $g_1$ fixes the $\psi_t$ orbits, and so $g$ fixes the leaves of $\W^c$, i.e. $g\in \Z_r^c(f)$.  Similarly, if $g\in \Z_r^c(f)$, then $g$ lies in the kernel.
\end{proof}

\begin{prop}\label{prop: outer auto}
Let $X$ be a closed, negatively curved manifold.  There exists $\epsilon>0$ such that for any $r\geq 1$,  if $f\in \Diff^r(T^1X)$, and $d_{C^1}(f,\psi_{t_0})<\epsilon$, then 
the quotient $\Z^+_r(f)/\Z_r^c(f)$  is finite.
\end{prop}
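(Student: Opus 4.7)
The plan is to combine Proposition~\ref{prop: outerauto} with a uniform length-distortion bound on the outer automorphisms in the image, and then invoke rigidity to conclude finiteness. By Proposition~\ref{prop: outerauto}, the map $g\mapsto [\bar g_\ast]$ embeds $\Z^+_r(f)/\Z^c_r(f)$ into $\mathrm{Out}(\pi_1(X))$, so it suffices to show the image is finite.

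To obtain the length-distortion bound, I would extend the argument of Lemma~\ref{lem: g fixes periodic}. On any lift of a $\W^c_f$-leaf in the universal cover, $\tilde f$ is uniformly close to a translation by $t_0$, with displacement per iteration in $[\tau_{\min},\tau_{\max}]$, where $\tau_{\max}/\tau_{\min}$ is close to $1$ once $f$ is sufficiently $C^1$-close to $\psi_{t_0}$. Any $g\in \Z^+_r(f)$ preserves $\W^c_f$ (by Proposition~\ref{prop: gwc=wc}) and commutes with $f$, so $\tilde g$ sends $\tilde f$-orbit segments to $\tilde f$-orbit segments with the same number of iterations; carrying out the quantitative version of the argument in Lemma~\ref{lem: g fixes periodic}, for every closed leaf $c$ of $\W^c_f$ the Riemannian length satisfies $|g(c)| \in [C^{-1}|c|, C|c|]$ for a constant $C=C(f)$ independent of $g$ and $c$. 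Via the leaf conjugacy supplied by Theorem~\ref{main: HPS}, closed $\W^c_f$-leaves correspond bijectively to closed orbits of $\psi_t$, and these in turn correspond to the nontrivial conjugacy classes in $\pi_1(X)$, each represented by a unique closed geodesic of length equal to the translation length of the class (since $X$ is negatively curved). Because the leaves of $\W^c_f$ are $C^0$-close to the orbits of $\psi_t$ with nearly identical tangent directions (by continuity of dominated splittings), the Riemannian length of a closed $\W^c_f$-leaf is comparable, with constants close to $1$, to the translation length of the associated conjugacy class. I would conclude that there is a constant $C'=C'(f)$ such that
\[
C'^{-1}\ell(\gamma)\leq \ell([\bar g_\ast]\gamma)\leq C'\ell(\gamma),\qquad g\in \Z^+_r(f),\; \gamma\in \pi_1(X)\setminus\{1\}.
\]

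Finally, I would deduce finiteness of the image from this distortion bound, splitting into cases by $\dim X$. When $\dim X\geq 3$, the group $\mathrm{Out}(\pi_1(X))$ is itself finite: for locally symmetric $X$ this is Mostow--Prasad rigidity, and for general closed negatively curved manifolds of dimension at least three it follows from finiteness results for outer automorphism groups of torsion-free word-hyperbolic groups (Paulin), since $\pi_1(X)$ admits no splitting over a virtually cyclic subgroup in this setting. When $\dim X=2$, the surface $X$ has genus at least two and $\mathrm{Out}(\pi_1(X))\cong \mathrm{MCG}(X)$ is infinite; here I would fix an auxiliary hyperbolic metric $m_0$ on $X$, use bi-Lipschitz equivalence between the given negatively curved metric and $m_0$ to transfer the distortion bound to $m_0$-lengths, reinterpret the bound as saying that each $[\bar g_\ast]^\ast m_0$ stays within a Thurston-ball of fixed radius around $m_0$ in Teichm\"uller space $\mathcal T(X)$, and conclude by proper discontinuity of the $\mathrm{MCG}$-action on $\mathcal T(X)$. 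The main obstacle is this two-dimensional case, where the global finiteness of $\mathrm{Out}(\pi_1(X))$ fails and one must genuinely exploit the length-distortion constraint; in higher dimensions the argument reduces almost immediately to classical rigidity.
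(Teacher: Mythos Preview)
Your proposal is correct and, for the surface case, takes a genuinely different route from the paper. Both arguments begin identically: embed $\Z^+_r(f)/\Z^c_r(f)$ into $\mathrm{Out}(\pi_1(X))$ via Proposition~\ref{prop: outerauto}, and dispose of $\dim X\geq 3$ by the Paulin--Sela finiteness of $\mathrm{Out}$ for such hyperbolic groups.

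For $\dim X=2$ the paper argues qualitatively: Lemma~\ref{lem: g fixes periodic} shows each closed $\W^c_f$-leaf is $g$-periodic, hence every $[\bar g_\ast]$ eventually fixes every conjugacy class; a short filling-curves argument (Lemma~\ref{lem: Mod Finite Order}) then forces each $[\bar g_\ast]$ to have finite order, and a Burnside-type statement for $\mathrm{Mod}(X)$ via the Torelli exact sequence (Lemma~\ref{lem: Mod Burnside}) concludes that any torsion subgroup is finite. You instead extract from the proof of Lemma~\ref{lem: g fixes periodic} the quantitative fact that $|g(c)|\in[C^{-1}|c|,C|c|]$ with $C=C(f)$ independent of $g$, transfer this to a uniform bi-Lipschitz bound on marked lengths, and finish with proper discontinuity of the mapping class group on Teichm\"uller space. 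Your approach is more geometric and yields an explicit bound on the cardinality of the image in terms of $C$ and the topology of $X$, at the cost of invoking Teichm\"uller theory; the paper's approach is more self-contained and elementary. One comment: your passage from lengths of closed $\W^c_f$-leaves to translation lengths in $\pi_1(X)$ is stated briskly---the clean justification is that since $E^c_f$ is uniformly close to $E^c_\psi$, the projection $p:T^1X\to X$ sends $\W^c_f$-leaves to curves that are uniform quasigeodesics in the universal cover (their tangent is close to the footpoint velocity), so the Morse lemma gives the desired comparison.
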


\begin{proof} 

The argument splits into two cases according to the dimension of $X$.  In the first case, $\dim(X)\geq 3$, the outer automorphism group of $\pi_1(X)$ is finite, which immediately gives the conclusion.  In the second case, $\dim(X)= 2$,  the outer automorphism group is infinite, isomorphic to  the extended mapping class group $\hbox{Mod}^{\pm}(X)$, which contains the mapping class group  $\hbox{Mod}(X)$  as an index $2$ subgroup.   A further analysis of the dynamics of centralizer is required.

\medskip

\noindent
{\bf The case $\dim(X)\geq 3$.} Work of  Paulin  and Sela  \cite{Paulin, ZS} shows that if $X$ is closed and negatively curved, of dimension at least $3$, then $\mathrm{Out}( \pi_1(X, p(v)))$ is finite: the fundamental group of $X$ is a torsion-free hyperbolic group that does not admit an essential small action on a real tree (see \cite[Corollary 0.2]{ZS} and the discussion that follows).
Thus Proposition~\ref{prop: outer auto} follows immediately from Proposition~\ref{prop: outerauto}.

\medskip

\noindent
{\bf The case $\dim(X)= 2$.} If $X$ is a closed, negatively curved surface, then $\mathrm{Out}(\pi_1(X, p(v)))$ is isomorphic to the extended mapping class group, which since $X$ is a surface, is the group of diffeomorphisms of $X$ modulo homotopy equivalence. The following lemmas are well-known; we sketch their proofs for completeness. 

\begin{lemma}\label{lem: Mod Finite Order}  Let $X$ be a closed, negatively curved surface.  Suppose that $h\in \mathrm{Mod}^\pm(X) \cong \mathrm{Out}(\pi_1(X, p(v)))$ has the property that for every conjugacy class $[\gamma]$ of $\gamma\in \pi_1(X, p(v))$, there exists $k\geq 1$ such that
\[h^k[\gamma] = [\gamma].\]
Then $h$ has finite order.
\end{lemma}

\begin{proof}  Represent $h$ by a diffeomorphism $\hat h\colon X\to X$, and take a system of filling curves $\gamma_1, \ldots, \gamma_n$ in $X$.  (These are closed curves with minimal intersection that separate $X$ into a union of disks).  Then some power of $\hat h$ fixes these curves (up to homotopy).  Iterating further, some power ${\hat h}^L$ leaves invariant the disks bounded by the curves (up to homotopy).  But then by coning off ${\hat h}^L$ in each disk, we get that ${\hat h}^L$ is homotopic to the identity in each disk, and so ${\hat h}^L$ is homotopic to the identity.  Thus $h^L$ is trivial.
\end{proof}

\begin{lemma}\label{lem: Mod Burnside} Let $X$ be a closed, negatively curved surface, and let $G$ be a subgroup of $\hbox{Mod}^{\pm}(X)$ with the property that every $h\in G$ has finite order.  Then $G$ is finite.
\end{lemma}

We remark that there is no assumption that $G$ be finitely generated in Lemma~\ref{lem: Mod Burnside}.
\begin{proof}[Proof of Lemma~\ref{lem: Mod Burnside}]  Since $\hbox{Mod}(X)$ has index $2$ in $\hbox{Mod}^{\pm}(X)$, it suffices to prove the statement for $G < \hbox{Mod}(X)$. 
The Torelli group $\mathrm{Tor}(X)$ is the set of $g\in \mathrm{Mod}(X)$ that induce a trivial action on first homology $H^1(X,\ZZ)$.  We have the short exact sequence 
\[1 \rightarrow \operatorname{Tor}(X) \rightarrow \operatorname{Mod}(X) \rightarrow \operatorname{Sp}\left(H^{1}(X,\ZZ)\right) \cong \operatorname{Sp}_{2 g}(\ZZ) \rightarrow 1,\]
where $g$ is the genus of $X$, and $ \operatorname{Sp}_{2 g}(\mathbb{Z})$ is the integer symplectic group.

It is well-known that $\mathrm{Tor}(X)$ is torsion-free.  Thus if $G<\mathrm{Mod}(X)$ is a torsion group, it is isomorphic to a subgroup of $\operatorname{Sp}_{2 g}(\ZZ)$.  But $ \operatorname{Sp}_{2 g}(\ZZ)$ is arithmetic and thus contains a finite index torsion free normal subgroup $H$ (for example, $H=\Gamma(3) = \{A\in \operatorname{Sp}_{2 g}(\ZZ): A\equiv I\mod 3\} $).  But this implies that $G$ injects into  $ \operatorname{Sp}_{2 g}(\ZZ)/H$,  which is finite. Hence $G$ is finite.
\end{proof}

We return to the proof of Proposition~\ref{prop: outer auto} in the case $\hbox{dim}(X)=2$.  Suppose that $g\in \Diff^1(T^1 X)$ commutes with $f$, a perturbation of the discretized geodesic flow $\psi_{t_0}$.  Lemma~\ref{lem: g fixes periodic} implies that every closed leaf of $\W^c_f$ is periodic under $g$.  Thus $h=[\bar g_\ast] \in \mathrm{Out}(\pi_1(X))$ satisfies the hypotheses of Lemma~\ref{lem: Mod Finite Order} and hence has finite order.  The image of  the quotient $\Z^+_r(f)/\Z_r^c(f)$ in $\mathrm{Out}(\pi_1(X))$ is thus a torsion group, and so by Lemma~\ref{lem: Mod Burnside} is finite. \end{proof}

We remark that Proposition~\ref{prop: outerauto} and the discussion above also imply that for $X$ negatively curved and locally symmetric, of dimension at least $3$,   \[\Z^+_r(\psi_{t_0})/\Z_r^c(\psi_{t_0}) \cong \mathrm{Out}( \pi_1(X))/<\pm \id>,\]  since by Mostow rigidity, every outer automorphism is represented by a unique isometry. With a little more work (see, e.g., \cite{JH}), one can show that for any $t_0\neq 0$ the centralizer of $\psi_{t_0}$  in $\Diff^1(T^1X)$ is precisely the group generated by the flow itself and the isometry group of $X$.  The same holds for hyperbolic surfaces. Details are left to the reader.

\begin{proof}[Proof of Theorem  \ref{main: geod fl}]Let $f$ be a diffeomorphism satisfying all the hypotheses of Theorem \ref{main: geod fl}.  By  \cite{KK} and \cite{RR}, $\psi_{t_0}$ in Theorem \ref{main: geod fl} is stably accessible and hence stably ergodic (by, e.g. \cite{BW}), and so we may assume that $f$ is accessible and ergodic.    Lemma~\ref{lemma: g pr vol} then  implies that  $\Z_1(f)\subset \Diff_{\mathrm{vol}}(M)$.
Proposition~\ref{prop: outer auto}  implies that $\Z^+_1(f)$, and hence $\Z_1(f)$, is virtually $\Z^c_1(f)$.

Assume that the disintegration of $\mathrm{vol}$ along $\W^c_f$ leaves is not Lebesgue; we show that $\Z^c_1(f)$ is virtually $<f^n>$, which will complete the proof of Theorem \ref{main: geod fl}. First, since $(f,\W^c_f)$ is leaf  conjugate to $\psi_{t_0}$, all but countably many $\W^c_f-$leaves are noncompact. For any noncompact $\W^c_f-$leaf, we consider the total order $``<"$ induced by the canonical orientation on $\W^c_f$. The action of $f$ on every non-compact $\W^c_f-$leaf is uniformly close to a translation by $t_0$ on $\RR$, and therefore is topologically conjugate to a translation.

Theorem F in \cite{AVW2} implies that the disintegration of $\mathrm{vol}$ along $\W^c_f$ leaves is atomic: there is a full volume set $S\subset T^1X$ and $k\in \ZZ^+$ such that for almost every $v\in T^1X$, $\W^c_f(v)$ is non-compact, 
\begin{equation}\label{eqn: S cap Wc}
S\cap \W^c_f(v)=\{x_{i,j}(v), i\in \ZZ, 1\leq j\leq k\},
\end{equation}
and 
\begin{equation}\label{eqn: x ij v def}
f^i(v)\leq x_{i,1}(v)<x_{i,2}(v)<\cdots <x_{i,k}(v)<f^{i+1}(v),~~f(x_{i,j}(v))=x_{i+1,j}(v).
\end{equation}

Fix an arbitrary $g\in \Z^c_1(f)$.   Lemma \ref{lemma: g pr vol} implies that $g$ is volume preserving,  which implies that, modulo a zero set, $gS=S$. As a consequence, there is an $f-$invariant full volume set $\Omega\subset T^1X$ such that for any $v\in \Omega$,
\begin{itemize}
\item $\W^c_f(v)$ is noncompact;
\item$S$ meets $\W^c_f(v)$ in exactly $k$ orbits and \eqref{eqn: S cap Wc}, \eqref{eqn: x ij v def} hold, i.e. we can define $x_{i,j}(v)$ associated to $v$;
\item $g(S\cap \W^c_f(v))=S\cap \W^c_f(v)$; and
\item $f(S\cap \W^c_f(v))=S\cap \W^c_f(v)$.
\end{itemize}

Since $g$ preserves the orientation on $\W^c_f-$leaves,  for any $v\in \Omega$, the restriction of $g$ to $\W^c_f(v)\cap S(=\{x_{i,j}(v), i\in \ZZ, 1\leq j\leq k\})$ is an order preserving transformation. By \eqref{eqn: x ij v def}, for any $v\in \Omega$, both $g|_{\W^c_f(v)\cap S}, f|_{\W^c_f(v)\cap S}$ are conjugate to a translation on $\ZZ$. 

In particular, for any $v\in \Omega$, there exists $k'(g,v)\in \ZZ$ such that on $W^c_f(v)\cap S$, we have $g^k=f^{k'(g,v)}$. Moreover by the construction of $x_{i,j}$, the fact that  $fg =gf$ implies $k'(g,v)$ is an $f-$invariant function on $v$.  Ergodicity of $f$ then implies that $k'(g,v)$ is almost everywhere a constant $k'(g)$, and on a full measure subset of $S$, $g^k=f^{k'(g)}$. But any full measure subset of $S$ is dense in $T^1X$, and hence $g^k=f^{k'(g)}$ on all of $T^1X$. In addition, any $g_1,g_2\in \Z^c_1(f)$ satisfying $k'(g_1)=k'(g_2)$  must induce the same transformation on $S\cap \W^c_f(v)$ for almost every $v\in T^1X$, which implies that $g_1=g_2$. Therefore $k'$ induces a group embedding $$k':\Z^c_1(f)\to \ZZ,$$  and $k'(<f^n>)=k\ZZ$. Then $\Z^c_1(f)$ is virtually $<f^n>$, proving Theorem~\ref{main: geod fl}\end{proof}

\begin{proof}[Proof of Theorem \ref{main: cent dic geod fl}]

Returning to the proof of Theorem \ref{main: cent dic geod fl},
if the volume has singular disintegration along $\W^c_f$, then Theorem \ref{main: cent dic geod fl} is just a corollary of Theorem \ref{main: geod fl}.

Suppose now the volume has Lebesgue disintegration along $\W^c_f$. Theorem F in \cite{AVW2} implies that there is a continuous vector field $Y$   tangent to $\W^c_f$ such that the continuous flow (a priori it might not be smooth) $\varphi_t$ generated by $Y$ satisfies the following:
\begin{itemize}
\item $\varphi_1=f$, and
\item $Y$, and hence $\varphi_t$, is uniformly smooth along the leaves of $\W^c_f$.
\end{itemize}

By assumption, $\psi_{t_0}$ has either $2$-bunched or narrow band spectrum. Let $r_0 = r(\psi_{t_0})\geq 1$.
Fix $r>r_0 $; we may assume, by Lemma~\ref{lem: NBS Stable}, that $f$ has either $2$-bunched or narrow band spectrum, and $r(f) <r$.  Consider $h\in \Z^c_r(f)$.

 By ergodicity of $f$, $h$ preserves the disintegration of  volume along $\W^c_f$. Therefore $h=\varphi_t$ for some $t\in \RR$.
If follows that 
\begin{equation}\label{eqn: Zc phi t}
\Z^c_r(f) =  \{\varphi_t, t\in D\}, \text{ where }D:=\{t\in \RR: \varphi_t\in \Diff^r(T^1X)\}.
\end{equation}

Since $f = \varphi_1$ is $C^\infty$, it follows that $D$ is a non-empty subgroup of $\RR$, and by Proposition~\ref{prop: outer auto}, $\Z_r(f)$  contains  $\{\varphi_t:  t\in D\}$ as a finite index subgroup.

\smallskip

\noindent\textbf{Case 1:} $D$ is discrete. Then, since $f=\varphi_1$, it follows that $<f>$ has finite index in $\{\varphi_t: t\in D\}$,  and hence in $\Z_r(f)$.  Thus $f$  has virtually trivial centralizer in $\Diff^r(T^1X)$.

\smallskip

\noindent\textbf{Case 2:} $D$ is dense in $\RR$.  
We use the normal form theory from Section~\ref{sec: normal form} to show that the $C^\infty$ smoothness of the $\varphi_t$ with $t\in D$ extends to all $t\in\RR$, as follows.
Applying  Proposition \ref{prop: applic normal form} to the triple $(f, \varphi_t, Y)$,  we obtain that $D=\RR$, $Y$ is a $C^\infty$ vector field and $\varphi_t$ is a $C^\infty$ flow.
As a consequence, by \eqref{eqn: Zc phi t} for any $s\geq r$ we have $$\Z^c_r(f)=\{\varphi_t : t\in \RR\}\subset\Z^c_s(f)\subset  \Z^c_r(f),$$ 
which implies $\Z^c_s(f)=\{\varphi_t : t\in \RR\}$. 
 Thus by Proposition~\ref{prop: outer auto} for any  $s\geq 1$,  $\Z^+_s(f)$ hence $\Z_s(f)=\Z_{\Diff^s_{\mathrm{vol}}(T^1X)}(f)$ is virtually $\{\varphi_t: t\in \RR\}\cong \RR$.
\end{proof}


\section{Proof of Theorem \ref{main: thm pr}}\label{sec: proof Cartan}

As mentioned in the introduction, the key idea in the proof of Theorem~\ref{main: thm pr} is to   show existence of many   partially hyperbolic elements commuting with $f$, an argument that we now detail.

\subsection{The groups $G$ and $G_0$}\label{sec: start ass} Two central players in the proof of 
Theorem~\ref{main: thm pr} are groups $G$ and $G_0$, which we define in this subsection.
We start with an easy observation. 

For $f_0$ as in Theorem \ref{main: thm pr}, we denote by $\lambda^1(f_0)>\cdots>\lambda^i(f_0)>\cdots$ the \emph{distinct} Lyapunov exponents of $f_0$ and the corresponding $Df_0-$invariant Lyapunov splitting by
\begin{equation}\label{eqn: dom spl non C f0}
T\TT^d=\oplus E^i_{f_0}\oplus E^c_{f_0}.
\end{equation}
Since $f$ is $C^1-$close to $f_0$, it follows that there is a corresponding $Df-$invariant dominated splitting 
\begin{equation}\label{eqn: dom spl non C}
T\TT^d=\oplus E^i_{f} \oplus E^c_{f}
\end{equation}
and $f$-invariant foliations $\W^s, \W^u, \W^{cs}, \W^{cu}$, and $\W^c$.
 
Consider an arbitrary element $g\in \Z_2(f)$.  Proposition~\ref{prop: gwc=wc}  implies that $g\W^c=\W^c$.  Thus $f,g$ induce homeomorphisms $\bar{f},\bar{g}$ on the topological manifold $\TT^d/\W^c$ such that $\bar{f}\bar{g}=\bar{g}\bar{f}$.  Moreover $\bar{f}$ is H\"older conjugate to the hyperbolic automorphism  $T_{A}$   on $\TT^{d-1}$. By Lemma \ref{lemma: cent linear Ansv},  $\bar{g}$ is conjugate to an affine map by the same conjugacy.  For  $g\in \Z_2(f)$, we denote the linear part of this affine map by $T_{A_g}$ ,  where $A_g\in\GL(d-1,\ZZ)$.  In particular we have $A_f=A$.

Let $\pi\colon \TT^d\to \TT^{d-1}$ be the fibration given by Proposition~\ref{lemma: dich atom rot}, which satisfies $\pi\circ f = T_{A_f}\circ \pi$. Then the center  leaf $\pi^{-1}(0)$ is invariant under $f$; denote it by $\W^c_f(x_0)$.  We use this leaf to define $G$ and $G_0$.

\begin{definition}\label{def: G G0} Let  $G_0$ be the group of all the elements $g\in\Z_2(f)$ such that $g$ fixes $\W^c_f(x_0)$ and preserves the orientation of $\W^c$ and $\TT^d/\W^c$.
Let $G<\SL(d-1,\ZZ)$ be the group generated by $\{A_g : g\in G_0\}$.
\end{definition}

The next proposition lays out the properties of $G$ and $G_0$ that we will use here.
\begin{prop}\label{lemma: prpty G0 G}Suppose $f, \ell_0$ satisfy the hypotheses of Theorem \ref{main: thm pr}.  Then
\begin{enumerate}
\item $\Z_2(f)$ is virtually $G_0$.  
\item $G_0, G$ are abelian groups. If the disintegration of $\mathrm{vol}$ along $\W^c$ is not Lebesgue, then $G_0$ is finitely generated.
\item One or both  of the following cases holds:
\begin{itemize}\item[I.] $G$ is virtually $\ZZ^\ell$ for some $
\ell\leq \ell_0$, where $\ell<\ell_0$ if $\ell_0>1$.
\item[II.] $G$ is a finite index subgroup of $\Z_{\SL(d-1,\ZZ)}(A_f)$. In particular, $G$ induces a maximal Anosov action on $\TT^{d-1}$ if $\ell_0>1$.
\end{itemize}
\end{enumerate}
\end{prop}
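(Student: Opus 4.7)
The plan is to treat (1) as a direct consequence of the affine structure of the induced action on $\TT^{d-1}$, (3) as a purely algebraic consequence of Lemma \ref{lemma: rank cent}, and (2) via a more delicate dynamical argument exploiting the atomic disintegration via Lemma \ref{lemma:fix center}.

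For (1), I would first observe that the two orientation conditions defining $G_0$ cut out a subgroup of $\Z_2(f)$ of index at most $4$. On this subgroup, fixing $\W^c_f(x_0)$ is equivalent to $\bar g(0) = 0$, where $\bar g$ is the induced affine map on $\TT^{d-1}$ furnished by Lemma \ref{lemma: cent linear Ansv}. Writing $\bar g(x) = T_{A_g} x + t_g$, the relation $\bar g \bar f = \bar f \bar g$ forces both $A_g \in \Z_{\SL(d-1,\ZZ)}(A_f)$ (which is abelian by Lemma \ref{lemma: rank cent}, giving abelianness of $G$ in (2)) and $(A_f - I) t_g \equiv 0 \pmod{\ZZ^{d-1}}$. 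Since $A_f$ is hyperbolic, $A_f - I$ is invertible over $\QQ$, so admissible $t_g$ lie in a finite group (of order $|\det(A_f - I)|$), and $\{t_g = 0\}$ is a finite-index subgroup. For (3), the image $G \subset \Z_{\SL(d-1,\ZZ)}(A_f)$ is virtually $\ZZ^\ell$ for some $\ell \leq \ell_0$ by Lemma \ref{lemma: rank cent}. If $\ell = \ell_0$, matching of ranks in the finitely generated abelian ambient group forces finite index, placing us in case II, and Lemma \ref{lemma: za max} then yields maximality of the induced action when $\ell_0 > 1$. Otherwise $\ell < \ell_0$, giving case I with the claimed strict inequality.

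The heart of the argument is abelianness of $G_0$ in (2). Any commutator $[g_1, g_2]$ lies in the kernel $N$ of $g \mapsto A_g$, since $G$ is abelian. Elements of $N$ preserve every leaf of $\W^c_f$ along with its orientation. Under the non-Lebesgue hypothesis, Proposition \ref{lemma: dich atom rot} forces atomic disintegration, and Lemma \ref{lemma:fix center} then realizes $N$ as a subgroup of a finite cyclic group, with each $h \in N$ uniquely determined by the cyclic-permutation index it induces on the atom set $S \cap \W^c_f(x)$ for almost every $x$. For any $g \in G_0$, the conjugate $g h g^{-1}$ again lies in $N$; on atoms of $\W^c_f(L)$, it factors as $g|_{g^{-1}(L)} \circ h|_{g^{-1}(L)} \circ g^{-1}|_L$, and since $g$ permutes atoms among themselves preserving the cyclic ordering, $ghg^{-1}$ induces the same cyclic index as $h$. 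Uniqueness in Lemma \ref{lemma:fix center} then forces $ghg^{-1} = h$, so conjugation of $N$ by $G_0$ is trivial. Combined with $N$ and $G_0/N \cong G$ both abelian, this gives $G_0$ abelian. Finite generation follows from the extension $1 \to N \to G_0 \to G \to 1$ with $N$ finite and $G$ finitely generated. In the Lebesgue case the same strategy gives abelianness (though not finite generation), using the continuous centralizer flow from \cite{AVW2} (which commutes with $G_0$ by a uniqueness argument) in place of Lemma \ref{lemma:fix center}.

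The main obstacle is verifying triviality of the conjugation action of $G_0$ on the kernel $N$: this rests squarely on the rigid structure from the atomic case of Proposition \ref{lemma: dich atom rot} and the uniqueness built into Lemma \ref{lemma:fix center}, and would be considerably more delicate without these prior rigidity inputs.
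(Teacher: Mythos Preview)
Your arguments for (1) and (3) are correct and essentially match the paper's. For (2), however, there is a genuine gap. You assert that ``under the non-Lebesgue hypothesis, Proposition~\ref{lemma: dich atom rot} forces atomic disintegration,'' but this is false: case~\ref{case: rigid} of that proposition (topological conjugacy to $T_{A_f}\times R_\theta$, arising from joint integrability of $E^s\oplus E^u$) says nothing about the disintegration type, and the paper explicitly treats the possibility of non-Lebesgue disintegration in that case (see Lemma~\ref{lemma: Zc fin not Leb}). So your atomic-case centrality argument via Lemma~\ref{lemma:fix center} does not cover everything, and your flow-based sketch is stated only for the Lebesgue case. The fix is not hard: in case~\ref{case: rigid} the conjugacy $\zeta$ lets you write any $g\in G_0$ as $\zeta^{-1}g\zeta(x,y)=(\bar g(x),\,y+R_g(x,y))$ with $R_g$ invariant under the ergodic map $T_{A_f}\times R_\theta$, hence constant; from this $N$ is central and (via a density/discreteness dichotomy on the resulting rotation subgroup) finite under the non-Lebesgue hypothesis. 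But you need to say this.

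It is also worth noting that the paper takes a different, more uniform route to abelianness of $G_0$. Rather than proving $N=\Z^c$ is central case-by-case, the paper first proves (Lemma~\ref{lemma: Zc circle grp}, case-by-case) that the rotation number $\rho\colon \Z^c\to\TT$ is an injective homomorphism, so $\Z^c$ is abelian and $G_0$ is solvable. It then restricts $G_0$ to the fixed leaf $\W^c_f(x_0)$: this is a solvable subgroup of $\mathrm{Homeo}^+(S^1)$, on which rotation number is a genuine homomorphism (a classical fact, cf.\ \cite{Na}). Hence every commutator has rotation number zero on $\W^c_f(x_0)$, and injectivity of $\rho$ kills it. Your direct ``$N$ is central'' approach is perfectly valid once patched, and avoids invoking the solvable-groups-on-$S^1$ result; the paper's approach trades that for a single unifying step after the case analysis is packaged into Lemma~\ref{lemma: Zc circle grp}.
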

\begin{proof}[Proof of Proposition~\ref{lemma: prpty G0 G}]
(1)  
Let $\Z^+$ be the group of all the elements $g\in\Z_2(f)$ such that $g$  preserves the orientation of $\W^c$ and $\TT^d/\W^c$. Clearly $\Z^+$ has finite index in
$\Z_2(f)$. Denote by $\Z^c$ the set of center-fixing elements of $\Z^+$.

Consider the map from $\Z^+$ to  $\Z_{\Ho^+(\TT^{d-1})}(T_{A_f})$, sending $g$ to the map induced by $g$.       The kernel is $\Z^c$, and so  $\Z^+/\Z^c$ is isomorphic to a subgroup of 
$\Z_{\Ho^+(\TT^{d-1})}(T_{A_f})$.  By Lemmas~\ref{lemma: cent linear Ansv} and \ref{lemma: rank cent}, the group $\Z_{\Ho^+(\TT^{d-1})}(T_{A_f})$ is virtually   $\ZZ^m$, for some $m$,
and hence $\Z^+/\Z^c$ is virtually $\ZZ^{m'}$, for some ${m'}$.  

 Note that since there are finitely many center leaves fixed by $f$, and each element of  $\Z^+$ permutes the fixed center leaves,  there exists $k\geq 1$ such that for every element $g\in \Z^+/\Z^c$, we have
$g^k\in G_0/\Z^c$.  Thus the   finitely generated, abelian   quotient
\[\frac{\Z^+/\Z^c}{G_0/\Z^c} \cong \Z^+/{G_0} \]  
has the property that every element has order at most $k$, and is therefore finite.
This proves that $G_0$ has finite index in $\Z^+$, as claimed.

(2) Since $A_f$ is irreducible, Lemma \ref{lemma: rank cent} implies that $\Z_{\SL(d-1,\ZZ)}(A_f)$ (hence $G$) is a finitely generated abelian group.

To study the group $G_0$, first we consider the group $\Z^c$ defined as in the proof of (1). For any $h\in \Z^c$,  the rotation number $\rho(h,x)\in \TT$ is well-defined for $h|_{\W^c(x)}$ for any $x\in \TT^d$. By commutativity, it is not hard to get $\rho(h,x)=\rho(h,f(x))$. Since the rotation number is a continuous funtion on diffeomorphisms,  the ergodicity of $f$ implies  $\rho(h)=\rho(h,x)$ is independent of $x$. Moreover we have
\begin{lemma}\label{lemma: Zc circle grp}The map $\rho:\Z^c\to \TT,~~h\mapsto \rho(h)$ is a group embedding. In particular for any $h\in \Z^c$, if there exists $x\in \TT^d$ such that $\rho(h,x)=0$ then $h=\id$.
\end{lemma}
 
\begin{proof}[Proof of Lemma~\ref{lemma: Zc circle grp}]  By Proposition~\ref{lemma: dich atom rot} we have three possibilities:

\smallskip

\noindent{\bf Case 1}: The volume $\mathrm{vol}_{\TT^d}$ has atomic disintegration along $\W^c$.  Lemma \ref{lemma:fix center} implies that $\Z^c < \mathcal G_{\hbox{\tiny{$\mathrm{fix}$}}}(\W^c_f)$ is an abelian group, and therefore $\rho:\Z^c\to \TT$ is a group homomorphism. Moreover for $h\in \Z^c$, if  $\rho(h)=0$, then by the proof of Lemma \ref{lemma:fix center}, $h$ fixes all the atoms, which are dense in $\TT^d$. Thus $h=\id$.

\smallskip

\noindent{\bf Case 2}: $f$ is topologically conjugate to 
$T_{A_f}\times R_\theta$ for some $\theta\notin \QQ/\ZZ$. Let $\zeta$ be the conjugacy, so that  $\zeta^{-1}\circ f\circ\zeta(x,y) =(T_{A_f}(x), y+\theta)$.  Fix $h\in \Z^c$.  Since $h$ is center-fixing, there exists a continuous function $R(x,y)=R(h,x,y)$ such that $\zeta^{-1}\circ h\circ \zeta(x,y)=(x,y+R(x,y))$.

Since $h$ commutes with $f$,  $R(x,y)$  is $T_{A_f}\times R_\theta$-invariant. Transitivity of $T_{A_f}\times R_\theta$ implies that $R(x,y)$ is a constant function. Therefore for any $h\in \Z^c$, we have $\zeta^{-1}\circ h\circ \zeta=\id\times R_{\rho(h)}$, which implies Lemma \ref{lemma: Zc circle grp}.

\smallskip

\noindent{\bf Case 3}: $f$ is accessible, and the disintegration of $\mathrm{vol}_{\TT^d}$ has a continuous density function on the leaves of $\W^c$. Then \cite[Theorem C]{AVW2} implies that $f$ is topologically conjugate to a rotation extension over $(T_{A_f})_r$, i.e. there exist a continuous function $r(x)=r(x,y)$ and a homeomorphism $\zeta:\TT^d\to \TT^d$ such that 
 $\zeta^{-1}\circ f\circ\zeta(x,y)=(T_{A_f}(x), y+r(x)).$

For any $h\in \Z^c$,  as in Case 2. we can assume that there exists a $(T_{A_f})_r-$invariant, continuous function $R(x,y)=R_h(x,y)$ such that $\zeta^{-1}\circ h\circ \zeta(x,y)=(x,y+R(x,y))$. Then by transitivity of $(T_{A_f})_r$ (which follows from transitivity of $f$),  we have $\zeta^{-1}\circ h\circ \zeta=\id\times R_{\rho(h)}$, for any $h\in \Z^c$, which implies Lemma \ref{lemma: Zc circle grp}.

\end{proof}

Returning to the proof of item (2) of Proposition~\ref{lemma: prpty G0 G}, we 
obtain from   Lemma \ref{lemma: Zc circle grp} that $\Z^c$ is an abelian group. Observe that the map $h\mapsto A_h$ is a  surjective homomorphism from $G_0$ to $G$, with kernel $\Z^c$, and therefore $G_0$ is a group extension of $G$ by $\Z^c$. By commutativity of $\Z^c$ and $G$, we have that $G_0$ is a solvable group and $[G_0,G_0]\subset \Z^c$.

Now we claim that $[G_0,G_0]$ is trivial, and so $G_0$ is abelian. Suppose there exists $h\in [G_0,G_0]\subset \Z^c$, $h\neq \id $.  Lemma \ref{lemma: Zc circle grp} implies that  $\left.h\right|_{\W^c_f(x_0)}$ has non-zero rotation number, where $\W^c_f(x_0)$ is the $G_0-$fixed center leaf we defined in Section \ref{sec: start ass}. On the other hand, $G_0|_{\W^c_f(x_0)}$ is a solvable, orientation-preserving action on a circle.  It is known (cf. \cite{Na}) that rotation number induces a group homomorphism from any solvable subgroup of $\mathrm{Homeo}^+(S^1)$ to $\TT^1$, and so the kernel contains $[G_0,G_0]$. Thus $h|_{\W^c_f(x_0)}$ must have rotation number $0$, which is a contradiction. 

To show that $G_0$ is finitely generated if the disintegration of $\mathrm{vol}_{\TT^d}$ along  $\W^c_f$ is not Lebesgue, we only need to show the following lemma, since $G_0$ is a group extension of $G$ by $\Z^c$ and $G$ is finitely generated.

\begin{lemma}\label{lemma: Zc fin not Leb}The group $\Z^c$ is finite if the disintegration of $\mathrm{vol}_{\TT^d}$ along  $\W^c_f$ is not Lebesgue.
\end{lemma}

\begin{proof}Proposition~\ref{lemma: dich atom rot} gives two possibilities.

If  conclusion  \ref{case:atomic} holds,  i.e. the volume $\mathrm{vol}_{\TT^d}$ has atomic disintegration along $\W^c$,  then the finiteness  follows directly from Lemma \ref{lemma:fix center}.

If \ref{case: rigid} holds, then  $f$ is conjugate to $T_{A_f}\times R_\theta$ for some $\theta\notin \QQ/\ZZ$. In this case $E^u_f$ and $E^s_f$ are jointly integrable. Let $\W^H$ be the compact foliation tangent to the distribution $E^u_f\oplus E^s_f$. 

Let $\zeta$ be the conjugacy satisfying $\zeta^{-1}\circ f\circ \zeta=T_{A_f}\times R_\theta$. Then as in the proof of Lemma \ref{lemma: Zc circle grp}, for any $h\in \Z^c$, we have $\zeta^{-1}\circ h\circ \zeta=\id\times R_{\rho(h)}$. Let \[D:=\{\rho\in \TT: \zeta\circ (\id \times R_\rho)\circ \zeta^{-1}\in \Z^c\}.\]

If $D$ is discrete, then $\Z^c$ is finite.  If $D$ is dense, we will prove that in this case $\mathrm{vol}_{\TT^d}$ has Lebesgue disintegration along $\W^c$, which contradicts our assumption above. By density of $D$, any measure on $\TT$ invariant under $\{R_\rho: \rho\in D\}$ is the Lebesgue measure $\mathrm{vol}_\TT$. Recall that $\mathrm{vol}_{\TT^d}$ is $\Z^c-$invariant, therefore $\mathrm{vol}_{\TT^d}$ has the form $\zeta_\ast(\nu\times \mathrm{vol}_{\TT})$,  where $\nu$ is some probability measure on $\TT^{d-1}$.

In particular, if we denote by $\mathrm{Pr}^c$ the projection from $\TT^d$ to $\W^c(x_0)$ along $\W^H$ and $\mathrm{Pr}^H$ the canonical projection from $\TT^d$ to $\TT^d/\W^c$, we have that any $\Z^c-$invariant measure $\mu$ is the product of  $\mathrm{Pr}^c_\ast(\mu)$ with  $\mathrm{Pr}^H_\ast (\mu)$. In particular, for almost every $x$, the conditional measure $m^c_x$ on $\W^c(x)$ of $\mathrm{vol}_{\TT^d}$ has the following form 
\[m^c_x =\mathrm{Pr}^c|_{\W^c(x)}^\ast(\mathrm{Pr}^c_\ast(\mathrm{vol}_{\TT^d}));\]
that is, $m^c_x$ is the pullback of $\mathrm{Pr}^c_\ast(\mathrm{vol}_{\TT^d})$ on $\W^c(x)$ by $\mathrm{Pr}^c|_{\W^c(x)}$.

By Lemma \ref{lemma: WH C2}, we have the key fact that $\W^H$ is a $C^1$ foliation. It follows that $\mathrm{Pr}^c$ is $C^1$, and so $\mathrm{Pr}^c|_{\W^c(x)}^\ast(\mathrm{Pr}^c_\ast(\mathrm{vol}_{\TT^d}))$ has continuous density function for any $x$. This implies that $\mathrm{vol}_{\TT^d}$ has Lebesgue disintegration along $\W^c$ leaves. 
\end{proof}
This completes the proof of item (2).
 Item (3) is a corollary of Lemma \ref{lemma: rank cent}: for more details, see \cite{RH07}.
The proof of Proposition~\ref{lemma: prpty G0 G} is complete.\end{proof}

Having defined the groups $G$ and $G_0$ and established their essential properties, we return to the proof of
 Theorem~\ref{main: thm pr}.  We are given $f$ sufficiently $C^1$ close to $f_0$ and aim to prove that  either the $\mathrm{vol}$ Lebesgue disintegration along $\W^c_f$, or $\Z_2(f)$ is virtually $\ZZ^\ell$ for some $\ell\leq \ell_0$, with  $\ell<\ell_0$ if $\ell_0>1$.

By Proposition~\ref{lemma: prpty G0 G}, item (3), there are two possibilities: 
\begin{itemize}\item[I.] $G$ is virtually $\ZZ^\ell$ for some $
\ell\leq \ell_0$, where $\ell<\ell_0$ if $\ell_0>1$.
\item[II.] $G$ is a finite index subgroup of $\Z_{\SL(d-1,\ZZ)}(A_f)$. In particular, $G$ induces a maximal Anosov action on $\TT^{d-1}$ if $\ell_0>1$.
\end{itemize}

Suppose that conclusion I. holds.  Lemma \ref{lemma: Zc fin not Leb}  implies that either $\mathrm{vol}_{\TT^d}$ has Lebesgue disintegration along $\W^c$; or $\mathrm{vol}_{\TT^d}$ has singular disintegration along $\W^c$ and $\Z^c$ is finite. In the former case, we are finished.
In the latter case,  item (2) of Proposition~\ref{lemma: prpty G0 G}  implies that $G_0$ is a finitely generated abelian group and also  a group extension of $G$ by $\Z^c$. As we are assuming that $G$ is virtually $\ZZ^{\ell}$ for some $\ell\leq \ell_0=\ell_0(A_f)$ ($\ell<\ell_0$ if $\ell_0>1$), tt is not hard to check that there is a subgroup $G_1$ of $G_0$ isomorphic to the torsion free part of $G$, which is $\ZZ^\ell$. Therefore by finiteness of $\Z^c$, $G_0$ is virtually $\ZZ^\ell$.  Thus Theorem~\ref{main: thm pr} follows from conclusion (1) of Proposition~\ref{lemma: prpty G0 G}.

Suppose on the other hand that conclusion II of Proposition~\ref{lemma: prpty G0 G}, item (3), holds.  The case $\ell_0=1$ is contained in conclusion I, so we may assume that $\ell_0>1$.  We have the following key proposition.

\begin{prop}\label{lemma: G high rank ac}Suppose $f$ is as in Theorem \ref{main: thm pr}, and $G_0, G$ are as in Definition \ref{def: G G0}. If $G$ induces a maximal Anosov action on $\TT^{d-1}$, then $\mathrm{vol}_{\TT^d}$ has Lebesgue disintegration along $\W^c_f$.
\end{prop}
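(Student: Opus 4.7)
The plan is to proceed by contradiction. If $\mathrm{vol}_{\TT^d}$ does not have Lebesgue disintegration along $\W^c_f$, then Proposition~\ref{lemma: dich atom rot} forces case~(\ref{case:atomic}): there exist $k \geq 1$ and a full volume set $S \subset \TT^d$ meeting every leaf of $\W^c_f$ in exactly $k$ points. Every $g \in G_0$ preserves $\mathrm{vol}$ (Lemma~\ref{lemma: g pr vol}), so $g$ sends $S$ to $S$ modulo a null set; an ergodicity argument patterned after the proof of Theorem~\ref{main: geod fl} arranges $S$ to be actually $G_0$-invariant. Moreover Lemma~\ref{lemma: Zc fin not Leb} gives $\Z^c$ finite, so $G_0$ is virtually isomorphic to $G$, which under our hypothesis is a finite-index subgroup of $\Z_{\SL(d-1,\ZZ)}(A_f)$ and hence virtually $\ZZ^{\ell_0}$ with $\ell_0 \geq 2$.

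The main step is to construct, for every hyperbolic Weyl chamber $\mathcal{C}$ of the linear action $G \curvearrowright \TT^{d-1}$, a partially hyperbolic element $g_\mathcal{C} \in G_0$ whose center-stable and center-unstable bundles project under $\pi$ onto the stable and unstable bundles of the Anosov automorphism $T_{A_{g_\mathcal{C}}}$. Starting from any $g \in G_0$ with $A_g \in \mathcal{C}$, note that $g$ preserves the center fibration $\pi$, the dominated splitting $\oplus E^i_f$ of~(\ref{eqn: dom spl non C}), and the atomic set $S$. For each ergodic $g$-invariant measure on $\TT^d$, its projection to $\TT^{d-1}$ is $T_{A_g}$-invariant and hence equals $\mathrm{vol}_{\TT^{d-1}}$ by ergodicity of $A_g$; combining this with the leafwise conjugacy and H\"older estimates of Section~\ref{sec: multi distri PSW} identifies the transverse Lyapunov exponents of $Dg$ on each $E^i_f$ with those of $A_g$ on the corresponding Lyapunov subspace. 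Lemma~\ref{lemma: est cocyc} then upgrades these exponents to uniform exponential contraction/expansion of $Dg$ along each $E^i_f$, with signs determined by $\mathcal{C}$. Central growth is controlled using that $g$ must permute the $k$ atoms in $S \cap \W^c_f(x)$ on every leaf, so its central displacements are uniformly bounded; with Lemma~\ref{lemma: h dim reg chart} this yields at worst subexponential growth of $Dg|_{E^c_f}$, and hence partial hyperbolicity of a suitable power of $g$.

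Having assembled a commuting family $\{g_\mathcal{C}\}$ of partially hyperbolic elements realizing every hyperbolic Weyl chamber, we obtain a $\ZZ^{\ell_0}$-by-finite abelian action on $\TT^d$ by diffeomorphisms sharing the compact one-dimensional center $\W^c_f$ and exhibiting every possible hyperbolic signature except all-positive and all-negative. At this point we appeal to the rigidity machinery of \cite{DX0} for higher-rank partially hyperbolic abelian actions with compact center (see the Appendix): the combination of atomic center disintegration with such a rich maximal collection of commuting partially hyperbolic elements is forbidden, forcing the action to be smoothly conjugate to its algebraic model. But the algebraic model is an isometric extension of a linear Anosov toral automorphism, which has \emph{smooth} (hence Lebesgue) center disintegration, contradicting our atomic assumption.

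The main obstacle will be the second paragraph---promoting the Anosov behavior of $A_g$ on $\TT^{d-1}$ to genuine uniform partial hyperbolicity of $g$ on $\TT^d$. A priori commutativity with $f$ offers no control on $Dg|_{E^c_f}$, and the bare fact that $A_g$ is Anosov in the base does not by itself give hyperbolicity for $g$. The argument relies essentially on three ingredients working in concert: the $G_0$-invariance of the atomic set $S$, which constrains the central dynamics combinatorially; the H\"older leaf-conjugacy estimates of Lemma~\ref{lemma: multi distri PSW}, which transport the linear Lyapunov spectrum to $g$ through the bi-H\"older conjugacy $h^c$; and the non-stationary normal forms of Theorem~\ref{thm: normal form}, which produce the smoothness needed to apply the higher-rank rigidity results in the final step.
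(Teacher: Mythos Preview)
Your proof has several genuine gaps, and the overall strategy differs substantially from the paper's.

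\textbf{The Lyapunov exponent argument is wrong.} Your claim that ``for each ergodic $g$-invariant measure on $\TT^d$, its projection to $\TT^{d-1}$ is $T_{A_g}$-invariant and hence equals $\mathrm{vol}_{\TT^{d-1}}$ by ergodicity of $A_g$'' is false: an Anosov toral automorphism has uncountably many ergodic invariant measures (periodic orbit measures, Gibbs measures, etc.), so the projection need not be Lebesgue. Without this, you cannot identify the transverse Lyapunov exponents of $Dg$ on $E^i_f$ with those of $A_g$, and your promotion to uniform partial hyperbolicity via Lemma~\ref{lemma: est cocyc} collapses. Relatedly, you invoke Lemma~\ref{lemma: multi distri PSW} to transport the linear spectrum through $h^c$, but that lemma has as a \emph{hypothesis} that $E^i_f\oplus E^c_f$ is integrable and that $h^c$ sends $\W^{ic}_{f_0}$ to $\W^{ic}_f$; this integrability is one of the main things that must be \emph{proved} here (it is the content of the paper's Proposition~\ref{prop: key pr}) and cannot be assumed.

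\textbf{Your central control does not work as stated.} An element $g\in G_0$ does not fix individual center leaves (only $\W^c_f(x_0)$ is $G_0$-fixed), so the fact that $g$ permutes atoms in $S\cap\W^c_f(x)$ says nothing about the displacement of $g$ along $\W^c_f$; it permutes atoms of one leaf to atoms of a \emph{different} leaf. Also, ``not Lebesgue'' does not force case~(\ref{case:atomic}) of Proposition~\ref{lemma: dich atom rot}: case~(\ref{case: rigid}) (Hölder conjugate to $T_{A_f}\times R_\theta$) is still possible with non-Lebesgue disintegration and must be handled separately.

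\textbf{The paper's route is quite different and direct, not by contradiction.} It first proves (Proposition~\ref{prop: key pr}), via an inductive Pesin-theoretic argument, that each $E^i_f$ is uniquely integrable with $\pi(\W^i_f)=\W^i_{A_f}$ and bi-H\"older leafwise restriction; this is exactly what you tried to assume. From this, partial hyperbolicity in every chamber follows (Proposition~\ref{lemma: exst PH}), then joint integrability of $E^s_f\oplus E^u_f$ and a H\"older conjugacy to $T_{A_f}\times R_\theta$ (Proposition~\ref{coro: hold rig}). The Lebesgue disintegration is then obtained not by a rigidity-forbids-atomic argument, but via thermodynamic formalism: volume is shown to be an equilibrium state for $-\log J^u(f)$ (Proposition~\ref{prop: eq state vol}), cocycle rigidity over the higher-rank action (Proposition~\ref{prop: coh rig ph act}) makes this potential cohomologous to a $\W^H$-constant function, and hence volume is the unique measure of maximal entropy (Proposition~\ref{prop: uque mx etrpy msur}); since the conjugacy matches MMEs and $\W^H$ is $C^1$, the disintegration is Lebesgue. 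Note also that Proposition~\ref{lemma: G high rank ac} is proved for $f\in\Diff^2$, so appeals to the $C^\infty$ global rigidity of the Appendix are not available here.
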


Assuming this proposition, the proof of Theorem~\ref{main: thm pr} is complete.  The proof of Proposition~\ref{lemma: G high rank ac} is lengthy and occupies the next section.

\section{Proof of Proposition~\ref{lemma: G high rank ac}}
This section is devoted to the proof of  Proposition~\ref{lemma: G high rank ac}.
We continue to assume that  $f_0:\TT^d\to \TT^d$ and  $\ell_0$ are as in   Theorem \ref{main: thm pr},   and   
that $f\in \Diff^2_{\mathrm{vol}}(\TT^d)$ is a $C^1-$small, ergodic perturbation of $f_0$.  In  addition,
we assume the hypothesis of Proposition~\ref{lemma: G high rank ac}, that $G$ induces a maximal Anosov action on $\TT^{d-1}$.  Our goal is to show that  $\mathrm{vol}_{\TT^d}$ has Lebesgue disintegration along $\W^c_f$.

 Without loss of generality we may assume that $G$ and $G_0$ are  finitely generated abelian groups (otherwise Proposition \ref{lemma: G high rank ac} follows from Proposition~\ref{lemma: prpty G0 G}). Then  there is a subgroup $G_1$ of $G_0$ isomorphic to $G$, through the map $g\mapsto A_g$. Replacing $G_0$ with $G_1$, we may thus assume that $G_0$ is isomorphic to $G$ through the map $g\mapsto A_g$. Moreover we may assume $G, G_0$ are torsion free (otherwise we consider their free parts instead).

The following proposition is the key step in the proof of Proposition~\ref{lemma: G high rank ac}. Recall that by linearity of the action of $G$, we can define the Lyapunov functionals and associated (hyperbolic) Weyl chamber picture as in Section \ref{sec: HR act}, independently of the invariant measure. Consider the  $G_0-$invariant dominated splitting $\oplus_i E^i\oplus E^c$ given by \eqref{eqn: dom spl non C}, ordered in $i$ by decreasing values of the Lyapunov exponents.

\begin{prop}\label{prop: key pr}  Assume that $G$ induces a maximal Anosov action on $\TT^{d-1}$.  
For every $i$, we have the following.
\begin{enumerate}
\item The bundle $E^i_f$ is uniquely integrable, tangent to an absolutely continuous foliation $\W^i_f$ with $C^2$ leaves.  
\item The restriction of $\pi\colon \TT^d\to \TT^{d-1}$
to each leaf of  $\W^i_f$ is a bi-H\"older  homeomorphism onto the leaf of the affine foliation tangent to $E^i_{A_f}$, with exponent $\delta$, where $\delta\to 1$ as $d_{C^1}(f,f_0)\to 0$.
Consequently $\pi$ itself is $\delta$-H\"older continuous as well.
\item For any $h\in G_0$ such that $A_h$ is not in any Weyl chamber wall of the action of $G$, $Dh$ uniformly contracts or expands $E^i$.
\end{enumerate}
\end{prop}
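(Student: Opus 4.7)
The plan is to establish item (3) first and then deduce (1) and (2) from it. For (3), fix $h \in G_0$ with $A_h$ off every Weyl chamber wall of the $G$-action on $\TT^{d-1}$. The heart of the matter is the following exponent matching claim: for every $h$-invariant ergodic probability measure $\mu$ on $\TT^d$, the Lyapunov exponent of $Dh|_{E^i_f}$ with respect to $\mu$ equals $\chi^i(A_h)$, while the center exponent vanishes. Granted this, Lemma~\ref{lemma: est cocyc} (with a small $\epsilon$ buffer) will bootstrap the measurable exponents into uniform exponential estimates, yielding uniform contraction or expansion of each $E^i_f$ by $Dh$ according to the sign of $\chi^i(A_h)$.

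To prove the exponent matching, I would combine three pieces. First, the projection $\pi \colon \TT^d \to \TT^{d-1}$ semiconjugates $h$ to $T_{A_h}$, and so $\pi_\ast \mu$ is $T_{A_h}$-invariant; the linear map $T_{A_h}$ has constant Lyapunov exponents $\chi^j(A_h)$ on each $V^j$. Second, the dominated splitting $\oplus E^j_f \oplus E^c_f$ is $Dh$-invariant by commutativity (Lemma~\ref{lemma: PHcent}), so the Lyapunov exponents of $Dh$ split cleanly along the $E^j_f$. Third, the leaf conjugacy $h^c$ from Section~\ref{sec: multi distri PSW} satisfies $\pi \circ h^c = P$, where $P$ is the linear projection; this provides a H\"older matching between the $E^j_f$ and $V^j$ structures. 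To transfer exponent information between base and total space, I would use Pesin theory: for $\mu$-regular points, construct local Pesin stable manifolds for $T_{A_h}$ along $V^i$ (when $\chi^i(A_h)<0$), pull them back through $h^c$ to obtain Pesin stable manifolds tangent to $E^i_f$ with contraction rate $\chi^i(A_h)$, and use Lemma~\ref{lemma: h dim reg chart} to conclude the desired infinitesimal contraction rate. The vanishing of the center exponent comes from the fact that $h$ fixes $\W^c_f(x_0)$ and is volume-preserving; it acts on each invariant center circle as a $C^2$ diffeomorphism preserving a conditional measure, so the three cases of Proposition~\ref{lemma: dich atom rot} (atomic, Lebesgue, or smooth rigidity) each force rotation-like behavior on the fibers with zero Lyapunov exponent.

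Once (3) is in hand, item (1) follows as follows: by maximality of the $G$-action, for each $i$ we find $h_i \in G_0$ with $A_{h_i}$ in the Weyl chamber where $\chi^i(A_{h_i}) < 0$ and $\chi^j(A_{h_i}) > 0$ for all $j \neq i$. By (3), $Dh_i$ uniformly contracts $E^i_f$ and uniformly expands all other $E^j_f$, while acting isometrically (in exponent) on $E^c_f$. Thus $h_i$ is partially hyperbolic with strong stable bundle $E^i_f$, and the classical stable manifold theorem gives unique integrability of $E^i_f$ with $C^2$ leaves forming a foliation $\W^i_f$. Absolute continuity follows from the standard Anosov argument for stable foliations of partially hyperbolic diffeomorphisms. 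For item (2), the leaf conjugacy $h^c$ (with the normal bundle $\cN$ chosen so that the $\W^{ic}$-structure is preserved) maps $\W^i_{f_0}$ to $\W^i_f$, so the relation $\pi \circ h^c = P$ shows that $\pi$ sends each leaf of $\W^i_f$ homeomorphically onto the affine leaf tangent to $E^i_{A_f}$. The bi-H\"older exponent $\delta \to 1$ as $d_{C^1}(f,f_0) \to 0$ is exactly the content of Lemma~\ref{lemma: multi distri PSW} applied to this dominated splitting.

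The main obstacle will be the exponent matching in the first step, because $\pi$ is only H\"older continuous, so $D\pi$ is not defined in the usual sense and one cannot naively push forward Lyapunov exponents through $\pi$. Pesin theory and leaf conjugacy together circumvent this, but one must be careful that the estimates hold for \emph{all} ergodic $h$-invariant measures (not just volume), including singular ones concentrated on exceptional orbits, since Lemma~\ref{lemma: est cocyc} demands uniform control over the invariant-measure set. A secondary subtlety is that $h$ itself is only a priori $C^2$, so one must verify that the $C^2$ regularity suffices for all invocations of the stable manifold theorem and Pesin theory along the way.
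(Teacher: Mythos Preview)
Your strategy of proving (3) first and deducing (1), (2) has a circularity that the paper's inductive scheme is designed to break. The crucial step you identify---pulling back Pesin stable manifolds of $T_{A_h}$ along $V^i$ through $h^c$ (equivalently, through $\pi^{-1}$ restricted to suitable leaves) to obtain submanifolds ``tangent to $E^i_f$''---is precisely the unjustified leap. The pullback is only a \emph{topological} foliation $\W^\#$ subfoliating $\W^{[i,k]}_f$; it is contracted by $h$ (because $\pi$ is bi-H\"older on $\W^u_f$-leaves and $T_{A_h}$ contracts $\W^i_{A_f}$), so its leaves do lie in Pesin stable manifolds for $h|_{\W^{[i,k]}_f}$. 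But nothing in your argument identifies the tangent direction of these Pesin manifolds with $E^i_f$ rather than some other $E^j_f$, or rules out nontrivial intersection with $E^{[i+1,k]}_f$. This identification is the content of the paper's Proposition~\ref{lemma: key lemma non C}, and its proof requires the \emph{absolute continuity} of $\W^{[i,k]}_f$ (so that a full-volume set of Pesin-regular points meets almost every $\W^{[i,k]}_f$-leaf in full leaf measure), together with a growth-rate comparison using the dominated splitting. That absolute continuity is exactly what the inductive hypothesis $(A_{i-1}),(C_{i-1})$ provides: the previously constructed partially hyperbolic element $h_{i-1}$ makes $\W^{[i,k]}_f$ a strong unstable foliation inside $\W^{[i-1,k]}_f$. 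Without the induction you have no such element, and no absolute continuity.

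The same circularity reappears in your derivation of (2): invoking Lemma~\ref{lemma: multi distri PSW} requires knowing that $h^c$ carries $\W^{ic}_{f_0}$ to $\W^{ic}_f$, i.e., that $\pi(\W^{ic}_f)=\W^{ic}_{A_f}$---but as the paper explicitly notes, for $j<k$ there is no a priori reason $\W^{[i,j]}_f$ projects to $\W^{[i,j]}_{A_f}$, and establishing this is exactly what the induction accomplishes in Step~5. Two smaller issues: your ``exact'' exponent matching $\chi^i(A_h)$ is too strong, since the bi-H\"older exponent $\delta$ of $\pi$ degrades the bound to an interval $[\delta\chi^i(A_h),\delta^{-1}\chi^i(A_h)]$ (cf.\ Lemma~\ref{lemma: bd LE Dh E}); and your argument for vanishing center exponent appeals to conditional measures of volume, whereas Lemma~\ref{lemma: est cocyc} requires control over \emph{every} $h$-invariant ergodic $\mu$. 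The paper sidesteps both by not attempting to control $E^c$ at this stage; the center estimate comes only later, in Proposition~\ref{lemma: exst PH}, via a Jacobian balance once the hyperbolic bundles are already handled.
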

  
The rest of Section \ref{sec: proof Cartan} is dedicated to the proofs of Propositions \ref{lemma: G high rank ac} and \ref{prop: key pr}. The plan of the proofs is as follows: in Section \ref{sec: same picture} we prove the fundamental property of $G_0$, namely that $G$ and $G_0$ share the same Weyl chamber picture. Then in Sections \ref{sec: same W C} 
and \ref{sec: key sec} we prove Proposition \ref{prop: key pr}. In Section  \ref{sec: integra and top rig} we derive from Proposition \ref{prop: key pr} an important corollary: the joint integrability of $E^s_f$ and $E^u_f$. In particular, this   implies that $f$ is H\"older conjugate to $T_{A_f}\times R_\theta$ for some $\theta\notin \QQ/\ZZ$; see Proposition~\ref{coro: hold rig}. 

In Section \ref{sec: e stat pres} we consider a partially hyperbolic generalization of a classical result of thermodynamic formalism in the Anosov setting. Combining a cocycle rigidity result (see Section \ref{sec: cocyc rig}) over a partially hyperbolic abelian action, using Proposition~\ref{coro: hold rig} we complete the proof of Proposition \ref{lemma: G high rank ac} in Section \ref{sec: unique mme}.

\subsection{$G, G_0$ have the same hyperbolic Weyl chamber picture}\label{sec: same picture}
\begin{lemma}\label{lemma: share same wall}Any $G_0-$invariant ergodic measure $\nu$ has the same hyperbolic Weyl chamber  picture  as $G$.
\end{lemma}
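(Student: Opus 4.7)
The plan is to apply the criterion of Lemma~\ref{lemma: crtrn same weyl}. Under the group isomorphism $G_0 \cong G$ given by $h \mapsto A_h$, identify both Lyapunov pictures as living on the same vector space $G_0\otimes\RR \cong G\otimes\RR \cong \RR^{\ell_0}$. By the persistence of the dominated splitting $T\TT^d = \bigoplus E^i_f \oplus E^c_f$ under $C^1$-small perturbation of $f_0$, each bundle $E^i_f$ corresponds to a Lyapunov subspace $V^i$ of $A_f$ of the same rank. Since $A_f|_{V^i}$ is conformal (real eigenvalue or complex pair, by irreducibility of $A_f$), the linear cocycle $G \curvearrowright V^i$ gives a single coarse Lyapunov functional $\lambda^i$; and by domination together with $C^1$-closeness to $f_0$, each $E^i_f$ carries a single coarse Lyapunov functional $\chi^i_\nu$ of $G_0$ with respect to $\nu$. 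Thus both pictures have $\ell_0+1$ coarse Lyapunov functionals.

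For the sign-matching hypothesis of Lemma~\ref{lemma: crtrn same weyl}, I would use the specific element $f \in G_0$ itself. For the unperturbed $f_0$, the bundle $E^i_{f_0}$ is horizontal and $Df_0|_{E^i_{f_0}}$ coincides with the constant-norm conformal map $A_f|_{V^i}$, whose top and bottom exponents both equal $\lambda^i(A_f)$ with respect to every invariant measure. Applying Lemma~\ref{lemma: est cocyc} to both $Df|_{E^i_f}$ and its inverse, together with the fact that the dominated splitting varies continuously under $C^1$-perturbation, gives that for $f$ sufficiently close to $f_0$, the top and bottom exponents of $Df|_{E^i_f}$ with respect to any $f$-invariant ergodic measure are uniformly close to $\lambda^i(A_f)$. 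Since $A_f$ is hyperbolic we have $\lambda^i(A_f) \neq 0$, so $\chi^i_\nu(f)$ and $\lambda^i(A_f)$ have the same (nonzero) sign, verifying hypothesis (ii) of Lemma~\ref{lemma: crtrn same weyl} for each $i$ with the element $f$.

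The main obstacle, and the harder of the two hypotheses of Lemma~\ref{lemma: crtrn same weyl}, is showing that the walls of the two pictures coincide as hyperplanes in $\RR^{\ell_0}$. Here I would use the fibration $\pi : \TT^d \to \TT^{d-1}$, which intertwines the $G_0$ and $G$ actions and projects the dominated splitting $\bigoplus E^i_f$ onto $\bigoplus V^i$. The goal is to show that for any $h \in G_0$ with $\lambda^i(A_h) = 0$, one also has $\chi^i_\nu(h) = 0$; by a rank count, this gives equality of kernels $\ker \chi^i_\nu = \ker \lambda^i$. My approach would be to observe that the pushforward $\pi_\ast \nu$ is a $G$-invariant probability measure on $\TT^{d-1}$, and that the derivative cocycle $Dh|_{E^i_f}$ sits over the factor map $\pi$ above the conformal linear cocycle $A_h|_{V^i}$. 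Using the continuous identification of $E^i_f$ with the pullback $\pi^\ast V^i$ coming from the leaf conjugacy $h^c$ (Theorem~\ref{main: HPS}) together with the bounded modifications afforded by the center bundle along compact circle fibers, one obtains that the cocycle $Dh|_{E^i_f}$ is $\nu$-integrably cohomologous to the constant cocycle $\lambda^i(A_h)$, so in particular its Lyapunov exponent is $\lambda^i(A_h)$. I expect this cohomology/factor step to be the main technical obstacle, as it requires extracting enough transverse regularity from a merely H\"older-continuous $\pi$ to transfer a Lyapunov computation across the factor.

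Granting these two ingredients, Lemma~\ref{lemma: crtrn same weyl} applies and yields that the hyperbolic Weyl chamber picture of $G_0$ with respect to $\nu$ coincides with that of the maximal Anosov action $G$ on $\TT^{d-1}$, proving the lemma.
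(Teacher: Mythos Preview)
Your sign-matching step at the element $f$ is fine and is essentially what the paper does at the end of its argument. The real issue is the walls, and you have correctly identified it as the main obstacle---but your proposed resolution does not work.

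You want to show $\chi^i_\nu(h) = \lambda^i(A_h)$ exactly, by arguing that $Dh|_{E^i_f}$ is $\nu$-integrably cohomologous to the constant cocycle $A_h|_{V^i}$ via a bundle identification coming from $h^c$. Since $h^c$ and $\pi$ are only H\"older, they carry no tangent data; any continuous identification $E^i_f \simeq \pi^\ast V^i$ you write down will fail to intertwine $Dh$ with $A_h$, because $h$ is an \emph{arbitrary} element of the centralizer, not a $C^1$-small perturbation of anything linear, and its derivative on $E^i_f$ is pointwise uncontrolled. In the one-dimensional case the cohomology equation reads $\log|Dh|_{E^i_f}| = \lambda^i(A_h) + (\text{coboundary})$; solving it for an integrable transfer function is equivalent to already knowing the exponent, so the argument is circular. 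The ``bounded modifications along compact circle fibers'' you invoke bound displacements, not log-Jacobians.

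The paper avoids derivative-level transfer altogether. The key input is that $\pi$ restricted to each $\W^u_f$-leaf (resp.\ $\W^s_f$-leaf) is a \emph{bi}-H\"older homeomorphism onto a $\W^u_{A_f}$-leaf (Lemma~\ref{lemma: Hold exp}). Given $h\in G_0$ with $A_h$ off every wall, $T_{A_h}$ has a linear stable foliation of dimension $d^u_-$ inside $\W^u_{A_f}$; pulling it back by $\pi^{-1}$ leafwise gives a topological foliation of $\W^u_f$ along which $h$ contracts distances exponentially (at a rate degraded by the H\"older exponent $\delta$, but still negative). Lemma~\ref{lemma: h dim reg chart} then converts topological exponential contraction into Lyapunov information: $Dh|_{E^u_f}$ has at least $d^u_-$ negative exponents with respect to every ergodic $\nu$. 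Doing the same for the expanded part and for $\W^s_f$ shows that $h$ has exactly the same count of positive and negative exponents on $E^u_f\oplus E^s_f$ as $A_h$, so the walls coincide. Only then is Lemma~\ref{lemma: crtrn same weyl} invoked (with $a=f$, as you propose). The moral: a H\"older semiconjugacy can transfer \emph{signs} of exponents, via topological contraction rates and Pesin theory, but not their \emph{values}.
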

\begin{proof}[Proof of Lemma~\ref{lemma: share same wall}]  First we prove that the action of $(G_0, \nu)$ has the same Weyl chamber walls as $G$.  Proposition~\ref{prop: gwc=wc}  implies that the foliations $\W^{u}_f$  and $\W^{s}_f$ are $G_0-$invariant. Moreover $\pi(\W^{*}_f)=\W^{*}_{A_f}$, for $\ast\in\{u,s\}$. Therefore to analyze the hyperbolic Weyl chamber walls of $G_0$ we need only consider the action of $G_0$ on   $\W^u_f$, $\W^s_f$   separately.  We show this for $\W^u$; the proof for $\W^s$ is analogous.

 Recall that by Lemma \ref{lemma: h dim reg chart}, to prove Lemma \ref{lemma: share same wall}, we only need to establish the following claim: for any $A_h\in G$ that is not in any Weyl chamber wall, if $A_h$ has $d^{u}_-, d^{u}_+$--dimensional stable and unstable distributions respectively within $\W^{u}_{A_f}$, then $h$ has $d^{u}_-, d^{u}_+$--dimensional stable and unstable \emph{topological foliations}  (with exponential contracting or expanding speed) respectively within $\W^{u}_f$.

In fact if the claim holds, then Lemma \ref{lemma: h dim reg chart} implies that for typical $h\in G$, the map $h$ and the matrix $A_h$ have the same number of positive (resp. negative) Lyapunov exponents with respect to any $G_0-$invariant ergodic  measure $\nu$.  From this it follows that $G$ and $(G_0,\nu)$  have the same Weyl chamber walls.

Notice that for any $x\in M$,  the restriction $\pi: \W^{u(s)}_f(x)\to \W^{u(s)}_{A_f}(\pi(x))$ is a homeomorphism.
To finish the proof of the claim, we use the following classical bi-H\"older estimate on the projection $\pi$, which we will use repeatedly to lift hyperbolicity  of elements acting in $\TT^{d-1}$ to hyperbolicity in $\TT^d$.  
\begin{lemma}\label{lemma: Hold exp}There exist $C, \delta>0$ such that for any $x\in \TT^d$ and $y\in \W^{u(s)}_{f}(x,loc)$,
\begin{equation*}
d_{\TT^{d-1}}(\pi(x), \pi(y))\leq C\cdot d_{\TT^d}(x,y)^{\delta},\quad\hbox{ and }~~
d_{\TT^d}(x,y)\leq C\cdot d_{\TT^{d-1}}(\pi(x), \pi(y))^{\delta}.
\end{equation*}
\end{lemma}
Lemma~\ref{lemma: Hold exp} is a simple consequence of  Theorem~\ref{main: HPS}.
This bi-H\"olderness of $\pi$ implies that the hyperbolicity of $T_{A_h}|_{\W^{u}_{T_{A_f}}}$ lifts under $\pi$ to uniform hyperbolicity of $h\vert_{\W^u_f}$.  Consequently, $G$ and $(G_0,\nu)$ have the same Weyl chamber walls.

Next consider the Lyapunov functionals $\{\lambda^{u,i}_{G_0}(\cdot, \nu), i=1,\dots, \dim E^u_f\}$ associated to the action of $G_0$ on $\W^u_f$ with respect to an ergodic measure $\nu$, and the Lyapunov functionals $\{\lambda^{u,i}_{G}(\cdot), i=1,\dots, \dim E^u_{T_{A_f}}\}$ associated to the action of $G$ on $\W^u_{T_{A_f}}$. By our discussion above, without loss of generality we may assume that Weyl chamber wall $\ker \lambda^{u,i}_{G_0}(\cdot, \nu)$ coincides with that of $\lambda^{u,i}_{G}(\cdot)$. Moreover $$\lambda^{u,i}_{G_0}(f, \nu)>0,\,\hbox{ and}~~ \lambda^{u,i}_{G}(T_{A_f})>0.$$
then by Lemma \ref{lemma: crtrn same weyl} (identifying $G, G_0$ with $\ZZ^k$ in the obvious way), the Weyl chamber picture of the action of $G_0$ on $\W^u_f$ with respect to  $\nu$ is the same as that of $G$ on $\W^u_{T_{A_f}}$. The same argument applied to the action of $G_0$ on $\W^s_f$ gives that  the Weyl chamber picture of the action of $G_0$ on $\W^s_f$ with respect to  $\mu$ is the same as that of $G$ on $\W^s_{T_{A_f}}$. In conclusion, $(G_0,\nu)$ has the same hyperbolic Weyl chamber picture as $G$.  This completes the proof of Lemma~\ref{lemma: share same wall}.
\end{proof}

\subsection{Estimates for elements in the same Weyl chamber}\label{sec: same W C}
 
 We continue our analysis of the dynamics of $G$ relative to the Weyl chamber picture. 
\begin{lemma}\label{lemma: same W cber PH} Suppose $h\in G_0$ has the property that  $A_h$ and $A_f$ lie in the same Weyl chamber. Then  
\begin{enumerate}
\item there exists $c>0$ such that for any $h-$invariant ergodic measure $\nu$,  
\begin{equation*}
\lambda^{\hbox{\tiny{$\mathrm{max}$}}} (Dh|_{E^s_f}, \nu) < -c,\;\hbox{and }\,
\lambda^{\hbox{\tiny{$\mathrm{min}$}}}(Dh|_{E^u_f}, \nu) > c;
\end{equation*} 
\item for every $i$, $Dh$ either uniformly contracts or uniformly expands $E^i$.
\end{enumerate}
\end{lemma}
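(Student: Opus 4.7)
The strategy is to transfer the uniform hyperbolicity of the linear action $A_h$ on $\TT^{d-1}$ to the lifted diffeomorphism $h$ on $\TT^d$ via the equivariant H\"older projection $\pi\colon \TT^d\to \TT^{d-1}$, and then to deduce (2) from (1) using the dominated splitting structure on $T\TT^d$.

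Since $A_h$ and $A_f$ lie in the same Weyl chamber of the linear action $G$, the sign of each coarse Lyapunov functional $\chi^i_G$ evaluated at $A_h$ agrees with its value at $A_f$; in particular there exists $c_0>0$ such that $T_{A_h}$ contracts $\W^s_{T_A}$ and $T_{A_h}^{-1}$ contracts $\W^u_{T_A}$ at rate at least $e^{-c_0 n}$ on bounded balls. By Proposition \ref{prop: gwc=wc}, $h$ preserves both $\W^s_f$ and $\W^u_f$, and by Lemma \ref{lemma: cent linear Ansv} the induced quotient map $\bar h$ on $\TT^d/\W^c_f\cong\TT^{d-1}$ is affine with linear part $T_{A_h}$, hence contracts $\W^s_{T_A}$ at the same exponential rate. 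Combining the equivariance $\pi\circ h^n=\bar h^n\circ\pi$ with the bi-H\"older estimate of Lemma \ref{lemma: Hold exp}, for every $x\in\TT^d$ and every $y\in\W^s_f(x,loc)$ one obtains
\[d_{\TT^d}(h^n x,h^n y)\leq C\,d_{\TT^{d-1}}(\bar h^n\pi x,\bar h^n\pi y)^\delta\leq C'\,e^{-c_0\delta n},\]
and an analogous estimate holds for $h^{-1}$ on $\W^u_f(x,loc)$.

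This uniform topological contraction, together with the trivial lower bound $d(h^n x,h^n y)\geq e^{-Kn}d(x,y)$ coming from the boundedness of $\|Dh^{-1}\|$, verifies the hypothesis of Lemma \ref{lemma: h dim reg chart} applied to the pair $(h,\W^s_f)$. The conclusion yields, for every $h$-invariant ergodic measure $\nu$, the estimate $\lambda^{\hbox{\tiny{$\mathrm{max}$}}}(Dh|_{E^s_f},\nu)\leq -c_0\delta/2$. Applying the same reasoning to $(h^{-1},\W^u_f)$ gives $\lambda^{\hbox{\tiny{$\mathrm{min}$}}}(Dh|_{E^u_f},\nu)\geq c_0\delta/2$, which proves (1) with $c:=c_0\delta/2$.

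Conclusion (2) then follows quickly from (1). By Lemma \ref{lemma: PHcent}, $Dh$ preserves each bundle $E^i_f$ of the dominated splitting of $f$, and writing $E^s_f=\bigoplus_{i\in S}E^i_f$ and $E^u_f=\bigoplus_{i\in U}E^i_f$, item (1) implies that for each $i\in S$ (respectively $i\in U$) and every $h$-invariant ergodic measure $\nu$, the top Lyapunov exponent of $Dh|_{E^i_f}$ is at most $-c$ (respectively its bottom exponent is at least $c$). Lemma \ref{lemma: est cocyc} then furnishes the uniform exponential contraction (respectively expansion) of $E^i_f$ under $Dh$. The principal delicate point is the second paragraph: one must check that the uniform exponential contraction on $\TT^{d-1}$ survives the H\"older distortion by $\pi$ with a rate that is still strictly negative; this is precisely what the bi-H\"older estimate Lemma \ref{lemma: Hold exp} secures, after which the remainder is a routine combination of the higher-rank Oseledets theorem with the uniform Lyapunov criteria of Lemmas \ref{lemma: h dim reg chart} and \ref{lemma: est cocyc}.
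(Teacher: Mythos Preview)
Your proof is correct and follows essentially the same route as the paper's: lift the uniform contraction of $T_{A_h}$ on $\W^s_{A_f}$ through the bi-H\"older projection $\pi$ (Lemma~\ref{lemma: Hold exp}) to obtain a uniform topological contraction of $h$ along $\W^s_f$, convert this via Lemma~\ref{lemma: h dim reg chart} into the Lyapunov exponent bound of part~(1), and then invoke Lemma~\ref{lemma: est cocyc} to get the uniform estimates of part~(2). The paper's argument is nearly identical, differing only in that it records the sharper bound $\lambda^{\hbox{\tiny{$\mathrm{max}$}}}(Dh|_{E^s_f},\nu)\le \delta\,\lambda^{\hbox{\tiny{$\mathrm{max}$}}}(T_{A_h}|_{\pi(\W^s_f)})$ rather than your safe $-c_0\delta/2$, and applies Lemma~\ref{lemma: est cocyc} to $E^s_f$ and $E^u_f$ as wholes rather than to each $E^i_f$ separately---but these are cosmetic.
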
  
\begin{proof}(1): If $A_h$ is in the same Weyl chamber as $A_f$, then as in Lemma \ref{lemma: share same wall} we have that $T_{A_h}$ uniformly contracts $\pi (\W^s_f)$ and uniformly expands $\pi (\W^u_f)$. By Lemma \ref{lemma: Hold exp}, for any $x\in \TT^{d}$ and $y\in \W^s_f(x, loc)$,  
\begin{equation}\label{eqn: bd LE sm chamber}
\limsup_{n\to \infty} \frac{1}{n}\log d_{\W^s_f}(h^n(x), h^n( y))\leq \delta\cdot\lambda^{\hbox{\tiny{$\mathrm{max}$}}}(T_{A_h}|_{\pi(\W^s_f)})<0,
\end{equation}
 
where $\delta$ is the H\"older exponent of $\pi$ in Lemma \ref{lemma: Hold exp}. Then for any $h-$invariant ergodic measure $\nu$,  Lemma~\ref{lemma: h dim reg chart}
and  \eqref{eqn: bd LE sm chamber} together  imply that for $\nu-$almost every $x\in \TT^{n+1}$,  the Pesin stable manifold  passing through $x$ is $\W^s_f(x,loc)$,   and  
\[\lambda^{\hbox{\tiny{$\mathrm{max}$}}} (Dh|_{E^s_f},\nu)\leq \delta\lambda^{\hbox{\tiny{$\mathrm{max}$}}} (T_{A_h}|_{\pi(\W^s_f)})<0.\]
 
Similarly, for any $h-$invariant ergodic measure $\nu$, we have   
\[\lambda^{\hbox{\tiny{$\mathrm{min}$}}}(Dh|_{E^u_f},\nu)\geq \delta \lambda^{\hbox{\tiny{$\mathrm{min}$}}}(T_{A_h}|_{\pi(\W^u_f)})>0.\] 
Setting   $c:=\min(| \delta\lambda^{\hbox{\tiny{$\mathrm{max}$}}} (T_{A_h}|_{\pi(\W^s_f)})|, \delta\lambda^{\hbox{\tiny{$\mathrm{min}$}}}(T_{A_h}|_{\pi(\W^u_f)}) )$   completes the proof of (1).

(2): Since $Dh|_{E^s_f}, Dh|_{E^u_f}$ are continuous,  item (1) of Lemma \ref{lemma: same W cber PH} implies that $Dh|_{E^s_f}$ and $Dh|_{E^u_f}$ satisfy the conditions of Lemma \ref{lemma: est cocyc}. Thus  $Dh|_{E^u_f}$ and $Dh^{-1}|_{E^s_f}$  have uniform exponential growth, which implies (2).  
\end{proof}
Thus we have shown that Proposition \ref{prop: key pr} holds in one case:
\begin{eqnarray*} &&\hbox{\em $h$  and $f$ lie in the same hyperbolic Weyl chamber}\\
&\iff&
 \hbox{\em $A_h$ and $A_f$ lie in the same Weyl chamber}.
\end{eqnarray*}
\subsection{Proof of Proposition \ref{prop: key pr}}\label{sec: key sec}
In this section we will prove Proposition \ref{prop: key pr} for  those $h$ for which $A_h$ and $A_f$ lie in different Weyl chambers.

 Recall that we have ordered the bundles $E^i_f$ and $E^i_{f_0}$ in $i$ by decreasing size of Lyapunov exponents. Write
\[E^u_{f} = E^1_f\oplus E^2_f\oplus\cdots \oplus E^k_f,\quad\hbox{and } E^s_{f} = E^{k+1}_f\oplus\cdots \oplus E^\ell_f,\]
and for $i\leq j$, let
\[E^{[i,j]}_f : = E^{i}_f\oplus E^2_f\oplus\cdots \oplus E^j_f\]

An immediate application of the normally hyperbolic theory in \cite{HPS} implies that for every $i\in [1,k]$,
$E^{[i,k]}\oplus E^c$ is integrable, tangent to an $f$-invariant foliation $\W^{[i,k]c}_f$ that projects under $\pi$ to the affine foliation $\W_{A_f}^{[i,k]}$ tangent to $E^i_{A_f}\oplus\cdots\oplus E^k_{A_f}$. 
Further application of \cite{HPS} gives the following.
\begin{lemma} For every $i \subset [1,k]$, there is an $f$-invariant foliation $\W^{[i,k]}_f$ with the following properties
\begin{enumerate}
\item $E^{[i,k]}_f$ is uniquely integrable, tangent to  $\W^{[i,k]}_f$, and
\item  $\W^{[i,k]}_f$ and $\W^c$  are jointly integrable, tangent to the foliation $\W^{[i,k]c}_f$; the restriction of $\pi$ to $\W^{[i,k]}_f$ is a bi-H\"older homeomorphism onto  $\W_{A_f}^{[i,k]}(\pi(x))$.
\end{enumerate}
\end{lemma}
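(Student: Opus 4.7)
The center-like foliation $\W^{[i,k]c}_f$ has already been produced; my plan is to extract $\W^{[i,k]}_f$ as an ``unstable-type'' subfoliation within each leaf of $\W^{[i,k]c}_f$ by a further, leafwise application of Hirsch--Pugh--Shub theory, and then read off joint integrability and the bi-H\"older property from the fibered structure of $\pi$.

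First I would observe that on each leaf of $\W^{[i,k]c}_f$ the restricted dynamics of $f$ preserves the dominated splitting $E^{[i,k]}_f \oplus E^c_f$, with $E^{[i,k]}_f$ uniformly expanded (these bundles have strictly positive Lyapunov functionals, as verified in Lemma~\ref{lemma: same W cber PH}(2) applied to $f$ itself, which sits in its own hyperbolic Weyl chamber) and $E^c_f$ nearly isometric (since $f$ is $C^1$-close to $f_0$, whose restriction to $E^c_{f_0}$ is an isometry). Thus the leafwise system is partially hyperbolic with $1$-dimensional center $E^c_f$, and center-bunching is automatic. For $f_0$ the subfoliation is explicit: inside $\W^{[i,k]c}_{f_0} = \pi^{-1}(\W^{[i,k]}_{A_f})$, the foliation $\W^{[i,k]}_{f_0}$ is the horizontal section tangent to $E^{[i,k]}_{f_0}$. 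Applying Theorem~\ref{main: HPS} leafwise, using $f_0|_{\W^{[i,k]c}_{f_0}}$ as the unperturbed model and $f|_{\W^{[i,k]c}_f}$ as the perturbation, produces a continuous family of $f$-invariant ``unstable'' subfoliations tangent to $E^{[i,k]}_f$, one inside each leaf of $\W^{[i,k]c}_f$. The uniform leafwise estimates, which depend only on the (uniform) expansion of $E^{[i,k]}_f$ and the (uniform) near-isometry on $E^c_f$, allow these leafwise foliations to be glued into a global, continuous, $f$-invariant foliation $\W^{[i,k]}_f$ with uniformly $C^1$ leaves.

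Joint integrability of $\W^{[i,k]}_f$ with $\W^c$ to $\W^{[i,k]c}_f$ is then automatic: both are subfoliations of $\W^{[i,k]c}_f$ of complementary dimension, and their tangent bundles span $E^{[i,k]}_f \oplus E^c_f = T\W^{[i,k]c}_f$. Unique integrability of $E^{[i,k]}_f$ follows by a standard dynamical argument: any $C^1$ curve $\gamma$ tangent to $E^{[i,k]}_f$ must lie in a single leaf of $\W^{[i,k]c}_f$ (by the unique integrability of the ``center'' bundle $E^{[i,k]}_f \oplus E^c_f$ from HPS), and under forward iteration by $f$ it is uniformly expanded along $E^{[i,k]}_f$, so the leafwise unstable-manifold characterization forces it to coincide locally with a plaque of $\W^{[i,k]}_f$.

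For the bi-H\"older property of $\pi|_{\W^{[i,k]}_f}$, note that each leaf of $\W^{[i,k]}_f$ is a continuous section of the $\W^c$-bundle $\pi \colon \W^{[i,k]c}_f \to \W^{[i,k]}_{A_f}$, so $\pi$ restricts to a homeomorphism onto $\W^{[i,k]}_{A_f}(\pi(x))$. One H\"older bound comes directly from Lemma~\ref{lemma: Hold exp}; the reverse bound follows from a graph-transform argument identical in structure to Lemma~\ref{lemma: multi distri PSW}, comparing the backward expansion rate of $f^{-1}$ on $E^{[i,k]}_f$ with that of $T_{A_f}^{-1}$ on $E^{[i,k]}_{A_f}$ in an adapted metric, yielding an exponent $\delta$ that tends to $1$ as $d_{C^1}(f,f_0) \to 0$.

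The main obstacle I anticipate is verifying that the leafwise HPS construction glues into a genuinely continuous global foliation (as opposed to a merely measurable family of leaves). This requires that the constants appearing in the leafwise HPS stability statement be uniform across $\TT^d$; such uniformity is available because the relevant expansion and domination constants for $f|_{\W^{[i,k]c}_f}$ are controlled by the ambient (uniform) partial hyperbolicity constants on $\TT^d$, which in turn persist under small $C^1$-perturbation of $f_0$.
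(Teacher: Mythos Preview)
Your approach is correct and essentially matches the paper's: both construct $\W^{[i,k]}_f$ by applying the graph transform (HPS) leafwise inside $\W^{[i,k]c}_f$, using that $E^{[i,k]}_f\oplus E^c_f$ is dominated with $E^{[i,k]}_f$ uniformly expanded, and then read off joint integrability from the construction. Two minor simplifications: you do not need to frame this as a perturbation of $f_0$, since the uniform expansion of $E^{[i,k]}_f\subset E^u_f$ and the domination over $E^c_f$ already allow a direct graph-transform construction of the strong unstable subfoliation; and for the bi-H\"older claim, both inequalities follow immediately from Lemma~\ref{lemma: Hold exp} because $\W^{[i,k]}_f(x)\subset \W^u_f(x)$, so invoking Lemma~\ref{lemma: multi distri PSW} is unnecessary here.
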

\begin{proof} 
For fixed $i\in [1,k]$ we apply the graph transform argument for $f$ in restriction to the disjoint union of the leaves of  $\W^{[i,k]c}_f$: as the splitting $E^{[i,k]}_f\oplus E^c_f$ is dominated, and since $E^{[i,k]}_f$ is uniformly expanded, it is uniquely integrable, tangent to a foliation $\W^{[i,j]}_f$.  By construction $\W^{[i,j]}_f$ and $\W^c_f$ are jointly integrable.
\end{proof}

For $j\in [1,k]$, the bundle $E^{[1,j]}_f$ is a strong unstable bundle for $f$ and therefore uniquely integrable, tangent to a foliation $\W^{[1,j]}_f$. By intersecting foliations $\W^{[1,j]}_f$ and $\W^{[i,k]}_f$
we obtain $f$-invariant foliations $\W^{[i,j]}_f$ tangent to the uniquely integrable bundle $E^{[i,j]}_f$, for any interval $[i,j]\subset [1,k]$.  We denote by $\W^m_f$ the foliation $\W^{[m,m]}_f$, which is tangent to $E^m_f$.  Lemma~\ref{lemma: PHcent} implies that the foliations 
$\W^{[i,j]}_f$ are $G_0$-invariant.

Note  that except in the case $j=k$, we do
not know that  $\W^{[i,j]}_f$ projects to  under $\pi$ to $\W_{A_f}^{[i,j]}$ (and {\em a priori} for a single $f$ this will not be the case).  We will need to use the maximality of the $G_0$-action to establish this.  

Set $h_0 = f$. 
Since $G$ induces a maximal Anosov action on $\TT^{d-1}$,  Lemma~\ref{lemma: za max} implies that there exists a Weyl chamber adjacent to that of $A_f$ such that for any element $h_i\in G_0$ with $A_{h_i}$ in this chamber, the signs of all the exponents of $A_f$ and $A_{h_i}$ are the same except one exponent corresponding to $E^{i}_{f_0}$.  By this process we produce elements $h_1,\ldots,h_k\in G_0$.  We prove the following statement inductively, for $i=1,\ldots, k$.

\noindent{\bf Inductive hypotheses ($i$):}  
\begin{itemize}
\item[$(A_i)$]   $\W_f^{[i,k]}$ is absolutely continuous, with $C^2$ leaves, 
\item[$(B_i)$]   $\pi(\W_f^{[i,k]}) = \W^{[i,k]}_{A_f}$, and  $\pi(\W^i_f)=\W^i_{A_f}$.  The restriction of $\pi$ to  $\W^i_f$ leaves is a bi-Holder homeomorphism onto $\W^i_{A_f}-$ leaves with H\"older exponent $\delta\to 1$ as $d_{C^1}(f_0, f)\to 0$.
\item[$(C_i)$]  $Dh_i$ uniformly contracts $E^i_f$ and expands $E^{[i+1,k]}_f$.
\end{itemize}

The inductive hypothesis holds vacuously for  $i=0$.  Assume then that the hypothesis holds for $i-1$, for some $i\in\{1,\ldots, k\}$.  We establish the hypothesis for $i$ in several steps.

\begin{enumerate}

\item[{\bf Step 1:}] Show that  $\W_f^{[i,k]}$ is absolutely continuous, with $C^2$ leaves.
\item[{\bf Step 2:}] Define a $G_0$-invariant topological foliation $\W^\#$,  subfoliating $\W^{[i,k]}_f$, such that  $\pi (\W^\#)=\W^i_{A_f}$. 
\item[{\bf Step 3:}] Show that for any $h_i-$invariant ergodic measure $\nu$, $\W^\#$ coincides with the Pesin stable manifold of $h_i\vert {\W^{[i,k]}_f}$, $\nu-$almost everywhere. In particular $T\W^\#$ is well-defined $\nu-$almost everywhere, and $\W^\#$ is absolutely continuous in $\W^{[m,k]}_f$.
\item[{\bf Step 4:}] Show that $T\W^\#=E^i_f, \mathrm{vol}-a.e.$. Consequently, there is a full volume set $K$ such that for any $x\in K$, $\W^\#(x)$ is a $C^1$ manifold tangent to $E^i_f$ everywhere.
\item[{\bf Step 5:}] Using an approximation argument, show that  $\W^\#=\W^i_f$ and $\pi(\W^i_f)=\W^i_{A_f}$.  Obtain integrability of $E^i_f\oplus E^c$ and the fact that $\pi(\W^{ic}) = \W_{A_f}^{ic}$.  Conclude that the restriction of $\pi$ to $\W^i_f$ leaves is $\delta$ bi-H\"older, with $\delta$
close to $1$.
\item[{\bf Step 6:}] Using Theorem~\ref{lemma: h dim reg chart} , show that $Dh_i$ uniformly contracts $E^i_f$ and uniformly expands $E^{[i+1,k]}_f$.
\end{enumerate}

\textbf{Step 1.}   By the inductive hypothesis $A_{i-1}$, the foliation  $\W_f^{[i-1,k]}$ is absolutely continuous, with $C^2$ leaves. Hypothesis $C_{i-1}$ implies that the derivative $Dh_{i-1}$ uniformly contracts $E^{i-1}_f$ and expands $E^{[i,k]}_f$. It follows that $\W^{[i,k]}_f$ is an absolutely continuous subfoliation of $\W_f^{[i-1,k]}$ with $C^2$ leaves, and thus $\W^{[i,k]}_f$ is absolutely continuous. (Note that for $i=1$, the statement holds, because $\W_f^{[1,k]}=\W^u_f$).

\textbf{Step 2.} We define the $G_0-$invariant topological foliation ${\W^\#}$ to be the lift of $\W^i_{{A_f}}=\W^s_{{A_{h_i}}}\cap \W^u_{{A_f}}$ by $\pi^{-1}$ on  $\W_f^{[i,k]}-$leaves.

{\em Henceforth to simplify notation, we write $h=h_i$, $\W = \W^{[i,k]}_f$,  $E = E^i_f$, $E' = E^{[i+1,k]}_f$, and $F=  E^{[i,k]}_f = E\oplus E' = T\W$.} By construction, the topological foliation  ${\W^\#}$  subfoliates the absolutely continuous foliation $\W$, whose leaves are $C^2$.

\textbf{Step 3.} Since $\pi$ is bi-H\"older when restricted to $\W^u_f$ leaves, the map $h$ contracts distance in the leaves of ${\W^\#}$ exponentially fast; i.e. for any $x\in \TT^d$ and $y\in {\W^\#}(x,loc)$, 
\begin{equation}\label{eqn: exp ctrt hat W}
\limsup_{n\to \infty}\frac{1}{n}\log d_{\W^u_f}(h^n(x), h^n(y))\leq \lambda_0<0,
\end{equation} 
where ${\W^\#}(x,loc)$ is defined to be the lift of $\W^s_{{A_{h}}}\cap \W^{[i,k]}_{{A_f}}(\pi(x),loc)$ to ${\W^\#}(x)$.

Proposition~\ref{prop: stb mfd} implies that for any $G_0-$invariant ergodic measure $\nu$, the leaf ${\W^\#}(x)$ coincides with the (global) Pesin stable manifold $\W^{Pe}_{h\vert\W}(x, gl)$ of $x$ (for the restricted dynamics $h\vert\W$ ), for $\nu-$almost every $x$, since globally $h$ contracts ${\W^\#}$ exponentially fast. Therefore ${\W^\#}(x, loc)$ is tangent to $E^s_{h,\nu}\cap F(x)$ at $x$ for $\nu-$almost every $x$, where $E^s_{h,\nu}$ is the Oseledec stable space of $(Dh, \nu)$.  

\textbf{Step 4.}  Restricting to the case  $\nu=\mathrm{vol}_{\TT^d}$ in Step 3, we then have 
\begin{prop}\label{lemma: key lemma non C} The measurable distribution $E^s_{h, \mathrm{vol}}\cap F$ coincides with $E$, $\mathrm{vol}-$a.e.
\end{prop}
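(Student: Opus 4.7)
The plan is to show that $T\W^\# = E$ $\mathrm{vol}$-a.e.\ by reducing the statement to the vanishing of a measurable linear intertwiner between two cocycles of the dominated splitting of $f$. From Step 3 one has $T\W^\# = E^s_{h,\mathrm{vol}} \cap F$ $\mathrm{vol}$-a.e., and applying Lemma~\ref{lemma: share same wall} to the ergodic components of $\mathrm{vol}$ gives $\dim(E^s_{h,\mathrm{vol}} \cap F) = \dim E$. Thus $E$ and $E^s_{h,\mathrm{vol}} \cap F$ are $Dh$-invariant subbundles of $F$ of the same dimension, and the goal is to identify them.

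First I would show that $Dh$ uniformly expands $E' = E^{[i+1,k]}_f$. Since $T_{A_h}$ uniformly expands $E^{[i+1,k]}_{A_f}$ within $E^{[i,k]}_{A_f}$, and since $\pi$ is bi-H\"older along $\W^u_f$-leaves (hence along $\W^{[i,k]}_f$-leaves) by Lemma~\ref{lemma: Hold exp}, the topological foliation $\W^{[i+1,k]}_f$ is uniformly exponentially expanded by $h$; Lemma~\ref{lemma: h dim reg chart} applied to $h^{-1}$ then promotes this to uniform expansion of $Dh$ on $E' = T\W^{[i+1,k]}_f$. In particular $E^s_{h,\mathrm{vol}} \cap E' = \{0\}$ everywhere, so by the dimension count one can write
\[
E^s_{h,\mathrm{vol}} \cap F = \{v + L(v) : v \in E\}
\]
for some measurable, $Df$- and $Dh$-equivariant linear bundle map $L\colon E \to E'$ defined on a full $\mathrm{vol}$-measure set.

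Next I would exploit the dominated splitting $F = E \oplus E'$ for $Df$ itself, in which $E$ dominates $E'$. The induced cocycle $\Phi_f$ on $\mathrm{Hom}(E, E')$, defined by $\Phi_f(x)\cdot L := Df|_{E'(x)} \circ L \circ (Df|_{E(x)})^{-1}$, satisfies $\|\Phi_f^N(x)\| \leq \|Df^N|_{E'}\| \cdot \|(Df^N|_E)^{-1}\| \leq 1/2$ uniformly in $x$ for some $N$, directly by the domination inequality. The $Df$-equivariance of $L$ reads $L_{fx} = \Phi_f(x) \cdot L_x$, which iterates to $\|L_{f^{nN}x}\| \leq 2^{-n}\|L_x\|$ at every point where $L$ is defined, so $\|L \circ f^{nN}\| \to 0$ $\mathrm{vol}$-a.e. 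Since $f$ preserves $\mathrm{vol}$, the function $\|L \circ f^{nN}\|$ is equidistributed with $\|L\|$; combining this with the pointwise convergence and Egorov's theorem forces $\|L\| = 0$ $\mathrm{vol}$-a.e., completing the identification $E^s_{h,\mathrm{vol}} \cap F = E$.

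The main obstacle I anticipate is the first step: upgrading the topological expansion of $\W^{[i+1,k]}_f$ by $h$ into uniform expansion of $Dh$ on the continuous bundle $E'$. This requires combining the bi-H\"older behavior of $\pi$ on $\W^{[i,k]}_f$-leaves (available from the inductive setup, using that $\pi(\W^{[i,k]}_f) = \W^{[i,k]}_{A_f}$) with Lemma~\ref{lemma: h dim reg chart}. Once this is in hand, the cocycle-rigidity conclusion for $L$ follows cleanly from the dominated splitting for $f$; the only subtlety is that $L$ need not be essentially bounded, which is handled by the distributional Egorov argument above.
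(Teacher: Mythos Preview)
Your proof is correct and takes a genuinely different, more streamlined route than the paper.

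The paper's proof of this proposition splits into two cases by $\dim E$, and in each case relies on Pesin theory in an essential way: it uses the absolute continuity of the family of Pesin stable disks inside $\W$ (Lemma~\ref{lemma: step 2}) to show that $\W^\#(x)$ is everywhere tangent to some $E^j_f$ on a full-volume set, and then derives a contradiction from $j\neq i$ by comparing the growth rate of $f$ along $\W^\#$ with the growth of $T_{A_f}$ along $\W^i_{A_f}$ through the bi-H\"older projection $\pi$. Only after this (in Step~6 of the induction) does the paper establish that $Dh$ uniformly expands $E'$.

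You reverse the order: you observe that the uniform expansion of $Dh$ on $E'=E^{[i+1,k]}_f$ can be established \emph{before} Proposition~\ref{lemma: key lemma non C}, since it only uses that $\pi$ is bi-H\"older on $\W^{[i+1,k]}_f$-leaves (available from the preliminary lemma about the foliations $\W^{[j,k]}_f$, independently of the inductive hypotheses for $i$) together with Lemma~\ref{lemma: h dim reg chart}. Once $Dh$ expands $E'$, the intersection $E^s_{h,\mathrm{vol}}\cap E'$ is trivial, and the Oseledec bundle is a measurable graph $L\colon E\to E'$. Your cocycle argument---exploiting that $E\oplus E'$ is dominated for $Df$, so the induced cocycle on $\mathrm{Hom}(E,E')$ is uniformly contracting, combined with the fact that $\|L\|$ and $\|L\circ f^{nN}\|$ are equidistributed---then forces $L=0$ without any appeal to Pesin disks or growth-rate comparisons. (Your Egorov formulation is slightly loose: the clean statement is that $\|L\circ f^{nN}\|\to 0$ a.e.\ implies convergence in measure, while measure preservation forces $\mathrm{vol}(\|L\|>\epsilon)=\mathrm{vol}(\|L\circ f^{nN}\|>\epsilon)$ for every $n$; together these give $\mathrm{vol}(\|L\|>\epsilon)=0$.)

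What your approach buys is a proof that bypasses the absolute continuity of Pesin laminations and the two-case growth-rate analysis. What the paper's approach buys is a closer description of the Pesin stable manifolds themselves, which feeds directly into Step~5 (showing $\W^\#=\W^i_f$); but since Step~3 already gives $T\W^\# = E^s_{h,\mathrm{vol}}\cap F$ $\mathrm{vol}$-a.e., your conclusion $E^s_{h,\mathrm{vol}}\cap F = E$ is enough for Step~5 as written.
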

\begin{proof}[Proof of Proposition~\ref{lemma: key lemma non C}] We split the proof into two cases.

\noindent
\textbf{Case 1:} $\dim\left(E^s_{h, \mathrm{vol}}\cap F\right) (=\dim E)=1$. The following lemma is easy to show. 

\begin{lemma}\label{lemma: dom ose spl}There exists $E^j_f$ such that $E^s_{h, \mathrm{vol}}\cap F\subset E^j_f$, $\mathrm{vol}-$almost everywhere. 
\end{lemma}
\begin{proof}[Proof of Lemma~\ref{lemma: dom ose spl}] Evidently $E^s_{h, \mathrm{vol}}\cap F$ is a $\mathrm{vol}-$a.e. defined, one dimensional, $Df-$invariant distribution within $F$, in particular we have Lyapunov exponents defined for $(v,x)$ for $\mathrm{vol}-$a.e. $x$ and $v\in E^s_{h, \mathrm{vol}}\cap F$. 

For any regular point $x$ and any $v\in T_x\TT^d-\{0\}$, the forward and the backward Lyapunov exponents for $v$
exist and coincide. Then $v$ is contained in some Oseledec subspace, and hence is contained in some $E^j_x$. Therefore  $\TT^d$  decomposes as a finite union of measurable sets $\cup_j X_j$ such that
for all $m$,  $f(X_j)=X_j$, and
\[E^s_{h, \mathrm{vol}}\cap F\subset E^j(x),\] 
 for all  $x\in X_j$. By ergodicity of $f$ we have that one of the $X_j$ has full volume.\end{proof}

Let $V:=E^j_f$, where $E^j_f$ is the subbundle obtained from Lemma \ref{lemma: dom ose spl}.
\begin{lemma}\label{lemma: step 2} For a full volume set of  $x\in \TT^d$, the leaf ${\W^\#}(x)$ is a $C^1$ submanifold tangent to $V$ everywhere.  
\end{lemma}
\begin{proof}[Proof of Lemma~\ref{lemma: step 2}]
The absolute continuity of the foliation  $\W$ from Step 1 implies that any set of full volume meets almost every leaf of  $\W$ in a set of full leaf volume.  Hence there is a full volume, $f$-invariant set $P\subset M$ of Pesin regular points for $(f,\mathrm{vol})$ in $M$ such that for every $p\in P$, the leaf  $\W$ meets $P$ in a set of full leafwise volume.

Let $N := \bigsqcup_{p\in M} \W (p)$ be the disjoint union of unstable manifolds: it is a non-compact $C^2$ Riemannian manifold.  The maps induced by $f$ and $h$ on $N$ are $C^2$, with uniform bounds on the derivatives.  Applying the arguments in \cite{PSh} to the (Pesin regular) points in 
$P_N := \bigsqcup_{p\in P} P\cap \W(p)$, we obtain that the Pesin local stable manifolds  
\[\P_{loc}:=\{ \P_{loc}(x):=\W^{Pe}_{h| \W}(x, loc):x\in P\}\]  
of $h|{N}$ form an absolutely continuous family of disks.
 In particular,  for every $p\in P$, a set $B\subset \W(p)$ has  $\mathrm{vol}_{\W}$-measure $0$ in  $\W(p)$  if and only if it has    $\mathrm{vol}_{\P_{loc}(p)}$-measure   $0$ in $\P_{loc}(z)$, for almost every $z\in\W$.

This implies in particular that for $\mathrm{vol}-$a.e.  $x\in \TT^d$, there is a dense subset of   $y\in\P_{loc}(x)$   such that $y$ belongs to $P$. For such $y$,  the smooth disk   $\P_{loc}(x)$   is tangent to $E(y)$. Thus  for $\mathrm{vol}-$a.e.  $x\in \TT^d$, the submanifold   $\P_{loc}(x)$    is tangent to $E$ on a dense subset, and hence by continuity of $E$,   $\P_{loc}(x)$   is tangent to $V$ everywhere and is therefore a $C^1$ submanifold.    

Fix a positive volume, compact Pesin block $\Lambda$ for $h$.  By Pesin theory,   for $y\in \Lambda$,  the size of $\P_{loc}(y)$ is at least $r_0>0$.
Let $x$ be an $f$-regular point in $\Lambda$.  Then there exist infinitely many $n$ such that $f^{-n}(x), n\geq 0$ intersects $\Lambda$ infinitely many times (this property holds for $\mathrm{vol}-$almost every $x$, by Poincar\'e recurrence).  Then the submanifold $f^n(\P_{loc}(f^{-n}(x)))$ 
\begin{itemize}
\item is contained in ${\W^\#}(x)$
\item is tangent to $V$ everywhere.
\item has length  $\geq r_0 e^{\lambda n}$ for some $\lambda>0$, for all $n$ with  $f^{-n}(x)\in \Lambda$.
\end{itemize}
As $n$ tends to infinity, we obtain that ${\W^\#}(x)=\cup_{n\geq 0} f^n(\P_{loc}(f^{-n}(x)))$ is a $C^1$ submanifold  tangent to $V$ everywhere. Since the $h$-Pesin blocks exhaust the volume, we conclude that for $\mathrm{vol}-$a.e. $x$, ${\W^\#}(x)$ is a $C^1$ submanifold tangent to $V$.  Let $K$ be the set of such $x$. Then $K$ is dense since it has full volume, completing the proof of Lemma~\ref{lemma: step 2}.
\end{proof}

Now we claim that $j=i$, and  thus $V=E$. 
Suppose $j\neq i$. By H\"older continuity of $\pi$, there exist positive constants $\epsilon_1, C_1$ such that for any $x$ and any $y\in {\W^\#}(x)$ with $d_{{\W}}(x,y)\leq \epsilon_1$,  we have
\begin{equation}\label{eqn: pi quasi iso glob}
d_{\TT^{d-1}}(\pi(x),\pi(y))\leq C_1\, \epsilon_1.
\end{equation}

Now we pick an arbitrary $x\in K$ and consider the $C^1$ submanifold ${\W^\#}(x)$. Since, by Lemma~\ref{lemma: step 2},  ${\W^\#}(x)$ is everywhere tangent to $V=E^j_f$,  $f$ expands ${\W^\#}$ at a rate slower than $e^{\lambda^j(f_0)+\eta}$ for some $\eta\ll \lambda^{j-1}(f_0)-\lambda^j(f_0)$ (by smallness of  $d_{C^1}(f,f_0)$). Choose $y\in {\W^\#}(x,loc)$ such that $d_{{\W^\#}}(x,y)\leq \epsilon_1$. 
For $n$ large, $\pi(f^n(x)),\pi(f^n(y))$ can be connected by a $\W^i_{{A_f}}-$path with length less than $O(C_1\,\epsilon_1\, e^{n(\lambda^j+\eta)})$. But 
since $T_{A_f}$ expands $\W^i_{{A_f}}$ leaves at a constant rate  $e^{\lambda^i(f_0)}$, the points $\pi(f^n(x)),\pi(f^n(y))$ cannot be linked by a $\W^i_{{A_f}}$ path with length  $o(e^{n\lambda^j(f_0)})$,  a contradiction. Therefore $j$ must be $i$. This completes the proof of 
Proposition~\ref{lemma: key lemma non C} in Case 1.
\smallskip

\noindent
\textbf{Case 2:} $\dim\left(E^s_{h, \mathrm{vol}}\cap F\right)  \,(=\dim E)>1$. Suppose that $E^s_{h, \mathrm{vol}}\cap F$ does not coincide $\mathrm{vol}-$a.e.~with $E$.
Then we have 
\begin{lemma}\label{lemma: non triv inters with rest} The measurable distribution $E^s_{h, \mathrm{vol}}\cap F$ has  non-trivial intersection (over a positive volume set) with $E'=E^{[i+1,k]}_f$.
\end{lemma}
\begin{proof}[Proof of Lemma~\ref{lemma: non triv inters with rest}] Suppose that $E^s_{h, \mathrm{vol}}\cap F $ has  trivial intersection with $E'$, $\mathrm{vol}$-almost everywhere. Since $E^s_{h, \mathrm{vol}}\cap F$ does not coincide $\mathrm{vol}-$a.e.~with $E$,  Lusin's theorem implies that there is a compact set $K_2$ with positive volume and a positive constant $\delta_2$ such that for any $x\in K_2$, 
\begin{equation}\label{eqn: bd dist d2}
\angle(E^s_{h, \mathrm{vol}}\cap F (x), E(x))>\delta_2.
\end{equation}
 Therefore for any $n\geq 1$ such that $f^{n}(x)\in K_2$,
\begin{equation}\label{eqn: bd dist iteration}
\angle(E^s_{h, \mathrm{vol}}\cap F (f^n(x)), E(f^n(x)))>\delta_2.
\end{equation}
 On the other hand since $F=E\oplus E'$ is a dominated splitting, and $E^s_{h, \mathrm{vol}}\cap F (x)$ is $Df-$invariant and has trivial intersection with $E'$, we have that 
\[\lim_{n\to \infty}\angle(E^s_{h, \mathrm{vol}}\cap F (f^n(x)), E(f^n(x)))= \angle(Df^n( E^s_{h, \mathrm{vol}}\cap F(x)), Df^n(E(x)) )=0.\]
If $x\in K_2$ is recurrent, then this contradicts  (\ref{eqn: bd dist iteration}).  Since almost every $x\in K_2$ is recurrent, this gives a contradiction, completing the proof of  Lemma~\ref{lemma: non triv inters with rest}.
\end{proof}

As in the proof of Lemma \ref{lemma: step 2}, we  thus obtain that ${\W^\#}$ is absolutely continuous and that there is a full volume set $K\subset \TT^d$ such that for any $x\in K$, ${\W^\#}(x)$ is a $C^1$ submanifold, and $T{\W^\#}(x)$ has non-trivial intersection with $E'$ everywhere. This uses Lemma~\ref{lemma: non triv inters with rest}, and the continuity of $E'$ and $T{\W^\#}$ along the leaves of $\W^\#$.

Moreover, by the Cauchy-Peano existence theorem, for $x\in K$, there exists a $C^1$ path $\gamma:I\to {\W^\#}(x)$ such that for any $t\in I$, $\gamma'(t)\in E' \cap T{\W^\#}$. Let  $z_0=\gamma(0)$ and  $z_1=\gamma(1)$. As in Case 1, $f^{n}(z_0)$ and  $f^n(z_1)$ can be linked by a $C^1$ path $f^n(\gamma)$ in ${\W^\#}(x)$ of length $O(e^{n(\lambda^{i+1}(f_0)+\eta')})$ for some $\eta'\ll \lambda^{i}(f_0)-\lambda^{i+1}(f_0)$; this implies that $\pi(f^n(z_0))$ and $\pi(f^n(z_1))$ can  be linked by a $\W^i_{{A_f}}-$path of length $O(e^{n(\lambda^{i+1}(f_0)+\eta')})$ (since \eqref{eqn: pi quasi iso glob} holds here). On the other hand, since $\pi(f^n(z_i))={T^n_{A_f}}(\pi (z_i)), i=0,1$, it follows that $\pi(f^n(z_0))$ and $\pi(f^n(z_1))$ cannot be connected by a $\W^i_{{A_f}}-$path of length $o(e^{n\lambda^1(f_0)})$, which is a contradiction.   This completes the proof of Proposition~\ref{lemma: key lemma non C}.
\end{proof}

Combining Proposition~\ref{lemma: key lemma non C} from Step 3 with Lemma~\ref{lemma: step 2}, we obtain a full volume subset  $K\subset \TT^d$  such that for $x\in K$, $\W^\#(x)$ is a $C^1$ manifold tangent to $E$ everywhere and coinciding with a global Pesin stable manifold.
 By the absolute continuity of the family Pesin disks tangent to $E$  and Fubini's theorem, it  follows that $\W^\#$ is absolutely continuous.

\textbf{Step 5.} If a topological foliation has almost every leaf coinciding with another topological foliations, the two foliations must coincide.
It follows that $\W^\# = \W^i_f$. and in particular $\pi(\W^i_f) = \W^i_{{A_f}}$.   Observe that the leaves of $\pi^{-1}\left( \W^i_{{A_f}}\right)$ are jointly subfoliated by $\W^c_f$ and  $\W^i_f$, both of which have $C^1$ leaves. Therefore $E\oplus E^c$ is integrable and tangent to $\W^{ic}$. Integrability of $E' \oplus E^c$ follows from normal hyperbolicity.

Note that any leaf conjugacy from $(f, \W^c_f)$  to  $(f_0, \W^c_{f_0})$ to close to the identity must map $\W^{ic}_{f_0}$ to $\W^{ic}_f$.
Lemma~\ref{lemma: multi distri PSW} implies there is a leaf conjugacy $h^c$ to  $(f_0, \W^c_{f_0})$ that is $\delta$-bi-H\"older along $\W^{i}_{f_0}$ leaves,  where
$\delta\to 1$ as $d_{C^1}(f,f_0)\to 0$.     Then $\pi = P\circ h^c$, where $P\colon \TT^d\to \TT^{d-1}$ is the coordinate projection.  It follows that that the restriction of $\pi$ to $\W^i_f$ leaves has bi-H\"older exponent $\delta$ as well.  This completes Step 5.

\textbf{Step 6.} Since $T_{A_h}^n$ exponentially contracts distances in $\W^i_{A_f}$ leaves and $\pi$ is bi-H\"older between $\W^i_f$ and $\W^i_{A_f}$ leaves, we have that $h^n$ exponentially contracts distances in $\W^i_f$ leaves.  Lemma~\ref{lemma: h dim reg chart} implies that $Dh$ uniformly contracts $T\W^i_f =E^i_f = E$.

Similarly, since $T_{A_h}^{-n}$ exponentially contracts distances in $\W^{[i+1,k]}_{A_f}$ leaves, $h^{-n}$ exponentially contracts distances in   $\W^{[i+1,k]}_{f}$ leaves.  Again, Lemma~\ref{lemma: h dim reg chart} implies that $Dh$ uniformly expands $T\W^{[i+1,k]}_{f} = E^{[i+1,k]}_f = E'$.  Note that he same argument holds for any  $h\in G_0$ such that $A_h$ is not in a Weyl chamber wall.

This completes the induction, and so Proposition \ref{prop: key pr} holds for the elements $h_1,\ldots h_k$ and the bundles $E^1_f,\ldots, E^k_f$. Working in the stable bundle $\W^s_f$ with $f^{-1}$ we obtain elements $h_{k+1}\,\ldots, h_\ell$ satisfying the conclusions of Proposition \ref{prop: key pr} for $f^{-1}$ and the bundles $E^{k+1}_f,\ldots, E^\ell_f$.  The remark above shows that that  the conclusions hold for any $h\in G_0$ such that $A_h$ is not in a Weyl chamber wall.   This completes the proof of Proposition \ref{prop: key pr}.

\subsection{Existence of partially hyperbolic elements and topological rigidity}\label{sec: integra and top rig}

We now return to the proof of    Proposition~\ref{lemma: G high rank ac}.  The next step is to show that there is a partially hyperbolic element in every hyperbolic Weyl chamber.

\begin{prop}\label{lemma: exst PH}Suppose $f$ satisfies the hypotheses of Proposition~\ref{lemma: G high rank ac}.  Then in each hyperbolic Weyl chamber of $G_0$, there exists a partially hyperbolic element $h$.
\end{prop}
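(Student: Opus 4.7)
Plan. Given a hyperbolic Weyl chamber $\mathcal{C}$ of $G_0$, the aim is to produce $h \in G_0$ with $A_h$ in the interior of $\mathcal{C}$ that is partially hyperbolic with splitting $E^u \oplus E^c \oplus E^s$, where $E^u = \bigoplus_{i \in I^+} E^i_f$ and $E^s = \bigoplus_{i \in I^-} E^i_f$ are built from the coarse bundles $E^i_f$ according to the signature of $\mathcal{C}$. By Proposition~\ref{prop: key pr}(3), any such $h$ with $A_h$ off the Weyl chamber walls already has $Dh$ uniformly contracting each $E^i_f \subset E^s$ and uniformly expanding each $E^i_f \subset E^u$, so only the domination between $E^c$ and the hyperbolic bundles needs to be verified.

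To establish domination, I would first bound the hyperbolic Lyapunov exponents using the bi-H\"older projection $\pi\colon \W^i_f \to \W^i_{A_f}$ from Proposition~\ref{prop: key pr}(2), whose exponent $\delta$ can be made arbitrarily close to $1$ by assuming $d_{C^1}(f,f_0)$ small. Running the H\"older-transfer argument of Lemma~\ref{lemma: same W cber PH} separately on each coarse bundle yields, for every ergodic $h$-invariant measure $\nu$,
\[
\delta\,|\log|\lambda^i(A_h)|| \;\leq\; |\chi^i(h,\nu)| \;\leq\; \delta^{-1}\,|\log|\lambda^i(A_h)||,
\]
where $\lambda^i(A_h)$ denotes the eigenvalue modulus of $A_h$ on $E^i_{A_f}$. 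The center exponent is then pinned down by two exact identities: volume preservation of $h$ (Lemma~\ref{lemma: g pr vol}) gives $\chi^c(h,\nu) + \sum_i \dim E^i \cdot \chi^i(h,\nu) = 0$, while $A_h \in \SL(d-1,\ZZ)$ gives $\sum_i \dim E^i \cdot \log|\lambda^i(A_h)| = 0$. Subtracting yields the key estimate
\[
|\chi^c(h,\nu)| \;\leq\; d\,(\delta^{-1}-1)\,\max_i |\log|\lambda^i(A_h)||.
\]

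The final task is to choose $A_h \in G$ in $\mathcal{C}$ with sufficiently balanced eigenvalue magnitudes that this right-hand side is strictly smaller than $\delta\min_i |\log|\lambda^i(A_h)|| \leq \min_i |\chi^i(h,\nu)|$. By the Dirichlet unit theorem and maximality of the $G$-action (Lemma~\ref{lemma: za max}), the image of $G$ under the logarithmic eigenvalue map is a full-rank lattice in the hyperplane $\{\sum_i \dim E^i \cdot x_i = 0\} \subset \RR^\ell$; maximality further forces $\mathcal{C}$ to be neither the all-positive nor all-negative chamber, so this hyperplane meets the open cone of $\mathcal{C}$ nontrivially, and in fact contains a direction along which $\max_i|\log|\lambda^i||\,/\,\min_i|\log|\lambda^i||$ is bounded by a purely dimensional constant $\rho_0$ (for instance by taking the coordinates $|x_i|$ constant on $I^+$ and on $I^-$ and balancing). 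Choosing $A_h \in G$ close to this direction and requiring $d\rho_0(\delta^{-1}-1) < \delta$, which holds automatically once $f$ is $C^1$-close enough to $f_0$, yields $|\chi^c(h,\nu)| < \min_i|\chi^i(h,\nu)|$ uniformly over ergodic $\nu$. Lemma~\ref{lemma: est cocyc} then upgrades these Lyapunov inequalities to uniform pointwise bounds on $\|Dh^n|_{E^s}\|$, $\|Dh^{-n}|_{E^u}\|$, and $\|Dh^{\pm n}|_{E^c}\|$, which together establish partial hyperbolicity of $h$. The main subtlety is that iterating a single element scales all exponents uniformly, so the domination cannot be obtained simply by raising to high powers; the cancellation enforced by $\det A_h = 1$ is essential, and it gives a uniform bound on $\chi^c$ only when the eigenvalues of $A_h$ are comparable in absolute value.
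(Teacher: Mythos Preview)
Your argument is correct and follows essentially the same strategy as the paper: bound the hyperbolic exponents of $h$ via the bi-H\"older projection of Proposition~\ref{prop: key pr}(2), then use the two exact identities $\det Dh\equiv 1$ (volume preservation) and $\det A_h=1$ to squeeze the center exponent, and finally compare. The only substantive difference is that the paper works with the \emph{pointwise} Jacobian identity $\prod_j\mathrm{Jac}(Dh^n|_{E^j_f})\cdot\|Dh^n|_{E^c_f}\|\asymp 1$, which yields the uniform estimate $\|Dh^n|_{E^c_f}\|\in[e^{-\gamma n},e^{\gamma n}]$ directly, without passing through ergodic measures and Lemma~\ref{lemma: est cocyc}; your route via Lyapunov exponents for each ergodic $\nu$ reaches the same conclusion with one extra invocation. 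Your more explicit discussion of choosing $A_h$ with bounded eigenvalue ratio (so that the required smallness of $1-\delta$ is uniform over the a priori unknown finite-index subgroup $G$) is a point the paper treats only implicitly in the phrase ``not close to the Weyl chamber wall.''
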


\begin{proof}[Proof of Proposition~\ref{lemma: exst PH}]
Fix an $h\in G_0$ such that $A_h$ is not in any Weyl chamber wall.
\begin{lemma}\label{lemma: bd LE Dh E}For any $\epsilon>0$, there exists $n\in \ZZ^+$ such that for any $j$, $\log \mathrm{Jac}|Dh^n|
_{E^j_f}|$ lies in the interval
\begin{equation}\label{eqn: bd LE Dh E}\begin{cases}
(\dim(E^j_f)\cdot(\delta^{-1}\lambda^j(A_{h})-\epsilon)\cdot n),~~\dim(E^j_f) \cdot(\delta \lambda^j(A_{h})
+\epsilon)\cdot n), \text{ if }\lambda^j(A_{h})<0;\\
(\dim(E^j_f) \cdot(\delta\lambda^j(A_{h})-\epsilon)\cdot n),~~\dim(E^j_f) \cdot (\delta^{-1} \lambda^j(A_{h})
+\epsilon)\cdot n), \text{ if }\lambda^j(A_{h})>0,
\end{cases}
\end{equation} 
where $\lambda^j(A_{h})$ is the Lyapunov exponent  of ${A_{h}}|_{\W^j_{{A_f}}}$, $\delta\approx 1$ is the H\"older exponent given by   Proposition~\ref{lemma: G high rank ac}, and $\mathrm{Jac}(\cdot|_{E^j_f})$ is the leafwise Jacobian for the map restricted on $\W^j_f$, .
\end{lemma}
\begin{proof}Without loss of generality, assume that $\lambda^j(A_{h})<0$. Lifting the action of $T_{A_h}$ to $h$ and using the $\delta$-H\"older continuity of $\pi$ restricted to $\W^j_f$, we obtain that that for each $\epsilon>0$, there exist
$\eta>0$ and $N\in \NN$ such that for all $x\in M$, and $y\in \W^j_f(x)$:
\[d(x,y)<\eta \implies  e^{(\delta^{-1}\lambda^j(A_{h})-\epsilon)n} \leq d(f^n(x), f^n(y)) \leq  e^{(\delta^{-1}\lambda^j(A_{h})+\epsilon)n},
\]
for all $n\geq N$.
The conclusion follows easily from Lemma~\ref{lemma: h dim reg chart}, completing the proof of Lemma~\ref{lemma: bd LE Dh E}.
\end{proof}


Lemma \ref{lemma: g pr vol} implies that $h$ is volume preserving. Since $E^i_f, E^c_f$ are all  continuous distributions in $T\TT^{d}$, there exists $C_0\geq 1$, depending only on the angles between $E^j_f, E^c_f$, such that for any $k\in \ZZ$, 
\begin{equation}\label{eqn: est ec h}
C_0^{-1}\leq (\prod_j\mathrm{Jac}(Dh^k|_{E^j_f}))\cdot \|Dh^k|_{E^c_f}\|\leq C_0;
\end{equation}
since $A_h$ has determinant $1$, we also have
\begin{equation}\label{eqn: lambd sum 0}
\sum_j\dim(E^j)\lambda^j(A_{h})=0.
\end{equation}
Therefore by  \eqref{eqn: est ec h}, \eqref{eqn: lambd sum 0}  and Lemma~\ref{lemma: bd LE Dh E}, we have that for $n$ large enough,
\begin{equation}\label{eqn: est fin ec h}
\|D{h}^n|_{E^c_f}\|\in [e^{-\gamma n}, e^{\gamma n}],
\end{equation}
where $\gamma$ is small if $\delta$ is sufficiently close to $1$ and $\epsilon$ in \eqref{eqn: bd LE Dh E} is small.

 Comparing \eqref{eqn: est fin ec h} with  \eqref{eqn: bd LE Dh E}, for $|\lambda^j(A_{h})|\gg \gamma$ (which holds for any $f$ which is sufficiently $C^1$ close to $f_0$ and any $h$ that is not close to the Weyl chamber wall), we get $h$ is in fact a partially hyperbolic diffeomorphism, with  $E^s_{h} \oplus E^u_{h} =\oplus_j E^j_f,$ and  $E^c_{h}=E^c_f$, completing the proof of Proposition~\ref{lemma: exst PH}.\end{proof}



From the existence of partially hyperbolic elements in every chamber, we obtain  topological rigidity of the action.
\begin{prop}\label{coro: hold rig} If $G$ induces a maximal Anosov action on $\TT^{d-1}$, then
\begin{enumerate} 
\item there exists a $G_0-$invariant continuous metric on $E^c_f$; and  
\item $f$ is H\"older conjugate to $T_{A_f}\times R_\theta $  for some $\theta\notin \QQ/\ZZ$.   Similarly,  any $h\in G_0$ is H\"older conjugate (by the \textbf{same} H\"older conjugacy) to a product of $T_{A_h}$ with a circle rotation.
\end{enumerate}
\end{prop}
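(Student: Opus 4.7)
The plan is to derive Proposition~\ref{coro: hold rig} from Proposition~\ref{prop: key pr} and Proposition~\ref{lemma: exst PH} in two phases: first establish joint integrability of $E^s_f$ and $E^u_f$, then apply Proposition~\ref{lemma: dich atom rot} to extract both the H\"older conjugacy and the $G_0$-invariant continuous metric simultaneously.

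For joint integrability: Proposition~\ref{prop: key pr} gives that each $E^i_f$ is uniquely integrable, tangent to a foliation $\W^i_f$ whose leaves project bi-H\"older under $\pi$ to the linear leaves $\W^i_{A_f}$, with H\"older exponent $\delta$ that tends to $1$ as $d_{C^1}(f,f_0)\to 0$. In the linear model on $\TT^{d-1}$ the distributions $E^s_{A_f}$ and $E^u_{A_f}$ are jointly integrable, with integral foliation the horizontal slices, so a natural candidate for the integral foliation $\W^H_f$ of $E^s_f\oplus E^u_f$ is the lift via $\pi^{-1}$ of this horizontal foliation along the transversals $\W^i_f$. By item (2) of Proposition~\ref{prop: key pr} this lift is well defined on each $\W^i_f$ leaf individually; to show that the lifts assemble into a genuine foliation tangent to $E^s_f\oplus E^u_f$, I would use the commuting partially hyperbolic elements from Proposition~\ref{lemma: exst PH}. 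Because every Weyl chamber of $G_0$ contains an element $h$ preserving the foliations $\W^s_f$, $\W^u_f$ and $\W^c_f$ (Lemma~\ref{lemma: PHcent}) with mutually different hyperbolic behavior, any nontrivial $\W^c_f$-holonomy of a stable/unstable commutator loop would be iterated by some $h$ in an incompatible Weyl chamber, violating the uniform bi-H\"older bounds on $\pi|_{\W^i_f}$. This forces the holonomies of $\W^s_f$ and $\W^u_f$ along $\W^c_f$ to commute, yielding the integral foliation $\W^H$.

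Once joint integrability is established, Proposition~\ref{lemma: dich atom rot} places $f$ in one of its three cases. The accessibility case~\ref{case: lebesgue access} is excluded by joint integrability (no open accessibility class). The atomic case~\ref{case:atomic} is excluded because it forces $\left.f\right|_{\pi^{-1}(p)}$ to permute finitely many atoms on every $T_{A_f}$-periodic fiber, making some iterate $f^N$ act trivially on fibers modulo a permutation; combined with the partially hyperbolic $h\in G_0$ acting in transverse Weyl chambers, this contradicts ergodicity of $f$. So case~\ref{case: rigid} holds: $f$ is topologically conjugate to $T_{A_f}\times R_\theta$ by a homeomorphism $\zeta$ that is $C^1$ on center leaves, with $\theta\notin\QQ/\ZZ$. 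The pullback of the flat metric on the $\TT$-factor under $\zeta^{-1}$ is the desired $f$-invariant continuous metric on $E^c_f$. For each $h\in G_0$, applying the same analysis to $h$ (its action on $\TT^{d-1}$ being $T_{A_h}$, which commutes with $T_{A_f}$) forces $\zeta$ to conjugate $h$ as well to a product $T_{A_h}\times R_{\theta(h)}$, making the metric $G_0$-invariant. H\"older regularity of $\zeta$ follows from the H\"older regularity of $\pi$ on the transverse foliations $\W^i_f$ from Proposition~\ref{prop: key pr}(2) together with the $C^1$ smoothness of $\zeta$ along $\W^c_f$, via the normal-bundle construction reviewed in Section~\ref{sec: multi distri PSW}.

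The principal obstacle is establishing joint integrability of $E^s_f$ and $E^u_f$: although each $E^i_f$ is individually integrable, their sums $E^u_f=\bigoplus_{i\le k}E^i_f$ and $E^s_f=\bigoplus_{i>k}E^i_f$ do not automatically satisfy a commuting-holonomy condition along $\W^c_f$. Overcoming this requires using the full strength of the maximal-rank structure provided by Proposition~\ref{lemma: exst PH} --- that commuting partially hyperbolic elements populate every Weyl chamber and share the same center foliation --- so that any candidate obstruction to joint integrability can be probed by an element of $G_0$ whose hyperbolic spectrum distinguishes $\W^s_f$ from $\W^u_f$ in an incompatible way. The remainder of the argument, once joint integrability is in hand, is essentially a book-keeping exercise using Proposition~\ref{lemma: dich atom rot} and the regularity estimates already proved.
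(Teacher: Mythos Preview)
Your overall strategy---establish joint integrability of $E^s_f\oplus E^u_f$, then invoke Proposition~\ref{lemma: dich atom rot}---matches the paper's. But the execution has two genuine gaps.

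\medskip

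\textbf{Joint integrability.} Your proposed argument (``any nontrivial $\W^c_f$-holonomy of a stable/unstable commutator loop would be iterated by some $h$ in an incompatible Weyl chamber, violating the uniform bi-H\"older bounds on $\pi|_{\W^i_f}$'') is not a proof: the commutator holonomy is a diffeomorphism of a center circle, and conjugating it by elements of $G_0$ does not obviously conflict with any bi-H\"older estimate on $\pi$. You correctly identify this as the principal obstacle, but the resolution you sketch does not work as stated. The paper does not argue this step from scratch either; it invokes external rigidity results (Proposition~8.1 of \cite{DX0} and Proposition~5.1 of \cite{DX}) which, using the maximal structure of the action and the existence of partially hyperbolic elements in every Weyl chamber from Proposition~\ref{lemma: exst PH}, show directly that $E^H=\bigoplus_j E^j_f$ is tangent to a $C^1$ foliation $\W^H$. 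The $C^1$ regularity of $\W^H$ (not just its existence) is essential and you do not address it: it is what makes the projection $\mathrm{Pr}^c$ along $\W^H$ a $C^1$ map, so that $\mathrm{Pr}^c_\ast(\mathrm{vol}_{\TT^d})$ has continuous density on $\W^c_f(x_0)$ and yields a \emph{continuous} $G_0$-invariant metric on $E^c_f$.

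\medskip

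\textbf{Excluding the atomic case.} This step is both unnecessary and incorrect. In the proof of Proposition~\ref{lemma: dich atom rot}, the trichotomy is organized by whether $f$ has an open accessibility class: cases~\ref{case:atomic} and~\ref{case: lebesgue access} arise when it does, and case~\ref{case: rigid} is precisely the joint integrability case. So once you have integrability of $E^s_f\oplus E^u_f$ you are already in case~\ref{case: rigid}; no separate exclusion of the atomic case is needed. Moreover, your proposed exclusion is wrong: atomic center disintegration does \emph{not} force any iterate $f^N$ to act ``trivially on fibers modulo a permutation,'' nor does it contradict ergodicity---indeed, the atomic-disintegration scenario for an ergodic $f$ is exactly the pathological case studied throughout the paper.

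\medskip

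Finally, your argument for $G_0$-invariance of the conjugacy (``applying the same analysis to $h$\ldots forces $\zeta$ to conjugate $h$ as well'') is incomplete: running the same construction for $h$ would a priori produce a different conjugacy $\zeta_h$. The paper's argument is cleaner: the conjugacy is built from the foliations $\W^H$ and $\W^c_f$ and the volume, all of which are $G_0$-invariant, so the same $\zeta$ automatically conjugates every $h\in G_0$ to a product.
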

\begin{proof}  Proposition~\ref{lemma: exst PH}  implies that the $G_0$ action is partially hyperbolic, with hyperbolic subbundle  $E^H:=\oplus_j E^j_f$.  The rigidity of such actions is studied in  \cite{DX, DX0}; in particular, the proofs of Proposition 8.1 in \cite{DX0} and Proposition 5.1 in \cite{DX} imply that
 $E^H$ is tangent to a $C^1$ foliation $\W^H$.

Denote by $\W^c_f(x_0)$ a  $G_0-$fixed center leaf. Since $E^H$ is integrable, there is no open accessibility class for $f$.  Proposition~\ref{lemma: dich atom rot} implies that   $\TT^d$ has a \emph{product structure}, i.e. $\TT^d$ is topologically the product of  $\W^c_f(x_0)$ and $\TT^{d-1}/\W^c$. By H\"older continuity of $\pi$ and $\W^c_f$ this product structure is H\"older continuous as well.

Consider  the projection $\mathrm{Pr}^c$ from $\TT^d$ to $\W^c_f(x_0)$ along $\W^H$. Since $\W^H$ is a $C^1$ foliation, $\mathrm{Pr}^c$ is $C^1$ as well. Therefore  $\mathrm{Pr}^c_\ast (\mathrm{vol}_{\TT^d})$ is an $f-$invariant volume on $\W^c_f(x_0)$ with continuous density function, and $f|_{\W^c_f(x_0)}$ is $C^1$ conjugate to a circle rotation $R_\theta$.
By ergodicity of $f$, the rotation number $\theta$ must be irrational. 

The continuous density function mentioned above gives an $f-$invariant continuous metric on $T\W^c_f({x_0})$, and this pulls back via $D\mathrm{Pr}^c\vert_{E^c}$ to an $f$-invariant metric on $E^c$. Since the construction of this $f-$invariant continuous metric on $E^c$ only depends on the product structure and the volume form on $\TT^d$, it must be $G_0-$invariant. This proves (1).

For (2),  we know that the action induced by $f$ 
\begin{itemize}
\item on $\TT^d/\W^c$ is H\"older conjugate to $T_{A_f}$ on $\TT^{d-1}$; and
\item on $\TT^d/\W^H$ is $C^1-$conjugate to  $R_\theta$.
\end{itemize}
Using the product structure of $f$, we obtain that $f$ is H\"older conjugate to the product of $T_{A_f}$ on $\TT^{d-1}$ with an irrational rotation $R_\theta$.  The same proof also works for any $h\in G_0$ (although if $h$ is not ergodic, the rotation number might not be irrational). 
Therefore, by the same conjugacy, $h$ is H\"older conjugate to the product of  $T_{A_h}$ with a circle rotation.
\end{proof}

\subsection{  Absolute   continuity of $\W^c_f$: volume and  equilibrium states.}\label{sec: e stat pres}
The following proposition is a partially hyperbolic version of Theorem 20.4.1. in \cite{KH}.
\begin{prop}\label{prop: eq state vol}
Let $f\colon M\to M$ be a $C^{1+}$, volume preserving partially hyperbolic diffeomorphism. Suppose that for any $f-$invariant ergodic measure $\nu$, the central Lyapunov exponents of $f$ with respect to $\nu$ are all zero.
Then the volume $\mathrm{vol}_M$ is an equilibrium state of the potential $\varphi:=-\log J^u(f):=-\log|\det Df|_{E^u}|$.
\end{prop}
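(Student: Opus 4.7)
The plan is to verify directly the variational principle for $\mathrm{vol}_M$ relative to the potential $\varphi = -\log J^u(f)$, i.e.\ to show
\[
P(\varphi)=h_{\mathrm{vol}}(f)+\int\varphi\,d\mathrm{vol}_M\geq h_\mu(f)+\int\varphi\,d\mu\qquad\text{for every $f$-invariant probability $\mu$}.
\]
The two ingredients are Pesin's entropy formula applied to $\mathrm{vol}_M$ (giving equality in one inequality) and the Ruelle--Margulis inequality applied to an arbitrary invariant $\mu$ (giving the opposite inequality), with the central-exponent-zero hypothesis used to identify the sum of positive exponents with $\int\log J^u\,d\mu$.

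First I would apply Pesin's entropy formula: since $f$ is $C^{1+}$ and preserves volume, the measure $\mathrm{vol}_M$ is absolutely continuous, and Pesin's formula yields
\[
h_{\mathrm{vol}}(f)=\int \sum_{\lambda_i(x)>0}\lambda_i(x)\dim E^{\lambda_i}(x)\,d\mathrm{vol}_M(x).
\]
By the hypothesis that central Lyapunov exponents vanish on every ergodic component, the positive Oseledec subspaces at $\mathrm{vol}_M$-a.e.\ $x$ are contained in $E^u_f$; applying this on ergodic components and integrating, the sum of positive exponents at $x$ equals $\log|\det Df|_{E^u_f(x)}|=\log J^u(f)(x)$. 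Hence
\[
h_{\mathrm{vol}}(f)=\int \log J^u(f)\,d\mathrm{vol}_M=-\int\varphi\,d\mathrm{vol}_M,
\]
so $h_{\mathrm{vol}}(f)+\int\varphi\,d\mathrm{vol}_M=0$.

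Next I would apply the Ruelle--Margulis inequality to an arbitrary $f$-invariant ergodic measure $\mu$:
\[
h_\mu(f)\leq \int \sum_{\lambda_i(x,\mu)>0}\lambda_i(x,\mu)\dim E^{\lambda_i}(x,\mu)\,d\mu(x).
\]
The same observation as above, now applied to $\mu$ (and extended to non-ergodic $\mu$ via ergodic decomposition), shows that the right-hand side equals $\int\log J^u(f)\,d\mu=-\int\varphi\,d\mu$, and hence
\[
h_\mu(f)+\int\varphi\,d\mu\leq 0=h_{\mathrm{vol}}(f)+\int\varphi\,d\mathrm{vol}_M.
\]
Combining the two steps, the supremum in the variational principle is attained by $\mathrm{vol}_M$, so $P(\varphi)=0$ and $\mathrm{vol}_M$ is an equilibrium state for $\varphi$.

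The only delicate point, and really the crux of the partially hyperbolic version, is the systematic use of the zero-center-exponent hypothesis: one must know that for \emph{every} invariant ergodic measure (not only for $\mathrm{vol}_M$) the contribution of $E^c_f$ to the positive-exponent sum is absent, so that the Pesin/Ruelle right-hand side collapses cleanly to $\int\log J^u\,d\mu$. Once this is in place, the argument is the standard Katok--Hasselblatt one transplanted from the Anosov setting, and no delicate thermodynamic input beyond the variational principle, Pesin's formula and Ruelle's inequality is required.
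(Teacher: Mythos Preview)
Your proof is correct and in fact more elementary than the paper's. Both arguments begin identically, using Pesin's entropy formula for $\mathrm{vol}_M$ together with the vanishing of the central exponents to obtain $h_{\mathrm{vol}}(f)+\int\varphi\,d\mathrm{vol}_M=0$. The divergence is in establishing that $P(\varphi)\le 0$. The paper invokes the unstable-pressure machinery of Hu--Wu--Zhu: one has $P^u(\varphi)=0$ for the geometric potential, and under the hypothesis of no positive central exponent for any ergodic measure the unstable pressure coincides with the ordinary pressure, hence $P(\varphi)=0$. You instead apply the Margulis--Ruelle inequality directly to an arbitrary invariant $\mu$: since all central exponents vanish for every ergodic measure, the sum of positive exponents collapses to the sum over $E^u$, which by Birkhoff equals $\int\log J^u(f)\,d\mu$, giving $h_\mu(f)+\int\varphi\,d\mu\le 0$ immediately.

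Your route has the advantage of being entirely self-contained and classical (Pesin's formula plus Ruelle's inequality), avoiding the auxiliary notion of unstable pressure altogether; the paper's route, by contrast, packages the same information into a black-box citation but would generalize more readily to settings where one wants finer control on the unstable entropy. For the statement at hand, your argument is the cleaner one.
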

\begin{proof}The proof is basically contained in \cite{HWZ}.  By the  Pesin entropy formula \cite{P77} and the vanishing of the central Lyapunov exponents, we have 
 \begin{equation*}\label{eqn: mea entpy}
h_{\mathrm{vol}}(f)=\int_M \log J^u(f)(x)d \mathrm{vol}(x).
\end{equation*}
Therefore $P_{\mathrm{vol}}(\varphi)=0$, where 
\[P_{\mathrm{vol}}(\varphi) = h_{\mathrm{vol}}(f) + \int \varphi\, d\mathrm{vol}. \]
is the free energy of $\varphi$ with respect to $\mathrm{vol}$. We need only show that that the pressure of $\varphi$ vanishes:
\[P(\varphi) := \sup_{\mu: f_\ast\mu=\mu}  \left(h_{\mu}(f) + \int \varphi\, d\mu\right) = 0.
\]

In \cite{HWZ}, the authors introduce the concept of \emph{unstable pressure} $P^u(f,\psi)=P^u(\psi)$ for any continuous $\psi$ and $C^1$ partially hyperbolic diffeomorphism $f$.   Corollary A.2 and the paragraph right after the statement of Corollary A.2 in \cite{HWZ} implies that  $P^u(\psi)\leq P(\psi)$ for any continuous $\psi$. Moreover if $f$ is $C^{1+}$ and there is no positive Lyapunov exponent in the center direction with respect to any $f-$invariant ergodic measure $\nu$, then  equality holds.  
Corollary C.1 in \cite{HWZ} implies that $P^u(\varphi)=0$. for the potential $\varphi=-\log J^u(f)$.

The assumptions of Proposition \ref{prop: eq state vol} imply that  for any $\psi\in C(M,\RR)$, 
$P^u(\psi)= P(\psi)$, and it follows that $P(\varphi)=0$. This completes the proof of Proposition \ref{prop: eq state vol}.\end{proof}

\subsection{Absolute continuity of $\W^c_f$: $\W^H$-leafwise cocycle rigidity of higher rank partially hyperbolic actions}\label{sec: cocyc rig}
 
Proposition~\ref{coro: hold rig}, implies that the action of $G_0$ on $\TT^d$ is H\"older conjugate to an \emph{irrational rotation extension} $\al$ over $\bal$, where $\bal$ is the  maximal linear Anosov $\ZZ^{d-2}$ action on  $\TT^d$. Here $\al$ is a  $\ZZ^{d-2}-$action on $\TT^d$ 
defined  by $\al(\mathbf{a}) = \bal(\mathbf{a})\times R_{\theta(\mathbf{a})}$
for $\mathbf{a}\in \ZZ^{d-2}$, where $\mathbf{a}\mapsto R_{\theta(\mathbf{a})}$ is an action by circle rotations with at least one $\theta(\mathbf{a})$  irrational. 



Recall that a continuous function $\beta: \ZZ^{d-2}\times \TT^{d}\to \RR $ is an \emph{(additive) cocycle} over $\al$ if 
$\beta(a+b,x)=\beta(a,\al(b)\cdot x)+\beta(b,x)$ holds
for all  $a,b\in \ZZ^{d-2}$ and $x\in \ZZ^{d}$.
A cocycle $\beta_1$ is  \emph{cohomologous to} another cocycle $\beta_2$ if there exists a continuous function (called the \emph{transfer function}) $\Psi: \TT^{d}\to \RR$ such that $\beta_1(a,x)=\beta_2(a, x)+\Psi(\al(a)\cdot x)-\Psi(x)$.


It is well known that for a maximal $\ZZ^{d-2}-$Anosov action $\bal$ on $\TT^{d-1}$ (see Lemma \ref{lemma: R-val cocy rig tori}),   any H\"older continuous cocycle over $\bal$ is cohomologous to a constant cocycle. 

We obtain here a corresponding result for the irrational rotation extension $\al$ over $\bal$.  A cocycle $\beta$ on $\TT^{d-1}\times\TT$ is \emph{constant on $\TT^{d-1}$} if $\beta(a,x)=\beta(a,y)$ whenever $x,y$ have  the same $\TT$-component, i.e. they lie on the same leaf of the {\em  horizontal $\TT^{d-1}$-foliation} $\{\TT^{d-1}\times\{t\}: t\in \TT\}$.

\begin{prop}\label{prop: coh rig ph act}  Let $\al$ be an  irrational rotation extension over a maximal, linear Anosov  $\ZZ^{d-2}$-action $\bal$ on $\TT^{d-1}$.
Then any H\"older continuous cocycle over $\al$ is cohomologous to a cocycle that  is constant on $\TT^{d-1}$.
\end{prop}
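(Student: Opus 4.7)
The plan is to Fourier-decompose $\beta$ along the $\TT$-fiber direction and reduce the problem to a family of (possibly twisted) scalar cohomology equations over the base Anosov action $\bal$. Writing $\beta(a, x, y) = \sum_{n \in \ZZ} \beta_n(a, x) e^{2\pi i n y}$, the cocycle identity for $\beta$ is equivalent to the family of mode equations
\begin{equation*}
\beta_n(a + b, x) = e^{2\pi i n \theta(b)}\, \beta_n(a, \bal(b) x) + \beta_n(b, x), \qquad n \in \ZZ.
\end{equation*}
Seeking a transfer function $\Psi(x,y) = \sum_n \psi_n(x) e^{2\pi i n y}$ such that $\tilde\beta := \beta + \Psi - \Psi \circ \al$ is constant on the horizontal $\TT^{d-1}$-leaves is then equivalent to finding, for each $n \in \ZZ$, a H\"older $\psi_n : \TT^{d-1} \to \CC$ and constants $c_n(a) \in \CC$ with
\begin{equation*}
\beta_n(a, x) - c_n(a) = e^{2\pi i n \theta(a)}\, \psi_n(\bal(a) x) - \psi_n(x).
\end{equation*}

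For the untwisted mode $n = 0$ the equation is the standard scalar cocycle relation over the maximal higher-rank Anosov action $\bal$, so the Katok--Spatzier cocycle rigidity theorem (the higher-rank version of Lemma~\ref{lemma: R-val cocy rig tori}) furnishes a H\"older $\psi_0$ and a homomorphism $c_0 : \ZZ^{d-2} \to \RR$ solving it. For $n \neq 0$ the multiplier $\mu(a) := e^{2\pi i n \theta(a)}$ is a unitary character of $\ZZ^{d-2}$, and one must solve a twisted cohomology equation. Picking $a_0$ with $\bal(a_0) = T_{M_{a_0}}$ Anosov and Fourier-expanding also in $x$, the twisted equation at lattice mode $k \in \ZZ^{d-1} \setminus \{0\}$ becomes
\begin{equation*}
\hat\beta_n(a_0, k) = \mu(a_0)\, \hat\psi_n\!\left(M_{a_0}^{-T} k\right) - \hat\psi_n(k),
\end{equation*}
and since $M_{a_0}^{-T}$ is hyperbolic, the formal telescoping sum
\begin{equation*}
\hat\psi_n(k) := -\sum_{j \geq 0} \mu(a_0)^{j}\, \hat\beta_n\!\left(a_0, M_{a_0}^{-jT} k\right)
\end{equation*}
converges absolutely and exponentially, thanks to the polynomial decay of the Fourier coefficients of the H\"older function $\beta_n$ combined with the exponential growth of $|M_{a_0}^{-jT} k|$ for lattice points off the stable direction. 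The full cocycle identity for $\beta$ together with the $\ZZ^{d-2}$-commutativity then guarantees that the resulting $\psi_n$ in fact solves the twisted equation for \emph{every} $a \in \ZZ^{d-2}$, with $c_n(a) = \hat\beta_n(a, 0) - (\mu(a) - 1)\hat\psi_n(0)$.

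The principal obstacle is then to ensure that the reassembled series $\Psi(x, y) = \sum_n \psi_n(x) e^{2\pi i n y}$ converges to a continuous, let alone H\"older, function on $\TT^d$: this requires uniform-in-$n$ quantitative control of $\|\psi_n\|_{C^0}$, extracted from the H\"older regularity of $\beta$ in the fiber direction and the exponential convergence of the telescoping construction. A standard route is to first approximate $\beta$ by cocycles that are $C^\infty$ in $y$ (via convolution in the fiber), run the argument with uniform bounds to obtain rapidly decaying $\psi_n$, and then pass to the H\"older limit. A secondary delicate point is to rule out resonant obstructions to the twisted Liv\v sic equation for each $n \neq 0$; here the higher-rank structure of $\bal$ is essential, since although any \emph{single} Anosov element may carry periodic-orbit resonances at special values of $n$, commutativity with the remaining $\ZZ^{d-2}$-directions forces these resonances to cancel globally.
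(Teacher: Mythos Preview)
Your approach is genuinely different from the paper's. The paper derives Proposition~\ref{prop: coh rig ph act} as an immediate special case of a more general leafwise statement (Proposition~\ref{prop: cyc rig WH}), whose proof is purely geometric: it uses the periodic cycle functionals machinery of \cite{DK2, KK}. One defines a transfer map along broken paths in the coarse Lyapunov foliations and checks, using the TNS property of a maximal action, that its value on any closed Lyapunov cycle vanishes. Since these foliations are accessible within each $\W^H$-leaf, this yields a H\"older transfer function making the cocycle constant along $\W^H$. No Fourier analysis appears; the argument works directly in the H\"older category and makes no use of the algebraic product structure of $\al$.

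Your Fourier-analytic route is natural and in the spirit of the original Katok--Spatzier treatment of algebraic Anosov actions, but the gaps you flag are real, not merely bookkeeping. First, your telescoping formula for $\hat\psi_n(k)$ uses a single Anosov element $a_0$; while $|M_{a_0}^{-jT}k|$ does eventually grow for every nonzero lattice point (by irreducibility), the time it takes to start growing depends badly on how close $k$ is to the contracting eigenspace, and this destroys the uniform estimates needed for H\"older regularity of $\psi_n$. This is precisely where the higher-rank structure must enter: one needs to split lattice points according to their position relative to the Lyapunov hyperplanes and use different action elements in different regions. Second, the reassembly $\Psi = \sum_n \psi_n e^{2\pi i n y}$ is genuinely delicate in the H\"older category: polynomial Fourier decay of $\beta$ gives only polynomial decay of $\|\beta_n\|$, and solving a Liv\v sic-type equation typically loses a fixed amount of regularity, so uniform-in-$n$ summability of $\|\psi_n\|$ does not follow automatically. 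Your proposed smoothing-in-$y$ workaround requires showing that the transfer functions $\Psi^{(m)}$ for the approximating cocycles converge, which in turn needs some uniqueness or stability statement that you have not supplied. These obstacles are exactly why the periodic cycle functionals method was developed: it sidesteps Fourier analysis entirely and delivers the H\"older transfer function in one stroke.
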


This proposition is a direct corollary of the following more general result on partially hyperbolic actions: 
\begin{prop} \label{prop: cyc rig WH} Let $\al$ be a partially hyperbolic $\mathbb Z^k$ action with coarse Lyapunov distributions $E^j$ and corresponding coarse Lyapunov foliations $\mathcal F^j$, $j=1,\dots,  r$. Assume that:
\begin{enumerate}
\item $\bigoplus_{j=1}^r E^j$ integrates to a H\"older foliation $\W^H$ with compact smooth leaves. 
\item For any two $i, j\in \{1, \dots, r\}$ there exists a Weyl chamber $\mathcal C$ and an action element $a\in \mathcal C$ such that $\al(a)$ is partially hyperbolic and uniformly contracts both   $E^i$ and $E^j$. 
\end{enumerate}
Then any H\"older continuous cocycle over $\al$ is cohomologous to a cocycle that  is constant along the leaves of $\W^H$.
\end{prop}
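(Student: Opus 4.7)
The plan is to build the transfer function $\Psi$ by a leafwise Liv\v{s}ic-type telescoping construction along each coarse Lyapunov foliation $\mathcal F^j$, and then to glue these leafwise potentials into a single H\"older continuous function on $M$ by exploiting hypothesis (2), which plays the role of a total non-symplecticity (TNS) assumption in this partially hyperbolic setting. Once a global $\Psi$ is obtained, the cocycle cohomologous to $\beta$ via $\Psi$ will be constant along the leaves of $\W^H$.

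For each $j$, apply (2) with $i=j$ to choose $a_j\in\ZZ^k$ such that $\al(a_j)$ uniformly contracts $E^j$. H\"older continuity of $\beta$ combined with exponential contraction along $\mathcal F^j$ makes the series
\[
\Psi^j_x(y)\;:=\;\sum_{n=0}^\infty\bigl[\beta(a_j,\al(a_j)^n y)-\beta(a_j,\al(a_j)^n x)\bigr]
\]
converge absolutely and uniformly whenever $y\in\mathcal F^j(x)$, yielding a H\"older potential on each leaf. A standard cocycle computation using commutativity of $\al(a_j)$ with $\al(b)$ shows that $\Psi^j$ actually transfers $\beta(b,\cdot)$ to a cocycle constant on $\mathcal F^j$-leaves for \emph{every} $b\in\ZZ^k$, and that $\Psi^j_x$ is characterized uniquely on $\mathcal F^j(x)$ by this property together with $\Psi^j_x(x)=0$. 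To glue across foliations, for each pair $(i,j)$ hypothesis (2) furnishes $a_{ij}$ simultaneously contracting $E^i$ and $E^j$; the Liv\v{s}ic sum built from $a_{ij}$ converges on the entire $a_{ij}$-stable set, which contains $\mathcal F^i(x)\cup\mathcal F^j(x)$, and by the uniqueness above its restrictions reproduce $\Psi^i$ and $\Psi^j$. Thus for any piecewise path $x=x_0\to x_1\to\cdots\to x_N$ inside a leaf of $\W^H$ with $x_{k+1}\in\mathcal F^{j_k}(x_k)$, the candidate value $\Psi(x,x_N):=\sum_k \Psi^{j_k}_{x_k}(x_{k+1})$ is unambiguous on any ``TNS-rectangle'' built from a single $a_{ij}$.

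The main obstacle is showing full path-independence, i.e.\ that the monodromy around every closed piecewise-$\mathcal F$ loop in $\W^H$ vanishes. Locally, such loops decompose into small $\mathcal F^i$--$\mathcal F^j$ rectangles, and each rectangle admits a single-valued joint Liv\v{s}ic potential from $a_{ij}$, producing zero monodromy per rectangle; pairwise TNS compatibility then propagates this to arbitrary small loops. Compactness of the leaves of $\W^H$ from hypothesis (1) permits covering each leaf by finitely many such charts and propagating consistency globally, yielding a continuous (indeed H\"older) function $\Psi$ on $M$. The universal leafwise coboundary identity, now valid across all of $\W^H$, then gives that $\beta$ is cohomologous to a cocycle that is constant along every leaf of $\W^H$, completing the argument.
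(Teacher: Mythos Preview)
Your construction of the leafwise potentials $\Psi^j$ and the pairwise gluing via joint Liv\v{s}ic sums for $a_{ij}$ is exactly the periodic cycle functional framework the paper invokes, citing \cite[Section 3.3]{DK2}; the paper in fact gives no more detail on this step than the reference.

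There is, however, a gap in your passage from local to global path-independence. The sentence ``Compactness of the leaves of $\W^H$ \ldots\ permits covering each leaf by finitely many such charts and propagating consistency globally'' is a \v{C}ech-type argument, and it only kills monodromy around \emph{contractible} loops. But hypothesis (1) says the leaves of $\W^H$ are compact, and in the intended application they are tori, so $\pi_1(\W^H(x))\ne 0$; closed Lyapunov paths representing nontrivial homotopy classes certainly exist, and no finite covering by contractible charts will decompose such a loop into your small rectangles. The missing ingredient, supplied in \cite{DK2} and implicitly relied on by the paper, is the $\al$-invariance of the periodic cycle functional: the cocycle identity for $\beta$ gives $P(\al(b)\gamma)=P(\gamma)$ for every $b\in\ZZ^k$ and every closed Lyapunov loop $\gamma$. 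Combined with your pairwise observation that adjacent legs always lie inside a common simply connected immersed stable manifold $\W^s_{a_{ij}}$ (on which the Liv\v{s}ic potential is single-valued regardless of what the loop does in the compact leaf $\W^H(x)$), this permits a \emph{dynamical} reduction of an arbitrary closed Lyapunov loop, not a topological one. Replacing your compactness line by this mechanism closes the gap.
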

\begin{proof}

The proof is an application of the periodic cycle functionals argument for higher rank actions developed in \cite{DK2, KK} (cf.  \cite{W} for the rank-$1$ case). The main idea is that within each accessibility class of the action one can define a transfer map for the cocycle along \emph{Lyapunov paths} (these are broken paths with pieces completely contained in leaves of foliations $\mathcal F^1, \dots, \mathcal F^1$), see  \cite[Definition 4]{DK2}. Such a transfer map gives rise to a well-defined global H\"older map provided that its values along any two broken paths with same endpoints are the same \cite[Definition 5]{DK2}. In other words, the value of the linear functional thus defined (called the periodic cycle functional \cite[Proposition 2]{DK2}) should be trivial on a closed Lyapunov path. This holds as in \cite[Section 3.3]{DK2} if the system of foliations   $\mathcal F^1, \dots, \mathcal F^1$ satisfies the condition (2),  which is also known as the \emph{totally non-symplectic} (TNS) condition.  The actions considered in \cite{DK2} are assumed to be accessible, so the whole manifold is one accessibility class and the periodic cycle functionals argument implies in the case of actions in \cite{DK2} that any H\"older cocycle is cohomologous to an everywhere constant cocycle. 

In the situation we have here the exact same argument applies along leaves of the $\W^H$ foliation, since within each leaf we have accessibility of the coarse Lyapunov foliations and property (2).  By the same argument as in \cite[Section 3.3]{DK2}, this implies that  any H\"older cocycle over $\al$ is cohomologous to a cocycle which is constant along the leaves of the foliation $\W^H$.

\end{proof}

\subsection{Absolute continuity of $\W^c_f$: uniqueness of the measure of maximal entropy}\label{sec: unique mme}
 Consider the diffeomorphism $T_A\times R_\theta: \TT^{d-1}\times\TT\to \TT^{d-1}\times \TT$ where $R_\theta$ is an irrational rotation on circle and $A\in\SL(d-1,\ZZ)$ is hyperbolic. 
\begin{lemma}\label{lemma: uniq max ent}The volume $\mathrm{vol}_{\TT^{d}}$ on $\TT^{d-1}\times \TT$ is the unique measure of maximal entropy of $T_A\times R_\theta$.
\end{lemma}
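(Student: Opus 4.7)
The plan is to prove uniqueness in two stages: first show that any measure of maximal entropy must project to $\mathrm{vol}_{\TT^{d-1}}$ on the first factor (using an entropy-subadditivity / Abramov--Rokhlin argument), and then upgrade this to $\mu = \mathrm{vol}_{\TT^d}$ using a Fourier analysis of the disintegrated measures combined with the fact that the ergodic hyperbolic automorphism $T_A$ has no nontrivial eigenfunctions.

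First, since $R_\theta$ is an isometry, $h_{\mathrm{top}}(R_\theta)=0$, and by the classical product formula one has $h_{\mathrm{top}}(T_A\times R_\theta)=h_{\mathrm{top}}(T_A)$. The measure $\mathrm{vol}_{\TT^d}=\mathrm{vol}_{\TT^{d-1}}\times\mathrm{vol}_\TT$ is invariant and has entropy $h_{\mathrm{top}}(T_A)$, so it is a measure of maximal entropy. For the other direction, let $\mu$ be any invariant probability measure and let $\pi_1,\pi_2$ denote the projections onto $\TT^{d-1}$ and $\TT$. The standard subadditivity of partition entropy under product factor maps yields
\[
h_\mu(T_A\times R_\theta)\;\leq\; h_{(\pi_1)_*\mu}(T_A)+h_{(\pi_2)_*\mu}(R_\theta).
\]
Unique ergodicity of $R_\theta$ forces $(\pi_2)_*\mu=\mathrm{vol}_\TT$, and $h_{\mathrm{vol}_\TT}(R_\theta)=0$. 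Thus $h_\mu(T_A\times R_\theta)\leq h_{(\pi_1)_*\mu}(T_A)\leq h_{\mathrm{top}}(T_A)$. If $\mu$ is of maximal entropy, equality propagates through, so $(\pi_1)_*\mu$ is the measure of maximal entropy for the hyperbolic automorphism $T_A$; by the classical Sinai--Bowen uniqueness of MME for Anosov diffeomorphisms, this forces $(\pi_1)_*\mu=\mathrm{vol}_{\TT^{d-1}}$.

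Second, I upgrade this to $\mu=\mathrm{vol}_{\TT^d}$. Disintegrate $\mu=\int_{\TT^{d-1}}\mu_x\,d\mathrm{vol}_{\TT^{d-1}}(x)$, where each $\mu_x$ is a probability measure on the fiber $\{x\}\times\TT\cong\TT$. The invariance of $\mu$ together with essential uniqueness of disintegrations gives the cocycle identity
\[
(R_\theta)_*\mu_x=\mu_{T_A(x)}\qquad\text{for }\mathrm{vol}_{\TT^{d-1}}\text{-a.e. }x.
\]
For each $n\in\ZZ$, set $f_n(x):=\int_\TT e^{-2\pi i n y}\,d\mu_x(y)$, which lies in $L^\infty(\mathrm{vol}_{\TT^{d-1}})$ with $\|f_n\|_\infty\leq 1$. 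Taking Fourier coefficients of both sides of the cocycle equation gives
\[
f_n\circ T_A \;=\; e^{-2\pi i n\theta}\,f_n \qquad \text{a.e. on } \TT^{d-1}.
\]
For $n=0$, $f_0\equiv 1$. For $n\neq 0$, $e^{-2\pi i n\theta}\neq 1$ by irrationality of $\theta$, so $f_n$ would be an eigenfunction of $T_A$ with nontrivial eigenvalue; but the hyperbolic toral automorphism $T_A$ has Lebesgue spectrum on the orthogonal complement of the constants in $L^2(\mathrm{vol}_{\TT^{d-1}})$, and in particular no nontrivial eigenfunctions. Hence $f_n\equiv 0$ almost everywhere for $n\neq 0$, so all nonzero Fourier coefficients of $\mu_x$ vanish for a.e. $x$, which forces $\mu_x=\mathrm{vol}_\TT$ for a.e. $x$. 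Therefore $\mu=\mathrm{vol}_{\TT^{d-1}}\times\mathrm{vol}_\TT=\mathrm{vol}_{\TT^d}$, as desired.

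The main technical ingredients are all classical and off-the-shelf: subadditivity of metric entropy under a product of factor maps; uniqueness of the MME for Anosov toral automorphisms; and absence of nontrivial eigenfunctions for $T_A$ (Lebesgue spectrum). I do not expect any serious obstacle; the one place that requires mild care is verifying that the fiber cocycle identity $(R_\theta)_*\mu_x=\mu_{T_A x}$ holds almost everywhere, which follows from essential uniqueness of the disintegration of $\mu$ applied both to $\mu$ and to $(T_A\times R_\theta)_*\mu=\mu$.
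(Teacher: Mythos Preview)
Your proof is correct. The first step, identifying both marginals of a measure of maximal entropy, matches the paper's argument exactly. The second step is where you diverge: the paper observes that any measure of maximal entropy is a joining of $(T_A,\mathrm{vol}_{\TT^{d-1}})$ and $(R_\theta,\mathrm{vol}_\TT)$, and then invokes Furstenberg's theorem that zero-entropy systems are disjoint from Bernoulli systems to conclude the joining is the product. You instead disintegrate over the base and use the eigenfunction equation $f_n\circ T_A = e^{-2\pi i n\theta}f_n$ together with the Lebesgue spectrum of $T_A$ to kill all nonzero Fourier coefficients. Your route is more elementary and self-contained, avoiding the disjointness machinery; the paper's route is a one-line appeal to a general ergodic-theoretic principle, which has the advantage of generalizing immediately (e.g., to any zero-entropy extension of a Bernoulli base). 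Both are valid and roughly equal in difficulty.
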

\begin{proof} This lemma is probably well-known; we sketch the proof.  The projection of the measure of maximal entropy $\nu$ for $T_A\times R_\theta$ to $\TT^{d-1}$ is the measure of maximal entropy for $T_A$, which is volume.   On the other hand, the projection of $\nu$ to the circle  $\TT$ is $R_\theta$-invariant, and hence is Lebesgue measure.  Therefore $\nu$ must be $\mathrm{vol}_{\TT^{d}}$, since zero entropy systems are disjoint from  Bernoulli systems (cf. \cite{F67}).\end{proof}



The following  proposition is a corollary of Proposition \ref{prop: coh rig ph act}. 
\begin{prop}\label{prop: uque mx etrpy msur}  Let $f$ satisfy the hypotheses of Theorem~\ref{main: thm pr}, and let  $G_0, G\subset \Diff(\TT^{d})$ be the finitely generated abelian groups defined  in Section \ref{sec: start ass}. If $G$ defines a maximal linear Anosov action, then the  volume $\mathrm{vol}_{\TT^d}$ is the unique measure of maximal entropy of $f$.
\end{prop}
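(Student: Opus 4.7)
The plan is to combine three ingredients: the H\"older conjugacy of Proposition~\ref{coro: hold rig} (to produce a unique measure of maximal entropy for $f$), Proposition~\ref{prop: eq state vol} (to identify $\mathrm{vol}_{\TT^d}$ as an equilibrium state for $-\log J^u(f)$), and the cocycle rigidity of Proposition~\ref{prop: coh rig ph act} (to match the two integrals against $\mathrm{vol}$ and the MME).

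First I would apply Proposition~\ref{coro: hold rig}(2) to obtain a H\"older conjugacy $\Phi\colon \TT^d\to \TT^d$ with $\Phi\circ \tilde f = f\circ\Phi$, where $\tilde f := T_{A_f}\times R_\theta$ and $\theta\notin \QQ/\ZZ$. Since a H\"older conjugacy preserves topological entropy and induces an entropy-preserving bijection on invariant probability measures, Lemma~\ref{lemma: uniq max ent} shows that $f$ has a unique measure of maximal entropy $\mu_f := \Phi_*\mathrm{vol}_{\TT^d}$. Simultaneously, Proposition~\ref{coro: hold rig}(1) furnishes a $G_0$-invariant continuous metric on $E^c_f$ with respect to which $Df|_{E^c_f}$ is an isometry, so the central Lyapunov exponent of $f$ vanishes for every invariant measure. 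Proposition~\ref{prop: eq state vol} then yields $h_{\mathrm{vol}}(f) = \int \log J^u(f)\,d\mathrm{vol}_{\TT^d}$.

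The core step is to show $\int \log J^u(f)\,d\mu_f = \int \log J^u(f)\,d\mathrm{vol}_{\TT^d}$. By Lemma~\ref{lemma: PHcent} and Proposition~\ref{prop: key pr}, each bundle $E^i_f$ is $G_0$-invariant, so $\beta^i_g(x) := \log|\det Dg|_{E^i_f(x)}|$ defines a H\"older cocycle over the $G_0$-action on $\TT^d$. Transporting via $\Phi$, I obtain H\"older cocycles $\tilde\beta^i_g := \beta^i_g\circ\Phi$ over the irrational rotation extension $\al$ of the maximal linear Anosov action of $G$, which is exactly the setting of Proposition~\ref{prop: coh rig ph act}. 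That proposition then provides a transfer function so that $\tilde\beta^i_g$ is cohomologous to a cocycle $(x,t)\mapsto b^i_g(t)$ depending only on $t\in \TT$. Because $\tilde f$ acts on the circle fiber as the irrational rotation $R_\theta$, unique ergodicity forces every $\tilde f$-invariant probability on $\TT^d$ to project to $\mathrm{vol}_\TT$ under $\pi^\TT\colon \TT^d\to \TT$; applying this to both $\mathrm{vol}_{\TT^d}$ and $\Phi^{-1}_*\mathrm{vol}_{\TT^d}$ (both $\tilde f$-invariant) and using that cohomologous cocycles have equal integrals against any invariant measure gives
\[
\int \log|\det Df|_{E^i_f}|\,d\mu_f = \int \tilde\beta^i_f\,d\mathrm{vol}_{\TT^d} = \int_\TT b^i_f\,d\mathrm{vol}_\TT = \int \tilde\beta^i_f\,d(\Phi^{-1}_*\mathrm{vol}_{\TT^d}) = \int \log|\det Df|_{E^i_f}|\,d\mathrm{vol}_{\TT^d}.
\]
Summing over the indices $i$ with positive Lyapunov exponent yields the desired equality.

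With this equality in hand, Ruelle's inequality applied to the MME $\mu_f$ and Pesin's formula applied to $\mathrm{vol}$ combine into the chain
\[
h_{top}(f) = h_{\mu_f}(f) \leq \int \log J^u(f)\,d\mu_f = \int \log J^u(f)\,d\mathrm{vol}_{\TT^d} = h_{\mathrm{vol}}(f) \leq h_{top}(f),
\]
forcing all inequalities to be equalities; thus $\mathrm{vol}_{\TT^d}$ is a measure of maximal entropy, and by uniqueness it is \emph{the} MME. The main obstacle I anticipate is verifying that after conjugating by the merely H\"older $\Phi$, the cocycles $\tilde\beta^i_g$ remain H\"older on $\TT^d$ and that the conjugated $G_0$-action matches precisely the irrational rotation extension to which Proposition~\ref{prop: coh rig ph act} applies; the near-Lipschitz control on $\Phi$ from Proposition~\ref{prop: key pr}(2) together with the H\"olderness of the dominated splitting $\{E^i_f\}$ should be what makes this go through.
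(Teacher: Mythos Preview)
Your proposal is correct and follows essentially the same route as the paper: both combine the H\"older conjugacy of Proposition~\ref{coro: hold rig}, the equilibrium-state identification of $\mathrm{vol}_{\TT^d}$ from Proposition~\ref{prop: eq state vol}, the cocycle rigidity of Proposition~\ref{prop: coh rig ph act}, and the uniqueness in Lemma~\ref{lemma: uniq max ent}. The only organizational difference is that the paper stays in the $f$-system and uses the $C^1$ projection $\mathrm{Pr}^c$ along $\W^H$ to show that $\int\varphi\,d\mu$ is the \emph{same} for \emph{every} $f$-invariant $\mu$ (reducing to unique ergodicity of $f|_{\W^c_f(x_0)}$), which immediately identifies equilibrium states with MMEs; you instead transport via $\Phi$ to the algebraic model, compare only the two measures $\mu_f$ and $\mathrm{vol}_{\TT^d}$, and close the argument with Ruelle's inequality---a slightly longer but equally valid arrangement of the same ideas.
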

\begin{proof}By the discussion in Section \ref{sec: e stat pres} and Proposition~\ref{coro: hold rig} we know that $\mathrm{vol}_{\TT^d}$ is an equilibrium state of the potential $\varphi:=-\log J^u(f)$ for $f$. We define the cocycle $\beta:=-\log J^u$ over the action of  $G_0$  as follows. For $f_1\in G_0,x\in \TT^{d}$, we set
$$\beta(f_1,x):=-\log|\det Df_1|_{E^u_f(x)}|.$$
Clearly $\beta$ is a cocycle over the action of $G_0$, and $\beta(f,x)=\varphi(x)$, for all $x$.

The action of $G_0$ is H\"older conjugate to the algebraic action $\al$ defined in Section \ref{sec: cocyc rig}. By Proposition \ref{prop: coh rig ph act} we know any H\"older continuous cocycle over $\al$ is cohomologous to a cocycle that is constant on $\TT^{d-1}$. Therefore $\beta$ must be cohomologous to a cocycle that is constant on each horizontal $\W^H-$leaf. In particular, there exist continuous functions $\psi, \Psi: \TT^{d}\to \RR, $ such that
\begin{equation}\label{eqn: coh eqn varphi}
\varphi=\psi + \Psi\circ f -\Psi,
\end{equation}and $\psi(x)=\psi(y)$ whenever $x,y$ lie in the same $\W^H-$leaf. 

As in the proof of Proposition~\ref{coro: hold rig}, we denote by $\W^c_f(x_0)$ a $G_0-$fixed center leaf, and let $\mathrm{Pr}^c: \TT^{d}\to \W^c_f({x_0}) $ be the projection along the horizontal foliation $\W^H$. Then $\psi$ defined in \eqref{eqn: coh eqn varphi} induces a well-defined continuous function $\psi^c$ on $\W^c_f(x_0)$ such that 
\begin{equation}\label{eqn: def psi c}
\psi=\psi^c\circ \mathrm{Pr}^c.
\end{equation}

Now we claim that for any $f-$invariant measure $\mu$, $\int_{\TT^d} \varphi\, d\mu$ is independent of $\mu$.  Indeed
 \begin{eqnarray*}
\int_{\TT^{d}}\varphi \, d\mu&=&\int_{\TT^{d}} (\psi+\Psi\circ f-\Psi) \,d\mu~~\text{(by \eqref{eqn: coh eqn varphi})}= \int_{\TT^{d}}\psi\, d\mu ~~\text{(since $\mu$ is $f-$invariant)}
\\
&=&\int_{\W^c_f(\bar{x_0})}\psi^c d \mathrm{Pr}^c_\ast(\mu)~~\text{(since $\psi$ is constant along each horizontal leaf)}.
\end{eqnarray*} 
But $f|_{\W^c_f(x_0)}$ is uniquely ergodic, and $\mathrm{Pr}^c_\ast(\mu)$ is $f-$invariant on $\W^c_f(x_0)$. Then the integral $\int_{\W^c_f(x_0)}\psi^c \,d \mathrm{Pr}^c_\ast(\mu)$  (and hence $\int_{\TT^{d}}\varphi \,d\mu$) is independent of $\mu$.  Write $s(\varphi)$ for the value $\int_{\TT^{d}}\varphi\, d\mu$ of this integral.

Since $\mathrm{vol}_{\TT^{d}}$ is an equilibrium state of the potential $\varphi$, we have that 
\begin{eqnarray*}
P_{\mathrm{vol}}(\varphi)
&=&\sup_{\mu~\text{is }f-inv}h_\mu(f)+\int_\mu\varphi\\
&=&\sup_{\mu~\text{is }f-inv}h_\mu(f)+s(\varphi)~~\text{(since $\int_\mu\varphi=s(\varphi)$, which is independent of $\mu$).}
\end{eqnarray*}
But $P_{\mathrm{vol}}(\varphi)=h_{\mathrm{vol}}(f)+\int_{\mathrm{vol}} \varphi=h_{\mathrm{vol}}(f)+s(\varphi).$ Therefore $h_{\mathrm{vol}}(f)=\sup_{\mu~\text{is }f-inv}h_\mu(f)$, which implies $\mathrm{vol}_{\TT^{d}}$ is a measure of maximal entropy of $f$. But by Proposition~\ref{coro: hold rig} we know $f$ is conjugate to  $T_{A_f}\times R_\theta$, for some $\theta\notin \QQ$, therefore by Lemma \ref{lemma: uniq max ent}, $\mathrm{vol}_{\TT^d}$ is the unique measure of maximal entropy of $f$.\end{proof}
As a corollary, the conjugacy between $f$ and $T_{A_f}\times R_\theta$ identifies the measure of maximal entropy $\mathrm{vol}_{\TT^{d}}$ of $T_{A_f}\times R_\theta$  with the measure of  maximal entropy $\mathrm{vol}_{\TT^{d}}$ of $f$.  Recall that $\mathrm{vol}_{\TT^{d}}$, the measure of maximal entropy of $T_{A_f}\times R_\theta$ is the product of $\mathrm{Pr}^{\TT}_\ast(\mathrm{vol}_{\TT^{d}})$ and $\mathrm{Pr}^{\TT^{d-1}}_\ast(\mathrm{vol}_{\TT^{d}})$. Therefore $\mathrm{vol}_{\TT^d}$, the measure of maximal entropy of $f$, is the product of $\mathrm{Pr}^{c}_\ast(\mathrm{vol}_{\TT^d})$ and $\mathrm{Pr}^{H}_\ast(\mathrm{vol}_{\TT^d})$, where $\mathrm{Pr}^{H}$ is the projection from $\TT^d$ to $\TT^d/\W^c_f$ along $\W^c_f$. 

In particular, since  $\mathrm{Pr}^{c}_\ast(\mathrm{vol}_{\TT^d})$ is absolutely continuous with respect to the Lebesgue measure on $\W^c_f(\bar{x_0})$ (since $\mathrm{Pr}^c$ is $C^1$!), it follows that $\mathrm{vol}_{\TT^d}$ has Lebesgue disintegration along $\W^c_f$. This  completes the proof of Proposition~\ref{lemma: G high rank ac}, which implies  Theorem~\ref{main: thm pr}.

 \section{Proof of Theorem \ref{main: dich}}\label{sec: pf Thm dich}
  
 Let $f_0$ be as in Theorem \ref{main: dich}. Let $f\in \Diff^\infty_{\mathrm{vol}}({\TT^d})$ be a $C^1-$small ergodic perturbation of $f_0$.   
Denote by $\lambda^i(f_0)$ the distinct Lyapunov exponents of $f_0$ (ordered in $i$ by decreasing size) and   by  $T\TT^d=\oplus E^i_{f_0}\oplus E^c_{f_0}$ the corresponding $Df_0-$invariant Lyapunov splitting. Let $T\TT^d=\oplus E^i_f\oplus E^c_f$ be the corresponding $Df-$invariant dominated splitting.  
\begin{lemma}\label{lemma: f narr band} If $d_{C^1}(f,f_0)$ is sufficiently small then the cocycles $Df^{-1}|_{E^u_f}, Df|_{E^s_f}$ satisfy the narrow band condition defined in Section \ref{sec: normal form}.
\end{lemma}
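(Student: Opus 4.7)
The plan is to verify the narrow band condition for $f_0$ itself and then invoke Lemma \ref{lem: NBS Stable}, which asserts persistence of the narrow band property under $C^1$-small perturbations.

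For the first step, observe that the coordinate projection $\pi\colon \TT^d \to \TT^{d-1}$ intertwines $f_0$ with $T_A$, and since $E^c_{f_0}=\ker d\pi$ is transverse to $E^s_{f_0}$, the restriction $d\pi|_{E^s_{f_0}}$ is a continuous bundle isomorphism from $E^s_{f_0}$ onto the pullback $\pi^{\ast}(E^s_A)$, conjugating the cocycle $Df_0|_{E^s_{f_0}}$ over $f_0$ with the pullback of the cocycle $A|_{E^s_A}$ over $T_A$. Since such a continuous bundle conjugacy preserves the Mather spectrum (the Lyapunov exponents over every invariant ergodic measure are preserved, and the Mather spectrum can be recovered from them), the spectrum of $Df_0|_{E^s_{f_0}}$ equals that of $A|_{E^s_A}$; the latter is the point spectrum consisting of the distinct values $\log|\alpha|$ for $\alpha$ a stable eigenvalue of $A$. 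Consequently $\lambda_i^s(f_0) = \mu_i^s(f_0)$ for each $i$, and the narrow band inequality $\mu_i^s(f_0) + \mu_{\ell_s}^s(f_0) < \lambda_i^s(f_0)$ collapses to $\mu_{\ell_s}^s(f_0) < 0$, which holds automatically since every stable Lyapunov exponent is negative. The same argument applied on the unstable side establishes narrow band spectrum for $Df_0^{-1}|_{E^u_{f_0}}$. Thus $f_0$ has narrow band spectrum in the sense of Definition \ref{def:narrow band and bunched partially hyperbolic}.

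Lemma \ref{lem: NBS Stable} now immediately concludes the proof: for $f$ sufficiently $C^1$-close to $f_0$, both $Df|_{E^s_f}$ and $Df^{-1}|_{E^u_f}$ have narrow band spectrum. If one prefers a direct argument without citing Lemma \ref{lem: NBS Stable}, upper semi-continuity of the Mather spectrum under $C^1$-perturbations forces each spectral annulus $[\lambda_i^s(f), \mu_i^s(f)]$ (and its unstable analogue) to lie in an arbitrarily small neighborhood of the corresponding point in the spectrum of $f_0$ as $d_{C^1}(f,f_0)\to 0$, so the strict inequalities established for $f_0$ persist. There is no substantive obstacle; the whole argument reduces to the observation that the unperturbed model has point spectrum with a uniformly negative rightmost element, which remains a narrow band under sufficiently small perturbations.
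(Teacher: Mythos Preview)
Your proof is correct and follows essentially the same approach as the paper: both observe that $f_0$ has point Mather spectrum (you justify this more explicitly via the bundle conjugacy $d\pi|_{E^s_{f_0}}$ to the linear cocycle, whereas the paper simply asserts it is clear), and both then use continuity of the dominated splitting and Mather spectrum under $C^1$ perturbation to conclude the narrow band condition for $f$. Your invocation of Lemma~\ref{lem: NBS Stable} is exactly the mechanism the paper has in mind when it says the spectrum of each $Df|_{E^i_f}$ lies in an arbitrarily small band.
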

\begin{proof}
It is clear that 
the cocycles $Df_0^{-1}|_{E^u_{f_0}}, Df_0|_{E^s_{f_0}}$ have point Mather spectrums.
If $d_{C^1}(f,f_0)$ is small then $E^i_f$ is close to $E^i_{f_0}$
and therefore the Mather spectrum of $Df|_{E^i_f}$ for each $i$ is contained in an arbitrarily small narrow band, which implies Lemma \ref{lemma: f narr band}.
\end{proof}

Since $f$ is leaf conjugate to $f_0$, there is an $f-$fixed center leaf $W^c_f(x_0)$. As in the proof of Proposition~\ref{lemma: prpty G0 G},   for any $s\geq 1$, $\Z_{s}(f)$ is virtually $G_0$, where $$G_0:=\{h\in \Z_{s}(f): h\text{ preserves the orientation of }\W^c_f,\text{ and } h(\W^c_f(x_0))=\W^c_f(x_0)\}.$$   By Proposition~\ref{lemma: dich atom rot} there is a H\"older continuous fiber bundle $\pi: \TT^d\to \TT^{d-1}$  such that $\pi\circ f=T_{A_f}\circ \pi$, and the fibers of $\pi$ are leaves of $\W^c_f$. For any $h\in G_0$, $h$ preserves the fiber bundle structure, and there is an automorphism $T_{A_h}\colon\TT^{d-1}\to \TT^{d-1}$ such that $\pi\circ h=T_{A_h}\circ \pi$. As in the proof of Proposition~\ref{lemma: prpty G0 G}, we consider the group $\Z^c$  of center-fixing elements in $G_0$ and we let $G=\{A_h:  h\in G_0 \}$. Then $G_0$ is a group extension of $G$ by $\Z^c$.

 In the case that $\W^c_f$ is a smooth foliation, the volume has a smooth disintegration along $\W^c_f$, and  $f$ is smoothly conjugate to an ergodic smooth isometric extension of $g$ such that $\rho$ is homotopic to identity. We have the following lemma for $g$ and $\rho$:
 \begin{lemma}\label{lemma: iso ext classsify} Let $r$ be as in Theorem \ref{main: dich}. If $d_{C^1}(f,f_0)$ is sufficiently small and $\W^c_f$ is a smooth foliation, then one of the following holds.
\begin{enumerate}
 \item  $\Z_{s}(f)$ is virtually $\ZZ\times \TT$ for every $s\geq r$. In this case, either $g$ is not $C^\infty$ conjugate to $T_{A_f}$, or $\rho$ is not $C^\infty$ cohomologous to a constant.
\item $\Z_{s}(f)$ is virtually $\ZZ^{\ell_0(A_f)}\times \TT$ for every $s\geq 1$, and $g_\rho$ is $C^\infty$ conjugate to $T_{A_f}\times R_\theta$.
\end{enumerate} \end{lemma}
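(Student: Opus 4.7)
The plan is to reduce the classification of $\Z_s(f)$ to a global rigidity statement for the base $g$ together with a cocycle rigidity argument for the fiber cocycle $\rho$. Since $\W^c_f$ is smooth, the smooth quotient $\TT^d/\W^c_f\cong \TT^{d-1}$ realizes $f$ as a smooth isometric extension $g_\rho$ of a smooth Anosov diffeomorphism $g$. The fiber-rotation action embeds $\TT\hookrightarrow \Z_\infty(g_\rho)$ and transfers under the smooth conjugacy to a $\TT$ subgroup of $G_0$; combined with the fact that $G_0$ is abelian (Proposition~\ref{lemma: prpty G0 G}(2)), every $h\in G_0$ must commute with all fiber rotations and hence, in isometric extension coordinates, takes the form $h(x,y)=(\bar h(x),\, y+\tau(x))$ with $\bar h\in \Z_s(g)$ and $\tau\in C^s(\TT^{d-1},\TT)$. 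Fixing the leaf $\W^c_f(x_0)$ and using ergodicity of $f$ identifies the center-fixing subgroup $\Z^c$ with $\TT$, so it suffices to determine the image of $G_0$ in $\Z_s(g)$ and which cocycles $\tau$ arise.

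Next I would apply Corollary~\ref{coro: gl rig toral} to $g$. Since $g$ is a smooth Anosov perturbation of $T_{A_f}$, Lemma~\ref{lem: NBS Stable} gives narrow band spectrum with critical regularity strictly less than $r$, and Corollary~\ref{coro: gl rig toral} yields a dichotomy: either $\Z_s(g)$ is virtually trivial for every $s\geq r$, or $g$ is $C^\infty$ conjugate to $T_{A_f}$. In the first case, the image of $G_0$ in $\Z_s(g)$ is virtually $\ZZ$, so $G_0$ is an abelian extension of $\ZZ$ by $\TT$, hence virtually $\ZZ\times \TT$; this yields conclusion~(1) with the first disjunct ($g$ not $C^\infty$ conjugate to $T_{A_f}$).

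In the remaining case a further smooth conjugacy reduces to $g=T_{A_f}$ and $f=(T_{A_f})_\rho$, and the analysis splits according to whether $\rho$ is $C^\infty$ cohomologous to a constant. If it is, the smooth transfer function conjugates $f$ to $T_{A_f}\times R_\theta$ with $\theta\notin \QQ/\ZZ$ forced by ergodicity; a direct computation using Lemmas~\ref{lemma: cent linear Ansv} and~\ref{lemma: rank cent}, together with the constraint of fixing $\W^c_f(x_0)$, will show that $\Z_s(T_{A_f}\times R_\theta)$ is virtually $\ZZ^{\ell_0(A_f)}\times \TT$ for all $s\geq 1$, giving conclusion~(2). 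If $\rho$ is not $C^\infty$ cohomologous to any constant, then for any $h=(T_{A_h})_\tau\in G_0$, were $\langle T_{A_f},T_{A_h}\rangle$ higher rank, Lemma~\ref{lemma: R-val cocy rig tori} would give a H\"older coboundary representation $\rho=\beta-\beta\circ T_{A_f}+\theta_f$, and smooth Liv\v sic regularity applied to the Anosov $T_{A_f}$ with $C^\infty$ coboundary $\rho-\theta_f$ would upgrade $\beta$ to $C^\infty$, contradicting the hypothesis on $\rho$. Hence by Lemma~\ref{lemma: lnr high ran} and the irreducibility of $A_f$, $T_{A_h}$ is virtually a power of $T_{A_f}$, so $G$ is virtually $\ZZ$, and $G_0$ is virtually $\ZZ\times \TT$, completing conclusion~(1).

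The hard part will be the bookkeeping needed to pin down the virtual structure exactly in the product case: having reduced to $f=T_{A_f}\times R_\theta$, one must verify that the constraint of fixing the leaf $\W^c_f(x_0)$ determines the rational translation part of each affine $\bar h\in \Z_\infty(T_{A_f})$, so that the centralizer is exactly virtually $\ZZ^{\ell_0}\times \TT$ rather than an a priori larger semidirect product involving all of $\QQ^{d-1}/\ZZ^{d-1}$. A secondary subtlety is that the application of Lemma~\ref{lemma: R-val cocy rig tori} delivers only a H\"older $\beta$, and the conclusion $\rho$ cohomologous to a constant in the smooth category requires invoking smooth Liv\v sic regularity; both are standard, and the remainder of the argument is a clean assembly of global rigidity, cocycle rigidity, and Liv\v sic tools already in place.
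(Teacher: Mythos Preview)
Your proposal is correct and follows essentially the same route as the paper: identify $\Z^c$ with the full circle of fiber rotations via Lemma~\ref{lemma: Zc circle grp}, apply Corollary~\ref{coro: gl rig toral} to the base $g$ to obtain the dichotomy, and in the case $g=T_{A_f}$ with $\rho$ not smoothly cohomologous to a constant, use Lemma~\ref{lemma: R-val cocy rig tori} plus Liv\v sic regularity to rule out higher rank in $G$. One small point you glossed over that the paper addresses (somewhat cryptically, via ``considering the induced action on $\pi_1(\TT^d)$''): before you can apply Lemma~\ref{lemma: R-val cocy rig tori} you need $\tau$ to be $\RR$-valued, not just $\TT$-valued; this follows because the commutation relation forces the homotopy class $[\tau]\in\ZZ^{d-1}$ to satisfy $[\tau]\circ A_f=[\tau]$, and hyperbolicity of $A_f$ then gives $[\tau]=0$.
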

\begin{proof}  Fix $s\geq r$.  Let $G(g_\rho), \Z^c(g_\rho)$,  and $G_0(g_\rho)$ be the groups defined in Section~\ref{sec: start ass} for $g_\rho$.

The proof of Lemma \ref{lemma: Zc circle grp} and ergodicity of $g_\rho$ imply that any $h$ commuting with $g_\rho$ is an isometric extension, $\Z_{s}(g_\rho)$ is virtually $G_0(g_\rho)$,  and $G_0(g_\rho)$ is a group extension of $G(g_\rho)$ by $\Z^c(g_\rho)$. Moreover, by the proof of Proposition~\ref{lemma: prpty G0 G},  $\Z^c(g_\rho)=\{\id\times R_\theta, \theta\in \TT\}$, and $G_0(g_\rho)$ is virtually $\ZZ^\ell\times\Z^c$, where $\ell$ is the rank of the finitely generated abelian group $G(g_\rho)$;  to see this, note that in any short exact sequence of  abelian groups: $0\to H\to G_0\to G\to 0$, with $G$ finitely generated, the group $G_0$ is virtually the product of $H$ with the torsion free part of $G$.

 It is not hard too see that $D{g_\rho}|_{E^s_{g_\rho}}$, $D{g_\rho}^{-1}|_{E^u_{g_\rho}}$ have narrow band spectrum if and only if $Dg|_{E^s_g}$ and $Dg^{-1}|_{E^u_g}$ do. 
Since $f$ has narrow band spectrum, and $f$ is smoothly conjugate to $g_\rho$, both $g$ and $g_\rho$ have narrow band spectrum.  
By Corollary \ref{coro: gl rig toral}, $\Z_{s}(g)$ is either virtually trivial or $g$ is smoothly conjugate to $T_A$. The former implies that $G(g_\rho)$ is virtually trivial, hence $\ell=1$ and item (1) of Lemma \ref{lemma: iso ext classsify} holds.

If $g$ is smoothly conjugate to $T_{A_f}$, then without loss of generality we may assume that $g=T_{A_f}$. If $\rho$ is smoothly cohomologous to a constant $\theta$, then by ergodicity $\theta\notin \QQ/\ZZ$, and item (2) of Lemma \ref{lemma: iso ext classsify} holds.

We claim now that if $g=T_{A_f}$, and $\rho$  is not $C^\infty$ cohomologous to a constant then $\ell=1$ for any $s\geq 1$. 
Suppose $\ell>1$ for some $s\geq 1$.
By taking a finite iterate if necessary, we can assume that  there is an isometric extension $(T_{B})_{\rho_B}$ (a priori $C^s$) such that $(T_{B})_{\rho_B}$ commutes with $g_\rho=(T_{A_f})_\rho$, and the group generated by ${A_f},B$ is not virtually trivial. Using commutativity, by considering the induced action of $(T_{B})_{\rho_B}, (T_{A_f})_\rho$ on $\pi_1(\TT^d)$, we get that $\rho_B$ is cohomologous to a constant, which can be viewed as a function on $\TT^{d-1}$.  By Lemma \ref{lemma: lnr high ran}, the group generated by $T_{A_f}, T_{B}$ on $\TT^n$ is a higher rank action, therefore by Lemma \ref{lemma: R-val cocy rig tori}, $\rho, \rho_B$ are (simultaneously) cohomologous to constants.  By Liv\v sic's theorem the conjugacy is smooth, i.e. $\rho$ is $C^\infty$ cohomologous to a constant, which is a contradiction.
\end{proof}

  
\begin{proof}[Proof of Theorem~\ref{main: dich}]If the disintegration of volume along $\W^c_f$ leaves is not Lebesgue, then Theorem \ref{main: dich} is a corollary of Theorem \ref{main: thm pr}.  Assume that $\mathrm{vol}_{\TT^d}$ has Lebesgue disintegration along $\W^c_f$.  Proposition~\ref{lemma: dich atom rot} implies that one of the following cases holds:

\noindent
\textbf{Case 1:} $f$ is accessible, and the disintegration of $\mathrm{vol}_{\TT^d}$  has a continuous density function on the leaves of $W^c_f$. 
By \cite[Theorem E]{AVW}, there is a volume-preserving flow $\varphi_t$ tangent to and $C^\infty$ along the leaves of $\W^c_f$, commuting with $f$ and satisfying $\varphi_1 = \id$.  Lemma \ref{lemma: Zc circle grp} implies that  $h=\varphi_{\rho(h)}$  for any $h\in \Z^c$, i.e. $\Z^c\subset\{\varphi_t\}_{t\in \TT}$. 
 
Let $D:=\{t\in \TT: \varphi_t\in \Z^c\}$.  There are two possibilities:
\begin{enumerate}
\item{\em $D< \TT$ is discrete.} Then $\Z^c$ is finite.  By Lemma \ref{lemma: rank cent}, the group $G$ is abelian with rank $\ell\leq \ell_0$. 
\begin{enumerate}
\item $\ell<\ell_0$ or $\ell=\ell_0=1$. Since $G_0$ is abelian group extension of $G$ by $\Z^c$, by finiteness of $\Z^c$ we can construct a finite index subgroup of $G_0$ isomorphic to the torsion free part of $G$, which is $\ZZ^\ell$. By the same proof as in Proposition~\ref{lemma: prpty G0 G}, we have that  $\Z_{s}(f)$ is virtually $G_0$, therefore Theorem \ref{main: dich} holds in this case.

\item $\ell=\ell_0>1$.  As in the proof of Theorem \ref{main: thm pr}, we can construct partially hyperbolic elements in all the Weyl chambers of the action of $\Z_0$, which implies that $E^u_f\oplus E^s_f$ is jointly integrable, contradicting the accessiblity of $f$.
\end{enumerate}
 
\item{\em $D< \TT$ is dense.} Lemma \ref{lemma: f narr band} implies that the triple $(f,\varphi_t, X)$ satisfies the hypotheses of Proposition \ref{prop: applic normal form}; applying this result, we obtain that $D=\RR$, $X$ is a $C^\infty$ vector field and so $\varphi_t$ is a $C^\infty$ flow. Therefore $\W^c_f$ is a smooth foliation, and $f$ is smoothly conjugate to   an isometric extension $g_\rho$. Then by Lemma \ref{lemma: iso ext classsify} and accessibility of $f$, item (2) of Theorem \ref{main: dich} holds for $f$.
\end{enumerate}

\noindent
\textbf{Case 2:} $f$ is topologically conjugate to $T_{A_f}\times R_\theta$ for some $\theta\notin \QQ/\ZZ$. Then $E^u_f\oplus E^s_f$ is integrable and tangent to the horizontal foliation $\W^H$. By Lemma \ref{lemma: WH C2}, $\W^H$ is a $C^1$ foliation. 
 
For any $x\in \TT^d$, we denote by $\mathrm{Pr}^c_x$ the projection from $\TT^d$ to $\W^c_f(x)$ along $\W^H$ and let $\mu_x:={\mathrm{Pr}^c_x}_\ast(\mathrm{vol}_{\TT^d})$. Then the family $\{\mu_x, x\in \TT^d\}$ is $f-$invariant, i.e. 
\begin{equation}\label{eqn: finv mux}
(f|_{W^c_f(x)})_\ast \mu_x=\mu_{f(x)}.
\end{equation}
The $C^1-$ness of $\W^H$ implies that  the family of measures $\{\mu_x, x\in \TT^d\}$ along $\W^c_f-$leaves have continuous density functions. 
Therefore $f$ is center $r-$bunched, for all $r>0$, which implies  $\W^c$ has $C^\infty$ leaves, and the stable and unstable holonomies between center leaves are uniformly smooth.   Since $\W^u, \W^s$ have uniformly smooth leaves,  Journ\'e's lemma implies that $\W^H$ has uniformly smooth leaves as well. In summary, $\W^H$ is a smooth foliation.

 Since $\W^c_f$ is absolutely continuous, \cite[Theorem C (1)]{AVW2} implies that there exists a continuous, volume-preserving
flow $\varphi_t$ on $\TT^d$ commuting with $f$ whose generating vector field is tangent to the leaves of $\W^c _f$.  Moreover,  $\varphi_1=\id$ and  $\Z^c\subset\{\varphi_t\}_{t\in \TT}$.

The rest of the proof for Case 2 is similar to that of Case 1. Again we take the set $D:=\{t\in \TT, \varphi_t\in \Z^c\}$, and consider the following cases.
 
\begin{enumerate}
\item{\em $D< \TT$ is discrete.}  Then $\Z^c$ is finite. As in Case 1, we consider the abelian group $G$ which is virtually $\ZZ^\ell, \ell\leq \ell_0$.
\begin{enumerate}
\item $\ell<\ell_0$, or $\ell=\ell_0=1$. then by exactly the same proof as in Case 1 we can prove the conclusion of Theorem \ref{main: dich}.
\item $\ell=\ell_0>1$. First we claim that the action of $\Z_{s}(f)$ on $\TT^d$ is $C^\infty$ (a priori it is only    $C^s$).    For any $g\in \Z_{s}(f)$, $g$ preserves the smooth density on $\W^c$ (induced by $\{\mu_x, x\in  \TT^d\}$). 
Since   $s\geq r >r_0(A)=\max(\frac{\lambda^s}{\mu^s},\frac{\lambda^u}{\mu^u})$,   Lemma~\ref{lem: NBS Stable} implies that  if $f$ is $C^1-$close to $f_0$,  then  $f$  preserves a $C^\infty$ normal form, and  $r(f)<r\leq s$.
Theorem \ref{thm: normal form} then implies that $g$ also preserves the smooth normal form on $\W^u_f$ and $\W^s_f$, which implies that $g$ is uniformly smooth along $\W^s_f$ and $\W^u_f$. Therefore by Journ\'e's lemma, $g$ is uniformly smooth. So the action by $G_0$ is smooth and volume preserving on $\TT^d$. Since $G$ has rank $\ell_0>1$, following the proof of Theorem \ref{main: thm pr}, we can construct partially hyperbolic elements in all the Weyl chambers of the action of $G_0$.  Then the global rigidity result in \cite{DWX} implies that the action of $G_0$ is rigid (see Proposition \ref{glob_rig} in the the Appendix). Thus $f$ is smoothly conjugate to $T_{A_f}\times R_\theta$ for some $\theta\notin\QQ$. 
\end{enumerate}
 
\item{\em $D< \TT$ is dense.}  By the same proof as in Case 1, we obtain that $f$ is smoothly conjugate to an isometric extension.
Then one the two alternatives in Lemma \ref{lemma: iso ext classsify} imply the alternatives (2) and (3) in Theorem \ref{main: dich}. 
\end{enumerate}
 This completes the proof of Theorem~\ref{main: dich}.
 \end{proof}

\appendix
\section{Global rigidity of conservative partially hyperbolic abelian actions on the torus}\label{appendix} We state here the main result in \cite{DWX}, which plays a crucial role in the proof of Theorem \ref{main: dich}. The setting is as follows. Suppose $\al:\ZZ^k\to \Diff_{\mathrm{vol}}^\infty(\TT^d)$ is a smooth, volume preserving ergodic abelian action. We assume that there exists at least one $a\in \ZZ^k$ such that $\al(a)$ is a fibered partially hyperbolic diffeomorphism and all the partially hyperbolic elements of $\al$ preserve a common circle center foliation $\W^c$.

As explained in  Section \ref{sec: HR act}, the distribution $E^H:=E^u_a\oplus E^s_a$ for a partially hyperbolic element $\al(a)$ is $\alpha-$invariant, and we consider the Lyapunov functionals $\chi_i $ and the hyperbolic Weyl chamber picture induced by the cocycle $D\al|_{E^H}$ with respect to $\mathrm{vol}_{\TT^d}$.

\begin{maintheorem}\label{main: gl 1d cent}\cite{DWX} Assume that each hyperbolic Weyl chamber for $\alpha$ contains a partially hyperbolic element. Suppose that  there is no pair of Lyapunov functionals $\chi_i, \chi_j$ and  $c\in (-\infty, \frac{1}{2}]\cup [2,\infty)$ such that $\chi_i=c\chi_j$.  Then $\al$ is smoothly conjugate to the product of an affine Anosov action on $\TT^{d-1}$ with an action by rotations on $\TT^1$.
\end{maintheorem}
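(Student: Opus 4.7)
The plan has three steps: reduction to the algebraic model on the base $\TT^{d-1}$, forcing (Lebesgue and then smooth) disintegration of volume along $\W^c$, and trivializing the resulting smooth circle extension by cocycle rigidity.

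First, since every partially hyperbolic element of $\al$ preserves the common circle fibration $\W^c$, the action descends to a smooth $\ZZ^k$-action $\bar\al$ on $\TT^d/\W^c\cong \TT^{d-1}$. The hypothesis that every hyperbolic Weyl chamber contains a partially hyperbolic element makes $\bar\al$ an Anosov $\ZZ^k$-action whose Lyapunov functionals project onto the coarse functionals $\chi_i$; the spacing condition on the $\chi_i$ prevents any rank-one factor, so $\bar\al$ is higher rank in the sense of Section~\ref{sec: HR act}. Theorem~\ref{main: gl rig anosov} then produces a $C^\infty$ conjugacy of $\bar\al$ to an affine Anosov action, while Theorem~\ref{main: HPS} gives dynamical coherence of each partially hyperbolic element and a leaf conjugacy to the algebraic model.

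Next, fix a partially hyperbolic $a_0\in\ZZ^k$. If $E^u_{a_0}\oplus E^s_{a_0}$ is not integrable, then the AVW dichotomy \cite{AVW,AVW2} leaves only atomic or Lebesgue disintegration of volume along $\W^c$; the atomic case would, as in the proof of Theorem~\ref{main: thm pr} via Lemma~\ref{lemma:fix center}, force the centralizer to be virtually $\ZZ\times\Z^c$ with $\Z^c$ finite, contradicting the existence of a partially hyperbolic element of $\al$ in every hyperbolic Weyl chamber. If instead joint integrability holds, the monodromy analysis of Lemma~\ref{lemma: top rig integrab} forces a product structure directly. In either case AVW produces a continuous volume-preserving flow $\varphi_t$ tangent to $\W^c$ that commutes with every $\al(a)$. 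Because the spacing condition on the $\chi_i$ is precisely the $2$-bunched spectrum condition of Definition~\ref{def:narrow band and bunched partially hyperbolic}, Theorem~\ref{thm: normal form} produces $C^\infty$ non-stationary normal forms on $\W^s_{a_0}$ and $\W^u_{a_0}$ preserved by every centralizer element, and Proposition~\ref{prop: applic normal form} applied to $(\al(a_0),\varphi_t,\dot\varphi)$ then upgrades $\varphi_t$ to a $C^\infty$ flow and $\W^c$ to a $C^\infty$ foliation.

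Thus $\al$ is smoothly conjugate to an isometric circle extension of $\bar\al$ by a smooth cocycle $\mathbf a\mapsto\rho_{\mathbf a}\colon\TT^{d-1}\to\TT$. Since $\bar\al$ is higher rank Anosov, the cocycle rigidity of Lemma~\ref{lemma: R-val cocy rig tori}, applied simultaneously to all generators of $\ZZ^k$, produces a single smooth $\beta\colon\TT^{d-1}\to\TT$ with $\rho_{\mathbf a}+\beta\circ\bar\al(\mathbf a)-\beta$ constant for every generator $\mathbf a$; conjugating by $(x,y)\mapsto(x,y+\beta(x))$ converts $\al$ into the product of the affine Anosov action with an action by rotations. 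The most delicate step is the regularity bootstrap in Proposition~\ref{prop: applic normal form}: one needs the set of $t$ for which $\varphi_t$ has regularity above the critical exponent $r(\al(a_0))$ to be dense in $\RR$, and this is where the global spacing condition on all the $\chi_i$ (rather than only pointwise $2$-bunching of $\al(a_0)$) is essential, since one must exploit several commuting partially hyperbolic elements with different spectra to simultaneously preserve the normal forms and thereby saturate the centralizer with $C^\infty$ elements. A secondary obstacle is ruling out nontrivial finite fiber monodromy in the jointly integrable case when $d\geq 3$, which requires a higher-rank analog of the abelian-by-cyclic argument in Lemma~\ref{lemma: top rig integrab}.
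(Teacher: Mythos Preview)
The paper does not prove this statement: it is quoted from the companion paper \cite{DWX} and used as a black box, with only Proposition~\ref{glob_rig} verifying that its hypotheses hold in the specific situation arising in Section~\ref{sec: pf Thm dich}. So there is no in-paper argument to compare your sketch against.

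That said, your outline has real gaps. The most serious is the elimination of atomic disintegration: Lemma~\ref{lemma:fix center} only forces the center-\emph{fixing} subgroup $\Z^c$ to be finite, while $G_0$ is an extension of $G$ by $\Z^c$; nothing prevents $G$ from having full rank with $\Z^c$ finite, so ``atomic $\Rightarrow$ centralizer virtually $\ZZ$'' is simply false. In this paper the implication ``partially hyperbolic elements in every chamber $\Rightarrow$ Lebesgue disintegration'' is exactly Proposition~\ref{lemma: G high rank ac}, whose proof occupies most of Section~\ref{sec: proof Cartan} and goes through joint integrability of $E^s\oplus E^u$ (the inductive Pesin argument of Proposition~\ref{prop: key pr}), a H\"older conjugacy to a product, and then thermodynamic formalism plus leafwise cocycle rigidity to identify volume with the unique measure of maximal entropy; the AVW dichotomy by itself does not do this work. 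A second gap is circularity in Step~1: you apply Theorem~\ref{main: gl rig anosov} to $\bar\al$ on $\TT^d/\W^c$, but until $\W^c$ is known to be $C^\infty$ this quotient action is only H\"older, and Rodriguez Hertz--Wang requires a smooth Anosov action. Finally, the spacing condition on the $\chi_i$ (what Proposition~\ref{glob_rig} calls the $\tfrac12$-pinching condition) is not the same as $2$-bunched spectrum for a single element: it constrains ratios between \emph{proportional} functionals, whereas bunching constrains ratios of exponent values at a fixed $a_0$, and non-proportional functionals can yield arbitrarily spread exponents at some $a_0$. Your invocation of Proposition~\ref{prop: applic normal form} therefore does not go through as stated, and the perturbative Lemma~\ref{lemma: top rig integrab} is likewise unavailable in this global (non-perturbative) setting.
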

We  now verify that Theorem \ref{main: gl 1d cent} applies to the action in the proof of Theorem \ref{main: dich}.

\begin{prop}\label{glob_rig}The action of $G_0$ in (1)(b) of  Case 2 in Section \ref{sec: pf Thm dich} satisfies all the conditions in Theorem \ref{main: gl 1d cent}. Therefore $G_0$ is smoothly conjugate to a product of a linear Anosov action on $\TT^{d-1}$ and a rotation action on $\TT^1$. In particular, in (1)(b) of Case 2  in Section \ref{sec: pf Thm dich}, $f$  is smoothly conjugate to $A_f\times R_\theta$ for some $\theta\notin \QQ/\ZZ$.
\end{prop}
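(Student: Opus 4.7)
The plan is to verify the four hypotheses of Theorem~\ref{main: gl 1d cent} for the $G_0$-action on $\TT^d$, then apply the theorem and translate its conclusion back to a smooth conjugacy for $f$.

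The easier hypotheses follow from work already done in case~(1)(b). Smoothness of every element of $G_0$ was established earlier using the narrow band spectrum of $f$ (Lemma~\ref{lemma: f narr band}), the normal form theory of Theorem~\ref{thm: normal form}, and Journ\'e's lemma; volume preservation follows from ergodicity of $f$ via Lemma~\ref{lemma: g pr vol}; and $f \in G_0$ is ergodic by assumption. Moreover $f$ is partially hyperbolic with $1$-dimensional circle center foliation $\W^c_f$ (Proposition~\ref{lemma: dich atom rot}), and by Proposition~\ref{prop: gwc=wc} every element of $G_0$ preserves $\W^c_f$.

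For the Weyl chamber condition, Lemma~\ref{lemma: share same wall} asserts that the hyperbolic Weyl chamber picture of the $G_0$-action on $E^H$ coincides with that of the maximal linear Anosov action of $G$ on $\TT^{d-1}$; this step uses the standing hypothesis of case~(1)(b). For any $h \in G_0$ whose projection $A_h$ lies in the interior of a Weyl chamber of $G$, I will repeat the argument of Proposition~\ref{lemma: exst PH}: the bi-H\"older estimates of Proposition~\ref{prop: key pr} together with the center-exponent bound~\eqref{eqn: est fin ec h} (applied to the $G_0$-invariant decomposition $T\TT^d = \bigoplus E^i_f \oplus E^c_f$) force $h$ to be partially hyperbolic, with hyperbolic subbundle matching that of $f$. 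Thus each hyperbolic Weyl chamber of $G_0$ contains a partially hyperbolic element.

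The most delicate step is the non-resonance condition: no pair of Lyapunov functionals for $D\alpha|_{E^H}$ satisfies $\chi_i = c\chi_j$ with $c \in (-\infty, 1/2] \cup [2, \infty)$. Since $\pi \colon \TT^d \to \TT^{d-1}$ semiconjugates the $G_0$-action to the linear $G$-action and $D\pi$ identifies $E^H$ with $T\TT^{d-1}$ fiberwise while preserving exponential growth rates, these functionals coincide with those of the $G$-action, namely $\chi_i(g) = \log|\lambda_i(A_g)|$ where $\lambda_i$ ranges over the eigenvalues of $A_g$ in the common eigenbasis guaranteed by irreducibility of $A_f$. A proportionality $\chi_i = c\chi_j$ on all of $G$ amounts to a multiplicative relation on the units of $K = \QQ(\lambda_1)$, which by the Dirichlet unit theorem structure of $\Z_{\SL(d-1,\ZZ)}(A_f)$ (Lemma~\ref{lemma: rank cent}) is a rank constraint on the logarithmic embedding of $O_K^\times$; the maximality of the $G$-action combined with irreducibility of $A_f$ should rule out such relations in the forbidden range. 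I expect this verification to be the main obstacle, as it requires a careful algebraic argument about the regulator matrix of the centralizer.

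Once the four hypotheses are checked, Theorem~\ref{main: gl 1d cent} yields a smooth conjugacy between the $G_0$-action and a product action on $\TT^{d-1} \times \TT^1$ consisting of an affine Anosov action times a rotation action. Since $\pi$ semiconjugates $f$ to $T_{A_f}$, the affine part of $f$ under this product decomposition is conjugate to $T_{A_f}$; by ergodicity of $f$ the rotation part must be an irrational rotation $R_\theta$. Therefore $f$ is $C^\infty$ conjugate to $T_{A_f} \times R_\theta$ with $\theta \notin \QQ/\ZZ$, completing the proof.
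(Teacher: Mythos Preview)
Your verification of the smoothness, volume-preservation, ergodicity, and Weyl-chamber hypotheses is fine and matches the paper's argument. The gap is in your treatment of the non-resonance condition.

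You write that ``$D\pi$ identifies $E^H$ with $T\TT^{d-1}$ fiberwise while preserving exponential growth rates,'' and conclude that the Lyapunov functionals of $G_0$ on $E^H$ \emph{coincide} with those of the linear $G$-action. But $\pi$ is only H\"older in case~(1)(b) of Case~2: the center foliation $\W^c_f$ has $C^\infty$ leaves but is not known to be a $C^\infty$ foliation (only $\W^H$ is shown to be smooth), so $D\pi$ does not exist and the Lyapunov functionals of $G_0$ need not equal those of $G$. What the bi-H\"older estimates (Lemma~\ref{lemma: bd LE Dh E}, with exponent $\delta$ close to $1$ as $d_{C^1}(f,f_0)\to 0$) actually give is that the $G_0$-functionals are \emph{close} to the $G$-functionals, not equal. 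Consequently your proposed algebraic verification via the Dirichlet unit theorem, even if completed, would establish non-resonance for the wrong set of functionals.

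The paper's argument is both different and simpler. For the linear action $G$, maximality (Lemma~\ref{lemma: za max}) implies two things: distinct coarse Lyapunov functionals have distinct kernels (hyperplanes in general position), hence are not proportional at all; and within each coarse Lyapunov distribution the linear action is conformal, so all functionals in a coarse class are equal ($c=1$). Thus for $G$ the only proportionality relations are with $c=1$. Now Lemma~\ref{lemma: share same wall} gives that $G_0$ and $G$ share the same hyperbolic Weyl chamber picture, so their coarse Lyapunov structures agree; in particular $G_0$ is TNS and no two $G_0$-functionals from different coarse classes are proportional. Within a single coarse class the $G_0$-functionals are positively proportional, and by the quantitative estimates of Lemma~\ref{lemma: bd LE Dh E} the proportionality constants lie in $(\delta^2,\delta^{-2})$, hence in $(1/2,2)$ once $f$ is $C^1$-close enough to $f_0$. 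This is the $\tfrac12$-pinching condition, and no number-theoretic argument is needed.
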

\begin{proof} Recall that in (1)(b) of Case 2 in Section \ref{sec: pf Thm dich}, we obtain that the action of $G_0$ on $\TT^d$ is abelian, $C^\infty$ and volume preserving. Every element $h$ in $G_0$ preserves the common center foliation $\W^c$ and there is a H\"older continuous fiber bundle $\pi:\TT^d\to \TT^{d-1}$ such that for any $h\in G_0$, there is a linear automorphism $T_{A_h}:\TT^{d-1}\to \TT^{d-1}$ satisfying $\pi\circ h=T_{A_h}\circ \pi$.

Since  $G=\{A_h, h\in G_0\}$ has rank $\ell_0>1$, by Lemma \ref{lemma: za max} $G$ induces a maximal Anosov affine action on $\TT^{n-1}$. Therefore, the action of $G$ is TNS (i.e., there are no negatively proportional Lyapunov functionals) and conformal on each coarse Lyapunov foliation. By the discussion in Section \ref{sec: same picture}, the Lyapunov functionals of the action of $G_0$ are close to that of $G$ (see also Step 7. of Section \ref{sec: key sec}); therefore the action of $G_0$ is TNS, and the Lyapunov functionals satisfy the $\frac{1}{2}-$pinching condition. Moreover, by following the proof of Theorem \ref{main: thm pr}, we can construct partially hyperbolic elements in all the Weyl chambers of the action of $G_0$. Thus the action of $G_0$ in (2)(b) of Case 2 of Section \ref{sec: pf Thm dich} satisfies all the conditions in Theorem \ref{main: gl 1d cent}, and by Theorem \ref{main: gl 1d cent} it is globally rigid. 
\end{proof}

\end{document}